\def\authornotes{1pt}
\def\displayauthors{1pt}
    \newcommand{\siqi}[1]{\footnote{\color{MidnightBlue}Siqi: {#1}}}
    \newcommand{\sid}[1]{\footnote{\color{RedOrange}Sidhanth: {#1}}}
    \newcommand{\tselil}[1]{\footnote{\color{ForestGreen}Tselil: {#1}}}
    \newcommand{\eyang}[1]{\footnote{\color{Purple}Elizabeth: {#1}}}
    \newcommand{\tnote}[1]{{\color{ForestGreen}[Tselil: #1]}}
    \newcommand{\enote}[1]{{\color{Purple}[Elizabeth: #1]}}
    \newcommand{\sidnote}[1]{{\color{RedOrange}[Sidhanth: #1]}}
    \newcommand{\siqnote}[1]{{\color{MidnightBlue}[Siqi: #1]}}
\newcommand{\todo}[1]{{\color{Red}[TODO: #1]}}
    \newcommand{\siqi}[1]{}
    \newcommand{\sid}[1]{}
    \newcommand{\tselil}[1]{}
    \newcommand{\eyang}[1]{}
    \newcommand{\tnote}[1]{}
    \newcommand{\enote}[1]{}
    \newcommand{\sidnote}[1]{}
    \newcommand{\siqnote}[1]{}
\newcommand{\todo}[1]{}
\newcommand{\grg}{\mathsf{Geo}}
\newcommand{\ER}{\mathsf{G}}
\newcommand{\dtv}[2]{\mathrm{d}_{\mathrm{TV}}\left(#1,#2\right)}
\newcommand{\GG}{\mathsf{gg}}
\newcommand{\Unif}{\rho}
\newcommand{\dkl}{\mathrm{D}}
\newcommand{\KL}[1]{\dkl(#1\|\Unif)}
\newcommand{\Par}{\mathrm{Par}}
\newcommand{\Binom}{\mathsf{Binom}}
\newcommand{\iprod}[1]{\left\langle #1 \right\rangle}
\newcommand{\Iprod}[1]{\langle #1 \rangle}
\newcommand{\scap}{\mathrm{cap}}
\newcommand{\mcap}{\ul{\smash{\mathrm{cap}}}}
\newcommand{\Shell}{\mathrm{shell}}
\newcommand{\cW}{\calW}
\newcommand{\sd}{\succeq_{\mathrm{st}}}
\newcommand{\sdl}{\preceq_{\mathrm{st}}}
\newcommand{\nA}{\underline{A}_{\bG}}
\newcommand{\nQ}{\underline{Q}}
\newcommand{\eD}{{D_{\bkappa}}}
\newcommand{\eDdet}{{D_{\kappa}}}
\newcommand{\mX}{m}
\newcommand{\dW}{d_W}
\newcommand{\exc}{\mathrm{exc}}
\newcommand{\gw}{G(W)}
\newcommand{\vw}{V(W)}
\newcommand{\ew}{E(W)}
\newcommand{\tw}{T(W)}
\newcommand{\gow}{G_1(W)}
\newcommand{\eow}{E_1(W)}
\newcommand{\gtw}{G_2(W)}
\newcommand{\vtw}{V_2(W)}
\newcommand{\etw}{E_2(W)}
\newcommand{\gjw}{G_J(W)}
\newcommand{\vjw}{J(W)}
\newcommand{\ejw}{E_J(W)}
\newcommand{\wo}{X}
\newcommand{\sw}{S_J(W)}
\newcommand{\dw}{D_J(W)}
\newcommand{\tjw}{T_J(W)}
\newcommand{\WS}{\mathsf{Struc}_{\ell}}
\newcommand{\uw}{U}
\newcommand{\vuw}{V(\uw)}
\newcommand{\suw}{s(\uw)}
\newcommand{\Beta}[1]{{\mathsf{D_{ip}}({#1})}}
\newcommand{\BetaPDF}{\psi}
\newcommand{\porb}[2]{q_{#1}(#2)}
\newcommand{\rhoflat}{\rho_{\mathrm{1D}}}
\newcommand{\rr}{\mathsf{RR}}
\newcommand{\defeq}{\coloneqq}
\newcommand{\xbound}{\eta}
\newcommand{\ptau}{{\nu}}
\begin{document}

\title{Local and global expansion in random geometric graphs}
\ifdim\displayauthors=1pt
\author{Siqi Liu\thanks{UC Berkeley. \texttt{sliu18@berkeley.edu}. Supported in part by the Berkeley Haas Blockchain Initiative and a donation from the Ethereum Foundation.} \and Sidhanth Mohanty\thanks{UC Berkeley. \texttt{sidhanthm@cs.berkeley.edu}. Supported by a Google PhD Fellowship.} \and Tselil Schramm\thanks{Stanford University.  \texttt{tselil@stanford.edu}. Supported by an NSF CAREER award \# 2143246.} \and Elizabeth Yang\thanks{UC Berkeley. \texttt{elizabeth\_yang@berkeley.edu}.  Supported by the NSF GRFP under Grant No. DGE 1752814.}}
\else
\fi
\date{\today}
\maketitle

\begin{abstract}
Consider a random geometric 2-dimensional simplicial complex $X$ sampled as follows: first, sample $n$ vectors $\bu_1,\ldots,\bu_n$ uniformly at random on $\bbS^{d-1}$; then, for each triple $i,j,k \in [n]$, add $\{i,j,k\}$ and all of its subsets to $X$ if and only if $\iprod{\bu_i,\bu_j} \ge \tau, \iprod{\bu_i,\bu_k} \ge \tau$, and $\iprod{\bu_j, \bu_k} \ge \tau$.
We prove that for every $\eps > 0$, there exists a choice of $d = \Theta(\log n)$ and $\tau = \tau(\eps,d)$ so that with high probability, $X$ is a high-dimensional expander of average degree $n^\eps$ in which each $1$-link has spectral gap bounded away from $\frac{1}{2}$. 

To our knowledge, this is the first demonstration of a natural distribution over $2$-dimensional expanders of arbitrarily small polynomial average degree and spectral link expansion better than $\frac{1}{2}$.
All previously known constructions are algebraic.
This distribution also furnishes an example of simplicial complexes for which the trickle-down theorem is nearly tight.

En route, we prove general bounds on the spectral expansion of random induced subgraphs of arbitrary vertex transitive graphs, which may be of independent interest.
For example, one consequence is an almost-sharp bound on the second eigenvalue of random $n$-vertex geometric graphs on $\bbS^{d-1}$, which was previously unknown for most $n,d$ pairs.

\end{abstract}

\setcounter{tocdepth}{2}
\setcounter{page}{0}
\thispagestyle{empty}
\newpage
\tableofcontents
\thispagestyle{empty}
\thispagestyle{empty}
\setcounter{page}{0}
\newpage

\section{Introduction}
A graph $G$ is called a spectral $\lambda$-expander if the second eigenvalue of its normalized adjacency matrix, $\lambda_2(G)$, is at most $\lambda$.
More generally, a sequence of graphs of increasing size $(G_n)_{n\in\N}$ is said to be a family of (1-dimensional) spectral $\lambda$-expanders if  $\lambda_2(G_n) \le \lambda < 1$ as $n \to \infty$, and importantly, this implies that no vertex cut of $G_n$ has sub-constant sparsity.
Expanders are an indispensible tool in theoretical computer science and mathematics, underlying advances in pseudorandomness, coding theory, routing algorithms, and more (e.g. \cite{INW94,SS96,Pin73}, see also the survey \cite{HLW06}); similarly, the phenomenon of expansion has enabled the analysis of approximation algorithms, probabilistically checkable proofs, embeddability of metric spaces (e.g. \cite{ABS15,Dinur07,LLR95}), as well as numerous results in number theory, group theory, and other areas of pure mathematics (see e.g. the survey \cite{Lubotzky12}).

Sparse expander graphs were first shown to exist via the probabilistic method \cite{KM93,Pin73}.
In fact, even the most extreme version of expansion is ubiquitous:
The best possible spectral expansion for a $d$-regular graph is the Ramanujan bound $\lambda = \frac{2\sqrt{d-1}}{d}$, and this bound is achieved (up to an additive $o_n(1)$) by a random $d$-regular graph with high probability \cite{Alon86,Nilli91,Friedman93,Friedman08}.\footnote{Though even in \cite{Alon86} it is conjectured that random graphs achieve the Ramanujan bound, the first graphs proven achieve this bound were explicit algebraic constructions \cite{LPS88,Margulis88}.}
Even an \erdos--\renyi graph $\ER(n,p)$ forms a ``decorated expander'' for any $p > \frac{1}{n}$, which is to say the graph is an expander when one omits small isolated connected components and dangling trees \cite{FR08,BKW14}.
The sampling of these graphs may be derandomized \cite{BL06,MOP20}, so that in addition to several known explicit constructions (see \cite{Margulis73,GG81,LPS88,Margulis88}, and others) we have a wealth of algorithmic constructions of expander graphs to use in applications.

{\em Higher-dimensional spectral expansion} is a generalization of expansion to simplicial complexes. 
For simplicity, we will for the moment limit ourselves to $2$-dimensional spectral expansion, which can be stated easily in terms of simple graphs.
A graph $G = (V,E)$ is said to be a $2$-dimensional spectral expander if $G$ itself is an expander, and further for every vertex $v \in V$, the induced graph on $v$'s neighbors $G[N(v)]$ (called the ``link'' of $v$) is a $\lambda$-expander for $\lambda < \frac{1}{2}$.
The significance of $\frac{1}{2}$ is that when the local expansion is $\lambda < \frac{1}{2}$, this is enough to trigger a ``trickling down'' phenomenon that ensures that $G$ in its entirety is an expander! 
Hence, higher-dimensional expanders have the remarkable property that global expansion is witnessed by local expansion.
This local-to-global phenomenon has led to a number of recent breakthroughs in theoretical computer science: objects inspired by high-dimensional expanders are crucial in explicit constructions of locally testable codes \cite{DELLM22,LH22} and quantum LDPC codes \cite{PK22,LZ22,DHLV22}, and the local-to-global phenomenon has been essential in analyzing Markov chains for a wide variety of sampling problems \cite{ALOV19}.

The simplest example of a $2$-dimensional expander is the complete complex, based on the complete graph $K_n$.
Sparse examples are known as well (e.g. \cite{CSZ03,Li04,LSV05a,LSV05b,KKL14}), though at first their existence may seem remarkable: such graphs must be globally sparse, and yet the $O(1)$-sized local neighborhood of every vertex must be densely connected to ensure sufficient expansion.
This is a delicate balance, and indeed given the state of our knowledge today the phenomenon of sparse high-dimensional expansion seems ``rare,'' in sharp contrast with the ubiquity of 1-dimensional expansion.
Only a few sparse constructions are currently known, and many of these constructions are algebraic, inheriting their expansion properties from the groups used to define them (as discussed further in \pref{sec:known-constructions}).

A prominent open problem in the area is to identify natural distributions over sparse higher-dimensional expanders \cite{Lubotzky18,LinialSimons,LubotzkySTOC};
 this would be highly beneficial, both for a deeper mathematical understanding and for applications in algorithms and complexity.
The simplest distributions immediately fail: 
random $d$-regular graphs are locally treelike, and so with high probability $G[N(v)]$ will be an independent set (with $\lambda = 1$) for most $v \in V$.
The same is true for an \erdos-\renyi graph $\ER(n,p)$ when $p \ll \frac{1}{\sqrt{n}}$.
Though a number of distributions have been shown to have some higher-dimensional expansion properties \cite{LM06,FGLNP12,Conlon19,CTZ20,LMY20,Gol21}, they all fall short in some sense: either they are quite dense (degree $\Omega(\sqrt{n})$) or fail to satisfy the spectral conditon $\lambda < \frac{1}{2}$.
In this work, our primary question is the following:
\begin{center}
\emph{Are there natural, high-entropy distributions over 2-dimensional expanders of average degree $\ll \sqrt{n}$?}
\end{center}

We answer this question in the affirmative.
We prove that for any $\eps > 0$ and any large enough $n \in \N$, there exists a choice of $d \in \N$ such that a random $n$-vertex geometric graph on $\bbS^{d-1}$ with average degree $n^{\eps}$ is a $2$-dimensional expander with high probability.

\subsection{Our results}

In order to state our results, we first give some formal definitions.

\begin{definition}[Simplicial complex]
A {\em $k$-dimensional simplicial complex} $X$ is a downward-closed collection of subsets of size at most $k+1$ over some ground set $X_0$, with a downward-closed weight function $w$.\footnote{
	Recall that $X$ is called downward-closed if $S \subseteq T$ and $T \in X$ imply $S \in X$, and $w$ is called downward-closed if weights are assigned to maximal faces, and for each non-maximal $S\in X$, we recursively define $w(S) = \sum_{x \in X_0} w(S \cup \{x\})$.}
Any $S \in X$ is called a {\em $(|S|-1)$-face}, and the restriction of $X$ to sets of size at most $\ell+1 \le k$ is called the {\em $\ell$-skeleton} of $X$.
The {\em degree} of $v \in X_0$ is the number of top-level faces that contain it.
\end{definition}

\noindent For example, the set of all cliques of size at most $k+1$ in a graph $G$, where the weight of each clique is proportional to the number of $(k+1)$-cliques it occurs in, defines a $k$-dimensional simplicial complex.
\begin{definition}[Link]
Let $X$ be a simplicial complex.
For any face $S$, the {\em link} of $S$ in $X$ is the simplicial complex $X_S$ with weight function $w_S$, consisting of all sets in $X$ which contain $S$, minus $S$:
\[
    X_S = \{T\setminus S\mid T\supseteq S,\, T \in X\},
    \qquad
    w_S(T\setminus S) = w(T) \quad \forall T\in X_S
\]
\end{definition}

\noindent For example, in the simplicial complex whose highest order faces are the triangles in a graph $G$, the link of a vertex $v$ is the induced graph on the neighbors of $v$ with its isolated vertices removed.

We are interested in simplicial complexes where the links expand enough to trigger a ``local-to-global phenomenon'' via the trickling-down theorem, stated below in the $2$-dimensional case.\footnote{The trickling-down theorem also generalizes to higher dimensions: sufficiently strong local spectral expansion of only the highest-order links implies global spectral expansion.}
\begin{theorem}[Trickling-down theorem {\cite{Opp18}}] \label{thm:opp-trickling}
	Let $X$ be a $2$-dimensional simplicial complex. 
If its 1-skeleton is connected, and the second eigenvalue of every link's random walk matrix is at most $\lambda$, then the second absolute eigenvalue of the random walk matrix of the 1-skeleton of $X$ is at most $\frac{\lambda}{1 - \lambda}$.
\end{theorem}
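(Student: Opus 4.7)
The plan is to follow Oppenheim's original argument: combine a local Poincar\'e inequality coming from the spectral assumption on every link with a total-variance decomposition of a single step of the global walk. Write $P$ for the random walk on the $1$-skeleton with its natural stationary distribution $\pi$, whose vertex weights $\pi_v \propto \sum_{T \ni v} w(T)$ are inherited from the triangle weights. For each vertex $v$, let $P_v$ and $\pi^v$ denote the analogous random walk and stationary distribution on the link of $v$, so $\pi^v(u) \propto w(\{u,v\})$. A direct unpacking of definitions shows that $\pi^v$ coincides with the one-step distribution of $P$ started at $v$, hence $\mathbb{E}_{u \sim \pi^v}[f(u)] = (Pf)(v)$ for every $f : X_0 \to \mathbb{R}$.

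For any $f$ orthogonal to constants under $\pi$, the law of total variance applied to the coupling $(v, u) \sim (\pi, \pi^v)$ yields
\[
\|f\|_\pi^2 \;=\; \mathbb{E}_{v \sim \pi}\!\left[\mathrm{Var}_{\pi^v}(f)\right] + \|Pf\|_\pi^2 .
\]
The hypothesis that each link walk $P_v$ has second eigenvalue $\le \lambda$ gives the local Poincar\'e inequality $\mathrm{Var}_{\pi^v}(f) \le (1 - \lambda)^{-1} \mathcal{E}_v(f)$, where $\mathcal{E}_v(f) := \langle f, (I - P_v) f\rangle_{\pi^v}$. The second ingredient is a double-counting identity matching the average local Dirichlet energy to the global Dirichlet energy:
\[
\mathbb{E}_{v \sim \pi}[\mathcal{E}_v(f)] \;=\; \mathcal{E}(f) \;=\; \|f\|_\pi^2 - \langle f, Pf\rangle_\pi ,
\]
which holds because each triangle $\{v_1, v_2, v_3\}$ contributes $w(\{v_1, v_2, v_3\}) \sum_{i < j}(f(v_i)-f(v_j))^2$ to both sides---once per choice of ``center'' vertex on the left, and once per edge on the right (after unpacking each edge weight into the weights of its triangles).

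Chaining these three displays yields $\|f\|_\pi^2 \le (\|f\|_\pi^2 - \langle f, Pf\rangle_\pi)/(1 - \lambda) + \|Pf\|_\pi^2$. Specializing to a second eigenfunction of $P$ with eigenvalue $\mu$ (so $\|Pf\|_\pi^2 = \mu^2 \|f\|_\pi^2$), this simplifies to $(1-\mu)(1+\mu) \le (1-\mu)/(1-\lambda)$; connectedness of the $1$-skeleton precludes $\mu = 1$, so dividing by $1-\mu > 0$ concludes $\mu \le \lambda/(1-\lambda)$.

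The only delicate piece of the plan is verifying the triangle-counting identity $\mathbb{E}_v \mathcal{E}_v(f) = \mathcal{E}(f)$, where one must check carefully that the vertex, link-edge, and global-edge weights align so that each triangle is triple-counted cleanly; once the identity $\pi^v(\cdot) = P(v, \cdot)$ is in hand, everything else is essentially forced. A two-sided bound on the absolute eigenvalue (as opposed to $\lambda_2$) is not immediate from the argument in this form, since the displayed quadratic inequality places no additional constraint on very negative eigenvalues; for this one either passes to a lazy version of the walk (under which all eigenvalues become nonnegative) or strengthens the local hypothesis to $|\lambda_2(P_v)| \le \lambda$ and repeats the same chain of inequalities with the most negative eigenfunction.
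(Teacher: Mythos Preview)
The paper does not prove this theorem; it is quoted from \cite{Opp18} and used as a black box. Your sketch is exactly Oppenheim's localization argument and is correct for the one-sided bound $\lambda_2(P)\le \lambda/(1-\lambda)$: the three ingredients (the law-of-total-variance identity $\|f\|_\pi^2=\E_{v\sim\pi}[\mathrm{Var}_{\pi^v}(f)]+\|Pf\|_\pi^2$, the local Poincar\'e inequality from the link hypothesis, and the triangle double-counting identity $\E_{v\sim\pi}[\mathcal{E}_v(f)]=\mathcal{E}(f)$) all check out with the downward-closed weighting used in the paper, and the algebra at the end is right.

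Your closing caveat is also accurate and worth keeping: the quadratic inequality $(1-\mu)(1+\mu)\le(1-\mu)/(1-\lambda)$ constrains $\mu$ only from above once one divides by $1-\mu>0$, so it yields no lower bound on the most negative eigenvalue. The paper's phrasing ``second \emph{absolute} eigenvalue'' is therefore slightly stronger than what this argument gives directly; the standard fixes you mention (lazify, or assume two-sided link expansion and rerun the argument on the bottom eigenfunction) are the usual ways to close that gap.
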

\noindent This theorem explains the significance of $\lambda = \frac{1}{2}$, since when $\lambda < \frac{1}{2}$, local expansion ``trickles down'' to imply global expansion.
We will show that random geometric graphs, in a carefully-chosen parameter regime, have sufficient link expansion.

\begin{definition}[Random geometric graph]
A {\em random geometric graph} $\bG \sim \grg_{d}(n,p)$ is sampled as follows: for each $i\in[n]$, a vector $\bu_i$ is drawn independently from the uniform distribution over $\bbS^{d-1}$ and identified with vertex $i$.
Then, each edge $\{i,j\}$ is included if and only if $\angles*{\bu_i,\bu_j}\ge\tau$ where $\tau = \tau(p,d)$ is chosen so that $\Pr_{\grg_d(n,p)}[(i,j) \in \bG ] = p$.
\end{definition}
\begin{definition}[Random geometric complex]
The {\em random geometric $k$-complex} $\grg_d^{(k)}(n,p)$ is the distribution defined by sampling $\bG \sim \grg_d(n,p)$ and taking the downward-closure of the complex whose $k$-faces are the cliques of size $(k+1)$ in $\bG$.
\end{definition}
Our main result proves that there are conditions under which random geometric 2-complexes of degree $n^\eps$ are high-dimensional expanders enjoying the trickling-down phenomenon:
\begin{theorem}\label{thm:hdx}
    For every $0 < \eps < 1$, there exist constants $C_\eps$ and $\delta = \exp(-O(1/\eps))$ such that when $\bH\sim\grg_d^{(2)}(n,n^{-1+\eps})$ for  $d=C_{\eps}\log n$, with high probability every vertex link of $\bH$ is a $(\frac{1}{2}-\delta)$-expander, and hence its $1$-skeleton is a $\parens{1-\tfrac{4\delta}{1+2\delta}}$-expander.
\end{theorem}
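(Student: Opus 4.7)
The plan is to deduce global expansion from local expansion via the trickling-down theorem (\pref{thm:opp-trickling}): it suffices to show that with high probability every vertex link of $\bH$ is a $(\tfrac{1}{2}-\delta)$-expander, for then the $1$-skeleton is automatically a $(1-\tfrac{4\delta}{1+2\delta})$-expander. So fix a vertex $v \in [n]$; I will argue that its link $\bH_v$ has second eigenvalue at most $\tfrac{1}{2}-\delta$ with probability $1-o(1/n)$, then union bound over $v$.

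Conditioning on the random vector $\bu_v$, the link $\bH_v$ is the induced subgraph on those $j \ne v$ whose vector falls in the spherical cap $W = \{x \in \bbS^{d-1} : \iprod{\bu_v, x} \ge \tau\}$. The sampling process has two stages: (i)~each $j$ is included independently with probability $p = n^{-1+\eps}$, and given inclusion, $\bu_j$ is drawn uniformly from $W$; (ii)~included vertices $i,j$ are joined by an edge precisely when $\iprod{\bu_i,\bu_j} \ge \tau$. Hence $\bH_v$ is distributed as a random geometric graph on the cap $W$ with roughly $n^\eps$ i.i.d.\ uniform vertices and adjacency threshold $\tau$.

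To bound $\lambda_2(\bH_v)$, I would split the analysis into a spectral computation for the idealized (infinite-sample) kernel on the cap, and a concentration step for the finite sample. For the idealized step, parametrize $x \in W$ as $x = t\bu_v + \sqrt{1-t^2}\,y$ with $t \in [\tau,1]$ and $y \in \bbS^{d-2}$. Since $d = C_\eps \log n$ and $\tau = \Theta_\eps(1)$, the coordinate $t$ concentrates tightly near the cap boundary $t = \tau$, so the edge kernel $\mathbf{1}[\iprod{x,x'} \ge \tau]$ on $W$ is well-approximated by $\mathbf{1}[\iprod{y,y'} \ge \tau']$ on $\bbS^{d-2}$ with an effective threshold $\tau' = \tau/(1+\tau)$. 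The latter operator is vertex-transitive, so its spectrum decomposes along spherical harmonics, where the Funk--Hecke formula evaluates each eigenvalue as an integral of a Gegenbauer polynomial against $(1-t^2)^{(d-4)/2}$ on $[\tau',1]$; tuning $C_\eps$ so that this integral lies strictly below $\tfrac{1}{2}$ for every non-constant harmonic yields a spectral gap of size $\delta = \exp(-O(1/\eps))$.

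For the concentration step, I would invoke the paper's general machinery bounding spectra of random induced subgraphs of vertex-transitive graphs (mentioned in the abstract) to show that $\lambda_2(\bH_v)$ is close to its idealized value, with deviation $o(\delta)$. The main obstacle I expect is the interplay of two competing demands: the cap kernel must have spectral gap strictly exceeding $\tfrac{1}{2}$, which constrains $C_\eps$ in a delicate window; and the random link, having only $\sim n^\eps$ vertices, must be dense enough inside the cap for its empirical spectrum to concentrate on the kernel spectrum at scale finer than the gap. Calibrating $C_\eps$ so that both conditions hold simultaneously---especially when $\eps$ is small and the link is only barely supercritical---is the crux of the argument. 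Once this is done, the union bound over the $n$ vertex links together with \pref{thm:opp-trickling} completes the proof.
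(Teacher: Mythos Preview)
Your high-level plan matches the paper's: bound each link's second eigenvalue, union bound, then apply trickling-down. But there are two genuine gaps.

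The more serious one is in your concentration step, where you propose to ``invoke the paper's general machinery bounding spectra of random induced subgraphs of vertex-transitive graphs.'' The spherical cap $W$ is \emph{not} vertex-transitive: points near the boundary and points near the center have different local pictures, and correspondingly the expected degree of a link vertex $j$ depends on its shell $\bkappa_j = \iprod{\bu_j,\bu_v}$. \pref{thm:random-restriction} does not apply. The paper devotes substantial effort to this (\pref{sec:link-eigs} and \pref{sec:shells}): it conditions on the shell configuration $\bkappa$, proves a trace-method bound on $A_{\bG_v} - \E[A_{\bG_v}\mid\bkappa]$ tailored to the inhomogeneous cap walk (with step size varying by edge), and then separately shows that $\E[A_{\bG_v}\mid\bkappa]$ is close to a rank-$1$ matrix via a coupling argument for the ``shell walk.'' Your approximation ``$t$ concentrates near $\tau$, so replace the cap by $\bbS^{d-2}$'' does not survive at the required precision: a $1/\mathrm{poly}(m)$ fraction of link vertices have $\bkappa_j > (1+\Omega(1))\tau$ and anomalously large degree, so the top eigenvector of $A_{\bG_v}$ is not close to $\vec{1}$ and centering by a scalar fails.

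Second, you omit the connectedness hypothesis of \pref{thm:opp-trickling}. The paper checks this separately: since every link expands, every edge of $\bG$ lies in a triangle, so the unweighted $1$-skeleton coincides with $\bG\sim\grg_d(n,p)$, whose expansion is handled by \pref{thm:sphere}.

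On the idealized-kernel side, your Funk--Hecke route would in principle recover the bound $\tau' = \tau/(1+\tau)$ for the threshold operator on $\bbS^{d-2}$. The paper instead bounds the TV-decay rate of the associated random walk by coupling it to spherical Brownian motion (\pref{sec:bm-to-discrete}). The latter plugs directly into the trace method---where cycle subgraph probabilities are naturally controlled by mixing rates---and, crucially, extends to the non-transitive cap walk with edge-dependent step sizes, which Funk--Hecke does not.
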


Along the way, we also analyze the spectrum of $\bG \sim \grg_d(n,p)$ directly and obtain sharper control of its second eigenvalue in a more general setting, giving bounds on the spectral norm of random geometric graphs in the full high-dimensional $(d \to_n \infty)$ regime. 
To our knowledge, previous results in this vein are only for $d \sim n^{1/k}$ for fixed integers $k$ \cite{Karoui10,CS13,DV13,Bordenave13,FM19,LY22}.
\begin{theorem}\torestate{\label{thm:sphere}
    Let $\bG\sim\grg_d(n,p)$ and $\tau \coloneqq \tau(p,d)$.  
Then with high probability $\bG$ is a $\mu$-expander, where
    \[
        \mu \coloneqq (1+o(1))\cdot\max\braces*{(1+o_{d\tau^2}\parens*{1})\cdot \tau, \frac{\log^4 n}{\sqrt{pn}} },
    \]
    where $o_{d\tau^2}(1)$ denotes a function that goes to $0$ as $d \cdot \tau(p,d)^2 \to \infty$.}
\end{theorem}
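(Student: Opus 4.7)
}
The strategy has two parts. First, compute the spectrum of the ``continuous'' geometric graph on $\bbS^{d-1}$, viewed as an integral operator. Second, invoke the general theorem (advertised in the abstract) on random induced subgraphs of vertex-transitive graphs to pass from this continuous spectrum to the spectrum of $\bG$ on $n$ i.i.d.\ samples, with the expected $\widetilde{O}(1/\sqrt{np})$ loss from finite sampling.

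For the first part, view $K(x,y) = \mathbf{1}[\iprod{x,y} \geq \tau]$ as an integral operator on $L^2(\bbS^{d-1}, \Unif)$. By the Funk-Hecke formula, $K$ is diagonalized by spherical harmonics: harmonics of degree $k$ are eigenfunctions with common eigenvalue $\widehat{K}_k$, given by an integral of $K$ against the Gegenbauer polynomial of degree $k$. The top eigenvalue is $\widehat{K}_0 = p$, and the second largest in magnitude is $\widehat{K}_1 = p \cdot \mathbb{E}[\bz \mid \bz \geq \tau]$, where $\bz$ is the first coordinate of $\Unif(\bbS^{d-1})$. The law of $\bz$ has approximately Gaussian tails of scale $1/\sqrt{d}$, so whenever $d\tau^2 \to \infty$ one gets $\mathbb{E}[\bz \mid \bz \geq \tau] = (1 + o_{d\tau^2}(1))\tau$, yielding a normalized second eigenvalue $\widehat{K}_1/\widehat{K}_0 = (1 + o_{d\tau^2}(1))\tau$. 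Analogous estimates give rapid decay of $\widehat{K}_k$ for $k \geq 2$.

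For the second part, the sphere is vertex-transitive under the orthogonal group, so $\bG$ is naturally a random induced subgraph of the continuous object on $n$ uniformly sampled vertices. Concretely, writing $V_k$ for the matrix collecting the degree-$k$ spherical harmonic evaluations at the samples, the centered adjacency splits (modulo diagonal corrections) as $A - p\mathbf{1}\mathbf{1}^\top = \sum_{k \geq 1} \widehat{K}_k\, V_k V_k^\top$. The $k = 1$ block is a Wishart-type matrix with $\|V_1 V_1^\top\|_{\mathrm{op}} = (1+o(1))\,n$, contributing $(1+o(1))\, p\tau n$ to the spectral norm. The general theorem then bounds the remaining $\sum_{k \geq 2} \widehat{K}_k V_k V_k^\top$ by $\log^4(n)\sqrt{np}$ with high probability. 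Combining with standard degree concentration $D_{ii} = (1+o(1))np$ and normalizing gives the stated bound on $\mu$.

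The main obstacle lies in the second part, specifically in the sparse regime where $np$ is polynomially smaller than $n$: standard matrix concentration tools such as Matrix Bernstein lose additional factors of $\sqrt{n}$ and fall short of the desired polylogarithmic rate. We expect the argument to use a Kahn--Szemer\'edi style trace-moment method, leveraging both the vertex-transitivity (so that spectral statistics can be averaged over the symmetry group) and the decay of the Gegenbauer coefficients $\widehat{K}_k$ for $k \geq 2$, in order to achieve the target rate $\log^4 n / \sqrt{np}$.
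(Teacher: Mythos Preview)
Your identification of $\hat{K}_1/\hat{K}_0 = \E[z \mid z \ge \tau] = (1+o_{d\tau^2}(1))\tau$ via Funk--Hecke is correct and is a cleaner explanation of \emph{why} $\tau$ is the target than the paper offers. But the plan for passing to the finite sample diverges from the paper and has a real gap. The paper's general theorem (\pref{thm:random-restriction}) does not take the $L^2$ spectral gap of the continuous operator as input, nor does it bound a harmonic tail $\sum_{k\ge 2}\hat K_k V_kV_k^\top$; it bounds $\|A_{\bG}-\E A_{\bG}\|$ directly by a trace-moment computation in which $\tau$ enters as the \emph{total-variation} mixing rate of the cap walk $P_p$ on $\bbS^{d-1}$ (cycle subgraph probabilities are controlled by $\dtv{P_p^k\delta_x}{\rho}$). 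The hypothesis is $\dtv{X^k\alpha}{\rho}\le C\lambda^k$, and the constant $C$ enters the range of validity as $pn\gg C^6\log^8 n$. Converting the $L^2$ gap to a TV bound naively via $\|P_p\delta_x\|_{L^2(\rho)}=1/\sqrt{p}$ gives $C=1/\sqrt{p}$, which restricts the conclusion to roughly $p\gtrsim n^{-1/4}$. The paper instead obtains $C=\sqrt{\tfrac12\log\tfrac1p}$ by coupling $P_p$ to spherical Brownian motion through a stochastic-domination argument on spherically monotone measures and then invoking the log-Sobolev inequality for the sphere (\pref{thm:decay}, \pref{cor:decay-useful}); this polylogarithmic $C$ is what makes the result hold down to $p=n^{-1+\eps}$.

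The harmonic-block route you sketch is closer to the kernel-random-matrix literature the paper cites, but as you note those results are confined to $d\sim n^{1/k}$ for fixed $k$, and it is not clear that Gegenbauer-coefficient decay alone is enough input for a trace method in the full sparse regime without something playing the role of the paper's sharp TV bound.
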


In \pref{sec:tight} we show that an eigenvalue close to $\tau$ is achieved (for some $p,d$), so \pref{thm:sphere} is close to sharp.
Since in \pref{thm:hdx} we show that the vertex links of $\bG$ have eigenvalue $\lambda \le \frac{\tau}{1+\tau}$, this implies that the trickling-down theorem is tight

\begin{proposition}[Trickling-down theorem is tight]
\torestate{
\label{prop:trickle-tight}
For each $\lambda \in (0,\tfrac{1}{2}]$ and $\eta > 0$ there exists a 2-dimensional expander in which all vertex link eigenvalues are at most $\lambda$ for which the 1-skeleton is connected with eigenvalue at least $\frac{\lambda}{1-\lambda} - \eta$.
}
\end{proposition}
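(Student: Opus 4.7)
The plan is to show that a random geometric 2-complex $\grg_d^{(2)}(n,p)$ with carefully chosen parameters nearly saturates the trickling-down bound. Given $\lambda \in (0, 1/2]$ and $\eta > 0$, I set the target inner product $\tau^* = \lambda/(1-\lambda)$, which is precisely the value satisfying $\tau^*/(1+\tau^*) = \lambda$. I then work with $\tau = (1-\eta')\tau^*$ for a small $\eta' = \eta'(\eta,\lambda) > 0$ to be chosen momentarily, and pick $n, p, d$ with $d = C \log n$ for a suitable constant $C = C(\lambda)$ so that the inner product threshold of $\grg_d(n,p)$ equals $\tau$, with $pn$ and $d\tau^2$ both large enough for Theorem~\ref{thm:sphere} to apply in the regime where the $\tau$ term dominates the $\log^4 n / \sqrt{pn}$ term.

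The next step is to understand the link of a typical vertex $v$ in $\grg_d^{(2)}(n,p)$. Conditional on $\iprod{\bu_v, \bu_i} \ge \tau$, and provided $d\tau^2 \to \infty$, the component of $\bu_i$ along $\bu_v$ concentrates near $\tau$, so I decompose $\bu_i = \alpha_i\, \bu_v + \beta_i\, \bu_i^{\perp}$ with $\alpha_i \approx \tau$, $\beta_i \approx \sqrt{1-\tau^2}$, and $\bu_i^\perp$ approximately uniform on the unit sphere of the orthogonal complement of $\bu_v$. The edge condition $\iprod{\bu_i, \bu_j} \ge \tau$ then becomes $\iprod{\bu_i^\perp, \bu_j^\perp} \ge \tau/(1+\tau)$, so the link of $v$ is distributed (approximately) as a random geometric graph in dimension $d-1$ with inner product threshold $\tau/(1+\tau)$ on roughly $pn$ vertices.

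Applying Theorem~\ref{thm:sphere} to each such link yields second eigenvalue at most $(1+o(1)) \cdot \tau/(1+\tau) \le \lambda$, where the final inequality uses the slack introduced by taking $\tau = (1-\eta') \tau^*$. Applying Theorem~\ref{thm:sphere} to the 1-skeleton gives second eigenvalue close to $\tau = (1-\eta')\lambda/(1-\lambda)$, which for $\eta'$ chosen small enough is at least $\lambda/(1-\lambda) - \eta$; this step uses the matching lower bound on the top non-trivial eigenvalue alluded to by the comment preceding the proposition that ``an eigenvalue close to $\tau$ is achieved'' in the relevant section.

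The main technical obstacle is controlling the conditional distribution of the link's vertex positions precisely enough to invoke Theorem~\ref{thm:sphere} in dimension $d-1$: the cap-conditioning introduces dependencies and deviations from the uniform spherical distribution, so one must argue total-variation closeness between the true link distribution and an idealized $\grg_{d-1}(n_v, p')$, or alternatively rerun the relevant parts of the proof of Theorem~\ref{thm:sphere} inside the conditioned model. Fortunately, the proof of Theorem~\ref{thm:hdx} appears to carry out essentially this analysis when $\lambda = 1/2 - \delta$, so the extension to general $\lambda \in (0, 1/2]$ should follow the same template, with the $(1-\eta')$ slack absorbing all $o(1)$ errors that arise from the approximations above.
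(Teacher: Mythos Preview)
Your overall approach---use $\grg_d^{(2)}(n,p)$ with $\tau$ chosen so that $\tau/(1+\tau)\approx\lambda$, invoke the link analysis from \pref{thm:main-thm-links} (via \pref{thm:hdx}), and appeal to the lower-bound discussion in \pref{sec:tight}---is exactly the paper's route. Two substantive pieces are missing, though.

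First, you never actually produce the \emph{lower} bound on the 1-skeleton eigenvalue; \pref{thm:sphere} is only an upper bound. The paper proves this separately (\pref{lem:grg-specgap-lower}) by a direct variational argument: plug the embedding $i\mapsto \bv_i$ into the Rayleigh quotient for the spectral gap,
\[
1-\lambda_2(W)\le \frac{\E_{x\sim_W y}\|\bv_x-\bv_y\|^2}{\E_{x,y\sim\pi}\|\bv_x-\bv_y\|^2}\le \frac{2(1-\tau)}{2(1-\|\sum_x \pi(x)\bv_x\|^2)},
\]
and use that adjacent vertices have $\langle \bv_x,\bv_y\rangle\ge\tau$ while the weighted centroid $\sum_x\pi(x)\bv_x$ is small. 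This is an argument you need to supply or cite, not just gesture at.

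Second, and more seriously overlooked: the 1-skeleton of the \emph{complex} $\bH$ is not the unweighted geometric graph---each edge is weighted by the number of triangles it lies in, so the relevant random walk has stationary distribution $\pi(v)\propto\#\{\text{triangles through }v\}$, which is not uniform. The lower-bound lemma above tolerates this only up to an $O(\dtv{\pi}{U_n}^2)$ loss, so you must additionally prove that triangle counts concentrate well enough that $\dtv{\pi}{U_n}=o_n(1)$. The paper does this via a Bernstein-inequality argument conditioned on the shells $\bkappa$ inside each link. Without this step, your application of the lower bound to the actual (triangle-weighted) 1-skeleton is not justified.
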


\subsection{Spectra of random restrictions} \label{sec:random-restrictions}
\pref{thm:sphere} (and morally \pref{thm:hdx}) is a consequence of a more general theorem that we prove concerning the spectral properties of random restrictions of graphs.
We describe this result here, both because it may be of independent interest, and because it may help demystify \pref{thm:hdx}.

Random restriction is a procedure for approximating a large graph $X$ by a smaller graph $\bG$: one selects a random subset of vertices $\bS$, and then takes $\bG$ to be the induced graph $X[\bS]$.
The random restriction $\bG$ is now a smaller (and often sparser) approximation to $X$;
this idea has been useful in a number of contexts in theoretical computer science (e.g. \cite{GGR98,AFKK03,BHHS11,LRS15,HKPRSS17}).
The core question is: to what extent do random restrictions actually inherit properties of the original graph? 
We will show that if random walks on $X$ mix rapidly enough, then random restrictions inherit the {\em spectral} properties of the original graph.

To see the relevance of this result in our context, notice that a random geometric graph on the sphere is a random restriction of the (infinite) graph with vertex set $\bbS^{d-1}$ and edge set $\{(u,v) \mid \iprod{u,v} \ge \tau\}$.
\pref{thm:hdx} is then a consequence of the fact that the sphere is {\em itself} a 2-dimensional expander.

We state the theorem precisely below.

\begin{definition}[Random restriction]\label{def:rr}
Suppose $X$ is a (possibly infinite) graph, and that the simple random walk on $X$ has unique stationary distribution $\rho$.
We define an {\em $n$-vertex random restriction} of $X$ to be a graph $\bG \sim \rr_n(X)$ sampled by sampling $n$ vertices independently according to $\rho$, $\bS \sim \rho^{\otimes n}$, then taking $\bG = X[\bS]$ to be the graph induced on those vertices.
\end{definition}

We show that if the average degree in $\bG$ is not too small, $\lambda_2(\bG)$ reflects the rapid mixing of the random walk on $X$.
\begin{theorem}
\torestate{
\label{thm:random-restriction}
Let $X$ be a (possibly infinite) vertex-transitive graph on which the associated simple random walk has a unique stationary distribution $\Unif$, and let $p = \Pr_{\bG \sim \rr_n(X)}[(i,j) \in E(\bG)]$ be the marginal edge probability of a $n$-vertex random restriction of $X$.
Suppose there exist $C \ge 1$ and $\lambda \in [(np)^{-1/2},1]$ such that for any $k \in \N$, $k$-step walks on $X$ satisfy the following mixing property: for any distribution $\alpha$ over $V(X)$, 
\[
\dtv{X^k \alpha}{\rho} \le C \cdot \lambda^k,
\]
where $X^k$ denotes the $k$-step random walk operator on $X$, and furthermore suppose $pn \gg C^6 \log^4 n$.
Then for any constant $\gamma > 0$,
\[
\Pr_{\bG \sim \rr_n(X)}\left[ \left|\lambda_2(\widehat{A}_{\bG}) \right|, \left|\lambda_n(\widehat{A}_{\bG}) \right| \le (1+o(1)) \cdot \max\left(\lambda, \frac{\log^4 n}{\sqrt{pn}}\right)\right] \ge 1 - n^{-\gamma},
\]
where $\widehat{A}_{\bG}$ is the (normalized) adjacency matrix of $\bG$.}
\end{theorem}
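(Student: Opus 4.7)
The plan is to bound $\|M\|_{\mathrm{op}}$ for $M \coloneqq A_{\bG} - pJ$ (with $A_{\bG}$ the combinatorial adjacency and $J = \mathbf{1}\mathbf{1}^{\top}$) by the trace moment method, then transfer to $\widehat{A}_{\bG}$ via degree concentration. For the reduction, vertex-transitivity of $X$ gives $\Pr[(i,j)\in E(\bG)\mid \bu_i] = p$, so $\deg_{\bG}(i)\mid \bu_i\sim\Binom(n-1,p)$. Since $pn\gg \log^4 n\gg \log n$, Chernoff combined with a union bound yields $\deg_{\bG}(i) = (1\pm o(1))np$ uniformly with probability $\ge 1-n^{-\gamma-1}$. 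On this event $\widehat{A}_{\bG} = (1+o(1))(np)^{-1}A_{\bG}$, the top eigenvector is $(1+o(1))\mathbf{1}/\sqrt{n}$ and the top eigenvalue is $1 - o(1)$, so it suffices to prove $\|M\|_{\mathrm{op}}\le (1+o(1))\cdot np\cdot\max(\lambda,\log^4 n/\sqrt{pn})$ with the required probability.

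For the trace method, set $k = \Theta(\log n)$ and expand
\[
\mathbb{E}[\mathrm{tr}(M^{2k})] \;=\; \sum_{i_0,\ldots,i_{2k-1}\in[n]}\mathbb{E}\!\left[\prod_{t=0}^{2k-1} g(\bu_{i_t},\bu_{i_{t+1}})\right],
\]
where $g(u,v)\coloneqq \mathbf{1}[(u,v)\in E(X)]-p$ is mean-zero in each argument by vertex-transitivity. Group closed walks $i_0\to \cdots \to i_0$ by their \emph{shape}---the partition of positions into distinct labels together with the induced multigraph of traversed edges. A shape $S$ with $m$ distinct labels admits $\le n^m$ embeddings into $[n]$, and the expectation factor depends only on $S$. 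Combined with $\|M\|^{2k}\le \mathrm{tr}(M^{2k})$ and Markov's inequality, a bound on $\mathbb{E}[\mathrm{tr}(M^{2k})]$ yields a high-probability bound on $\|M\|_{\mathrm{op}}$.

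The crux is bounding
\[
\mathbb{E}_{\bu_1,\ldots,\bu_m\sim\Unif}\!\left[\prod_{e\in E(S)} g(\bu_{e_1},\bu_{e_2})\right]
\]
for each shape $S$. This is a trace-like quantity on the walk operator of $X$: integrating out a label incident to exactly two edges replaces those edges by a single application of $X - \Pi_{\Unif}$ on the remaining labels (with $\Pi_{\Unif}$ projecting onto constants). Iterating around each cycle of $S$, and using that vertex-transitivity upgrades the TV mixing hypothesis $\dtv{X^k\alpha}{\Unif}\le C\lambda^k$ to the $L^2$ operator bound $\|(X-\Pi_{\Unif})^k\|_{\mathrm{op}}\lesssim C\lambda^k$ at essentially the same rate (by Cauchy--Schwarz), gives a per-shape bound of the form $p^{|E(S)|}\cdot\lambda^{\mathrm{cyc}(S)}$ for an appropriate cyclic length $\mathrm{cyc}(S)$. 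Tree-like shapes ($\mathrm{cyc}(S) = 0$, each edge used exactly twice) contribute $\asymp n^{k+1} p^k$ to the $2k$-th moment, giving after normalization a floor of order $1/\sqrt{pn}$, with the extra $\log^4 n$ factor emerging from shape enumeration and a union bound. Genuinely cyclic shapes contribute $\lesssim (np\lambda)^{2k}$, giving the $\lambda$ term. Summing over shapes (there are $k^{O(k)}$ of them) and applying Markov with $k = \Theta(\log n)$ completes the argument for any desired $\gamma > 0$.

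The principal technical obstacle is the per-shape analysis: (i) upgrading the TV mixing hypothesis to an $L^2$ operator bound at essentially the same rate, which uses vertex-transitivity in an essential way (for general reversible chains, TV and $L^2$ mixing rates can genuinely differ by polynomial factors in $|V|$); (ii) defining $\mathrm{cyc}(S)$ and the tree/cyclic decomposition of $S$ so that both the $\lambda$ regime and the $\log^4 n/\sqrt{pn}$ fluctuation regime emerge with sharp constants; and (iii) enumerating shapes tightly enough that the $1/\sqrt{pn}$ floor comes with only the stated $\log^4 n$ overhead. The particular exponent $\log^4 n$ in the hypothesis $pn\gg C^6\log^4 n$ is consistent with subexponential tail bounds on shape contributions near the floor combined with a delicate shape-counting union bound.
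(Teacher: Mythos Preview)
Your high-level architecture (degree concentration, trace method on $A_{\bG}-pJ$, grouping closed walks by shape) matches the paper. The substantive divergence is in how you propose to bound the per-shape expectation. You want to upgrade the TV hypothesis to an $L^2$ operator bound $\|(X-\Pi_\rho)^k\|_{\mathrm{op}}\lesssim C\lambda^k$ and then integrate out degree-2 vertices as compositions of $X-\Pi_\rho$. The paper does \emph{not} do this: it works with the TV hypothesis directly throughout and never touches $L^2(\rho)$ operator norms. Concretely, the paper decomposes each walk into its 2-core plus dangling forests (the forest edges factor out with value $\le p$ each), then contracts the 2-core to a \emph{junction multigraph} on the degree-$\ge 3$ vertices, with one edge per internally-degree-2 path. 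Conditioning on the junction vertices makes the paths independent; each path contributes a term of the form $p^{|\gamma|-1}\bigl(\langle N(\bu_s),X^{|\gamma|-1}\delta_{\bu_t}\rangle - p\bigr)$ plus a term that vanishes if the path contains any singleton edge. For non-tree edges of the junction graph this is bounded pointwise by $C\lambda^{|\gamma|-1}$ from the TV hypothesis; for a spanning tree, one recursively integrates over leaf junction vertices, using the identity $\E_{\bu_t}\bigl|\langle N(\bu_s),X^{|\gamma|-1}\delta_{\bu_t}\rangle - p\bigr| = 2p\cdot\dtv{X^{|\gamma|}\delta_{\bu_s}}{\rho}$ to recover the TV bound exactly. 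The final accounting is in terms of (total edges, number of singleton edges, excess of the walk graph), and the walk-counting lemma is tailored to those three parameters.

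Your proposed $L^2$ route is not obviously wrong, but the justification you give is. Cauchy--Schwarz gives $\|\cdot\|_{L^1(\rho)}\le\|\cdot\|_{L^2(\rho)}$, i.e., $L^2$ mixing implies TV mixing, not the converse; the $L^\infty\to L^1$ operator norm (which is what the TV hypothesis controls) is the \emph{weakest} among $L^p\to L^q$ norms for $p\ge q$. What \emph{is} true for self-adjoint $X$ (and this is where vertex-transitivity enters, to make the walk reversible with respect to $\rho$) is that the spectral radius of $X-\Pi_\rho$ equals $\lim_k\|X^k-\Pi_\rho\|^{1/k}$ in any reasonable norm, so the TV hypothesis forces $|\lambda_2(X)|\le\lambda$ and hence $\|(X-\Pi_\rho)^k\|_{L^2\to L^2}\le\lambda^k$. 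If you go this way you also need to handle (i) that for infinite $X$ the relevant ``trace'' quantities like $\tr_\rho((X-\Pi_\rho)^\ell)$ must be interpreted via the Hilbert--Schmidt norm $\|X-\Pi_\rho\|_{HS}^2 = 1/p - 1$, which is where the extra factor of $p^{-1}$ that the paper also incurs will reappear, and (ii) shapes with edge multiplicities $>1$, where the kernel is no longer $p(X-\Pi_\rho)$ but a pointwise power of it---this is exactly what the paper's singleton/duplicative dichotomy handles, and your sketch does not address it.

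Finally, your shape taxonomy (``tree-like'' vs.\ ``genuinely cyclic'' with an undefined $\mathrm{cyc}(S)$) is too coarse to recover the stated bound. The paper's parameters (edges $a$, singletons $b$, excess $c$) together with the bound $a\le (\ell+b)/2$ are what make the two regimes $\lambda pn$ and $\sqrt{pn}\cdot\ell^2$ emerge cleanly; the $\log^4 n$ in the final statement comes from choosing $\ell=\log^2 n$ in the $\ell^2$ factor, not from a union bound over shapes.
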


\begin{remark}\label{rem:rr}
It is likely that some of the conditions of \pref{thm:random-restriction} could be weakened.
The decay of total variation could plausibly be replaced with a (much weaker) assumption about the spectral gap of $X$; this would not impact our results for $\bbS^{d-1}$, but may be useful in other applications. 
Transitivity is assumed mostly to make the proof of \pref{thm:random-restriction} go through at this level of generality; to prove \pref{thm:hdx} we re-prove a version of \pref{thm:random-restriction} for the specific non-transitive case where $X$ is a link of a vector in the sphere (a spherical cap).
\end{remark}

\subsection{Related work}
We give a brief overview of related work.
While so far we have focused on a spectral notion of high-dimensional expanders (HDX), there are two additional notions: coboundary and cosystolic expansion.
These are meant to generalize the Cheeger constant, a cut-based measure of graph expansion. 

\smallskip
\noindent{\bf Distributions over high-dimensional expanders.}
The existence of natural distributions over sparse HDXs has been a question of interest since sparse HDX were first shown to exist (and this was highlighted as an important open problem in e.g. \cite{Lubotzky18,LubotzkySTOC}).

The early work of Linial and Meshulam \cite{LM06} considered the distribution over $2$-dimensional complexes in which all edges $\binom{[n]}{2}$ are included, and each triangle is included independently with probability $p$; they identified the phase transition at $p$ for coboundary connectivity for this distribution (see also the follow-ups \cite{BHK11,MW09,LP16}).
This distribution has the drawback that the 1-skeleton of these complexes is $K_n$, and so the resulting complex is far from sparse.

In \cite{FGLNP12}, the authors show that a union of $d$ random partitions of $[n]$ into sets of size $k+1$ with high probability produces a {\em geometric} expander \cite{G10}, which is a notion of expansion which measures how much the faces must intersect when the complex is embedded into $\R^k$.
The resulting complexes have disconnected links when $d \ll \sqrt{n}$, and so they fail to be spectral HDXs.

The work of \cite{LMY20} introduces a distribution over spectral expanders with expansion exactly $\frac{1}{2}$ by taking a tensor product of a random graph and a HDX; the authors show that down-up walks on these expanders mix rapidly, and
\cite{Gol21} introduces a reweighing of these complexes which yields improved mixing time bounds.
However, the links in these complexes fail to satisfy $\lambda < \frac{1}{2}$, and so fall outside of the range of the trickling-down theorem.
The same drawback applies to \cite{Conlon19,CTZ20}: they show that up-down walks mix on random polylogarithmic-degree graphs given by subsampling a random set of generators of a Cayley graph. 
However, these graphs do not satisfy the conditions of the trickling-down theorem.

\medskip
\noindent {\bf Explicit constructions.}
\label{sec:known-constructions}
One of the first constructions of sparse high-dimensional spectral expanders was the Ramanujan complex of \cite{CSZ03,Li04,LSV05a,LSV05b}, which generalize the Ramanujan expander graphs of \cite{LPS88}. 
Not only are these spectral expanders, but \cite{KKL14,EK16} also show that they are co-systolic expanders. 
These Ramanujan complexes are algebraic by nature, constructed from the Cayley graphs of $\text{PSL}_d(\mathbb{F}_q)$. 
Other algebraic constructions include that of \cite{KO18}; the authors analyze the expansion properties of coset complexes for various matrix groups.
They achieve sparse spectral expanders, with local expansion arbitrarily close to 0. 
More recently, \cite{OP22} extend the coset complex construction to the more general family of Chevalley groups. 

A few combinatorial constructions for HDX are also known.
\cite{CLP20} prove that objects called $(a, b)$-expanders are two-dimensional spectral expanders; they give a graph-product-inspired construction of a family of such expanders, and show that other known complexes \cite{CSZ03,Li04,LSV05a,LSV05b,KO18} are also $(a,b)$-expanders.
Their work is extended by \cite{FI20} to higher dimensions. 

\medskip
\noindent{\bf Applications of HDX.}
The local-to-global phenomenon in HDX has already been useful in many settings.
\cite{DK17} use high-dimensional spectral expanders to construct ``agreement expanders,'' whose links give rise to local agreement tests: given ``shards'' of a function that pass a large fraction of the local agreement tests, the authors can conclude the presence of a ``global'' function $g$ that stitches the shards together. 
In coding theory, the locally testable codes of \cite{DELLM22} and quantum LDPCs of \cite{PK22, LZ22, DHLV22} utilize a common simplicial-complex-like structure called the square Cayley complex, whose local-to-global properties are essential in the analysis of these codes.

The local-to-global phenomenon also implies that ``down-up'' walks on the associated simplicial complex mix (as made formal in \cite{AL20}). 
A $k$-down-up walk is supported $k$-faces of the simplicial complex, and transitions occur by dropping down into a random $(k - 1)$-face, then transitioning up to a random $k$-face (one can also define the ``up-down'' walk analagously).
This local-to-global analysis has recently been influential in the study of mixing times of Markov chains.
Several well-studied Markov chains can be recast as the $k$-down-up random walk of a carefully designed simplicial complex. 
One notable example is the matroid basis exchange walk, which is an algorithm for sampling independent sets of a matroid (e.g. spanning trees in the graphical matroid). 
\cite{ALOV19} were able to obtain an improved mixing time bound for the basis exchange walk--a significant breakthrough that, due to the local-to-global property, was achieved through the analysis of simple, ``local'' view of the matroids. 

\medskip
\noindent{\bf Random geometric graphs and random kernel matrices.}
Random restrictions of metric spaces such as $\bbS^{d-1}$ and $[-1,1]^d$ are well-studied in the fixed-dimensional regime, where $d = O(1)$ and $n \to \infty$ (see the survey of Penrose \cite{Pen03}).
In our work we are interested in the high-dimensional setting, where $d\to \infty$ with $n$.
The high-dimensional setting was first studied only recently, initiated by \cite{DGLU11,BDER16}, and many mysteries remain in this young area of study.

Our \pref{thm:sphere} is related to the study of {\em kernel random matrices}: random $n \times n$ matrices whose $(i,j)$-th entry is given by $f_d\left(\iprod{\bu_i,\bu_j}\right)$, for $f_d:\R \to \R$ and $\bu_1,\ldots,\bu_n$ sampled independently from some distribution over $\R^d$.
The special case of $\bu_i \sim \mathrm{Unif}(\bbS^{d-1})$ and $f_d(x) = \Ind[x \ge \tau(p,d)]$ yields the adjacency matrix of $\grg_d(n,p)$.
A line of work initiated by \cite{KG00} studies the spectrum of kernel random matrices \cite{Karoui10,CS13,DV13,Bordenave13,FM19}, and the most recent work \cite{LY22} characterizes the limiting empirical spectral distribution when $d =\Theta(n^{1/k})$ for $k$ a fixed constant and $f$ can be ``reasonably'' approximated by polynomials (in a sense that is flexible enough to capture the indicator $f_d(x) = \Ind[x \ge \tau(p,d)]$).
In comparison with our results, they characterize the entire empirical spectral distribution, but we do not need to restrict $d \sim n^{1/k}$ for integer $k$, which is crucial for our applications.

\subsection{Discussion and open questions}\label{sec:discussion}

\paragraph{Sparser high-dimensional expanders from random restrictions?}
As hinted in \pref{sec:random-restrictions} the random geometric complex fits in the broader framework of \emph{random restrictions} of simplicial complexes: starting with a dense high-dimensional expander $X$, we sample a subset of vertices $\bS$ of $X$ to produce the sparser induced complex $X[\bS]$.

We have shown in \pref{thm:random-restriction} that $X[\bS]$ (to some extent) inherits the spectral properties of $X$ itself, and we've leveraged this to show that for any polynomial average degree, one can produce a 2-dimensional expander by taking a random restriction of $X$ the sphere in a particular dimension and with a particular connectivity distance.
We hope that \pref{thm:random-restriction} (or a strengthening thereof, see \pref{rem:rr}) might help us identify additional natural distributions over sparser and/or higher-dimensional complexes.
More specifically,
\begin{displayquote}
    {\em Is there a simplicial complex $X$ whose random restrictions yield high-dimensional expanders whose links have eigenvalue $< \frac{1}{2}$, of sub-polynomial or polylogarithmic degree}\footnote{We note that constant average degree would likely require additional work; this is not just because of the polylogarithmic factors appearing in the statement of \pref{thm:random-restriction}, but because in a random restriction the degree distribution of each vertex is $\Binom(n,p)$ and so when $p = \Theta(1/n)$ one will have isolated vertices; this is the same as the phenomenon wherein \erdos-\renyi graphs of degree $O(1)$ are not expanders until one restricts to the giant component.}?
\end{displayquote}
As a starting point, we remark that geometric graphs on the unit sphere work because the corresponding $X$ itself has link expansion better than $\frac{1}{2}$, witnessing that $\bbS^{d-1}$ itself is an expander.
Some simpler-to-analyze metric spaces do not have this property; for example:
\begin{description}
    \item[Shortest path metric in a graph.]
Starting with a connected, locally-treelike $d$-regular graph $G$, consider the geometric graph given by connecting pairs of vertices at distance $\le 2$ in $G$.
The triangle complex on the resulting graph has links which are connected, and further the 1-skeleton is an expander if $G$ is an expander.\footnote{Technically, we require this of a reweighting of $G$ where each edge is weighted according to the number of triangles it participates in; concentration phenomena ensure that the expansion of $G$ and this weighted graph are similar.}
However, the links cannot identify whether $G$ is an expander, and so link expansion cannot be better than $\frac{1}{2}$. 
To see why, consider a first case where $G$ is a random $d$-regular graph (expanding), and a second case where $G$ consists of two random $d$-regular graphs connected by a bridge (non-expanding); because $G$ is locally treelike in both cases, the links in these two cases will be identical. 

    \item[The $d$-dimensional torus with $\ell_\infty$ metric.]
Consider the geometric graph $X$ on the $d$-dimensional torus $[-R,R]^d$ with ``wraparound'' (so that $-R$ is identified with $R$), in which we connect $u,v$ if $\norm*{u-v}_{\infty} \le \tau$. 
This space is simple to analyze because of its product structure; the expansion is dictated by the ratio of $\tau$ to the side length $R$, worsening as $R$ grows relative to $\tau$.\footnote{This can be seen by analogy to the spectrum of a $d$-tensor-power of the discrete $R\tau^{-1}$-cycle.}
Each link is simply the box $[-\tau,\tau]^d$ with the same $\ell_\infty$-edge condition, regardless of the value of $R$.
Since $R$ dictates the global expansion, the link expansion cannot be better than $\frac{1}{2}$.
\end{description}
By way of contrast, it is not possible to plant the links of the geometric graph on $\bbS^{d-1}$ in a nonexpanding graph: for instance, it is possible to determine the radius $R$ of a sphere of unknown scale given only a link in its geometric graph.

\medskip

We also remark that \pref{thm:random-restriction} could be used to obtain expanders of dimension $k > 2$; indeed, it seems that this is within reach even using $\grg_d^{(k)}(n,p)$.
A direct approach, in the case of the sphere, is to perform the conditioning from \pref{sec:link-eigs} not only for spherical caps, but for intersections of $k-1$ spherical caps as well; perhaps there is a more elegant alternative approach?

\paragraph{How faithfully do random geometric graphs discretize continuous manifolds?}
One interpretation of \pref{thm:sphere} is that the random geometric graph $\grg_d(n,p)$ offers a good approximation (in spectral norm) for the corresponding metric on $\bbS^{d-1}$ when $pn$ is large enough relative to $d$.  
A natural question is to extend this to other properties of $\bbS^{d-1}$; for example, do random geometric graphs offer a good approximation on the rest of the spectrum?
Numerical experiments suggest the following (informal) conjecture.
\begin{conjecture}
    For $\bG\sim\grg_d(n,p)$, the spectrum of the normalized adjacency matrix $A_{\bG}$ breaks into a ``bulk'' portion and an ``outlier'' portion where every bulk eigenvalue is at most $O(1/\sqrt{pn})$ in magnitude, and every outlier eigenvalue is ``close'' to an eigenvalue of the graph on the sphere with an edge between every $u,v$ with $\angles*{u,v}\ge\tau(p,d)$.
\end{conjecture}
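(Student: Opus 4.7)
My plan is to transfer the known spectrum of the sphere integral operator $T_K$ with kernel $K(u,v) = \Ind[\iprod{u,v} \geq \tau(p,d)]$ onto $A_{\bG}$ via the Funk--Hecke decomposition. Write
\[
K(u,v) \;=\; \sum_{k\geq 0} \hat K(k)\sum_{m=1}^{D_k} Y_{k,m}(u)\,Y_{k,m}(v),
\]
where $\{Y_{k,m}\}_{m=1}^{D_k}$ is an $L^2(\Unif)$-orthonormal basis of degree-$k$ spherical harmonics, $D_k$ is their dimension, and $\hat K(k)$ is the Funk--Hecke coefficient (so $\hat K(0) = p$ and $T_K$ has eigenvalue $\hat K(k)$ with multiplicity $D_k$). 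Assembling $Y_k \in \mathbb{R}^{n\times D_k}$ via $(Y_k)_{i,m} = Y_{k,m}(\bu_i)$, the off-diagonal part of $A_{\bG}$ equals $\sum_k \hat K(k)\,Y_k Y_k^{\!\top}$. The goal is then to show that the large $\hat K(k)$ survive as outliers of $\widehat A_{\bG}$ while the rest contributes only bulk of spectral norm $O(1/\sqrt{pn})$.

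Fix a cutoff $k_0$ at the largest $k$ for which $|\hat K(k)|/p$ exceeds (a fixed multiple of) $1/\sqrt{pn}$, and split $A_{\bG} = A_{\leq k_0} + A_{>k_0}$. For the low-degree block, form $Q = n^{-1/2}[Y_0|\cdots|Y_{k_0}]$; classical sup-norm bounds on low-degree harmonics plus Matrix Bernstein applied to each empirical Gram block $\tfrac{1}{n}Y_k^{\!\top} Y_{k'} - \delta_{kk'}I$ give $\|Q^{\!\top} Q - I\|_{\mathrm{op}} = o(1)$ provided $D^{\ast} \defeq \sum_{k\leq k_0} D_k \ll n/\mathrm{polylog}(n)$. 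Then $A_{\leq k_0} \approx n\,Q\Lambda Q^{\!\top}$ with $\Lambda = \mathrm{diag}(\hat K(0)I_{D_0},\ldots,\hat K(k_0)I_{D_{k_0}})$, whose nonzero eigenvalues are $(1+o(1))\,n\hat K(k)$ with the correct multiplicities; after dividing by the average degree $\approx np$, they become $\hat K(k)/p$, which are exactly the eigenvalues of the normalized sphere random walk. This produces precisely the outliers predicted by the conjecture.

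The remaining step is to show $\|A_{>k_0}\|_{\mathrm{op}} \leq O(\sqrt{np})$. Entrywise $A_{>k_0}$ has mean zero, magnitude $O(1)$, and variance $\sim p$, so Matrix Bernstein would give $\sqrt{np}$ immediately for independent entries; the obstacle is that the entries are correlated through the shared sample $\bu_1,\ldots,\bu_n$. The natural route is the trace moment method: expand $\mathbb{E}\,\mathrm{tr}\,(A_{>k_0})^{2\ell}$ as a sum over closed walks of length $2\ell$ on $[n]$, stratify each walk's contribution by the harmonic level used at each step, and invoke orthogonality of distinct $Y_{k,m}$'s to kill walks whose product re-enters the already-removed low-degree subspace. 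For $\ell$ mildly super-logarithmic, combined with quantitative decay estimates on $\hat K(k)$ past $k_0$, this should yield the needed bound w.h.p.

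This last step is the main obstacle and is exactly why the conjecture remains open. The shared-vector correlations are what has restricted prior kernel random matrix work (e.g.\ \cite{Karoui10,CS13,Bordenave13,FM19,LY22}) to $d \sim n^{1/k}$ for fixed integer $k$. The indicator kernel is especially delicate because $\hat K(k)$ decays only polynomially in $k$ (not exponentially as for analytic kernels), and because the uniform sup-norm of degree-$k$ spherical harmonics grows nontrivially in both $k$ and $d$; both effects strain the walk-counting combinatorics. A full resolution almost certainly requires new ideas that exploit harmonic cancellation at every level simultaneously, rather than truncating at a fixed integer degree as in the existing $d \sim n^{1/k}$ analyses.
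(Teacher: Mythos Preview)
The paper does not prove this statement; it is explicitly labeled a conjecture, motivated only by numerical experiments, and the surrounding text even calls a proof ``very interesting'' as future work. So there is no proof in the paper to compare your attempt against.

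Your outline is the natural approach and matches what the kernel random matrix literature does in the regimes where the result \emph{is} known: decompose the indicator kernel via Funk--Hecke, show the low-degree harmonic blocks produce outliers at $\hat K(k)/p$ via concentration of the empirical Gram matrices $n^{-1}Y_k^\top Y_{k'}$, and try to show the high-degree remainder has spectral norm $O(\sqrt{np})$. You correctly identify that the last step is the obstruction, and you are right that this is precisely why the statement is still a conjecture. Two additional technical points worth flagging. First, your condition $D^\ast \ll n/\mathrm{polylog}(n)$ for the low-degree Gram concentration is not innocuous in the regime $d = \Theta(\log n)$ that the paper cares about: the dimensions $D_k$ grow like $\binom{d+k-1}{k}$, so even for $k$ a small constant the sum $D^\ast$ can be polynomial in $n$, and Matrix Bernstein with sup-norm bounds on harmonics may not close. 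Second, the trace-method argument for $A_{>k_0}$ that you sketch would need to control not just the decay of $\hat K(k)$ but the \emph{joint} behavior of products of many $Y_{k,m}(\bu_i)$'s along a walk; orthogonality in $L^2(\Unif)$ does not by itself kill higher moments, and the sup-norm growth of harmonics in both $k$ and $d$ makes the combinatorics genuinely hard. These are exactly the reasons the existing analyses are pinned to $d \sim n^{1/k}$ for fixed integer $k$, and your proposal does not yet contain a new idea to escape that restriction.
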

A proof of the above conjecture, and an investigation of whether an analogous phenomenon holds on general manifolds, would be very interesting.

\paragraph{Spectral algorithms for random geometric graphs.}

Here, we have given some of the first analyses of the spectral radius of random geometric graphs on the sphere.
One appeal of random geometric graphs on the sphere, or in Gaussian space, is that they offer a more natural model for networks arising from data than, e.g., \erdos-\renyi graphs. 
The idea is that in modern networks, we often think of each node as being representable by a latent feature vector, with nearby nodes having similar features.
Hence, geometric graphs are promising as an alternative testbed for rigorous analysis of algorithms. 
Yet currently, they have not been studied much in such a context, in part because of the absence of tools for their analysis.

A natural question is whether one could build on our work to analyze spectral clustering algorithms in ``random geometric block model'' graphs.
\begin{question}\label{question:block-model}
Suppose $\bG \sim \rr_n(\frac{1}{2} \cN(0,\Sigma_1) + \frac{1}{2}\cN(\mu,\Sigma_2))$; that is, $n$ points $\bu_1,\ldots,\bu_n$ are sampled from the uniform mixture over the $d$-dimensional Gaussian distributions $\cN(0,\Sigma_1)$ and $\cN(\mu,\Sigma_2)$, then $(i,j) \in E(\bG)$ if and only if $\|\bu_i - \bu_j\| \le \eps$.
Does spectral clustering recover the component membership of the datapoints?
\end{question}
This question is a more accurate representation of clustering problems arising from real data than, say, the question of applying spectral clustering to recover cluster memberships in the stochastic block model; it would be interesting to understand the conditions (on $n$, $d$, $\delta$, $\mu$, and $\Sigma_1,\Sigma_2$) which guarantee that spectral clustering succeeds.

\subsection{Overview of the proof}
We now explain how we prove our main theorem, \pref{thm:hdx}, which states that for a complex sampled from $\bH\sim\grg_d^{(2)}(n,p)$ for $p = n^{-1+\eps}$ with $0 < \eps < 1$ and $d = C_\eps \log n$, with high probability every link of $\bH$ is a $\left(\frac{1}{2}-\delta\right)$-expander for some $\delta = \exp(-O(1/\eps))$, and its $1$-skeleton is a $\parens*{1-\frac{4\delta}{1+2\delta}}$-expander.
By the trickling-down theorem, it suffices for us to prove:
\begin{enumerate}
    \item \label{item:links-expanding} All $n$ vertices' corresponding links in $\bH$ are $\parens*{\frac{1}{2}-\delta}$-expanders with high probability.
    \item \label{item:connectedness} The $1$-skeleton of $\bH$ is connected with high probability.
\end{enumerate}

To show \pref{item:connectedness}, it is enough to show that some reweighting of the $1$-skeleton expands; \pref{item:links-expanding} implies that every edge $(i,j)$ must participate in at least one triangle (otherwise the link would contain isolated vertices), so the unweighted $1$-skeleton is just the adjacency matrix of an unweighted graph from $\grg_d(n,p)$. 
En route to proving \pref{item:links-expanding} we'll prove that unweighted random geometric graphs expand, by this logic yielding \pref{item:connectedness} a consequence.

\paragraph{Analyzing link expansion.}
We establish \pref{item:links-expanding} by showing that that each of the $n$ links is a $\parens*{\frac{1}{2}-\delta}$-expander with probability $1-o(1/n)$, then applying a union bound.
We can think of sampling the link of vertex $i_w$ in $\bH$ by first choosing the number of neighbors $\br\sim\Binom(n-1,p)$, then sampling $\br$ points $\bv_1,\dots,\bv_{\br}$ independently and uniformly from a measure-$p$ cap in $\bbS^{d-1}$ centered at some point $w$ (corresponding to the vector of the link vertex $i_w$), placing an edge between every $i,j$ such that $\angles*{\bv_i,\bv_j}\ge \tau(p,d)$.
Finally, we remove any isolated vertices; here, we'll show that the graph expands with high probability before removing these isolated vertices, which implies that no isolated vertices have to be removed.
For the remainder of the overview, let $\tau = \tau(p,d)$. 
We'll show that:
\begin{theorem}[Informal version of \pref{thm:main-thm-links}]\label{thm:inf-main-thm-links}
    Let $\bG$ be the link of some point $w\sim \bbS^{d-1}$ induced by $\bv_1,\dots,\bv_m\sim\scap_{p}(w)$ .  
Then with high probability $\bG$ is a $\mu$-expander where
    \[
        \mu \coloneqq (1+o(1))\cdot\max\braces*{\,\frac{\tau}{\tau+1},\, \frac{\log^4 m}{\sqrt{qm}} \,}+ o_d(1).
    \]
    Here $q = \Pr_{u,v\sim\bbS^{d-2}}\left[\iprod{u,v}\ge \frac{\tau}{\tau+1}\right]$.
\end{theorem}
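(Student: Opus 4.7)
The plan is to reduce the link graph $\bG$, which is a random geometric graph on a spherical cap, to a random geometric graph on a lower-dimensional sphere $\bbS^{d-2}$, and then adapt the arguments of \pref{thm:random-restriction} to this setting (since the cap is not vertex-transitive).

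\textbf{Step 1: Reparametrization.} For each $\bv_i \sim \scap_p(w)$, I decompose $\bv_i = \bm{\alpha}_i w + \sqrt{1 - \bm{\alpha}_i^2}\, \bv_i^\perp$, where $\bm{\alpha}_i = \iprod{w, \bv_i} \in [\tau, 1]$ and $\bv_i^\perp$ is uniform on $\bbS^{d-2}$ in the hyperplane orthogonal to $w$, independent of $\bm{\alpha}_i$. The density of $\bm{\alpha}_i$ on $[\tau,1]$ is proportional to $(1-\alpha^2)^{(d-3)/2}$, so by standard beta-concentration $\bm{\alpha}_i$ lies within $O(\sqrt{\log m / d})$ of $\tau$ with probability $1 - m^{-\omega(1)}$; I union-bound to assume this uniformly over all $i \in [m]$.

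\textbf{Step 2: Identify the effective threshold on $\bbS^{d-2}$.} Conditioned on $\bm{\alpha}_i,\bm{\alpha}_j$, the edge $(i,j)$ is present iff $\iprod{\bv_i^\perp, \bv_j^\perp} \ge \tau_{ij} \defeq \frac{\tau - \bm{\alpha}_i\bm{\alpha}_j}{\sqrt{(1-\bm{\alpha}_i^2)(1-\bm{\alpha}_j^2)}}$. Setting $\bm{\alpha}_i = \bm{\alpha}_j = \tau$ gives $\tau_{ij} = \tau/(1+\tau)$; the concentration from Step~1 then yields $\tau_{ij} = \frac{\tau}{1+\tau} \pm o_d(1)$ simultaneously for all pairs.

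\textbf{Step 3: Spectral bound for the idealized graph on $\bbS^{d-2}$.} Let $\bG'$ be the ``idealized'' graph in which the edge $(i,j)$ is present iff $\iprod{\bv_i^\perp, \bv_j^\perp} \ge \tau/(1+\tau)$. Since the $\bv_i^\perp$ are i.i.d.\ uniform on $\bbS^{d-2}$, the graph $\bG'$ is distributed as $\grg_{d-1}(m,q)$, and I can re-run the proof of \pref{thm:random-restriction} on the vertex-transitive sphere $\bbS^{d-2}$ with threshold $\tau/(1+\tau)$. This gives
\[
\lambda_2(\bG'),\,|\lambda_m(\bG')| \le (1+o(1))\cdot \max\left\{\tfrac{\tau}{1+\tau},\, \tfrac{\log^4 m}{\sqrt{qm}}\right\}
\]
with high probability, as the $k$-step mixing of the spherical random walk with threshold $\tau/(1+\tau)$ behaves like $(\tau/(1+\tau))^k$ up to polynomial factors.

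\textbf{Step 4: Control the perturbation $A_{\bG} - A_{\bG'}$.} The two adjacency matrices disagree on a pair $(i,j)$ only when $\iprod{\bv_i^\perp, \bv_j^\perp}$ lies in the window $\tau/(1+\tau) \pm o_d(1)$. By a beta-pdf bound on $\iprod{\bv_i^\perp,\bv_j^\perp}$ (of the flavor of the density estimate used throughout the paper), the expected number of disagreements in any row is $o_d(qm)$. I then bound the spectral norm of $A_{\bG}-A_{\bG'}$ by controlling row/column sums via Bernstein and, if needed, a truncation step that splits heavy rows off and handles them separately. After normalizing by $qm$, this contributes an additive $o_d(1)$ term matching the stated bound.

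\textbf{Main obstacle.} The essential technical challenge is Step~4 combined with the non-transitivity noted in \pref{rem:rr}. The trace/moment argument underlying \pref{thm:random-restriction} uses vertex transitivity both to write the operator norm in terms of a simple-to-analyze random walk on $X$ and to obtain clean concentration via symmetry. On a cap, these pieces must be reassembled: one possibility is to condition on the entire $\bm{\alpha}$-vector (restoring conditional i.i.d.\ structure on the orthogonal sphere), prove a conditional spectral bound using the idealized threshold, and then absorb the $\bm{\alpha}$-dependent deviations into the perturbation matrix. Carefully aligning this conditional analysis with the window argument so that the perturbation truly contributes only $o_d(1)$—rather than a term of order $\tau/(1+\tau)$—will be the delicate point and is the reason the theorem acquires the extra additive $o_d(1)$ slack beyond the clean form of \pref{thm:random-restriction}.
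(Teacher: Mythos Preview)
Your reduction to $\bbS^{d-2}$ and the identification of the effective threshold $\tau/(1+\tau)$ are correct and match the paper. The gap is Step~4: the perturbation $A_{\bG}-A_{\bG'}$ is \emph{not} small in the regime $d=\Theta(\log m)$ that the theorem covers. The shell $\bkappa_i=\iprod{\bv_i,w}$ is approximately exponential on $[\tau,1]$ with mean $\tau+\Theta(1/d)$, so a \emph{constant fraction} of vertices have $\bkappa_i-\tau \ge c/d$ for any fixed $c$, and the paper notes explicitly (\pref{fact:typical-alpha}) that with high probability not all shells lie within $O(\log d/d)$ of $\tau$. A $\Theta(1/d)$ shift in $\bkappa_i$ moves the pairwise threshold $\tau_{ij}$ by $\Theta(1/d)$, and since the density of $\iprod{\bv_i^\perp,\bv_j^\perp}$ near $\tau/(1+\tau)$ is $\Theta(dq)$, the probability a given pair lands in the disagreement window is $\Theta(q)$, not $o_d(q)$. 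Consequently the row sums of $A_{\bG}-A_{\bG'}$ are $\Theta(qm)$---the same order as $A_{\bG'}$ itself---and no Bernstein or truncation step will bring this down to $o_d(1)$ after normalization.

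The paper never compares to an idealized $\bG'$. It conditions on the full shell vector $\bkappa$ and bounds $\|\nA-\E[\nA\mid\bkappa]\| + \|\E[\nA\mid\bkappa]-R_{\bkappa}\|$ (times a degree-concentration factor). The first term is controlled by running the trace method \emph{directly} on the heterogeneous model: the key observation (\pref{clm:tau-tau-worst-case}) is that $T(\bkappa_i,\bkappa_j)\le \tau/(1+\tau)$ for all $\bkappa_i,\bkappa_j\ge\tau$, so in every walk term the slowest cap-walk mixing rate that can appear is exactly $\tau/(1+\tau)$, and the bound goes through without any reference to a fixed-threshold graph. The second term---the source of the additive $o_d(1)$---is handled in \pref{sec:shells} by showing $\E[\nA\mid\bkappa]$ is $O(\sqrt{\log^2 d/d})$-close to rank one via a two-step coupling on the associated Markov chain; this is where the outlier shells are absorbed, by proving they carry only an $O(1/d)$ fraction of transition mass out of any state. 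Your instinct to condition on the shells is right, but the analysis afterward must be intrinsic to the heterogeneous model rather than perturbative.
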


\paragraph{Links are essentially random geometric graphs in one lower dimension.}
Since most of the measure of the cap lies close to its boundary, intuitively the link is distributed \emph{almost} like a random geometric graph with points drawn independently from the cap boundary, i.e. the shell $\Shell_p(w)\coloneqq\{x:\angles*{x,w}=\tau\}$.
Our proof of \pref{thm:inf-main-thm-links} must pay attention to the fluctuations in $\iprod{\bv_i,w}-\tau$, but to simplify our current discussion we assume each link is in fact a random geometric graph on $\Shell_p(w)$, and address the fluctuations later in the overview.

Observe that a uniformly random $\bv$ from $\Shell_p(w)$ is distributed as $\tau\cdot w + \sqrt{1-\tau^2} \cdot \bu$ where $\bu$ is a uniformly random unit vector orthogonal to $w$.
Using this decomposition, we see that $\angles*{\bv_i,\bv_j}\ge\tau$ if and only if
\(\angles*{\bu_i,\bu_j} \ge \frac{\tau}{1+\tau}\).
Thus, under our simplifying assumption, the link is distributed exactly like a random geometric graph on $\bbS^{d-2}$ with inner product threshold $\frac{\tau}{1+\tau}$.
Hence (up to the difference between $\scap_p(w)$ and $\Shell_p(w)$), to understand link expansion we can study the second eigenvalue of a random geometric graph on the sphere.

\begin{remark}[Requiring $d = \Theta(\log n)$]
In light of \pref{thm:inf-main-thm-links} (and even the heuristic discussion above), it turns out that $d = \Theta(\log n)$ is the only regime for which the links can be connected while the 1-skeleton has average degree $\ll \sqrt{n}$.
To see this, we consider the relationship between $p,\tau,$ and $d$; we have that 
\begin{equation}\label{eq:p-val}
p = \Pr_{\bv,\bv' \sim \bbS^{d-1}}\left[\Iprod{\bv,\bv'} \ge \tau\right]
= \Theta\parens*{\tfrac{1}{\tau d}} \cdot \parens*{1-\tau^2}^{\frac{d-1}{2}} 
\approx \exp(-d\tau^2/2).
\end{equation}
See \pref{lem:approx-tails-dpd} for a formal argument.\footnote{Heuristically, it makes sense that $p = \Pr[\Iprod{\bv,\bv'} \ge \tau] \approx \exp(-\Theta(\tau^2 d))$, because $\Iprod{\bv,\bv'}$ is approximately $\cN(0,\frac{1}{d})$.}
Note that the arguments above in conjunction with \pref{eq:p-val} imply that the probability that two vertices within the link are connected is also roughly 
\[
q = \Pr_{\bu,\bu' \sim \bbS^{d-2}}\left[\Iprod{\bu,\bu'} \ge \tfrac{\tau}{1+\tau}\right] = \Theta \left(\tfrac{1}{\tau d} \right) \cdot  \left(1-\tfrac{\tau^2}{(1+\tau)^2}\right)^{\frac{d-2}{2}},
\] 
since the link is like a random geometric graph on $\Shell_p(w)$.

Connectivity within the links in conjunction with sparsity now requires us to have $d \in \Theta(\log n)$:
The number of vertices inside each link concentrates around $m = np$, so the average degree inside the link is $q m \approx qpn$; we must have the average link degree $qpn \ge 1$, otherwise the link is likely disconnected. 
Now, if $\tau = o(1)$, then $\tau \approx \frac{\tau}{1+\tau}$ and $p \approx q$, so $qp n \ge 1 \implies p^2 n \gtrapprox 1 \implies p \gtrapprox n^{-1/2}$, ruling out a $1$-skeleton with average degree $\ll\sqrt{n}$.
Hence we need $\tau = \Omega(1)$.
Given that $\tau = \Omega(1)$, \pref{eq:p-val} implies that to have the average 1-skeleton degree $\sqrt{n} \ge pn \ge 1$ we need $d \in \Theta(\log n)$.
\end{remark}

\paragraph{Spectral expansion in random geometric graphs.}
We now explain how to prove near-sharp second eigenvalue bounds for random geometric graphs.
\restatetheorem{thm:sphere}
As mentioned above, \pref{thm:sphere} is a consequence of the more general \pref{thm:random-restriction} about the second eigenvalue of random restrictions of vertex-transitive graphs, and the inner product threshold $\tau = \tau(p,d)$ appears as the mixing rate of the random walk on $\bbS^{d-1}$ where a step originating at $v$ walks to a random vector in $\scap_p(v)$.
Via standard concentration arguments applied to the vertex degrees, to prove the above it suffices to bound $\norm*{A_{\bG}-\E A_{\bG}} \le \mu \cdot pn$, where $A_{\bG}$ is the (unnormalized) adjacency matrix of $\bG$. 
We'll focus on the regime where $pn \gg \polylog n$, so that $\mu \approx \tau$.

\medskip
\noindent {\bf Trace method for random geometric graphs.}
To bound $\norm*{A_{\bG}-\E A_{\bG}}$, we employ the trace method, bounding the expected trace of a power of $A_{\bG} - \E A_{\bG}$.
This is sufficient for the following reason:
for convenience, let $\ol{A}_{\bG} = A_{\bG} - \E A_{\bG}$, and let $\ell$ be any non-negative, even integer.
Since $\ell$ is even,
\[
\norm*{\ol{A}_{\bG}}^{\ell} = \norm*{\ol{A}_{\bG}^{\ell}} \le \tr\parens*{\ol{A}_{\bG}^{\ell}},
\]
And so applying Markov's inequality,
\[
\Pr\left(\norm*{\ol{A}_{\bG}} \ge e^{\eps} \left(\E \tr\left( \ol{A}_{\bG}^{\ell}\right)\right)^{1/\ell}\right) 
= \Pr\left(\norm*{\ol{A}_{\bG}}^{\ell} \ge e^{\eps\ell} \E \tr \left(\ol{A}_{\bG}^{\ell}\right)\right) 
\le \exp(-\eps \ell).
\]
Thus, our goal reduces to bounding the expectation of $\tr\parens{\ol{A}_{\bG}^{\ell}}$ for a sufficiently large even $\ell$; in particular, if we choose $\ell \gg \log n$, then since $\ol{A}_{\bG}$ has $n$ eigenvalues, $\tr\parens{\ol{A}_{\bG}^{\ell}}^{1/\ell}$ is a good ``soft-max'' proxy for $\norm{\ol{A}_{\bG}}$, and we will obtain high-probability bounds. 

We now explain why properties of random walks on $\bbS^{d-1}$ naturally arise when applying the trace method.
Concretely, $\tr\parens{\ol{A}_{\bG}^{\ell}}$ is a sum over products of entries of $\ol{A}_{\bG}$ corresponding to closed walks of length $\ell$ in the complete graph $K_n$ on $n$ vertices:
\[
    \tr\parens*{\ol{A}_{\bG}^{\ell}}
 = \sum_{i_0,\ldots,i_{\ell-1} \in [n]}\, \prod_{t=0}^{\ell-1} (\ol{A}_{\bG})_{i_ti_{t+1\mathrm{\,\, mod } \ell}},
\]
The walk $i_0,i_2,\ldots,i_{\ell-1},i_0$ can be represented as a directed graph. When we take the expectation, the symmetry of the distribution means that all sequences $i_0,\ldots,i_{\ell-1}$ which result in the same graph (up to relabeling) give the same value.
That is, letting $\calW_{\ell}$ be the set of all such graphs, and for each $W \in \calW_{\ell}$ letting $N_W$ be the number of ways it can arise in the sum above,
\begin{equation}
    \E \tr\parens*{\ol{A}_{\bG}^{\ell}}
 = \sum_{W \in \calW_{\ell}}\, N_W \cdot \E \prod_{(i,j) \in W} (\ol{A}_{\bG})_{ij}.\label{eq:trace-sum}
\end{equation}
To bound this sum, we must bound the expectation contributed by each $W \in \calW_{\ell}$. 
For the sake of this overview we will consider only the case when $W = C_{\ell}$, the cycle on $\ell$ vertices, as it requires less accounting than the other cases; however it is reasonable to restrict our attention to this case for now, as bounding it already demonstrates our main ideas, and because this term roughly dominates the sum with $N_{C_{\ell}} \gg N_{W'}$ for all other $W' \in \calW_{\ell}$ at $\ell = \polylog n$ and $pn \gg \polylog n$.\footnote{
Briefly, this is because whenever $i_0,\ldots,i_{\ell-1}$ are all distinct elements of $[n]$, the resulting walk's graph is a cycle, and when $\ell = \polylog n$, $\ell$ indices sampled at random from $[n]$ are all distinct with high probability.} 

We now bound the expectation for the case $W=C_{\ell}$; readers uninterested in the finer details may skip to the conclusion in \pref{eq:subgraph-prob}.
We expand the product using that $(\ol{A}_{\bG})_{ij} = \bA_{ij} - p$ (since  $\E[\bA_{ij}] = p$):
\[
    \E \prod_{i=1}^\ell (\bA_{i,i+1} - p)
= \sum_{ T \subseteq [\ell]} (-p)^{\ell-|T|} \E \prod_{i \in T} \bA_{i,i+1} 
= \sum_{ T \subseteq [\ell]} (-p)^{\ell-|T|} \Pr[\{(i,i+1) : i \in T\}\text{ is subgraph of }\bG]. 
  \numberthis \label{eq:e-trace-subgraph}
\]
and thus our focus is to understand subgraph probabilities in a random geometric graph.
It is not too hard to see that when the edges specified by $T$ form a forest, its subgraph probability is $p^{|T|}$, identical to its counterpart in an \erdos--\renyi graph; the nontrivial correlations introduced by the geometry only play a role when $T$ has cycles.
Hence, the sum \pref{eq:e-trace-subgraph} simplifies,
\[
\E\prod_{i=1}^\ell (\ol{A}_{\bG})_{i,i+1}
= \sum_{ T \subsetneq [\ell]} (-p)^{\ell-|T|} p^{|T|} + \Pr[C_\ell \text{ is subgraph of } \bG] 
= \Pr[C_\ell \text{ is subgraph of } \bG] - p^\ell,
\numberthis\label{eq:subgraph-prob}
\]
where we used that the binomial sum is equal to $(p-p)^\ell = 0$.

Hence it remains to estimate the subgraph probability of a length-$\ell$ cycle.
We will now see how subgraph probabilities are related to the mixing rate of a random walk on $\bbS^{d-1}$.

\paragraph{Subgraph probability of a cycle in a random geometric graph.}
For the cycle $C_\ell = 0,1,\ldots,\ell-1,0$, by Bayes' rule:
\begin{align*}
    \Pr[C_\ell\in \bG] 
&= \prod_{i=0}^{\ell-1} \Pr[(i,i+1)\in\bG \mid \forall j < i, (j,j+1) \in\bG ]
= p^{\ell-1} \cdot \Pr[(\ell-1,0)\in\bG \mid 0,1,\ldots \ell-1\in\bG],
\end{align*}
since in all but the step $i+1=\ell$, the graph in question is a forest.
Identifying each $i$ with a point $\bx_{i}$ on $\bbS^{d-1}$, for any choice of $\bx_{0}$ the above probability can equivalently be written as
\[
    p^{\ell-1} \cdot \Pr\bracks*{ \angles*{\bx_{\ell-1}, \bx_0} \ge \tau \mid \angles*{\bx_i, \bx_{i+1}} \ge \tau:  0 \le i \le \ell-2 }.
\]
Denoting with $P$ the transition kernel of the random walk we alluded to earlier, where in one step we walk from a point $x$ to a uniformly random point in $\scap_p(x)$, we can write the distribution of $\bx_{\ell}\mid\braces*{\bx_0, \angles*{\bx_i, \bx_{i+1}} \ge \tau:  0 \le i \le \ell-2}$ as $P^{\ell-1}\delta_{\bx_0}$ where $\delta_{\bx_0}$ refers to the point mass probability distribution supported at $\bx_0$.
In turn, we can write the subgraph probability as:
\[
    p^{\ell-1} \cdot \Pr_{\bx_{\ell-1}\sim P^{\ell-1}\delta_{\bx_0} } \bracks*{ \bx_{\ell-1} \in \scap_p(\bx_0) }.
\]
If $\bx_{\ell-1}$ were sampled from the uniform distribution $\Unif$ on $\bbS^{d-1}$ then the probability of landing in $\scap_p(\bx_0)$ would be $p$, which lets us upper bound the subgraph probability by:
\[
    p^{\ell-1} \cdot \parens*{ p + \dtv{P^{\ell-1}\delta_{\bx_0}}{\Unif} }.
\]
The terms for more complicated subgraphs $W' \in \calW_{\ell}$ also similarly depend on the mixing properties of $P$ via subgraph probabilities.
Our next goal then is to understand the mixing properties of $P$.
\begin{remark}
    To prove \pref{thm:random-restriction} about random restrictions, the same strategy is used to relate subgraph probabilities with mixing rate of the random walk on the original graph we start with. 
\end{remark}

\paragraph{Mixing properties of $P$.}
We show that the walk over $\bbS^{d-1}$ with transition kernel $P$ contracts the TV distance by coupling this discrete walk with the continuous Brownian motion $U_t$ over $\bbS^{d-1}$. Then via a known log-Sobolev inequality for Brownian motion on spheres, we can prove the following contraction property for $P$. 

\begin{theorem}[Informal version of \pref{thm:decay}]
    \label{thm:inf-decay-multi}
    For any probability measure $\alpha$ over $\bbS^{d-1}$ and integer $k \ge 0$,
    \[
        \dtv{P_{p}^k \alpha}{\rho} \le \left((1+o_{d\tau^2}(1))\cdot\tau \right)^{k-1} \cdot \sqrt{\frac{1}{2}\log \frac{1}{p}},
    \]
    where $P_{p}$ denotes the transition kernel in which every $x\in\bbS^{d-1}$ walks to a uniformly random point in the measure-$p$ cap around it and $o_{d\tau^2}(1)$ denotes a function that goes to $0$ as $d\tau^2 \to \infty$.
\end{theorem}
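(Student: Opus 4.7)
The plan is to establish contraction in KL divergence per step and then convert to total variation via Pinsker's inequality. The shape of the bound---$\tau^{k-1}$ multiplied by $\sqrt{\tfrac{1}{2}\log(1/p)}$---essentially dictates this structure: each KL contraction by a factor of $\tau^2$ becomes a TV contraction by $\tau$ after taking the square root in Pinsker, while the $\log(1/p)$ enters as the KL divergence between uniform-on-a-cap and uniform-on-the-sphere.

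I would begin by treating the first step separately. For any $x\in\bbS^{d-1}$ the distribution $P_p\delta_x$ is uniform on the spherical cap $\scap_p(x)$, which has $\Unif$-measure $p$; its density against $\Unif$ therefore equals $1/p$ on the cap and $0$ elsewhere, giving $\dkl(P_p\delta_x\klbar\Unif)=\log(1/p)$. Joint convexity of KL in its first argument extends this to any initial $\alpha$: $\dkl(P_p\alpha\klbar\Unif)\le\log(1/p)$. The heart of the argument is then to prove the per-step KL contraction
\[
\dkl(P_p\mu\klbar\Unif)\le\bigl((1+o_{d\tau^2}(1))\tau\bigr)^{2}\cdot\dkl(\mu\klbar\Unif)
\]
for any probability measure $\mu$. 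For this I would couple one step of $P_p$ with the spherical Brownian motion $(U_t)$ run for a calibrated time $T$. The positive Ricci curvature of $\bbS^{d-1}$ yields (via Bakry--Emery) a log-Sobolev inequality that contracts KL exponentially under the heat semigroup: $\dkl(e^{-TL}\mu\klbar\Unif)\le e^{-2(d-1)T}\dkl(\mu\klbar\Unif)$. Calibrating $T=\log(1/\tau)/(d-1)$ makes $e^{-2(d-1)T}=\tau^{2}$. Since both $P_p$ and $e^{-TL}$ are rotationally invariant, they are simultaneously diagonalized by spherical harmonics; on the first harmonic the eigenvalue of $P_p$ equals $\E[\iprod{x,P_p(x)}]$, which concentrates at $(1+o_{d\tau^2}(1))\tau$ as $d\tau^{2}\to\infty$ by beta-distribution tail estimates for the cap inner product (developed elsewhere in the paper). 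The plan is to verify level-by-level eigenvalue domination via Gegenbauer polynomial bounds, which transfers the KL-contraction of the heat semigroup to $P_p$ up to the $(1+o_{d\tau^2}(1))$ slack.

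Iterating this per-step contraction $k-1$ times from the initial bound gives
\[
\dkl(P_p^{k}\alpha\klbar\Unif)\le\bigl((1+o_{d\tau^2}(1))\tau\bigr)^{2(k-1)}\cdot\log(1/p),
\]
and Pinsker's inequality $\dtv{\nu}{\Unif}\le\sqrt{\tfrac{1}{2}\dkl(\nu\klbar\Unif)}$ yields the claimed bound. The main obstacle is the KL-contraction step itself: while $L^{2}$ contraction of $P_p$ at rate $\approx\tau$ is immediate from the Gegenbauer expansion, transferring a KL-level contraction requires either an explicit pathwise coupling between the cap step and a stopped Brownian trajectory (so that data-processing imports the log-Sobolev inequality), or a uniform majorization of $P_p$'s spherical-harmonic eigenvalues by those of the heat semigroup. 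Either route must produce the sharp per-step rate $\tau^{2}$ (as opposed to some weaker polynomial-in-$\tau$ rate) so that iteration yields the correct exponent $k-1$ in the final TV estimate; pinning down the $o_{d\tau^{2}}(1)$ loss through the concentration of $\iprod{x,P_p(x)}$ is the technical core.
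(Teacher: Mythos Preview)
Your first-step reduction and the final Pinsker step match the paper exactly, but the core contraction argument has a genuine gap. You assert a per-step KL contraction
\[
\dkl(P_p\mu\klbar\Unif)\le\bigl((1+o_{d\tau^2}(1))\tau\bigr)^{2}\dkl(\mu\klbar\Unif)
\]
and propose to derive it from level-by-level majorization of $P_p$'s Gegenbauer eigenvalues by those of $e^{-TL}$. But eigenvalue domination only transfers \emph{$L^2$} contraction, not KL contraction: the exponential KL decay for the heat semigroup comes from the log-Sobolev inequality, which is strictly stronger than a spectral gap, and there is no mechanism by which ``$|\lambda_k(P_p)|\le \lambda_k(e^{-TL})$ for all $k$'' yields a log-Sobolev or modified-log-Sobolev inequality for $P_p$ with the same constant. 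Your alternative suggestion of a pathwise coupling that realizes $P_p$ as a degradation of $e^{-TL}$ (so data processing imports the KL bound) is the right \emph{shape} of argument, but you do not construct such a coupling, and it is not clear one exists with the sharp constant.

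The paper avoids this obstacle by never proving KL contraction for $P_p$ at all. Instead it works entirely at the level of total variation and \emph{stochastic domination of one-dimensional projections}, restricting throughout to spherically monotone measures (those whose radial density $\ell_\alpha$ is nondecreasing). The key steps are: (i) show that spherical monotonicity is preserved by $P_p$ and by $U_T$; (ii) show via a Brownian-motion concentration estimate that the radial law of $\beta_T$ stochastically dominates $(1-2\delta)\ell_{\mcap_p}+2\delta\ell_\rho$ for a calibrated $T$ and small $\delta$; (iii) prove that for spherically monotone densities, $\ell_\nu\sdl\ell_\mu$ implies $\dtv{\ell_\nu}{\ell_\rho}\le\dtv{\ell_\mu}{\ell_\rho}$. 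Iterating (ii) gives $\ell_{P_{p,\delta}^k\alpha}\sdl\ell_{U_T^k\alpha}$, and then (iii) together with Pinsker and the log-Sobolev inequality \emph{applied only to $U_{kT}$} yields the bound. So the KL machinery touches Brownian motion exclusively; the cap walk is handled by a soft stochastic-domination comparison that does not require any entropic inequality for $P_p$.
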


We leave the details to \pref{sec:bm-to-discrete}, but in brief, the reason we are able to execute this coupling is that the probability mass in $P\delta_{\bx_0}$ concentrates around $\Shell_{=\tau}(\bx_0)$, and most of the $(\frac{1}{d-1}\log \frac{1}{\tau})$-step Brownian motion starting from $\bx_0$ concentrates at $\Shell_{=\tau}(\bx_0)$, so when $t = \frac{1}{d-1}\log\frac{1}{\tau}$ the operators $P$ and $U_t$ have similar action.

We can now apply \pref{thm:inf-decay-multi} to bound $\dtv{P^{\ell-1}\delta_{\bx_0}}{\rho}$ with $\alpha = \delta_{\bx_0}$ and $k = \ell-1$:
\[
\dtv{P^{\ell-1}\delta_{\bx_0}}{\rho} \le ((1+o(1))\tau)^{\ell-2} \sqrt{\tfrac{1}{2}\log \tfrac{1}{p}}.
\]

\paragraph{Spectral norm of random geometric graph.}

We now return to bounding the expected trace of $\ol{A}_{\bG}^\ell$; putting together the above, we have the bound
\[
\E \prod_{(i,j) \in C_\ell} (\ol{A}_{\bG})_{ij} 
\le \Pr[C_\ell \in \bG] - p^\ell
\le p^{\ell-1}\left(p + \dtv{P^{\ell-1}\delta_{\bx_0}}{\rho} \right) - p^\ell
\le p^{\ell-1}((1+o(1))\tau)^{\ell-2} \sqrt{\tfrac{1}{2}\log\tfrac{1}{p}}.
\]

The coefficient $N_{C_{\ell}}$ in front of the $W = C_{\ell}$ term  in \pref{eq:trace-sum} is the number of sequences $i_1,\ldots,i_\ell \in [n]$ which yield an $\ell$-cycle graph; this happens if and only if all of the indices are distinct, so $N_{C_{\ell}} = \ell! \cdot \binom{n}{\ell} \le n^\ell$.
Hence the contribution of the $\ell$-cycle to the sum is at most $((1+o(1)) np \tau)^{\ell-2} \cdot \poly(n)$ when $p > 1/n$.
By a careful accounting similar to the above for all graphs $W \in \calW_\ell$, one can show that in the parameter regime $pn \gg \polylog(n)$ and $\ell = \polylog n$, the term $W = C_{\ell}$ contains $(1-o(1))$ of the total value of this sum, so we obtain the bound
\[
\left[\E \tr(\ol{A}_{\bG}^\ell)\right]^{1/\ell} \le \left((1+o(1))\cdot ((1+o(1)) np \tau)^{\ell-2} \cdot \poly(n)\right)^{1/\ell} = (1+o(1)) np \tau,
\]
when we choose $\ell = \omega(\log n)$.
Applying Markov's inequality we conclude that $\|\ol{A}_{\bG}\| \le (1+o(1)) np \tau$ with high probability, and normalizing by the degrees (which concentrate well around $np$) we conclude our upper bound of $\tau$ in \pref{thm:sphere}.

\paragraph{Adapting the spectral norm bound to links.}
Up until now, we have pretended that the link of $i_w$ is a random geometric graph, where the vertices are identified with vectors in $\Shell_{=\tau}(w)$, rather than $\scap_{\ge \tau}(w)$.
While it is true that most of the probability mass in $\scap_{\ge \tau}(w)$ is close to the boundary, some $\frac{1}{\poly(m)}$-fraction of the vertices $j$ in the link will have $\iprod{\bv_j,w} = \bkappa_j > (1+\delta)\tau$ for some $\delta > 0$.
And within the link, these vertices will have higher expected degree: for $\bv_i,\bv_j$ having $\Iprod{\bv_i,w} = \bkappa_i$ and $\Iprod{\bv_j,w} = \bkappa_j$, following a similar calculation to the one above, 
\begin{equation}
q_{ij} \coloneqq \Pr[i \sim j] = \Pr_{\bu_i,\bu_j \sim \bbS^{d-2}}\left[\Iprod{\bu_i,\bu_j} \ge \tfrac{\tau - \bkappa_i\bkappa_j}{\ssqrt{(1-\bkappa_i^2)(1-\bkappa_j^2)}}\right]\label{eq:qij}
\end{equation}
And this quantity is $\gg q = \Pr_{\bu_i,\bu_j \sim \bbS^{d-2}}[\Iprod{\bu_i,\bu_j} \ge \tfrac{\tau}{1+\tau}]$ when $\bkappa_i > (1+\delta)\tau$ and $\bkappa_j \ge \tau$.
Hence, vertex degrees are not as well concentrated within each link as they are (around $pn$) in the entire graph $\bH$.

As a result, if we let $\bG_w$ now stand for the link and $A_{\bG_w}$ now stand for the adjacency matrix of the link, it is no longer the case that $\|A_{\bG_w} - \E A_{\bG_w}\|$ is small:
$\E A_{\bG_w}$ still has every entry equal to $q$, but the top eigenvector of $A_{\bG}$ will not be close to the all-$1$ vector.

To contend with this, we analyze the spectral norm of $A_{\bG}$ {\em conditioned} on the shells that the points in $\scap_p(w)$ are in. 
Letting $\bkappa \in [\tau,1]^m$ be such that $\bkappa_i = \iprod{\bv_i,w}$,
vertex degrees concentrate in $\bG_w$ conditioned on $\bkappa$, and we can readily bound the spectral norm of $\ol{A}_{\bG_w} \mid \bkappa=  A_{\bG_w} \mid \bkappa - \E[A_{\bG_w} \mid \bkappa]$.

The analysis of the spectral norm of $\ol{A}_{\bG_w}$ is then not so different from that of $\ol{A}_{\bG}$ for $\bG$ a random geometric graph; the main difference is that now, instead of working with the walk $P$ in which we walk from $\bu_i$ to a random point in $\scap_{\ge \tau}(\bu_i)$, at each step of the walk we must adjust the volume of the cap: when considering the probability that the edge $i,j$ is present, we apply the operator $P_{q_{ij}}$ for $q_{ij}(\bkappa_i,\bkappa_j)$ as defined in \pref{eq:qij}, which walks from $\bu_i$ to a random point in $\scap_{q_{ij}}(\bu_i)$.
This requires some additional accounting, but one can show that the slowest mixing occurs when $\bkappa_i = \bkappa_j = \tau$ and  $q_{ij} = \frac{\tau}{1+\tau}$, from which we obtain the desired bound on $\|\ol{A}_{\bG_w} \mid \kappa\|$.
For details, see \pref{sec:link-eigs}.

One additional complication is that $\E A_{\bG_w}\mid \bkappa$ is not a rank-1 matrix, so bounding $\|\ol{A}_{\bG_w} \mid \kappa\|$ does not directly imply a bound on the second eigenvalue of $A_{\bG_w}$.
However, it turns out that $\E A_{\bG_w}\mid \bkappa$ is sufficiently close to a rank-1 matrix $R_{\bG_w}$ (the matrix whose $(i,j)$th entry is the product of the expected degrees conditioned on $\bkappa$) that we can apply the triangle inequality:
\[
\norm*{(A_{\bG_w} - R_{\bG_w}) \mid \bkappa } \le \norm*{ A_{\bG_w} \mid \bkappa - \E[A_{\bG_w}\mid \bkappa]} + \norm*{\E[A_{\bG_w} \mid \bkappa] - R_{\bG_w} \mid \bkappa},
\]
the first term we bound using the trace method as described above.
The second term we bound via more-or-less direct calculation: because all but an $o(1)$ fraction of $\bkappa_i \approx \tau$, when ignoring an $o(1)$ fraction of rows and columns, the rows of $\E A_{\bG_w}\mid \bkappa$ are almost constant multiples of each other, and further these $o(1)$ fraction of rows and columns represent an $o(1)$ fraction of the total absolute value of $\E[A_{\bG_w}\mid \bkappa]$. (This is because the high-degree vertices in $\bG_w$ represent an $o(1)$ fraction of the total edges in $\bG_w$.)
Now, thinking of $\E[A_{\bG_w}\mid \bkappa]$ as a transition operator of a Markov chain, we are able to use this to argue that the Markov chain mixes so rapidly that $\E[A_{\bG_w}\mid \bkappa]$ must be close to $R_{\bG_w}\mid \bkappa$, yielding the desired bound.
For details, see \pref{sec:shells}.

\subsection*{Organization}

In \pref{sec:prelim} we give some technical preliminaries.
In \pref{sec:trace-method} we use the trace method prove the spectral norm bound for random restrictions of arbitrary graphs, \pref{thm:random-restriction}.
To apply \pref{thm:random-restriction} to bound the spectrum of $\grg_d(n,p)$ (and also to ultimately prove that $\grg_d^{(2)}(n,p)$ is a 2-dimensional expander), we must prove the total variation decay condition for a random walk on the sphere with steps consisting of jumps to a random point in a spherical cap. 
We do this in \pref{sec:bm-to-discrete} by relating this walk with discrete jumps to Brownian motion on $\bbS^{d-1}$.
The links of vertices in $\grg_d^{(2)}(n,p)$ do not conform to the requirements of \pref{thm:random-restriction} because they are random restrictions of graphs which are not vertex-transitive, and so in \pref{sec:link-eigs} and \pref{sec:shells} we prove a version of \pref{thm:random-restriction} specialized to these links; \pref{sec:link-eigs} contains the trace method and \pref{sec:shells} addresses the fact that the top eigenvector is not proportional to $\vec{1}$.
Finally we put the pieces together in \pref{sec:wrapup} to prove \pref{thm:hdx}.
In \pref{sec:tight}, we show that the trickling-down theorem is tight.

\section{Preliminaries} \label{sec:prelim}

\paragraph{Notation.}
For a self-adjoint matrix $M$, we denote its eigenvalues in decreasing order as $\lambda_1(M)\ge \dots \ge \lambda_n(M)$, the absolute values of its eigenvalues as $|\lambda|_1(M)\ge\dots\ge|\lambda|_n(M)$, and $\lambda_{\max}(M)$ and $|\lambda|_{\max}(M)$ to denote $\lambda_1(M)$ and $|\lambda|_1(M)$ respectively.
Given a sequence of matrices $M_1,\dots,M_T$ we use $\prod_{i=1}^T M_i$ to denote the matrix $M_T \cdot M_{T-1}\cdots M_1$.

\medskip
For a graph $G$, we use $V(G)$ to refer to its vertex set and $E(G)$ to refer to its edge set.
For a vertex $v\in V(G)$, we use $N(v)$ to denote the set of neighbors of $v$.

\medskip
For a probability distribution $\calD$, we use $\Phi_{\calD}(x)$ to denote the CDF of $\calD$ at $x$, and $\ol{\Phi}_{\calD}(x)\coloneqq 1 - \Phi_{\calD}(x)$ to denote the tail of $\calD$ at $x$.  For any point $x$, we use $\delta_x$ to denote the delta distribution at $x$ .

\subsection{Linear algebra}
The following articulates how one gets a handle on the second eigenvalue of a matrix after subtracting a rank-$1$ term, which will be used in \pref{sec:trace-method} and \pref{sec:link-eigs}.
\begin{fact}   \label{fact:rank-1-sub}
    For any $n\times n$ symmetric matrix $M$ and rank-$1$ PSD matrix $R$,
    \(
        |\lambda|_2(M) \le \norm*{M-R}.
    \)
\end{fact}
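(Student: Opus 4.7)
The plan is to apply Weyl's eigenvalue inequality to the decomposition $M = (M-R) + R$, using crucially that $R$, being rank one and PSD, satisfies $\lambda_1(R) \ge 0$ and $\lambda_j(R) = 0$ for all $j \ge 2$. I will derive two separate bounds: one controlling $\lambda_2(M)$ from above, and one controlling $-\lambda_n(M)$ from above, each in terms of $\|M-R\|$. For the first, Weyl's inequality with indices $i = 1$, $j = 2$ gives $\lambda_2(M) = \lambda_2((M-R) + R) \le \lambda_1(M-R) + \lambda_2(R) = \lambda_1(M-R) \le \|M-R\|$, where the middle equality uses that $R$ is rank one. For the second, since $R \succeq 0$, monotonicity of eigenvalues under the L\"owner order yields $\lambda_n(M) \ge \lambda_n(M-R) \ge -\|M-R\|$, i.e., $-\lambda_n(M) \le \|M-R\|$.

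To finish, I will use the elementary observation that $|\lambda|_2(M) \le \max(\lambda_2(M), -\lambda_n(M))$. This holds because any eigenvalue $\lambda_i(M)$ with $2 \le i \le n$ satisfies $|\lambda_i(M)| \le \max(\lambda_2(M), -\lambda_n(M))$ (since $\lambda_i \le \lambda_2$ and $-\lambda_i \le -\lambda_n$ in this index range), and the second-largest absolute eigenvalue of $M$ is at most the maximum absolute value attained among the bottom $n-1$ eigenvalues $\lambda_2(M), \ldots, \lambda_n(M)$. Combining this observation with the two bounds from the previous paragraph gives $|\lambda|_2(M) \le \|M-R\|$, as desired. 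This is standard linear algebra and there is no real obstacle; the only mild care required is to avoid conflating $|\lambda|_2(M)$ (the second-largest eigenvalue in absolute value) with $|\lambda_2(M)|$ in the last step.
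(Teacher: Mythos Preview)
Your proof is correct and takes essentially the same approach as the paper's: both establish $\lambda_2(M) \le \lambda_1(M-R) \le \|M-R\|$ and $-\lambda_n(M) \le -\lambda_n(M-R) \le \|M-R\|$ via rank-one perturbation bounds (the paper phrases this as Cauchy interlacing, you as Weyl's inequality and L\"owner monotonicity), and then conclude using $|\lambda|_2(M) \le \max\{\lambda_2(M), -\lambda_n(M)\}$.
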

\begin{proof}
    By Cauchy's interlacing theorem,
    $\lambda_2(M) \le \lambda_1(M-R) \le \norm*{M-R}$ and
    $-\lambda_n(M) \le -\lambda_n(M-R) \le \norm*{M-R}$.
    The desired inequality is then true since $|\lambda|_2(M) \le \max\braces*{\lambda_2(M),-\lambda_n(M)}$.
\end{proof}

Establishing second eigenvalue bounds in \pref{sec:trace-method} and \pref{sec:link-eigs} also involves bounding the spectral norm of some matrices via the ``trace method'' articulated below.
\begin{claim}[Trace Method]\label{claim:trace-method} 
    Let $\bM$ be a symmetric (random) matrix. 
    Then for any even integer $\ell \ge 0$,
    \[
        \Pr\left[\norm*{\bM} \ge e^\eps \cdot \E\left[\tr\parens*{\parens*{\bM}^{\ell}}\right]^{1/\ell}\right] \le \exp(-\eps\ell).
    \]
\end{claim}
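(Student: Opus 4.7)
The plan is to execute the standard trace method argument, which proceeds in three short steps: relate $\|\bM\|$ to $\tr(\bM^{\ell})$ via the symmetry of $\bM$ and the evenness of $\ell$, apply Markov's inequality to the nonnegative random variable $\tr(\bM^{\ell})$, and then take $\ell$-th roots.

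First I would note that because $\bM$ is symmetric, it admits a real spectral decomposition with eigenvalues $\lambda_1(\bM), \ldots, \lambda_n(\bM)$, and thus $\bM^{\ell}$ has eigenvalues $\lambda_i(\bM)^{\ell}$. Since $\ell$ is even, each $\lambda_i(\bM)^{\ell} \ge 0$, so $\bM^{\ell}$ is PSD and $\tr(\bM^{\ell}) = \sum_i \lambda_i(\bM)^{\ell}$ is a sum of nonnegative terms. In particular, $\tr(\bM^{\ell})$ dominates any single summand, which gives the pointwise bound
\[
\|\bM\|^{\ell} = \max_i |\lambda_i(\bM)|^{\ell} = \max_i \lambda_i(\bM)^{\ell} \le \tr(\bM^{\ell}).
\]

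Next I would apply Markov's inequality to the nonnegative random variable $\tr(\bM^{\ell})$. For any $t > 0$,
\[
\Pr\bigl[\|\bM\|^{\ell} \ge t\bigr] \le \Pr\bigl[\tr(\bM^{\ell}) \ge t\bigr] \le \frac{\E\bigl[\tr(\bM^{\ell})\bigr]}{t}.
\]
Setting $t = e^{\eps \ell} \cdot \E[\tr(\bM^{\ell})]$ yields $\Pr[\|\bM\|^{\ell} \ge e^{\eps \ell} \E[\tr(\bM^{\ell})]] \le e^{-\eps \ell}$, and taking $\ell$-th roots on the event inside the probability produces the claimed inequality.

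There is essentially no obstacle here; the only thing to be careful about is that we need $\ell$ even so that $\lambda_i(\bM)^{\ell} \ge 0$ and hence $\max_i \lambda_i(\bM)^{\ell} \le \tr(\bM^{\ell})$ (this step fails for odd $\ell$ when $\bM$ has negative eigenvalues), and that Markov's inequality is applied to $\tr(\bM^{\ell})$ rather than to $\|\bM\|^{\ell}$ directly. The statement is a soft-max principle: when $\ell \gg \log n$, the bound $\tr(\bM^{\ell})^{1/\ell}$ is within a $(1+o(1))$ factor of $\|\bM\|$, which is what makes this argument useful in the trace-method computations carried out later in the paper.
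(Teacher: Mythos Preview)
Your proof is correct and follows essentially the same approach as the paper: use the pointwise inequality $\|\bM\|^{\ell}\le \tr(\bM^{\ell})$ (valid because $\ell$ is even and $\bM$ is symmetric) together with Markov's inequality. The paper applies Markov to $\|\bM\|^{\ell}$ and then bounds $\E\|\bM\|^{\ell}\le \E\tr(\bM^{\ell})$, while you first bound pointwise and then apply Markov to $\tr(\bM^{\ell})$; these orderings are equivalent.
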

\begin{proof}
    By Markov's inequality, $\Pr\left[\norm*{\bM} \ge t\right] \le t^{-\ell} \E \parens*{\norm*{\bM}^\ell}$.
    The claim then follows because for any self-adjoint matrix $M$, $\lambda_{\max}\parens*{M^{\ell}} \le \tr(M^\ell)$ when $\ell$ is even.
\end{proof}

We will also require the following bound on the spectrum of a matrix, which is a special case of the Gershgorin circle theorem.
\begin{claim}[Row sum bound]    \label{claim:row-sum-bound}
    For any matrix $M$, $|\lambda|_{\max}(M) \le \max_i \norm*{M[i,*]}_1$.
\end{claim}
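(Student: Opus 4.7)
The plan is to give a direct eigenvector argument, which yields the claim without needing to invoke the full Gershgorin disk theorem. I would begin by picking an eigenpair $(\lambda, v)$ of $M$ with $|\lambda| = |\lambda|_{\max}(M)$. Since eigenvectors can be rescaled, I would normalize $v$ so that $\|v\|_\infty = 1$, and pick $i \in \arg\max_j |v_j|$, so that $|v_i| = 1$.

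Reading off the $i$-th coordinate of the equation $Mv = \lambda v$ then gives
\[
|\lambda| = |\lambda v_i| = \left|\sum_j M_{ij} v_j\right| \le \sum_j |M_{ij}|\cdot |v_j| \le \sum_j |M_{ij}| = \|M[i,*]\|_1,
\]
where the two inequalities use the triangle inequality and the bound $|v_j| \le 1$. Since the right-hand side is at most $\max_i \|M[i,*]\|_1$, the claim follows.

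This is essentially the textbook short proof of Gershgorin's circle theorem, and the argument works for arbitrary (not necessarily symmetric or real) matrices; if $M$ is not self-adjoint then $\lambda$ may be complex and $v \in \bbC^n$, but all the algebraic manipulations carry through verbatim over $\bbC$. There is no real obstacle here: the only judgment call is pairing the $\ell_\infty$ normalization of $v$ with the row-$\ell_1$ norm on the matrix side, which is forced by the H\"older duality $(\ell_\infty, \ell_1)$.
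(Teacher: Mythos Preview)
Your proof is correct and essentially identical to the paper's: both pick the eigenvector for the top eigenvalue, look at the coordinate of largest magnitude, and apply the triangle inequality to the corresponding row of $Mv = \lambda v$. The only cosmetic difference is that you normalize $v$ to have $\|v\|_\infty = 1$ up front, whereas the paper divides through by $|v_k|$ at the end.
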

\begin{proof}
Let $v$ be the eigenvector achieving $\lambda = |\lambda|_{\max}(M)$.
Then letting $k$ be the index maximizing $|v_k|$, we have 
\[
|\lambda v_k| = |(Mv)_k| = \left|\sum_{j} M_{kj} v_j\right|\le |v_k| \sum_{j} |M_{kj}| \le |v_k| \max_{i} \|M[i,*]\|_1,
\]
and dividing through by $|v_k|$ gives the conclusion.
\end{proof}

\subsection{Probability}

\begin{definition}
    The \emph{total variation distance} between probability distributions $\mu$ and $\nu$ is defined as:
    \[
        \dtv{\mu}{\nu} \coloneqq \max_{\calE} \abs*{\mu(\calE)-\nu(\calE)}.    
    \]
\end{definition}

\begin{fact}
    When $\rho$ is a nonnegative measure such that $\mu$ and $\nu$ are absolutely continuous with respect to $\rho$, then:
    \[
        \dtv{\mu}{\nu} = \frac{1}{2}\int \abs*{\frac{d\mu}{d\rho}(x) - \frac{d\nu}{d\rho}(x)}\,d\rho(x) = \int \parens*{\frac{d\mu}{d\rho}(x) - \frac{d\nu}{d\rho}(x)}\cdot\Ind\bracks*{\frac{d\mu}{d\rho}(x) > \frac{d\nu}{d\rho}(x)}\,d\rho(x).   
    \]
    When $\mu$ and $\nu$ are supported on $[n]$, then:
    \[
        \dtv{\mu}{\nu} = \frac{1}{2}\norm*{\mu - \nu}_1 = \sum_{i=1}^n (\mu(i) - \nu(i)) \cdot \Ind[\mu(i) > \nu(i)]
    \]
    where $\mu$ and $\nu$ are the vectors of probabilities.
\end{fact}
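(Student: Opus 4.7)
The plan is to prove the two equalities in the continuous case; the discrete case then follows immediately by taking $\rho$ to be the counting measure on $[n]$, so that $\tfrac{d\mu}{d\rho}(i) = \mu(i)$, and observing that $\int |f-g|\,d\rho$ becomes $\sum_i |\mu(i)-\nu(i)| = \|\mu-\nu\|_1$. So essentially all the work is in the continuous statement.

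First I would introduce shorthand $f = \tfrac{d\mu}{d\rho}$, $g = \tfrac{d\nu}{d\rho}$, and $A = \{x : f(x) > g(x)\}$. The key observation is that for any measurable event $\calE$,
\[
\mu(\calE) - \nu(\calE) \;=\; \int_{\calE} (f-g)\,d\rho,
\]
which is maximized (over choices of $\calE$) by taking $\calE = A$, since including a point with $f(x) > g(x)$ only increases the integral while including one with $f(x) \le g(x)$ only decreases it. Hence
\[
\max_{\calE}\bigl(\mu(\calE) - \nu(\calE)\bigr) \;=\; \int_A (f-g)\,d\rho,
\]
and symmetrically (replacing $\calE$ by $\calE^c$, or running the same argument with roles swapped) $\max_{\calE}\bigl(\nu(\calE)-\mu(\calE)\bigr) = \int_{A^c}(g-f)\,d\rho$. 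Since $\mu$ and $\nu$ are both probability measures (so $\int f\,d\rho = \int g\,d\rho = 1$), we have $\int(f-g)\,d\rho = 0$, forcing $\int_A (f-g)\,d\rho = \int_{A^c}(g-f)\,d\rho$. These two maxima coincide, giving the second equality in the claim.

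For the first equality, the signed integral $\int (f-g)\,d\rho = 0$ splits as the sum of its positive and negative parts, and the fact $\int_A(f-g)\,d\rho = \int_{A^c}(g-f)\,d\rho$ means each of these parts equals $\tfrac{1}{2}\int |f-g|\,d\rho$. Combining with the previous paragraph yields $\dtv{\mu}{\nu} = \tfrac{1}{2}\int|f-g|\,d\rho = \int(f-g)\Ind[f>g]\,d\rho$. There is no real obstacle here, only a small bookkeeping point: one should state the definition of $\dtv{\mu}{\nu}$ as a supremum over \emph{all} measurable events and verify that the supremum is actually attained by $A$ (which requires nothing beyond measurability of $A$, automatic since $f, g$ are measurable). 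The rest is routine once the identity $\mu(\calE)-\nu(\calE) = \int_\calE(f-g)\,d\rho$ is in hand.
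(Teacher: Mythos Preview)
The paper states this as a \emph{fact} without proof, treating it as a standard identity from probability theory. Your argument is correct and is the standard one: reduce to the Radon--Nikodym densities $f,g$, observe that $\mu(\calE)-\nu(\calE) = \int_\calE (f-g)\,d\rho$ is maximized at $A = \{f>g\}$, and use $\int (f-g)\,d\rho = 0$ to equate the positive and negative parts with $\tfrac{1}{2}\int|f-g|\,d\rho$. There is nothing to compare against here, and nothing to fix.
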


We describe a Markov chain via its transition operator $P$ where
$P(i, j)$ denotes the probability of transitioning from state $i$ to state $j$.

We call the joint distribution $\omega(\mu, \nu)$ a coupling between two distributions $\mu$ and $\nu$ if $\mu = \omega(\cdot, \nu)$ and $\mu = \omega(\mu, \cdot)$. 
In other words, the marginals of $\omega$ correspond to $\mu$ and $\nu$. 

\begin{fact} \label{fact:coupling-tv}
	Let $x$ and $y$ be two arbitrary states in a Markov chain over state space $\Omega$ with transition operator $P$, and sample $X \sim P(x, \cdot)$ and $Y \sim P(y, \cdot)$, where $P(z, \cdot)$ denotes the distribution over $\Omega$ given by a single step of the walk starting from state $z$. 
	Then, there exists a coupling of $X$ and $Y$ such that $X = Y$ with probability $1 - \eps$ if and only if $\dtv{P(x, \cdot)}{P(y, \cdot)} \leq \eps$.  
\end{fact}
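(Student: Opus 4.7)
The plan is to prove this classical coupling lemma in two directions, making explicit use of the characterization of total variation distance as a signed-difference integral given just above in the excerpt.

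For the easier direction, suppose a coupling $\omega$ of $X,Y$ exists with $\Pr_\omega[X = Y] \ge 1 - \eps$. Then for any event $A \subseteq \Omega$,
\[
|P(x,A) - P(y,A)| = |\Pr_\omega[X \in A] - \Pr_\omega[Y \in A]| \le \Pr_\omega[X \in A,\, Y \notin A] + \Pr_\omega[Y \in A,\, X \notin A] \le \Pr_\omega[X \ne Y] \le \eps.
\]
Taking the supremum over $A$ yields $\dtv{P(x,\cdot)}{P(y,\cdot)} \le \eps$.

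For the converse, the main step is to construct the maximal coupling. Write $\mu = P(x,\cdot)$ and $\nu = P(y,\cdot)$, let $\rho$ be a common dominating measure (e.g., $\mu + \nu$), and set $m(z) = \min\bigl(\tfrac{d\mu}{d\rho}(z), \tfrac{d\nu}{d\rho}(z)\bigr)$. Using the formula recalled in the excerpt,
\[
\int m\, d\rho = 1 - \dtv{\mu}{\nu} \eqqcolon 1 - \delta,
\]
where $\delta \le \eps$ by assumption. The coupling samples a fair Bernoulli$(1-\delta)$ coin: on heads, draw $Z$ from the probability measure with density $m / (1-\delta)$ and set $X = Y = Z$; on tails, draw $X$ independently from the normalized residual $(\tfrac{d\mu}{d\rho} - m)/\delta$ and $Y$ from $(\tfrac{d\nu}{d\rho} - m)/\delta$.

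It then remains to verify the two required properties: (i) the marginals are correct, which follows because for every measurable $A$ the total mass assigned to $X \in A$ is $(1-\delta) \int_A \tfrac{m}{1-\delta}\, d\rho + \delta \int_A \tfrac{d\mu/d\rho - m}{\delta}\, d\rho = \mu(A)$, and symmetrically for $Y$; and (ii) $\Pr[X = Y] \ge 1 - \delta \ge 1 - \eps$, since equality is guaranteed on the ``heads'' branch. The only step that requires any care is the case analysis when the two densities are not both finite or the dominating measure must be chosen carefully; this is handled by the standard choice $\rho = \mu + \nu$, under which both Radon–Nikodym derivatives exist and lie in $[0,1]$, so no measure-theoretic pathology arises. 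There is no genuine obstacle here — the result is essentially a restatement of the identity $\dtv{\mu}{\nu} = 1 - \int \min(d\mu/d\rho, d\nu/d\rho)\, d\rho$ already invoked in the preliminaries.
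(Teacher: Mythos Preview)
Your proof is correct; this is the standard maximal-coupling argument. The paper itself states this as a \emph{Fact} in the preliminaries and does not supply a proof, so there is no original argument to compare against --- you have simply filled in the well-known justification, and done so accurately.
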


\subsection{The uniform distribution over the unit sphere}\label{sec:prelim-sphere}
We use $\Unif$ to denote the uniform distribution on $\bbS^{d-1}$.

Let $v\in\bbS^{d-1}$ and $\bw\sim\Unif$.  Then the distribution $\Beta{d}$ of $\angles*{\bw,v}$ is invariant under the choice of $v$, is supported on $[-1,1]$ and has probability density function:
\[
    \BetaPDF_{d}(x) = \frac{\Gamma\parens*{\frac{d}{2}}}{\Gamma\parens*{\frac{d-1}{2}} \sqrt{\pi} }\cdot \parens*{1-x^2}^{(d-3)/2}.
\]
Henceforth, we use $Z_d$ to denote the normalizing constant $\frac{\Gamma\parens*{\frac{d}{2}}}{\Gamma\parens*{\frac{d-1}{2}} \sqrt{\pi}}$.
\begin{fact}
    $Z_d \leq O(\sqrt{d})$.
\end{fact}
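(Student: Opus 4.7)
The plan is to bound the ratio of gamma functions $\Gamma(d/2)/\Gamma((d-1)/2)$ directly. Setting $x = (d-1)/2$, this ratio becomes $\Gamma(x+1/2)/\Gamma(x)$, and I would invoke Wendel's inequality, which states that for $x > 0$ and $0 \le s \le 1$, one has $\Gamma(x+s)/\Gamma(x) \le x^s$. Applying this with $s = 1/2$ yields $\Gamma(x+1/2)/\Gamma(x) \le \sqrt{x} = \sqrt{(d-1)/2}$, so $Z_d \le \sqrt{(d-1)/(2\pi)} = O(\sqrt{d})$.

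If Wendel's inequality is not a preferred reference, an alternative route is via Stirling's approximation $\log \Gamma(y) = y \log y - y + \tfrac{1}{2}\log(2\pi/y) + O(1/y)$. Taking the difference $\log \Gamma(d/2) - \log \Gamma((d-1)/2)$ and simplifying using $\log(d/(d-1)) = O(1/d)$ gives $\tfrac{1}{2} \log(d/2) + O(1)$, from which the claimed $O(\sqrt d)$ bound is immediate. Either calculation is short and essentially routine, so I expect no genuine obstacle; the only minor care needed is to handle small $d$ (say $d = 2$) separately so that the asymptotic estimate is valid. Given that this is stated as a \textbf{fact} rather than a lemma, a one-line proof citing Wendel's (or Gautschi's) inequality is likely the intended presentation.
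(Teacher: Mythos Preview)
Your proof is correct. The paper itself states this as a \textbf{Fact} without proof, so there is no argument to compare against; your approach via Wendel's inequality (or alternatively Stirling) cleanly fills in the omitted justification.
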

\noindent In addition, we will rely heavily on the following sharp estimate of the tail of $\Beta{d}$.
\begin{lemma} \label{lem:approx-tails-dpd}
    Let $\Phi_{\Beta{d}}(t) \coloneqq \Pr_{X\sim\Beta{d}}[X\ge t]$.
    Then, when $t \geq 0$:
    \[
        \frac{Z_d}{t(d-1)}\cdot\parens*{1-t^2}^{(d-1)/2}\cdot\parens*{1 - \frac{4\log\parens*{1+d \cdot t^2}}{d \cdot t^2}} \le \ol{\Phi}_{\Beta{d}}(t) \le \frac{Z_d}{t(d-1)}\cdot\parens*{1-t^2}^{(d-1)/2}.
    \]
\end{lemma}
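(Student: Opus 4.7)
}

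The statement is a two-sided tail bound for the density $\psi_d(x) = Z_d(1-x^2)^{(d-3)/2}$. I plan to attack both directions starting from the substitution $u = x^2$ followed by $v = 1-u$, which gives
\[
\overline{\Phi}_{\Beta{d}}(t) \;=\; Z_d \int_t^1 (1-x^2)^{(d-3)/2}\,dx \;=\; Z_d \int_0^{1-t^2} \frac{v^{(d-3)/2}}{2\sqrt{1-v}}\,dv.
\]
The factor $v^{(d-3)/2}$ is sharply concentrated near the right endpoint $v=1-t^2$ for large $d$, so heuristically the prefactor $1/\sqrt{1-v}$ may be replaced by its value $1/t$ at that endpoint.

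\emph{Upper bound.} For every $v\in[0,1-t^2]$ we have $1-v \ge t^2$, so $1/\sqrt{1-v}\le 1/t$. Pulling $1/t$ out of the integrand and evaluating $\int_0^{1-t^2} v^{(d-3)/2}\,dv = \frac{2(1-t^2)^{(d-1)/2}}{d-1}$ yields the claimed upper bound on the nose.

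\emph{Lower bound.} I would truncate the integral to the narrow strip $v\in [(1-t^2)(1-\delta),\,1-t^2]$, where $1/\sqrt{1-v}$ is close to $1/t$. On this interval $1-v \le t^2 + \delta(1-t^2)$, so $1/\sqrt{1-v}\ge 1/\bigl(t\sqrt{1 + \delta(1-t^2)/t^2}\bigr)$, and hence
\[
\overline{\Phi}_{\Beta{d}}(t) \;\ge\; \frac{Z_d}{t\sqrt{1+\delta(1-t^2)/t^2}}\cdot\frac{(1-t^2)^{(d-1)/2}\bigl(1 - (1-\delta)^{(d-1)/2}\bigr)}{d-1}.
\]
The natural choice is $\delta = \tfrac{2\log(1+dt^2)}{d-1}$, so that $(1-\delta)^{(d-1)/2}\le e^{-\delta(d-1)/2} = 1/(1+dt^2)$ and the shrinkage factor $1 - (1-\delta)^{(d-1)/2}$ is at least $dt^2/(1+dt^2)$. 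After combining with $1/\sqrt{1+\delta/t^2}\ge 1 - \delta/(2t^2)$ and collecting terms, one lands on a correction factor of the form $1 - O(\log(1+dt^2)/(dt^2))$, and a direct bookkeeping of constants delivers the clean envelope $1 - 4\log(1+dt^2)/(dt^2)$.

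\emph{Main obstacle.} The only real work is absorbing all lower-order multiplicative errors into the single tidy factor stated in the lemma (in particular, handling the regime where $dt^2$ is only moderately large). As a sanity check and alternate route, one can integrate $\int_t^1 (1-x^2)^{(d-3)/2}\,dx$ by parts twice, writing the integrand first as $\tfrac{1}{x}\cdot x(1-x^2)^{(d-3)/2}$ and then as $\tfrac{1}{x^3}\cdot x(1-x^2)^{(d-1)/2}$; the first application produces the matching upper bound and a negative remainder, the second bounds that remainder by $\frac{(1-t^2)^{(d+1)/2}}{t^3(d+1)}$, and one obtains the lower bound $\overline{\Phi}_{\Beta{d}}(t)\ge \frac{Z_d(1-t^2)^{(d-1)/2}}{t(d-1)}\bigl(1 - \tfrac{1-t^2}{(d+1)t^2}\bigr)$, which is straightforward to verify is stronger than the stated bound in exactly the regime where the stated bound is non-trivial.
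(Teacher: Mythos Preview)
Your proposal is correct and essentially identical to the paper's proof: after your substitution $v = 1-x^2$, bounding $1/\sqrt{1-v}$ by $1/t$ is exactly the paper's ``replace $t$ by $x$'' trick, and your truncation to $v\in[(1-t^2)(1-\delta),1-t^2]$ with $\delta = \tfrac{2\log(1+dt^2)}{d-1}$ is precisely the paper's restriction to $x\in[t,t\sqrt{1+\delta}]$ with the same parameter choice (their $\eps$ is your $\delta$). Your integration-by-parts alternative is a genuinely different and cleaner route that yields the sharper correction $1 - \tfrac{1-t^2}{(d+1)t^2}$; the paper does not use it.
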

\begin{proof}
    It suffices to upper and lower bound $\int_t^1 (1-x^2)^{(d-3)/2}$.
    We first obtain an upper bound.
    \begin{align*}
        \int_{t}^1 \parens*{1 - x^2}^{(d - 3)/2} dx &= \frac{1}{t} \int_t^1 t\parens*{1 - x^2}^{(d - 3)/2} dx \\
        &\leq \frac{1}{t} \int_t^1 x\parens*{1 - x^2}^{(d - 3)/2} dx \\
        &= -\frac{1}{t(d - 1)} \cdot \parens*{1 - x^2}^{(d - 1)/2} \Bigg|_t^1 \\
        &= \frac{1}{t(d - 1)} \cdot \parens*{1 - t^2}^{(d - 1)/2}
    \end{align*}
    Now we prove the lower bound.
    For any $\eps > 0$ such that $t\cdot\sqrt{1-\eps+\frac{\eps}{t^2}} \le 1$, and defining $\delta \coloneqq \frac{\eps}{t^2} - \eps$, we have the following.
    \begin{align*}
        \int_{t}^1 (1 - x^2)^{(d - 3)/2} dx &\geq \frac{1}{t\sqrt{1 + \delta}} \int_t^{t\sqrt{1 + \delta}} \parens*{t\sqrt{1+\delta}}\parens*{1 - x^2}^{(d - 3)/2} dx \\
        &\geq \frac{1-\delta}{t} \int_t^{t\sqrt{1+\delta}} x\parens*{1 - x^2}^{(d - 3)/2} dx \\
        &= -\frac{1-\delta}{t(d - 1)} \cdot \parens*{1 - x^2}^{(d - 1)/2} \Bigg|_{t}^{t\sqrt{1+\delta}} \\
        &= \frac{1-\delta}{t(d - 1)} \cdot \parens*{1-t^2}^{(d-1)/2} \cdot \parens*{1 - (1-\eps)^{(d-1)/2}}
    \end{align*}
    where the second inequality uses $\frac{1}{\sqrt{1+\delta}} \ge 1-\delta$ and the last equality uses $1-t^2(1+\delta) = (1-t^2)(1-\eps)$.
    Choosing $\eps = \frac{2\log\parens*{1+dt^2}}{d-1}$ yields: 
    \begin{align*}
        \int_{t}^1 \parens*{1 - x^2}^{(d - 3)/2} \ge \frac{1}{t(d-1)}\cdot\parens*{1-t^2}^{(d-1)/2} \cdot \parens*{1 - \frac{4\log\parens*{1+dt^2}}{dt^2}}. & \qedhere
    \end{align*}
\end{proof}

We use $\Beta{d} \vert_{\geq \tau}$ to represent $\Beta{d}$ conditioned on lying in $[\tau, 1]$.

\begin{definition}
    For a vector $y$, we use $\scap_p(y)$ and $\scap_{\ge \tau(p)}(y)$ interchangeably to denote the measure-$p$ spherical cap around $y$:
    \[
        \scap_p(y)= \scap_{\ge \tau(p)}(y) \coloneqq \left\{u: \langle u, y\rangle \ge \tau(p), u\in\bbS^{d-1} \right\}.
    \]
    We use $\mcap_p(y)$ and $\mcap_{\ge \tau(p)}(y)$ to denote the uniform measure over the set $\scap_p(y)$.
    We denote the boundary of $\scap_p(y)$ by $\Shell_p(y)$ or $\Shell_{=\tau(p)}(y)$.
    That is,
    \[
        \Shell_p(y) \coloneqq \left\{u: \langle u, y\rangle = \tau(p), u\in\bbS^{d-1} \right\}.
    \]
\end{definition}

\section{The second eigenvalue of random restrictions}  \label{sec:trace-method}
\newcommand{\sing}{\mathrm{sing}}
In this section we prove \pref{thm:random-restriction}.
Let $X$ be a (possibly infinite) vertex-transitive graph with a unique stationary measure $\Unif$.
Let $\bG \sim \rr_n(X)$ be a random restriction of $X$ as defined in \pref{def:rr}, and let $p = \Pr_{\bG \sim \rr_n(X)}[(i,j) \in E(\bG)]$ be the marginal edge probability in $\bG$.
Suppose furthermore that
\begin{equation}
     \exists \, C,\lambda \text{ with } C\ge 1 \text{ and } \frac{1}{\sqrt{pn}}\le\lambda\le 1 \quad \text{s.t. for any distribution } \alpha \text{ on } V(X),\quad \dtv{X^k \alpha}{\Unif} \le C\lambda^k.\label{assumption:tv-bound}
\end{equation}
We overload notation and use $X$ to denote the transition operator for the simple random walk on $X$, and for $H\subseteq V(X)$ we also use $H$ to denote the indicator vector of the set $H$.

We denote its adjacency matrix by $A_{\bG}$, the diagonal degree matrix by $D_{\bG}$, the centered adjacency matrix by $\ol{A}_{\bG} = A_{\bG} - \E A_{\bG}$, and the normalized adjacency matrix by $\widehat{A}_{\bG} = D_{\bG}^{-1/2}A_{\bG}D_{\bG}^{-1/2}$.
Then we'll show the following.
\begin{theorem}
    As long as $pn \gg C^6 \log^8 n$, for any constant $\gamma > 0$, with probability at least $1-n^{-\gamma}$,
    \[
        |\lambda|_2\parens*{\widehat{A}_{\bG}} \le (1+o(1))\cdot \max\left(\lambda, \frac{\log^4 n}{\sqrt{pn}}\right).
    \]
\end{theorem}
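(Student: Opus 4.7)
The plan is to apply the trace method outlined in Section~1.4, bounding $\|\ol{A}_{\bG}\|$ for $\ol{A}_{\bG} = A_{\bG}-\E A_{\bG}$ via $\E\tr\bigl(\ol{A}_{\bG}^\ell\bigr)$ for a large even $\ell$. To reduce to $\|\ol{A}_{\bG}\|$, observe that by vertex-transitivity and uniform stationarity of $\rho$ we have $\E A_{\bG} = p(J-I)$, so $A_{\bG} - pJ = \ol{A}_{\bG} - pI$; combining this with \pref{fact:rank-1-sub} yields $|\lambda|_2(A_{\bG}) \le \|\ol{A}_{\bG}\|+p$. Because $pn$ exceeds a polylogarithmic factor of $n$, Chernoff bounds applied to the $\Binom(n-1,p)$-distributed degrees give $D_{\bG} = (1+o(1))\,pn\cdot I$ with probability $1-n^{-\omega(1)}$, so $|\lambda|_2(\widehat{A}_{\bG}) \le (1+o(1))\,|\lambda|_2(A_{\bG})/(pn)$. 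It thus suffices to prove $\|\ol{A}_{\bG}\| \le (1+o(1))\cdot \mu \cdot pn$ with the claimed probability, where $\mu = \max(\lambda, \log^4 n / \sqrt{pn})$.

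Next I expand the trace as a sum over closed walks of length $\ell$ in $K_n$, grouped by the isomorphism class $W$ of the walk shape. By vertex-transitivity, $\E\prod_e (\ol{A}_{\bG})_e^{m_W(e)}$ depends only on $W$, and I pull out the coefficient $N_W \le n^{v(W)}\cdot \ell^{O(1)}$ counting labelings. The dominant contribution in the $\lambda$-regime comes from the simple cycle $W=C_\ell$: iterating Bayes' rule together with the vertex-transitivity identity that $\rho$ restricted to $N(v_0)$ (and normalized) equals $X\delta_{v_0}$ gives $\Pr[C_\ell\subseteq \bG \mid v_0] = p^{\ell-1}\cdot \Pr_{v_{\ell-1}\sim X^{\ell-1}\delta_{v_0}}[v_{\ell-1} \in N(v_0)]$; the TV assumption bounds the inner probability by $p + C\lambda^{\ell-1}$, and the forest subgraphs in the binomial expansion of $\prod_{e\in C_\ell}(\bA_e - p)$ cancel, yielding $\E\prod_{e\in C_\ell}(\ol{A}_{\bG})_e \le p^{\ell-1} C \lambda^{\ell-1}$. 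Combined with $N_{C_\ell} \le n^\ell$, the cycle's contribution is at most $Cn(np\lambda)^{\ell-1}$, whose $\ell$-th root is $(1+o(1))\,np\lambda$ once $\ell = \omega(\log(Cn))$.

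The main obstacle is to show that every other shape $W$ is subdominant, which also accounts for the $\log^4 n/\sqrt{pn}$ floor. For generic $W$, I expand $(\ol{A}_{\bG})_e^{m_W(e)} = (-p)^{m_W(e)}(1-\bA_e) + ((1-p)^{m_W(e)}-(-p)^{m_W(e)})\bA_e$ for each edge, reducing the expected product to a linear combination of subgraph probabilities over $G \subseteq G_W$. Forest terms cancel via the same binomial identity as in the cycle case, and the remaining terms contain at least one cycle, to which iterating Bayes' rule and the TV bound supplies a multiplicative factor $C\lambda^r$ per cycle of length $r$. The special case is ``doubled-tree'' shapes, whose underlying simple graph is a tree with every edge traversed exactly twice: here no cycle is available, and I bound each $(\ol{A}_{\bG})_e^2$ by $\E[(\bA_e - p)^2] = p(1-p) \le p$; the count of such shapes is controlled by the Catalan number $C_{\ell/2} \le 4^{\ell/2}$ on $\ell/2+1$ vertices, producing a total contribution of at most $n\cdot (4np)^{\ell/2}$ whose $\ell$-th root is $(1+o(1))\cdot 2\sqrt{np}$, comfortably within $\mu\cdot pn$. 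The delicate step is bounding the mixed shapes containing both cycles and repeated edges: I expect to use $v(W)\le \ell$, $e(W)\le \ell$, and one factor of $C\lambda^r$ per cycle of length $r$ in $G_W$ to show that for every such $W$ the per-$\ell$-th-root contribution is at most $(1+o(1))\mu pn$; this argument, together with the need for $n^{1/\ell}$ and $C^{1/\ell}$ to be $1+o(1)$, is what forces $\ell$ to be a high enough polylog of $n$ and in turn forces the condition $pn\gg C^6\log^8 n$. A final application of \pref{claim:trace-method} with $\epsilon = \gamma \log n/\ell$ converts the moment bound into the stated $1-n^{-\gamma}$ probability.
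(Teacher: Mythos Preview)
Your skeleton is right---trace method, the cycle $C_\ell$ giving the $\lambda$ term, doubled trees giving the $\sqrt{pn}$ floor---and your cycle computation matches the paper's. But the treatment of ``mixed shapes'' is where the real work lies, and your proposal has a genuine gap there.

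First, the claim that ``forest terms cancel via the same binomial identity as in the cycle case'' does not generalize as stated. In the cycle case the cancellation works because \emph{every} proper subset of $C_\ell$ is a forest and all multiplicities are $1$, so the binomial sum over subsets collapses to $-p^\ell$. For a general $W$ with repeated edges, the correct statement is different: writing $\sum_T \Pr[T\subseteq\bG]\,c_T = \sum_T p^{|T|}c_T + \sum_T(\Pr[T\subseteq\bG]-p^{|T|})c_T$, the first (``fake'') sum equals $\prod_e\bigl(p(1-p)^{m(e)}+(1-p)(-p)^{m(e)}\bigr)$, which vanishes only when some edge has $m(e)=1$; the second sum is supported on $T$'s containing a cycle. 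This is not ``forest terms cancel''---non-forest $T$'s appear in both sums---and when every edge is duplicative but the underlying graph has a cycle (a case you do not isolate), neither piece vanishes.

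Second, even granting the reduction to cyclic $T$'s, the heuristic ``one factor $C\lambda^r$ per cycle of length $r$'' is not an argument: for $T$ with several interacting cycles (sharing edges or vertices) you need to say how to iterate Bayes' rule, and your counting $N_W\le n^{v(W)}\ell^{O(1)}$ is too crude to close the sum (the shape count is exponential in $\ell$, not polynomial). The paper handles all of this with a specific structural decomposition you are missing: peel off the dangling forest $G_1(W)$, pass to the $2$-core $G_2(W)$, condition on the degree-$\ge 3$ \emph{junction} vertices so that the paths between them decouple, and then on each path distinguish singleton vs.\ duplicative edges and tree vs.\ non-tree edges of the junction graph. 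This yields a clean per-walk bound $p^{e(W)-\exc(W)}\lambda^{\sing(W)}(27C^3/\lambda)^{\exc(W)}$ together with a walk count stratified by $(e(W),\sing(W),\exc(W))$; summing over these three parameters is what produces both the $\lambda pn$ and $\sqrt{pn}\log^4 n$ terms and the $pn\gg C^6\log^8 n$ hypothesis. Your proposal gestures at the right endpoint but does not supply this decomposition, and without it the mixed-shape bound does not go through.
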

\begin{proof}
By \pref{fact:rank-1-sub}, for any rank-$1$ PSD matrix $R$,
$|\lambda|_2\parens*{\wh{A}_{\bG}} \le \norm*{\wh{A}_{\bG}-R}$.
Thus we turn our attention to bounding $\norm*{\wh{A}_{\bG}-R}$ for appropriately chosen $R$.
Setting $R_{\bG} = pD_{\bG}^{-1/2} J D_{\bG}^{-1/2}$ where $J$ is the all-ones matrix and using submultiplicativity of the operator norm, we see:
\[
    \norm*{\wh{A}_{\bG}-R_{\bG}} \le \norm*{D_{\bG}^{-1/2}}^2 \cdot \norm*{A_{\bG} - pJ}.
\]
Now, observe that $\norm*{D_{\bG}^{-1/2}}^2 = \norm*{D_{\bG}^{-1}}$.
To bound this quantity, we'll use the concentration of the vertex degrees (the entries of the diagonal of $D_{\bG}$).
For every vertex, the marginal distribution of the degree is $\Binom(n,p)$.
So by Hoeffding's inequality and the union bound, when $pn \gg \log^8 n$, for any fixed $\gamma > 0$, $\left|(D_{\bG})_{ii} - pn\right| \le \sqrt{pn \log^2 n}$ for all $i \in [n]$ with probability at least $1-n^{\gamma}$.
So with probability at least $1-n^{-\gamma}$, $D_{\bG}^{-1} = \frac{1}{pn}I + \Delta$ for $\Delta$ a diagonal matrix with entries with absolute value of order $\sqrt{\log^2 n/ (pn)^3}$.
Thus, $\norm*{D_{\bG}^{-1}} \le \frac{1}{pn}\cdot\parens*{1+\frac{\log n}{\sqrt{pn}}}$.

Next, $\norm*{A_{\bG}-pJ} \le \norm*{\ol{A}_{\bG}} + p$,
where recall $\ol{A}_{\bG}=A_{\bG}-\E A_{\bG}$.
We will show:
\[
    \norm*{\ol{A}_{\bG}} \le \parens*{1+o(1)} \cdot \max\braces*{\lambda p n, \sqrt{pn}\log^4 n}.
\]
Putting these bounds together gives:
\[
    |\lambda|_2\parens*{\wh{A}_{\bG}} \le \parens*{1+o(1)}\cdot \max\braces*{\lambda, \frac{\log^4 n}{\sqrt{pn}} }.
\]

Finally, we devote the rest of the proof to bounding $\norm*{\ol{A}_{\bG}}$.
By \pref{claim:trace-method}, it suffices to bound $\E \tr\parens{\parens{\ol{A}_{\bG}}^{\ell}}$ for a large enough even $\ell$.

For an $n \times n$ matrix $M$, $\tr(M^\ell)$ can be written as a sum over length-$\ell$ closed walks on the complete graph $\calK_n$, with each walk $W$ weighted according to $\prod_{(i,j)\in W} M_{ij}$.
The exchangeability of entries in $\ol{A}_{\bG}$ means that the walks can be partitioned into equivalence classes based on their topology as graphs, where the members of each class contribute identically to the summation.
\begin{definition}
    We use $\cW_{\ell}$ to denote the collection of length-$\ell$ walks in $\calK_n$, the complete graph on $n$ vertices. 
 For $W\in \cW_{\ell}$, we use $G(W)=(V(W),E(W))$ to denote the simple graph induced by edges walked on in $W$.  
We let the \emph{multiplicity} of $e$ in $W$, $m(e)$, be the number of times $e$ occurs in $W$. 
\end{definition}
\noindent We can then write:
\begin{align*}
    \E \tr\left((A_{\bG} -\E A_{\bG})^\ell\right) &= \sum_{W\in\cW_{\ell}} \E \prod_{e\in E(W)} \parens*{\Ind[e\in\bG]-p}^{m(e)}
\numberthis \label{eq:walk-weight}
\end{align*}
We now focus on understanding each term of the above summand in terms of the properties of $G(W)$.  
Our first step is to handle leaves.
\begin{definition}
    We use $G_2(W) = (V_2(W),E_2(W))$ to denote the \emph{$2$-core} of $G(W)$, the graph obtained by recursively deleting degree-$1$ vertices from $G(W)$.  
We denote the graph induced on the edges deleted in this process as $G_1(W)$.
\end{definition}
\begin{observation} \label{obs:2-core-decomp}
    We have $G(W) = G_1(W)\cup G_2(W)$. 
Further, every vertex in $G_2(W)$ has degree at least $2$, and $G_1(W)$ is a forest where each connected component has at most one vertex in $G_2(W)$.
\end{observation}
Notice that if $F$ is a forest, then $\Pr[F \in \bG] = p^{|E(F)|}$, and further if $F$ is a forest sharing at most one vertex with a graph $H$, then the events $\{H \in \bG\}$ and $\{F \in \bG\}$ are independent.
Hence, with the above decomposition in hand, we can ``peel off'' the one-core and for any $W\in\cW_{\ell}$ we can write:
\begin{align*}
\pref{eq:walk-weight}
    =& \E_{\substack{\bu_i\\i\in V_2(W)}}\,\, \E_{\substack{\bu_j\\j\in V_1(W)\setminus V_2(W)}}\,\, \prod_{e\in E(W)} \parens*{\Ind[e\in\bG] -p}^{m(e)} \\
    =& \prod_{e\in E_1(W)} \E\parens*{ \parens*{\Ind[e \in \bG] - p}^{m(e)}} \cdot \E_{\substack{\bu_i\\i\in V_2(W)}} \prod_{e\in E_2(W)} \parens*{\Ind[e\in\bG] -p}^{m(e)}\\
    =& \prod_{e\in E_1(W)} \E\parens*{ \Ind[e \in \bG]\parens*{(1-p)^{m(e)} -(-p)^{m(e)}} + (-p)^{m(e)}} \cdot \E_{\substack{\bu_i\\i\in V_2(W)}} \prod_{e\in E_2(W)} \parens*{\Ind[e\in\bG] -p}^{m(e)}\\
    \le& \abs*{\prod_{e\in E_1(W)} \parens*{p(1-p)^{m(e)} + (1-p)(-p)^{m(e)}}} \cdot \abs*{\E_{\bu_i:i\in V_2(W)} \prod_{e\in E_2(W)} \parens*{\Ind[e\in\bG]-p}^{m(e)}}, \numberthis \label{eq:sep-2-core}
\end{align*}
where in the third line we've used that $(\Ind[e \in\bG] - p)^k = \Ind[e\in \bG]((1-p)^k - (-p)^k) + (-p)^k$.

It now remains to handle the $2$-core $G_2(W)$.
To simplify the expression, we'll exploit the following fact: if $J$ is a subset of vertices in $G_2(W)$, conditional on an assignment of $\bu_i$ for all $i\in J$, the existence of edges in regions of $G_2(W)$ separated by $J$ are independent.   
We'll take advantage of this fact by splitting $G_2(W)$ into regions separated by the set of vertices in $G_2(W)$ of degree at least $3$, leaving us to bound a collection of paths and cycles.
\begin{definition}[Junction vertices] \label{def:junc-ver}
    We use $J(W)$ to denote the set of \emph{junction vertices} of $G_2(W)$, which are vertices with degree-$\ge 3$ in $G_2(W)$, or in the case that $G_2(W)$ only has vertices of degree-$2$, we choose an arbitrary vertex ot $G_2(W)$ and add it to $J(W)$.  
We use $G_J(W) = (J(W), E_J(W))$ to denote the \emph{junction graph} of $G_2(W)$, which is a multigraph obtained by starting with $G_2(W)$ and contracting to an edge all walks $\gamma = u_0 \dots u_t$ satisfying the following conditions:
    \begin{enumerate}
        \item $u_0$ and $u_t$ are (possibly identical) junction vertices,
        \item $u_1,\dots,u_{t-1}$ are distinct vertices with degree-$2$ in $G_2(W)$.
    \end{enumerate}
    For an edge $f\in E_J(W)$, we use $\gamma(f) = u_0,\ldots,u_t$ to identify the walk from which $f$ arose in $G_2(W)$, $s(f)$ to denote the ``start'' vertex $u_0$ of $\gamma(f)$, and $t(f)$ to denote the ``terminal'' vertex $u_t$ of $\gamma(f)$. 
\end{definition}
Then we can bound the contribution of the $2$-core in terms of the contribution of the walk $\gamma(f)$ corresponding to each edge $f$ in the junction graph:
\begin{align*}
\abs*{\E_{\substack{\bu_i\\i\in V_2(W)}} \prod_{e\in E_2(W)} \parens*{\Ind[e\in\bG]-p}^{m(e)}}
    =& \abs*{\E_{\bu_i:i\in J(W)} \E_{\bu_i:i\notin J(W)} \prod_{e\in E_2(W)} \parens*{\Ind[e\in \bG] -p}^{m(e)}} \\
    =& \abs*{\E_{\bu_i:i\in J(W)} \prod_{f\in E_{J(W)}} \E_{\bu_i:i\in \gamma(f)\setminus J(W)} \prod_{e\in\gamma(f)} \parens*{\Ind[e\in \bG]-p}^{m(e)}} \\
    \le& \E_{\bu_i:i\in J(W)} \prod_{f\in E_{J(W)}} \abs*{\E_{\bu_i:i\in \gamma(f)\setminus J(W)} \prod_{e\in\gamma(f)} \parens*{\Ind[e\in \bG] -p}^{m(e)}}.
    \numberthis \label{eq:indep-paths}
\end{align*}
We now focus on understanding the innermost expected value, the expectation over the internal vertices along a path, conditioned on the endpoints.
Again using $(\Ind[e \in\bG] - p)^k = \Ind[e\in \bG]((1-p)^k - (-p)^k) + (-p)^k$,
\begin{align*}
&\abs*{\E_{\bu_i:i\in \gamma(f)\setminus J(W)} \prod_{e\in\gamma(f)} \parens*{\Ind[e\in \bG] -p}^{m(e)}}\\
    &\qquad\qquad= \abs*{\E_{\bu_i:i\in \gamma(f)\setminus J(W)} \prod_{e\in\gamma(f)} \parens*{\Ind[e\in \bG] \cdot \parens*{(1-p)^{m(e)} - (-p)^{m(e)}} + (-p)^{m(e)}} },\\
    &\qquad\qquad= \abs*{\sum_{T\subseteq \gamma(f)} \E_{\bu_i:i\in\gamma(f)\setminus J(W)} \prod_{e\in T} \Ind[e\in \bG] \cdot \parens*{(1-p)^{m(e)} - (-p)^{m(e)}} \prod_{e\in \gamma(f) \setminus T} (-p)^{m(e)}} \\
\intertext{Now, using the independence of edges in a forest, we can bound terms where $T \neq \gamma(f)$ simply, and the term $T = \gamma(f)$ in terms of the probability that a $|\gamma(f)|$-length walk in $X$ starting at $\bu_{s(f)}$ ends at $\bu_{t(f)}$ (which is where properties of the random walk in $X$ will enter into the bound):}
    &\qquad\qquad= \Bigg|\sum_{ \substack{T \subseteq \gamma(f) \\ T \ne \gamma(f) } } \prod_{e\in T} p\cdot\parens*{(1-p)^{m(e)} - (-p)^{m(e)}} \cdot \prod_{e\in\gamma(f)\setminus T} (-p)^{m(e)} \\ 
&\qquad\qquad\qquad + \prod_{e\in\gamma(f)} \parens*{(1-p)^{m(e)} - (-p)^{m(e)}} \cdot p^{|\gamma(f)|-1} \cdot \angles*{N(\bu_{s(f)}), X^{|\gamma(f)|-1} \delta_{\bu_{t(f)}} } \Bigg|,  \\
\intertext{where $N(\bu_{s(f)})$ is the neighborhood of $\bu_{s(f)}$ in $X$, and $\delta_{\bu_{t(f)}}$ is the point mass at $\bu_{t(f)}$.
 Now adding and subtracting $\prod_{e\in\gamma(f)} \parens*{\parens*{(1-p)^{m(e)} - (-p)^{m(e)}} + (-p)^{m(e)}} \cdot p^{|\gamma(f)|}$, we complete the first summation and from the triangle inequality we obtain the bound}
    &\qquad\qquad\le \abs*{ \prod_{e\in\gamma(f)} \parens*{p(1-p)^{m(e)} + (1-p)(-p)^{m(e)} } } \\ &\qquad\qquad\qquad+ \abs*{\prod_{e\in\gamma(f)} \parens*{(1-p)^{m(e)} - (-p)^{m(e)}} \cdot p^{|\gamma(f)|-1} \cdot \parens*{ \angles*{N(\bu_{s(f)}), X^{|\gamma(f)|-1} \delta_{\bu_{t(f)}} } - p} }. \numberthis \label{eq:inner-exp}
\end{align*}
We bound \pref{eq:inner-exp} based on the graphical properties of $\gamma(f)$.
\begin{definition}
    We say an edge $e$ is a \emph{singleton edge} if $m(e) = 1$ and a \emph{duplicative edge} otherwise.
\end{definition}
If $\gamma(f)$ contains any singleton edges, then the first term of \pref{eq:inner-exp} is $0$; otherwise it is bounded by 
\[
    \prod_{e\in\gamma(f)}(p(1-p)^2 + (1-p)p^2) \le \prod_{e\in\gamma(f)} p(1-p) \le p^{|\gamma(f)|}.
\]
The second term can always be bounded by
\[
    \prod_{e\in\gamma(f)} \parens*{(1-p)^{m(e)} + p^{m(e)}} \cdot p^{|\gamma(f)|-1} \cdot \abs*{ \angles*{N(\bu_{s(f)}), X^{|\gamma(f)|-1} \delta_{\bu_{t(f)}} } - p} \le p^{|\gamma(f)|-1} \cdot \abs*{ \angles*{N(\bu_{s(f)}), X^{|\gamma(f)|-1} \delta_{\bu_{t(f)}} } - p}.
\]
Using $D_J(W)$ to denote the collection of edges $f$ in $G_J$ such that $\gamma(f)$ contains no singleton edges, and $S_J(W)$ to use the collection of edges $f$ in $G_J$ such that $\gamma(f)$ contains a singleton edge, and plugging the above bounds into \pref{eq:indep-paths} tells us:
\begin{align*}
    \pref{eq:indep-paths} \le& \E_{\substack{\bu_i\\i\in J(W)}} \prod_{f\in D_J(W)} p^{|\gamma(f)|-1} \cdot \parens*{ \abs*{ \angles*{N(\bu_{s(f)}), X^{|\gamma(f)|-1} \delta_{\bu_{t(f)}} } - p } + p }
    \cdot \\ 
    &\prod_{f\in S_J(W)} p^{|\gamma(f)|-1} \cdot \abs*{ \angles*{N(\bu_{s(f)}), X^{|\gamma(f)|-1} \delta_{\bu_{t(f)}} } - p}.
\end{align*}
If $G_J(W)$ were a tree, we could recursively take the expectation over leaf vertices to bound the quantity above, as we did to get rid of $G_1$. 
However, it is not a tree, so we'll pick an arbitrary spanning tree $T_J(W)$ of $G_J(W)$, and bound edges outside of the spanning tree directly.
For $f\in E_J(W)\setminus T_J(W)$, we use \pref{assumption:tv-bound} to conclude that $\dtv{X^{|\gamma(f)|-1}\delta_{\bu_{t(f)}}}{\rho} \le C \lambda^{|\gamma(f)|-1}$, which thus implies that
\[
    \abs*{ \angles*{N(\bu_{s(f)}), X^{|\gamma(f)|-1} \delta_{\bu_{t(f)}} } - p}
\le C \lambda^{|\gamma(f)|-1}, \numberthis\label{eq:non-tree-bound}
\]
because $\angles*{N(\bu_{s(f)}), X^{|\gamma(f)|-1}\delta_{\bu_{t(f)}}}$ represents the probability that a point sampled at random from the measure $X^{|\gamma(f)|-1}\delta_{\bu_{t(f)}}$ lands in $N(\bu_{s(f)})$, which is a set of measure $p$ under $\rho$.
We now prove the following by induction.
\begin{claim}   \label{claim:tree-bound}
    We have the following bound on the contribution of $f \in T_{J}(W)$:
\begin{align*}
    \E_{\bu_i:i\in J(W)} \prod_{f\in T_J(W)} p^{|\gamma(f)|-1} \cdot \parens*{ \abs*{ \angles*{N(\bu_{s(f)}), X^{|\gamma(f)|-1} \delta_{\bu_{t(f)}} } - p} + p \cdot\Ind[f\in D_J(W)] } \\
    \le \prod_{f\in T_J(W)} p^{|\gamma(f)|} \cdot \parens*{ 2C\lambda^{|\gamma(f)|} + \Ind[f\in D_J(W)]}.
\end{align*}
\end{claim}
\begin{proof}
We fix an order for $i\in J(W)$, $i_0,\dots,i_t$ such that $i_j$ is a leaf in $T_J^{(j)}(W)$, the graph obtained by taking $T_J(W)$ and deleting $i_{j+1},\dots,i_{t}$.
We use $f_j$ to denote the unique edge incident to $i_j$ in $T_J^{(j)}(W)$.
Then if we define
\[
    a_j \coloneqq \E_{\bu_i:i\in V(T_J^{(j)}(W))} \prod_{f\in T_J^{(j)}(W) } p^{|\gamma(f)|-1} \cdot \parens*{ \abs*{ \angles*{N(\bu_{s(f)}), X^{|\gamma(f)|-1} \delta_{\bu_{t(f)}} } - p} + p \cdot\Ind[f\in D_J(W)] }
\]
Because $f_j$ is independent of $f_{j'}$ for $j' < j$ we can write:
\begin{align*}
    a_j \coloneqq \E_{\bu_{i_0}} \cdots &\E_{\bu_{i_{j-1}}} \prod_{f\in T^{(j-1)}_J(W)} p^{|\gamma(f)|-1} \cdot \parens*{ \abs*{ \angles*{N(\bu_{s(f)}), X^{|\gamma(f)|-1} \delta_{\bu_{t(f)}} } - p} + p \cdot\Ind[f\in D_J(W)] } \cdot \\
    &\E_{\bu_{i_j}} p^{|\gamma(f_j)|-1} \cdot \parens*{ \abs*{ \angles*{N(\bu_{s(f_j)}), X^{|\gamma(f_j)|-1} \delta_{\bu_{t(f_j)}} } - p} + p \cdot\Ind[f_j\in D_J(W)] }
\end{align*}
Without loss of generality we can assume $i_j = t(f_j)$, and because $N(\bu_{s(f_j)}) = p X \delta_{\bu_{s(f_j)}}$,
\begin{align*}
    \E_{\bu_{i_j}} \abs*{ \angles*{N(\bu_{s(f_j)}), X^{|\gamma(f_j)|-1} \delta_{\bu_{t(f_j)}}} - p} 
    &=\E_{\bu_{i_j}} \abs*{ \angles*{N(\bu_{s(f_j)}), X^{|\gamma(f_j)|-1} \delta_{\bu_{i_j}}} - p} \\
&= p \E_{\bu_{i_j}} \abs*{ \angles*{X^{|\gamma(f_j)|} \delta_{\bu_{s(f_j)}}, \delta_{\bu_{i_j}}} - 1 } \\
    &= 2p \cdot \dtv{X^{|\gamma(f_j)|} \delta_{\bu_{s(f_j)}} }{\Unif}\\
    &\le p\cdot 2C \lambda^{|\gamma(f_j)|}.
\end{align*}
This gives us the inequality:
\[
    \alpha_j \le \alpha_{j-1} \cdot p \parens*{2C \lambda^{|\gamma(f_j)|} + \Ind[f_j\in D_J(W)] }.
\]
The above inequality combined with the fact that $\alpha_0 = 1$ yields the claim.
\end{proof}

We use $e(W)$ to denote $|E(W)|$ and $\sing(W)$ to denote the number of singleton edges in $G_2(W)$,\footnote{Note $\sing(W)$ is the same as the number of singleton edges in $G(W)$ since $G_1(W)$ cannot have singleton edges, as it is the multigraph induced by a closed walk of length $\ell$.}.  
For any graph $H$ we use $\exc(H)$ to denote the \emph{excess} of $H$, which is $|E(H)|-|V(H)|+1$, the number of edges $H$ has over a tree.
\begin{observation} \label{obs:exc-conservation}
    \(\exc(G(W)) = \exc(G_2(W)) = \exc(G_J(W))\).  Thus, we denote this quantity as $\exc(W)$.
\end{observation}

\begin{observation} \label{obs:minor-exc-ineq}
    \(|E_J(W)| \le 3\exc(W)\).
\end{observation}
\begin{proof}
    We use \pref{obs:exc-conservation} to write:
    \begin{align*}
        2\exc(W) - 2 = 2|E_J(W)| - 2|V_J(W)| &= \sum_{v\in V_J} (\deg_{G(J)}(v) - 2) \ge |V_J(W)|-1,
    \end{align*}
where the degree a self-loop incurs on a vertex is $2$, and the $-1$ on the right-hand side is to capture the possibility that $|J(W)| = 1$ when $G_2(W)$ has no degree-$3$ vertices.
    Adding $\exc(W)$ to both sides gives:
    \[
        3\exc(W) \ge |E_J(W)|.     \qedhere
    \]
\end{proof}
Using the bound on the non-tree edges from \pref{eq:non-tree-bound} and \pref{claim:tree-bound}, we get:
\begin{align*}
    \pref{eq:indep-paths} 
	&\le \prod_{f\in T_J(W)} p^{|\gamma(f)|} \cdot \parens*{ 2C \lambda^{|\gamma(f)|} + \Ind[f\in D_J(W)] } \cdot \prod_{f\in E_J(W)\setminus T_J(W)} p^{|\gamma(f)|-1} \cdot \parens*{ C \lambda^{|\gamma(f)|-1} + p\cdot\Ind[f\in D_J(W)] }
\intertext{
Now, we bound separately the contribution of singleton and duplicative edges. 
For each $f \in S_J(W)$, we pull out a factor of $(p\lambda)^{|\gamma(f)|}2C$ if the edge was in the tree, and a factor $(p\lambda)^{|\gamma(f)|-1}C$ if the edge was not in the tree; this fully accounts for the contributions of singleton edges.
For each $f \in D_J(W)$, we upper bound its contribution by $p^{|\gamma(f)|}3C $ if the edge was in the tree, and a factor $p^{|\gamma(f)|-1}3C $ otherwise; this is potentially loose because we don't keep the factors of $\lambda$, but it is a valid upper bound because $C \ge 1$ and $p,\lambda \le 1$.
We thus have a factor of $p$ from $|E_2(W)| - \exc(W)$ edges, a factor of $\lambda$ from $\sing(W) - \exc(W)$ edges, and a factor of at most $3C$ from each edge in $E_J(W)$.
Summarizing, 
}
    &\le p^{|E_2(W)|-\exc(W)} \lambda^{\sing(W)-\exc(W)} \cdot \parens*{3C}^{|E_J(W)|},
    \intertext{and by \pref{obs:minor-exc-ineq}, the above is bounded by:}
    &\le p^{|E_2(W)|-\exc(W)}\lambda^{\sing(W)-\exc(W)}\cdot\parens*{3C}^{3\exc(W)}.
\end{align*}
Since $m(e)\ge 2$ for every edge in $e\in E_1$ (otherwise the walk cannot be closed), by an analysis identical to that of the first term of \pref{eq:inner-exp}, we have:
\begin{align*}
    \pref{eq:sep-2-core} &\le p^{|E_1(W)|} \cdot p^{|E_2(W)|-\exc(W)}\lambda^{\sing(W)-\exc(W)}\cdot\parens*{3C}^{3\exc(W)} = p^{e(W)-\exc(W)} \lambda^{\sing(W)} \parens*{\frac{27C^3}{\lambda}}^{\exc(W)}.
\end{align*}
Finally, we can bound the trace power \pref{eq:walk-weight} as follows.
\begin{align*}
    \pref{eq:walk-weight} &\le \sum_{W\in\cW_{\ell}} p^{e(W)-\exc(W)} \lambda^{\sing(W)} \parens*{\frac{27C^3}{\lambda}}^{\exc(W)} \\
    &= \sum_{a=1}^{\ell} \sum_{b=1}^{\ell} \sum_{c=1}^{\ell} \sum_{\substack{W\in\cW_{\ell} \\ e(W)=a,\, \sing(W)=b,\, \exc(W)=c}} p^{a-c}\lambda^{b} \parens*{\frac{27C^3}{\lambda}}^c \\
    &= \sum_{a=1}^{\ell} \sum_{b=1}^{\ell} \sum_{c=1}^{\ell} p^{a-c}\lambda^{b} \parens*{\frac{27C^3}{\lambda}}^c \cdot \abs*{\braces*{W\in \cW_{\ell}: e(W)=a,\, \sing(W)=b,\, \exc(W)=c }} \numberthis \label{eq:reduce-to-counting}
\end{align*}
To finish bounding the trace power, it remains to count length-$\ell$ closed walks with a specified number of edges, excess edges, and singleton edges.
\begin{claim}   \label{claim:walk-count}
    The number of walks $W$ such that $e(W)=a$, $\sing(W)=b$, and $\exc(W)=c$ is at most:
    \[
        n^{a-c+1}\cdot\ell^{2(\ell-b)} \cdot \ell^{2c}.
    \]
\end{claim}
\begin{proof}
    Observe that $W$ has $a-c+1$ vertices.
    Then the following information about $W$ is sufficient to reconstruct it:  
    \begin{itemize}
        \item The labels of the visited vertices in $[n]$ in the order in which they are visited.
        \emph{There are at most $n^{a-c+1}$ labelings.}
        \item The timestamps when the edge walked on is not a singleton edge.
        \emph{There are at most $\ell^{\ell-b}$ possibilities.}
        \item The timestamps when $W$ takes a step $uv$ such that the edge $\{u,v\}$ has not been previously covered by $W$, but $v$ has been previously visited, along with the timestamp of when $v$ was visited for the first time.
        \emph{There are $c$ such steps, and hence there are at most $\ell^{2c}$ possibilities.}
        \item The timestamps when $W$ takes a step $uv$ such that the edge $\{u,v\}$ has been previously covered by $W$ along with the timestamp of when $\{u,v\}$ was covered the first time.
        \emph{There are at most $\frac{\ell-b}{2}$ such steps, and hence there are at most $\ell^{\ell-b}$ possibilities.}
    \end{itemize}
    Putting the above bounds together completes the proof.
\end{proof}

\begin{observation}     \label{obs:edge-bound}
    Any walk with $b$ singleton edges and $c$ excess edges has at most $\frac{\ell+b}{2}$ edges.
\end{observation}
\begin{proof}
    Each nonsingleton edge must be visited at least twice. 
	There are at most $\ell-b$ non-singleton steps. 
	So, there are at most $\frac{\ell-b}{2}$ nonsingleton edges, and the total number of edges is at most $\frac{\ell+b}{2}$.
\end{proof}
Now we can continue bounding the trace power.
\begin{align*}
    \pref{eq:reduce-to-counting} &\le \sum_{a=1}^{\ell} \sum_{b=1}^{\ell} \sum_{c=1}^{\ell} p^{a-c}\lambda^b \parens*{\frac{27C^3}{\lambda}}^c \cdot n^{a-c+1} \cdot \ell^{3(\ell-b)}\cdot \ell^{2c} \\
    &= n \sum_{a=1}^{\ell} \sum_{b=1}^{\ell} \sum_{c=1}^{\ell} (pn)^{a} \lambda^b \parens*{\frac{27C^3\ell^2}{\lambda pn }}^c \cdot \ell^{2(\ell-b)} \\
    &\le n \ell \cdot \max\braces*{1, \parens*{\frac{27C^3\ell^2}{\lambda pn}}^{\ell} } \cdot \sum_{a=1}^{\ell} \sum_{b=1}^{\ell} (\lambda pn)^b \cdot (pn)^{a-b} \cdot \ell^{2(\ell-b)}
    \intertext{By \pref{obs:edge-bound} and the assumption on $\lambda$ from \pref{assumption:tv-bound}, we can bound the total edges $a$ and hence the below.}
    &\le n\ell\cdot \max\braces*{1, \parens*{\frac{27C^3\ell^2}{\sqrt{pn}}}^{\ell} } \cdot \sum_{a=1}^{\ell} \sum_{b=1}^{\ell} \parens*{\lambda pn}^b \cdot (pn)^{\frac{\ell-b}{2}} \cdot \ell^{2(\ell-b)} \\
    &\le n\ell^3 \cdot \max\braces*{1, \parens*{\frac{27C^3\ell^2}{\sqrt{pn}}}^{\ell} } \max\braces*{(\lambda pn)^{\ell}, (pn\ell^4)^{\ell/2}}
\end{align*}
By \pref{claim:trace-method},
\[
\Pr\left[\|\ol{A}_{\bG}\| \ge e^{\eps} \cdot \left(n^{1/\ell}\ell^{3/\ell}\right)\max\left\{1,\frac{27C^3\ell^2}{\sqrt{pn}}\right\} \max\left\{\lambda pn, \sqrt{pn}\ell^2 \right\}\right] \le \exp(-\eps \ell),
\]
and choosing $\ell=\log^2 n$, $\eps = \log\log n / \log n$, for any constant $\gamma$, we get:
\begin{equation}
    \norm*{A_{\bG} - \E A_{\bG}} \le (1+o(1)) \cdot \parens*{1+\frac{27C^3\log^4n}{\sqrt{pn}}} \cdot \max\braces*{ \lambda pn, \sqrt{pn}\log^4 n }\label{eq:final-trace}
\end{equation}
with probability at least $1-n^{-\gamma}$.
Now, by the assumption of the theorem, $pn \gg C^6\log^8 n$, so $1 + \frac{27C^3 \log^4 n}{\sqrt{pn}} = 1+o(1)$.
\end{proof}

\section{Analyzing the discrete walk with Brownian motion}  \label{sec:bm-to-discrete}
In this section we quantify the extent to which convolving a measure $\alpha$ over $\bbS^{d-1}$ with a spherical cap of measure $p$ brings $\alpha$ closer to uniform, provided that $\alpha$ satisfies a certain monotonicity property.
We now define this monotonicity property, establishing a couple of additional definitions along the way.

\begin{definition}
We say a distribution on $\bbS^{d-1}$ with relative density $\alpha$ is {\em symmetric about $y \in \bbS^{d-1}$} if there exists a function $\ell_{\alpha}:[-1,1] \to \R$ such that $\alpha(z) = \ell_{\alpha}(\iprod{z,y})$.
We note that $\ell_{\alpha}$ is also the density $\alpha$ projected onto the \ul{line} defined by $y$ relative to the projection of the uniform distribution, so that
\[
\ell_{\alpha}(t) 
= \frac{\int_{\bbS^{d-1}} \Ind[\iprod{z,y} = t] \cdot \ell_\alpha(t)\, d\rho(z)}{\int_{\bbS^{d-1}} \Ind[\iprod{z,y} = t] \, d\rho(z)}
= \frac{\int_{\bbS^{d-1}} \Ind[\iprod{z,y} = t] \cdot \alpha(z)\, d\rho(z)}{\int_{\bbS^{d-1}} \Ind[\iprod{z,y} = t] \, d\rho(z)}.
\]
\end{definition}
Notice that $\ell_\rho = 1$.
\begin{definition}
A measure $\alpha$ over $\bbS^{d-1}$ which is symmetric about some $y \in \bbS^{d-1}$ is said to be {\em spherically monotone} if $\ell_\alpha$ is monotone non-decreasing.
\end{definition}
An alternate characterization of spherically monotone distributions is that their relative densities can be written as a non-negative combination of spherical caps.
Recall that we use $\mcap_{p}(y)$ and $\mcap_{\ge\tau(p)}(y)$ interchangeably to denote the uniform measure over $\scap_p(y)$.
\begin{claim}  \torestate{ \label{claim:pancake-basis}
    A density $\alpha:\bbS^{d-1}\to \R$ which is symmetric about $y \in \bbS^{d-1}$ is spherically monotone if and only if there is a distribution $r$ on $[-1,1]$ such that:
    \[
        \alpha = \int \mcap_{\ge \theta} \, dr(\theta).
    \]
    We call the above way of writing $\alpha$ as the \emph{cap decomposition} of $\alpha$.
Further, $\ell_\alpha = \int \ell_{\mcap_{\ge\theta}}\, dr(\theta)$.}
\end{claim}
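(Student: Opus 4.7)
The plan is to prove the two directions of the equivalence separately, with the forward direction being essentially immediate from the definition of $\ell_{\mcap_{\ge\theta}}$ and the backward direction requiring an explicit construction of $r$ via a Lebesgue--Stieltjes / layer-cake representation of $\ell_\alpha$.

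First I would observe that for every $\theta\in[-1,1]$ the cap density takes the form $\ell_{\mcap_{\ge\theta}}(t) = \frac{1}{p_\theta}\,\Ind[t \ge \theta]$, where $p_\theta = \Pr_{\bz\sim\rho}[\iprod{\bz,y}\ge\theta]$, since $\mcap_{\ge\theta}(y)$ is the uniform measure of total mass $1$ on a set of $\rho$-measure $p_\theta$. Interchanging integral and projection, any mixture $\alpha=\int \mcap_{\ge\theta}\,dr(\theta)$ satisfies $\ell_\alpha = \int \ell_{\mcap_{\ge\theta}}\,dr(\theta)$, which is a non-negative combination of non-decreasing step functions and is therefore non-decreasing. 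This simultaneously yields the ``further'' assertion and the ``if'' direction.

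For the converse, given a symmetric density $\alpha$ with non-decreasing $\ell_\alpha$, I would define the measure
\[
    dr(\theta) \coloneqq \ell_\alpha(-1)\cdot\delta_{-1}(\theta) + p_\theta\,d\ell_\alpha(\theta),
\]
where $d\ell_\alpha$ is the (non-negative) Lebesgue--Stieltjes measure induced by the monotone non-decreasing function $\ell_\alpha$. Non-negativity of $r$ is immediate from $\ell_\alpha(-1)\ge 0$, $p_\theta\ge 0$, and monotonicity of $\ell_\alpha$. Substituting into the cap decomposition gives
\[
    \int \ell_{\mcap_{\ge\theta}}(t)\,dr(\theta) = \frac{\Ind[t\ge -1]}{p_{-1}}\cdot\ell_\alpha(-1) + \int_{-1}^{t} \frac{1}{p_\theta}\cdot p_\theta\,d\ell_\alpha(\theta) = \ell_\alpha(-1) + \bigl(\ell_\alpha(t) - \ell_\alpha(-1)\bigr) = \ell_\alpha(t),
\]
using $p_{-1}=1$, so the reconstructed density agrees with $\alpha$. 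It remains only to verify that $r$ is a probability measure, which will be an integration by parts: since $dp_\theta = -\psi_d(\theta)\,d\theta$ and $\int \ell_\alpha(\theta)\psi_d(\theta)\,d\theta = \int \alpha\,d\rho = 1$, one gets $\int_{-1}^1 p_\theta\,d\ell_\alpha(\theta) = 1 - \ell_\alpha(-1)$, so $r([-1,1]) = \ell_\alpha(-1) + (1-\ell_\alpha(-1)) = 1$.

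The only mildly subtle point is the atom at $\theta=-1$: without it, one would not be able to account for the constant ``floor'' $\ell_\alpha(-1)$ of the density, and the resulting $r$ would have total mass $1-\ell_\alpha(-1)$ rather than $1$. Conceptually, this point mass records that the whole sphere $\bbS^{d-1}=\scap_{\ge -1}(y)$ is itself a (degenerate) spherical cap whose relative density is the constant function $1$.
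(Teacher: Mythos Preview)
Your proposal is correct and takes essentially the same approach as the paper: both construct $r$ from the Lebesgue--Stieltjes measure $p_\theta\,d\ell_\alpha(\theta)$ and verify it is a probability measure, with the ``if'' direction following from monotonicity being preserved under non-negative combinations. The only cosmetic differences are that you make the atom at $\theta=-1$ explicit (the paper folds it into the distributional derivative) and you verify total mass via integration by parts, whereas the paper instead integrates the reconstructed $\alpha$ over $\bbS^{d-1}$ and applies Fubini.
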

We give the straightforward \hyperlink{proof:pancake-basis}{proof} later.
Notice that in writing the expression for $\ell_\alpha$ we have replaced $\ell_{\mcap_{\ge\theta}(y)}$ with $\ell_{\mcap_{\ge\theta}}$; this is because $\ell_{\mcap_{\ge\theta}(y)}$ does not depend on $y$.

\begin{definition}
Given a measure $\mu$ over $\bbS^{d-1}$ which is symmetric about some $y \in \bbS^{d-1}$, its \emph{spherical kernel} $P_{\mu}$ is the transition operator of the random walk on $\bbS^{d-1}$ where a single step, starting from $x\in \bbS^{d-1}$, samples $\ba \sim \ell_\mu$ and then walks from $x$ to a uniformly random $\bw \in \bbS^{d-1}$ satisfying $\iprod{\bw,x} = \ba$.
Equivalently, the density of $P_\mu \alpha$ is $\mu * \alpha$ for $*$ denoting convolution.
\end{definition}

\begin{remark}
    For brevity, we will use $P_p$ as a shorthand for $P_{\mcap_p}$.
\end{remark}

\noindent The main result of this section, proved \hyperlink{proof:decay}{after} developing some tools, is the following:
\begin{theorem}\label{thm:decay}
If a probability distribution $\alpha$ over $\bbS^{d-1}$ is symmetric and spherically monotone, then for any integer $k \ge 0$,
\[
    \dtv{P_p^k \alpha}{\rho} \le \left((1+o_{d\tau^2}(1))\cdot\tau\right)^k \cdot \sqrt{\tfrac{1}{2}\KL{\alpha}},
\]
where $o_{d\tau^2}(1)$ denotes a function that goes to $0$ as $d\tau^2 \to \infty$.
\end{theorem}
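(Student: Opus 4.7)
The plan is to first apply Pinsker's inequality,
\[
\dtv{P_p^k \alpha}{\Unif} \le \sqrt{\tfrac{1}{2}\, \KL{P_p^k \alpha}},
\]
reducing the theorem to establishing the KL contraction $\KL{P_p^k \alpha} \le ((1+o_{d\tau^2}(1))\tau)^{2k}\,\KL{\alpha}$.

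\textbf{Step 2: Per-step KL contraction.} The core task is then a single-step bound
\[
\KL{P_p \alpha} \le ((1+o_{d\tau^2}(1))\tau)^2 \cdot \KL{\alpha}
\]
for every symmetric spherically monotone $\alpha$. I would first verify that $P_p$ preserves both symmetry and spherical monotonicity about the same axis $y$: this follows from \pref{claim:pancake-basis}, because $\mcap_p$ is itself spherically monotone so a convolution of cap-mixtures remains a mixture of caps. Given this, the per-step contraction iterates to give the $k$-step bound.

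\textbf{Step 3: Comparison with Brownian motion.} To obtain the per-step contraction, I would compare $P_p$ to the Brownian-motion heat semigroup $U_t$ on $\bbS^{d-1}$ at the time $t = \frac{1}{d-1}\log\frac{1}{\tau}$. On the round sphere the log-Sobolev inequality, together with the fact that the spectral gap of the Laplacian equals $d-1$, yields
\[
\KL{U_t \alpha} \le e^{-2(d-1)t}\, \KL{\alpha} = \tau^2 \KL{\alpha}.
\]
The choice of $t$ is calibrated to match the geometry of the cap walk: one step of $P_p$ from $y$ concentrates most of its mass near $\Shell_{=\tau}(y)$, while Brownian motion at time $t$ has $\E[\iprod{U_t(y), y}] = e^{-(d-1)t} = \tau$, so in high dimension the two operators diffuse mass onto essentially the same shell.

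\textbf{Step 4: Transferring the contraction to $P_p$.} I would try to express the cap measure as a nonnegative mixture of heat kernels at times $s\ge t$, writing $\mcap_p = \int_{s\ge t} h_s\, dw(s)$ for some probability measure $w$ on $[t,\infty)$. If this is achievable, linearity of $P_\mu$ in $\mu$ gives $P_p \alpha = \int U_s \alpha\, dw(s)$, and convexity of KL in its first argument yields
\[
\KL{P_p \alpha} \le \int \KL{U_s \alpha}\, dw(s) \le \int_{s\ge t} e^{-2(d-1)s}\, dw(s) \cdot \KL{\alpha} \le \tau^2 \cdot \KL{\alpha}.
\]
Alternatively, since by \pref{claim:pancake-basis} any spherically monotone $\alpha$ decomposes as $\alpha = \int \mcap_{\ge \theta}\, dr(\theta)$, the one-dimensional densities $\ell_\alpha$ and $\ell_{P_p \alpha}$ could be compared to those of the heat kernel directly along the line through $y$, reducing the problem to a one-dimensional majorization check.

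\textbf{Main obstacle.} The hardest part will be making Step~4 rigorous. The cap density $\ell_{\mcap_p}(x) = \Ind[x\ge\tau]/p$ is identically zero on $[-1,\tau)$, whereas every heat-kernel density $\ell_{h_s}$ with $s<\infty$ is strictly positive on $[-1,1]$, so no exact nonnegative mixture representation of $\mcap_p$ using only heat kernels at times $s\ge t$ exists. I expect the argument must therefore use an approximate decomposition --- for instance, truncating a mixture representation that uses times slightly smaller than $t$ and absorbing the truncation error, or coupling a single cap step with a Brownian sample and bounding the coupling defect via the concentration of both $\mcap_p$ and $h_t$ near $\Shell_{=\tau}(y)$. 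The lower-order slack $o_{d\tau^2}(1)$ in the theorem presumably quantifies this defect and vanishes precisely because this shell concentration sharpens as $d\tau^2 \to \infty$.
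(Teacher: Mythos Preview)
Your overall plan---Pinsker, then comparison with the heat semigroup at time $t \approx \frac{1}{d-1}\log\frac{1}{\tau}$---matches the paper's. But Step~4 is a genuine gap, and neither proposed workaround closes it.

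The decomposition $\mcap_p = \int_{s\ge t} h_s\, dw(s)$ is impossible, as you note, and there is no useful approximate version either: at time $t$ the heat kernel has mean $\iprod{V_0,\bV_t}=\tau$ with fluctuations of order $1/\sqrt{d}$ (\pref{lem:BM-time-s}), so $h_t$ puts mass $\Theta(1)$ on $\{x<\tau\}$ where $\ell_{\mcap_p}$ vanishes. Any nonnegative mixture of $\{h_s\}_{s\ge t}$ therefore differs from $\mcap_p$ by $\Theta(1)$ in TV, and absorbing a constant error per step destroys the $\tau^k$ rate. Your ``one-dimensional majorization'' alternative gestures in the right direction but does not specify the comparison or its orientation.

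The paper avoids this by reversing the inequality and changing the metric. It does \emph{not} prove per-step KL contraction for $P_p$. Instead it shows (\pref{claim:combo-dom}) that the heat kernel stochastically dominates a mixture,
\[
\ell_{\beta_T} \sd (1-2\delta)\,\ell_{\mcap_p} + 2\delta\,\ell_\rho,\qquad \delta=\exp\bigl(-\Theta(\tau\sqrt{d})\bigr),
\]
with $T$ tuned so the Brownian mean sits slightly above $\tau$. Stochastic domination is preserved under spherically monotone kernels (\pref{lem:op-act}), so by induction $\ell_{U_T^k\alpha}\sd\ell_{P_{p,\delta}^k\alpha}$ where $P_{p,\delta}=(1-2\delta)P_p+2\delta P_1$. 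For spherically monotone densities, stochastic domination toward the pole implies larger TV distance to uniform (\pref{lem:stoc-dom-tv}), giving $\dtv{P_{p,\delta}^k\alpha}{\rho}\le\dtv{U_T^k\alpha}{\rho}$; the right side is then bounded by Pinsker plus log-Sobolev decay for $U_{kT}$. A short convexity argument (\pref{lem:TV-relation}) extracts $\dtv{P_p^k\alpha}{\rho}$ from the mixture at the cost of $(1-2\delta)^{-k}$, which contributes the $1+o_{d\tau^2}(1)$ factor.

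The idea you are missing is that stochastic domination---not KL or a convex decomposition of the cap---is the right comparison: it points in the direction that is actually true (heat kernel dominates cap-plus-noise), it composes under the relevant operators, and it controls TV to uniform for monotone densities.
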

\noindent As an immediate corollary, we obtain the following version which can be used in conjunction with \pref{thm:random-restriction} to conclude a bound on the second eigenvalue of random geometric graphs.
\begin{corollary}   \label{cor:decay-useful}
    For any probability distribution $\alpha$ over $\bbS^{d-1}$,
    \[
        \dtv{P_p^k \alpha}{\rho} \le \parens*{\parens*{1+o_{d\tau^2}(1)}\cdot \tau}^{k-1} \cdot \sqrt{\frac{1}{2}\cdot\log\frac{1}{p}}.
    \]
\end{corollary}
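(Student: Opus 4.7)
The plan is to reduce to \pref{thm:decay} by absorbing one application of $P_p$ into the initial distribution: an arbitrary $\alpha$ is a mixture of point masses $\delta_x$, and a single step of the walk turns each $\delta_x$ into the uniform measure on the cap $\scap_p(x)$, which is symmetric and spherically monotone. This lets us invoke \pref{thm:decay} with only $k-1$ remaining steps.

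First I would write $\alpha = \int \delta_x\,d\alpha(x)$ and use linearity of $P_p$ to obtain
\[
    P_p^k \alpha \;=\; P_p^{k-1}(P_p \alpha) \;=\; \int P_p^{k-1} \mcap_p(x)\, d\alpha(x),
\]
since $P_p \delta_x = \mcap_p(x)$ by definition of the spherical kernel. By joint convexity of total variation distance (taking the second argument to be $\rho$ identically),
\[
    \dtv{P_p^k \alpha}{\rho} \;\le\; \int \dtv{P_p^{k-1} \mcap_p(x)}{\rho}\, d\alpha(x).
\]

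Next I would verify that each $\mcap_p(x)$ satisfies the hypotheses of \pref{thm:decay}: by definition it is symmetric about $x$, and it is trivially spherically monotone since its cap decomposition in \pref{claim:pancake-basis} is obtained by taking $r$ to be the point mass at $\tau(p)$. Applying \pref{thm:decay} with initial distribution $\mcap_p(x)$ and $k-1$ steps then gives
\[
    \dtv{P_p^{k-1} \mcap_p(x)}{\rho} \;\le\; \bigl((1+o_{d\tau^2}(1))\cdot \tau\bigr)^{k-1}\cdot \sqrt{\tfrac{1}{2}\KL{\mcap_p(x)}}.
\]

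Finally I would compute the KL divergence. Since $\mcap_p(x)$ has Radon--Nikodym derivative $\tfrac{1}{p}\Ind[\scap_p(x)]$ with respect to $\rho$, we get $\KL{\mcap_p(x)} = \tfrac{1}{p}\log\tfrac{1}{p}\cdot p = \log\tfrac{1}{p}$. The resulting bound is independent of $x$, so it pulls out of the integral against $d\alpha(x)$ (which has total mass $1$), yielding the corollary. There is no real obstacle — the whole argument is just a ``one step makes the distribution nice'' reduction combined with convexity of TV — since all of the analytic work is already carried out in the proof of \pref{thm:decay}.
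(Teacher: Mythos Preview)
Your proposal is correct and follows essentially the same approach as the paper's proof: decompose $\alpha$ as a mixture of point masses, use $P_p\delta_x = \mcap_p(x)$ to absorb one step, then apply \pref{thm:decay} with $k-1$ steps together with $\KL{\mcap_p(x)} = \log\tfrac{1}{p}$. The only cosmetic difference is that you phrase the reduction via convexity of total variation while the paper says ``triangle inequality,'' but the content is identical.
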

\begin{proof} 
We write $\alpha$ as a convex combination of (symmetric, spherically monotone) point masses $\delta_x$.
Then we apply \pref{thm:decay} in conjunction with the triangle inequality and the fact that $P_p^k \delta_x = P_p^{k-1}\mcap_p(x)$ and $\KL{\mcap_p(x)} = \log \frac{1}{p}$.
\end{proof}

Our proof of \pref{thm:decay} will relate the action of $P_p$ to the action of the {\em Brownian motion kernel}.
\begin{definition}[Brownian motion on $\bbS^{d-1}$]
    Let $(\bB_t)_{t\ge 0}$ be standard Brownian motion in $\R^d$.
    We define Brownian motion on $\bbS^{d-1}$ starting at some point $V_0\in\bbS^{d-1}$ as the process $(\bV_t)_{t\ge 0}$ via the following stochastic differential equation:
    \[
        d\bV_t = \sqrt{2} \parens*{\Id - \bV_t \bV_t^{\top}}\, d\bB_t - (d-1)\bV_t \, d t.
    \]
\end{definition}

\begin{definition}
For any $t \ge 0$, let the {\em time-$t$ Brownian motion kernel} $U_t$ be the transition operator of a random walk on $\bbS^{d-1}$ where a single step samples runs a time-$t$ Brownian motion on the sphere.
Equivalently, $U_t = P_{\beta_t}$ for $\beta_t$ the (spherically symmetric) density of a $t$-step Brownian motion.
\end{definition}

For any $y \in \bbS^{d-1}$, $P_p y$ is highly concentrated near the boundary of the cap of measure $p$ around $y$.
As we will show in \pref{sec:bm}, the same is true for $U_t y$; it is highly concentrated near the boundary of a cap of measure $q = q(t)$ around $y$.
So, choosing $T>0$ so that $q(T) \approx p$, we will argue that $U_{T}$ and $P_p$ have similar action on spherically monotone measures.

We can then take advantage of the contractive properties of $U_T$ in order to prove that $P_p$ is contractive.
The Brownian motion kernel satisfies the following mixing condition  (which can be obtained, e.g., as a corollary of \cite[Theorem 5.2.1]{BGL14} and \cite[Corollary 2]{DEKL14}):
\begin{theorem}[{Mixing of Brownian motion on $\bbS^{d-1}$}] \label{lem:log-Sobo-to-KL}
    For any probability distribution $\phi$ on $\bbS^{d-1}$,
    \[
        \dkl(U_t\phi\,\|\,\Unif) \le \exp(-2(d-1)t)\cdot \dkl(\phi\,\|\,\Unif)
    \]
\end{theorem}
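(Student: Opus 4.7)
The plan is to deduce the KL-contraction from the sharp logarithmic Sobolev inequality (LSI) on $\bbS^{d-1}$ via the classical entropy-dissipation argument of Bakry--Emery--Gross. First I would identify the infinitesimal generator of $U_t$: reading off from the SDE $d\bV_t = \sqrt{2}(\Id - \bV_t \bV_t^{\top})\,d\bB_t - (d-1)\bV_t\,dt$ via Ito's formula, the generator acts on smooth test functions as the Laplace--Beltrami operator $\Delta$ on $\bbS^{d-1}$ (the $\sqrt{2}$ in the diffusion coefficient is exactly what cancels the $\tfrac{1}{2}$ that would otherwise appear in front of $\Delta$, matching the normalization needed for the rate $e^{-2(d-1)t}$ below).

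Next I would compute the time derivative of the relative entropy along the heat flow. Writing $\phi_t$ for the density of $U_t\phi$ with respect to $\Unif$ (and handling the lack of smoothness at $t=0$ by a standard approximation, since $U_t$ regularizes instantly for $t > 0$), self-adjointness of $\Delta$ in $L^2(\Unif)$ together with $\partial_t \phi_t = \Delta \phi_t$ gives
\[
\frac{d}{dt}\dkl(U_t\phi\,\|\,\Unif) \;=\; \int (1+\log\phi_t)\,\Delta\phi_t\,d\Unif \;=\; -\int \frac{|\nabla\phi_t|^2}{\phi_t}\,d\Unif \;=\; -4\int |\nabla\sqrt{\phi_t}|^2\,d\Unif.
\]
I would then invoke the sharp spherical LSI in the form $\int f^2\log f^2\,d\Unif \le \tfrac{2}{d-1}\int |\nabla f|^2\,d\Unif$ (for $f$ with $\int f^2\,d\Unif = 1$); applied with $f = \sqrt{\phi_t}$ this bounds $\dkl(U_t\phi\,\|\,\Unif)$ by $\tfrac{2}{d-1}\int |\nabla\sqrt{\phi_t}|^2\,d\Unif$. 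Combining with the entropy-dissipation identity yields $\frac{d}{dt}\dkl(U_t\phi\,\|\,\Unif) \le -2(d-1)\,\dkl(U_t\phi\,\|\,\Unif)$, and Gronwall's lemma closes the loop.

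The main obstacle is obtaining the LSI with the \emph{sharp} constant $d-1$. The naive Bakry--Emery criterion applied to the Ricci lower bound $\mathrm{Ric}_{\bbS^{d-1}} \ge (d-2)\,g$ only gives a curvature-dimension condition $CD(d-2,\infty)$, hence an LSI constant of $d-2$ and the weaker rate $e^{-2(d-2)t}$. One really needs to exploit the finite-dimensional curvature--dimension condition $CD(d-2,d-1)$ enjoyed by $\bbS^{d-1}$, together with a refined $\Gamma_2$-calculus that leverages the dimension parameter $N = d-1$ to upgrade the LSI constant from $d-2$ to the sharp value $d-1$. This is precisely what \cite[Theorem 5.2.1]{BGL14} (the sharp spherical LSI) and \cite[Corollary 2]{DEKL14} (the $CD(\rho,N)$-to-LSI upgrade under finite dimension) provide. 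With the sharp LSI cited as a black box, the remainder of the argument is routine.
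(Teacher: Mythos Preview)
Your proposal is correct and matches the paper's approach exactly: the paper does not give its own proof of this statement but simply cites it as a corollary of \cite[Theorem~5.2.1]{BGL14} and \cite[Corollary~2]{DEKL14}, the very references you invoke for the sharp spherical LSI, and the entropy-dissipation/Gronwall wrap-up you spell out is the standard route from an LSI with constant $d-1$ to the claimed $e^{-2(d-1)t}$ decay. If anything, you have supplied more detail than the paper does.
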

As a corollary of the above and Pinsker's inequality, for any $t > 0$ and measure $\alpha$ over $\bbS^{d-1}$,
\begin{align}
    2\left(\dtv{U_t\alpha}{\rho}\right)^2 \le \dkl(U_{t}\alpha\,\|\,\rho) \leq \exp(-2(d-1)t)\cdot \dkl(\alpha\,\|\,\rho).\label{eq:log-sob-cor}
\end{align}

Armed with \pref{eq:log-sob-cor}, we can pass to working exclusively with the 1-dimensional projection of the measures in question onto the direction $y$.
\begin{claim}\label{claim:marginal}
    For any spherically symmetric distribution with relative density $\gamma$, $\dtv{\gamma}{\rho} = \dtv{\ell_\gamma}{\ell_\rho}$.
\end{claim}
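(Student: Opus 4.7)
The plan is to reduce both sides to the same one-dimensional integral against the density $\BetaPDF_d$ of $\Beta{d}$, which is the distribution of $\iprod{Z,y}$ when $Z\sim\Unif$.

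First I would recall the standard expression for total variation in terms of densities: since $\gamma$ is the relative density of the distribution with respect to $\Unif$, and $\ell_\Unif\equiv 1$,
\[
\dtv{\gamma}{\Unif}=\tfrac{1}{2}\int_{\bbS^{d-1}}|\gamma(z)-1|\,d\Unif(z).
\]
Using the spherical symmetry assumption $\gamma(z)=\ell_\gamma(\iprod{z,y})$, the integrand depends on $z$ only through the coordinate $t=\iprod{z,y}$. I would then apply the pushforward/change of variables under the map $z\mapsto\iprod{z,y}$: since the law of $\iprod{Z,y}$ under $Z\sim\Unif$ is $\Beta{d}$ with density $\BetaPDF_d$, we obtain
\[
\dtv{\gamma}{\Unif}=\tfrac{1}{2}\int_{-1}^{1}|\ell_\gamma(t)-1|\,\BetaPDF_d(t)\,dt.
\]

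Next, I would unpack the right-hand side of the claim. The objects $\ell_\gamma$ and $\ell_\Unif$ are the relative densities (with respect to $\BetaPDF_d$) of the pushforwards of $\gamma$ and $\Unif$ under the projection $z\mapsto\iprod{z,y}$, so as probability densities on $[-1,1]$ they are $\ell_\gamma(t)\BetaPDF_d(t)$ and $\BetaPDF_d(t)$ respectively. Applying the same density formula for total variation,
\[
\dtv{\ell_\gamma}{\ell_\Unif}=\tfrac{1}{2}\int_{-1}^{1}\bigl|\ell_\gamma(t)\BetaPDF_d(t)-\BetaPDF_d(t)\bigr|\,dt=\tfrac{1}{2}\int_{-1}^{1}|\ell_\gamma(t)-1|\,\BetaPDF_d(t)\,dt,
\]
which matches the expression above, proving the claim.

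There is no real obstacle here; the only point to be careful about is the convention that $\ell_\gamma$ denotes a density \emph{relative to} $\BetaPDF_d$ rather than a raw density on $[-1,1]$, so that the $\BetaPDF_d$ factor correctly appears in the TV integral on the line. Once this is pinned down, the identity is just the standard fact that TV distance is preserved when passing to a sufficient statistic — in this case, the projection $\iprod{\cdot,y}$, on which both $\gamma$ and $\Unif$ are measurable by the symmetry assumption.
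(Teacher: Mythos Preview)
Your proof is correct and follows essentially the same approach as the paper: express total variation as an $L^1$ integral of the relative density, use spherical symmetry to reduce the sphere integral to a one-dimensional integral against the pushforward measure $\BetaPDF_d$, and identify this with $\dtv{\ell_\gamma}{\ell_\rho}$. If anything, you are slightly more careful than the paper in distinguishing the relative density $\ell_\gamma$ from the actual pushforward density $\ell_\gamma\cdot\BetaPDF_d$, which is a helpful clarification.
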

\begin{proof}
    We express the total variation distance in terms of the $\ell_1$ norm:
    \begin{align*}
        2\dtv{\gamma}{\rho}
        = \int_{z\in\bbS^{d-1}} \left|\gamma(z) -1\right| \, d\rho(z)
        &= \int_{z\in\bbS^{d-1}}  \left|\ell_\gamma(\iprod{z,y}) -1\right| \, d\rho(z)\\
        &= \int_{t\in[-1,1]} \left|\ell_{\gamma}(t) -1\right| \, d\ell_\rho(t)
        = 2\dtv{\ell_\gamma}{\ell_\rho}.\qedhere
        \end{align*}
\end{proof}

Note that if $\alpha$ is spherically symmetric about $y$ then so is $U_t \alpha$, by the rotational invariance of Brownian Motion on the sphere. 
Hence combining \pref{claim:marginal} with \pref{eq:log-sob-cor}, we have that
\begin{align*}
\dtv{\ell_{U_t\alpha}}{\ell_\rho} \le \sqrt{\frac{1}{2} \cdot \exp(-(d-1)t)\cdot \dkl(\alpha\|\rho)}.
\end{align*}

Now, we'll show that for a well-chosen $T>0$, $\ell_{U_T\alpha}$ nearly stochastically dominates $\ell_{P_p \alpha}$, and that $P_p \alpha$ and $U_T \alpha$ are both spherically monotone, and that this furthermore implies that $\dtv{\ell_{U_T\alpha}}{\ell_\rho}$ and $\dtv{\ell_{P_p \alpha}}{\ell_\rho}$ are related.
Specifically, we show the following lemmas:

\begin{lemma}  \torestate{ \label{lem:stoc-dom-tv}
    If $\nu$ and $\mu$ are spherically monotone densities and $\ell_\nu\sdl \ell_\mu$, then\footnote{As will be apparent from the proof, one may replace $\ell_\nu,\ell_\mu$ with any monotone non-decreasing densities on $[-1,1]$.}
    \[
        \dtv{\ell_\nu}{\ell_\rho} \le \dtv{\ell_\mu}{\ell_\rho}.
    \]}
\end{lemma}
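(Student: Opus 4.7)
The plan is to reduce each total variation distance to a one-sided tail probability using monotonicity, and then to invoke the stochastic domination directly on those tails.

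Since $\ell_\nu$ is monotone non-decreasing on $[-1,1]$ and both $\ell_\nu$ and $\ell_\rho \equiv 1$ are probability densities with respect to the same base measure (the measure obtained by projecting $\rho$ onto the line through $y$), the two densities must cross: there exists a threshold $t^\ast \in [-1,1]$ such that $\ell_\nu(t) \le 1$ for $t < t^\ast$ and $\ell_\nu(t) \ge 1$ for $t \ge t^\ast$. Using the representation of total variation distance recalled in \pref{sec:prelim} (applied with $\rho$ there set to the projected base measure), this gives
\[
    \dtv{\ell_\nu}{\ell_\rho} \;=\; \int_{t^\ast}^1 (\ell_\nu(t) - 1)\, d\ell_\rho(t) \;=\; \Pr_{X \sim \ell_\nu}[X \ge t^\ast] - \Pr_{X \sim \ell_\rho}[X \ge t^\ast].
\]

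Applying the stochastic domination $\ell_\nu \sdl \ell_\mu$ to the monotone indicator of $\{X \ge t^\ast\}$ yields $\Pr_{X \sim \ell_\mu}[X \ge t^\ast] \ge \Pr_{X \sim \ell_\nu}[X \ge t^\ast]$. Combining this with the variational characterization of total variation distance as the supremum over measurable events of the difference in probabilities, applied specifically to the event $\{X \ge t^\ast\}$, yields
\[
    \dtv{\ell_\mu}{\ell_\rho} \;\ge\; \Pr_{X \sim \ell_\mu}[X \ge t^\ast] - \Pr_{X \sim \ell_\rho}[X \ge t^\ast] \;\ge\; \Pr_{X \sim \ell_\nu}[X \ge t^\ast] - \Pr_{X \sim \ell_\rho}[X \ge t^\ast] \;=\; \dtv{\ell_\nu}{\ell_\rho},
\]
which is the desired bound.

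I do not anticipate a substantial obstacle: the core observation is that monotonicity of $\ell_\nu$ collapses its total variation distance from $\ell_\rho$ to a one-sided tail deviation, which is precisely the kind of functional on which stochastic domination supplies inequalities. Notably, the argument never uses monotonicity of $\ell_\mu$, consistent with the footnote's remark that the lemma extends to arbitrary monotone non-decreasing densities on $[-1,1]$.
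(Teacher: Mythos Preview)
Your proof is correct and follows essentially the same approach as the paper's: both reduce $\dtv{\ell_\nu}{\ell_\rho}$ to a single tail deviation at the crossing point using monotonicity of $\ell_\nu$, then invoke stochastic domination on that tail. The only minor difference is that the paper phrases everything via CDFs and uses monotonicity of $\ell_\mu$ to identify $\dtv{\ell_\mu}{\ell_\rho}$ with $\max_s\bigl(G_{\ell_\rho}(s)-G_{\ell_\mu}(s)\bigr)$, whereas you bypass that step with the general variational characterization of total variation distance---so your argument in fact does not need monotonicity of $\ell_\mu$ (a slight strengthening beyond what the footnote asserts).
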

We prove the lemma \hyperlink{proof:stoc-dom-tv}{below}, but intuitively, a spherically monotone distribution can be realized as a non-negative combination of spherical caps; the uniform distribution has all of its mass on the largest cap (of measure $1$).
If $\ell_\nu \sdl \ell_\mu$, then the total probability mass within any radius $\theta$ of the mode of $\mu$ exceeds that of $\nu$, witnessing a larger total variation distance.

\begin{lemma}\torestate{\label{lem:op-act}
Let $\mu,\nu,\alpha$ be spherically monotone densities over $\bbS^{d-1}$, with $\ell_\nu \sdl \ell_\mu$.
Then
\begin{enumerate}
\item \label{part:spherical-mon} $P_\mu \alpha$ is spherically monotone (as is $P_\nu \alpha$),
\item \label{part:stoc-dom-dists} $\ell_{P_{\alpha} \nu} \sdl \ell_{P_{\alpha} \mu}$, and
\item \label{part:stoc-dom-ops} $\ell_{P_\nu \alpha} \sdl \ell_{P_\mu \alpha}$.
\end{enumerate}}
\end{lemma}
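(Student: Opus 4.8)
The plan is to reduce all three parts to a single geometric fact — that the $\rho$-measure of the intersection of two spherical caps is monotone in the inner product of their centers — together with the cap decomposition of \pref{claim:pancake-basis}. I would begin with two routine preliminaries. A short computation shows $P_\mu$ has integral kernel $P_\mu(x,w)=\ell_\mu(\iprod{x,w})$ with respect to $\rho$; this makes $P_\bullet$ linear in the profile $\ell_\bullet$ (so $\ell_\mu=\int\ell_{\mcap_{\ge\phi}}\,dr_\mu(\phi)$ gives $P_\mu=\int P_{\mcap_{\ge\phi}}\,dr_\mu(\phi)$) and shows $P_\mu$ commutes with rotations, so $P_\mu\alpha$ is symmetric about the center of $\alpha$ whenever $\alpha$ is. Slicing the sphere by $s=\iprod{x,y}$ (for $\alpha$ symmetric about $y$) shows that, for $t=\iprod{w,y}$, the conditional law of $\iprod{x,w}$ equals that of $st+\sqrt{(1-s^2)(1-t^2)}\,H$ with $H\sim\Beta{d-1}$, hence $\ell_{P_\mu\alpha}(t)=\E_{s\sim\ell_\alpha\BetaPDF_{d},\,H\sim\Beta{d-1}}\bigl[\ell_\mu\bigl(st+\sqrt{(1-s^2)(1-t^2)}\,H\bigr)\bigr]$. (Throughout, $p_\phi\defeq\rho(\scap_{\ge\phi}(\cdot))$.)

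Next I would establish the geometric fact: for all $\phi,\theta\in[-1,1]$ and unit vectors $u,v$, $\rho\bigl(\scap_{\ge\phi}(u)\cap\scap_{\ge\theta}(v)\bigr)$ is non-decreasing in $\iprod{u,v}$. Since $\scap_{\ge\theta}(v)^{c}=\scap_{\ge-\theta}(-v)$ up to a null set, flipping a threshold's sign replaces this measure by $p_\phi$ minus itself (and $\iprod{u,v}$ by $-\iprod{u,v}$), so I may assume $\phi,\theta\ge0$. Placing $u,v$ in a common $2$-plane at angles $\pm\omega$ to their bisector (so $\iprod{u,v}=\cos2\omega$) and using polar coordinates $(r,\xi)$ on that plane, the two cap conditions become $r\cos(\xi-\omega)\ge\phi$ and $r\cos(\xi+\omega)\ge\theta$; for each $r$ the admissible $\xi$ form the intersection of two arcs centered at $\pm\omega$ with half-widths $\arccos(\phi/r),\arccos(\theta/r)\in[0,\pi/2]$ (hence no wrap-around), whose length is continuous and piecewise linear in $\omega\in[0,\pi/2]$ with slopes in $\{0,-1,-2\}$, hence non-increasing; integrating against the positive radial weight finishes it. I expect this to be the main obstacle — everything else is bookkeeping with cap decompositions and stochastic dominance.

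For part (1), I would decompose $\alpha=\int\mcap_{\ge\psi}(y)\,dr_\alpha(\psi)$ and $\mu$ into caps, so that by linearity $P_\mu\alpha=\iint P_{\mcap_{\ge\phi}}\mcap_{\ge\psi}(y)\,dr_\mu(\phi)\,dr_\alpha(\psi)$. Each $P_{\mcap_{\ge\phi}}\mcap_{\ge\psi}(y)$ is symmetric about $y$ (rotation invariance) with profile $t\mapsto\tfrac1{p_\phi p_\psi}\rho\bigl(\scap_{\ge\phi}(w)\cap\scap_{\ge\psi}(y)\bigr)$ at $t=\iprod{w,y}$, which is non-decreasing by the geometric fact; a non-negative combination of symmetric, spherically monotone densities is again symmetric and spherically monotone, so $P_\mu\alpha$ (and likewise $P_\nu\alpha$) is spherically monotone.

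Finally, I would observe that parts (2) and (3) coincide: rotation-invariant integral operators on $\bbS^{d-1}$ commute (being simultaneously diagonalized by spherical harmonics), so $Q\defeq P_\alpha P_\nu=P_\nu P_\alpha$, and since $\nu=P_\nu\delta_{z}$ ($z$ the center of $\nu$) and $\alpha=P_\alpha\delta_{z_\alpha}$ we get $P_\alpha\nu=Q\delta_{z}$ and $P_\nu\alpha=Q\delta_{z_\alpha}$, rotations of one another, so $\ell_{P_\alpha\nu}=\ell_{P_\nu\alpha}$ (similarly for $\mu$); thus it suffices to prove (2). Decomposing $\alpha$ into caps and using that mixtures preserve $\sdl$, it is enough to show $\ell_{P_{\mcap_{\ge\phi}}\nu}\sdl\ell_{P_{\mcap_{\ge\phi}}\mu}$ for each $\phi$. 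From the slicing formula, $\ell_{P_{\mcap_{\ge\phi}}\nu}(t)=\tfrac1{p_\phi}\E_{s\sim\ell_\nu\BetaPDF_{d}}[M_\phi(s,t)]$ with $M_\phi(s,t)=\Pr_{H\sim\Beta{d-1}}[\,st+\sqrt{(1-s^2)(1-t^2)}\,H\ge\phi\,]$, so for any $x$ the CDF of $\ell_{P_{\mcap_{\ge\phi}}\nu}$ at $x$ equals $\tfrac1{p_\phi}\E_{s\sim\ell_\nu\BetaPDF_{d}}\bigl[\Lambda_\phi(s,x)\bigr]$, where $\Lambda_\phi(s,x)\defeq\int_{-1}^x M_\phi(s,t)\BetaPDF_{d}(t)\,dt=\rho\bigl(\scap_{\ge\phi}(x')\cap\scap_{\ge-x}(-z)\bigr)$ for any $x'$ with $\iprod{x',z}=s$ (the same function arises for $\mu$, the center being immaterial by rotation invariance). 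The geometric fact, applied with centers $x'$ and $-z$, makes $\Lambda_\phi(\cdot,x)$ non-increasing in $s$; since $\ell_\nu\sdl\ell_\mu$ and non-increasing test functions reverse $\sdl$, $\E_{s\sim\ell_\nu\BetaPDF_{d}}[\Lambda_\phi(\cdot,x)]\ge\E_{s\sim\ell_\mu\BetaPDF_{d}}[\Lambda_\phi(\cdot,x)]$ for every $x$, i.e.\ $\ell_{P_{\mcap_{\ge\phi}}\nu}\sdl\ell_{P_{\mcap_{\ge\phi}}\mu}$, which is what we needed.
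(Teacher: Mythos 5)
Your proposal is correct, and while its skeleton coincides with the paper's (cap decompositions via \pref{claim:pancake-basis}, reduction to single caps, and commutativity of zonal convolutions to identify $\ell_{P_\alpha\nu}$ with $\ell_{P_\nu\alpha}$ for part (3)), the core of your argument is genuinely different. The paper handles the single-cap case of part (2) by a coupling: it couples the inner products $\ba_\mu \ge \ba_\nu$ and then \emph{asserts} that a smaller angle to $y$ makes the cap-intersection measure larger, just as in part (1) it asserts without proof that the ratio $\ell_{P_{\ge\theta}\mcap_{\ge\psi}}(t)$ is increasing in $t$. You instead isolate exactly that geometric fact --- monotonicity of $\rho\bigl(\scap_{\ge\phi}(u)\cap\scap_{\ge\theta}(v)\bigr)$ in $\iprod{u,v}$ --- and prove it by the sign-flip reduction plus the two-plane polar-coordinate computation (the arc-intersection length is piecewise linear and non-increasing in the half-angle, then integrate the rotationally symmetric radial weight), and you then deduce both (1) and (2) analytically: (1) because the cap-on-cap profile is literally this intersection measure, and (2) by writing the CDF of $\ell_{P_{\mcap_{\ge\phi}}\nu}$ as $\frac1{p_\phi}\E_{s}[\Lambda_\phi(s,x)]$ with $\Lambda_\phi(\cdot,x)$ non-increasing and using that monotone test functions respect stochastic dominance; you also prove (2)$\Leftrightarrow$(3) first rather than deriving (3) from (2) at the end. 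What each approach buys: the paper's coupling is shorter and more intuitive but leaves the key monotonicity claims as geometric assertions, whereas your route is more self-contained and makes the one nontrivial ingredient explicit and verified, at the cost of the coordinate computation. Only cosmetic care is needed at degenerate configurations (radii $r<\max(\phi,\theta)$ giving empty arcs, $u=\pm v$, the measure-zero wrap-around case $\phi=\theta=0$, $\omega=\pi/2$), all of which your argument absorbs trivially.
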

We will prove this lemma \hyperlink{proof:op-act}{below} as well; the crux of the proof of \pref{part:spherical-mon} is to realize that because $\alpha,\mu$ are spherically monotone, they can be decomposed as a non-negative combination of spherical caps. 
Then, by linearity of $P_\mu$ and by the commutativity of convolution, \pref{part:spherical-mon} reduces to showing that the convolution of two spherical caps is spherically monotone (this is a statement that we find intuitive, and it is easy to verify by directly examining the expression for $\ell_{P_{\ge \theta}\mcap_{\ge \psi}}$). 
To show \pref{part:stoc-dom-dists}, we observe that by decomposing $\alpha$ in its cap decomposition, it is then enough to compare $\ell_{P_{\ge \theta}\nu}$ with $\ell_{P_{\ge \theta}\mu}$ for each $\theta$.
Here, when $\ell_\mu \sd \ell_\nu$, a straightforward coupling demonstrates that $\ell_{P_{\ge \theta}\mu} \sd \ell_{P_{\ge \theta}\nu}$.
\pref{part:stoc-dom-ops} is a consequence of \pref{part:stoc-dom-dists} and commutativity of convolution.

Our aim is to now apply these lemmas with $\nu \approx \mcap_p(y)$  and $\mu = \beta_T$ (note that $P_\nu \alpha = P_p \alpha$ and $P_\mu \alpha = U_T \alpha$).
We now verify that these densities meet the conditions above.
The density $\mcap_p(y)$ is spherically monotone because it is the same as $\rho$ conditoned on being closer to $y$; we now show that $\beta_t$ is indeed spherically monotone.
\begin{claim}\label{claim:bm-monotone}
The density of a time-$t$ Brownian motion, $\beta_t$, is spherically monotone.
\end{claim}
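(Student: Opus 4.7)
The plan is to combine rotational symmetry with a semigroup induction that reduces the claim to a small-time estimate, invoking \pref{lem:op-act}(\ref{part:spherical-mon}) to propagate spherical monotonicity under the Brownian semigroup.

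First, I would verify that $\beta_t$ is symmetric about the starting point $y$: the Brownian motion SDE on $\bbS^{d-1}$ is equivariant under every rotation fixing $y$, so the law of $\bV_t$ (and hence its density) depends on $z$ only through $\iprod{z,y}$. Thus $\ell_{\beta_t}$ is well-defined, and what remains is to prove that it is monotone non-decreasing.

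For the monotonicity, I would exploit the semigroup property of Brownian motion: for any positive integer $N$, $\beta_t$ equals the $N$-fold convolution of $\beta_{t/N}$ with itself, i.e.\ $\beta_t = P_{\beta_{t/N}}^{\,N-1}\,\beta_{t/N}$. By \pref{lem:op-act}(\ref{part:spherical-mon}), if $\beta_{t/N}$ is spherically monotone then applying $P_{\beta_{t/N}}$ preserves spherical monotonicity, so inducting $N{-}1$ times yields that $\beta_t$ is spherically monotone. It therefore suffices to prove the base case: $\beta_s$ is spherically monotone for all sufficiently small $s > 0$.

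For the base case I would appeal to the small-time Gaussian asymptotic of the heat kernel on $\bbS^{d-1}$: as $s \to 0^+$,
\[
\beta_s(z) \;=\; \frac{1}{(4\pi s)^{(d-1)/2}}\exp\!\left(-\tfrac{\dist_{\bbS^{d-1}}(y,z)^2}{4s}\right)\bigl(1 + O(s)\bigr),
\]
where $\dist_{\bbS^{d-1}}(y,z) = \arccos(\iprod{z,y})$. The leading factor is manifestly a monotone decreasing function of the geodesic distance, equivalently monotone non-decreasing in $\iprod{z,y}$. The correction is smooth in $\iprod{z,y}$ and uniformly of lower order, so for all sufficiently small $s$, $\ell_{\beta_s}$ inherits monotonicity from the leading Gaussian term. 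The main obstacle is making this base case fully rigorous: one must rule out that the subleading corrections create any local non-monotonicities. Clean options are either the Minakshisundaram--Pleijel expansion on a compact Riemannian manifold (which gives smooth, uniformly bounded lower-order terms), or an explicit Gegenbauer/spherical-harmonic computation in which the $k=0,1$ modes dominate for small $s$ and higher modes are exponentially damped. An attractive alternative route that bypasses this difficulty is to instead write $\beta_t$ as a weak limit of $P_p^{\,N}\delta_y$ under the appropriate diffusion rescaling; here the base case is trivial since $P_p\delta_y = \mcap_p(y)$ is literally a cap, the inductive step is still \pref{lem:op-act}(\ref{part:spherical-mon}), and monotonicity passes to the limit via the cap decomposition of \pref{claim:pancake-basis} (non-negativity of the coefficient measure is preserved under weak limits).
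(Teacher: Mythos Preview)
Your ``alternative route'' at the end is exactly the paper's proof: it realizes $\beta_t$ as the limit of iterated applications of $P_{\mcap_{ds}}$ to the point mass $\delta_y$, observes that each $\mcap_{ds}$ is a cap and hence trivially spherically monotone, and invokes \pref{lem:op-act}(\ref{part:spherical-mon}) at each step. The paper's argument is two sentences long because it bypasses any analytic base case entirely.

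Your primary approach---splitting via the semigroup into $P_{\beta_{t/N}}^{N-1}\beta_{t/N}$ and establishing monotonicity of $\beta_s$ for small $s$ by small-time heat-kernel asymptotics---is correct in principle but strictly more work, since the base case is itself a nontrivial analytic statement (you correctly flag this as ``the main obstacle''). The paper's route replaces the base measure $\beta_{t/N}$, whose monotonicity must be argued, with a cap $\mcap_{ds}$, whose monotonicity is immediate; the price is that one must view Brownian motion as a limit of random cap-jumps rather than using the exact semigroup decomposition, but that is standard and costs essentially nothing here. So: you found the right argument, just listed it second.
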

\begin{proof}
Since Brownian motion on $\bbS^{d-1}$ can be realized as a sequence of random steps within spherical caps of infintesimally small measure $ds$, the measure of a $t$-step Brownian motion starting from $y \in \bbS^{d-1}$ is achieved by iteratively applying $P_{\mcap_{ds}}$ to the point mass at $y$.
The proof is then complete by noting that $\ell_{\mcap_p}$ is spherically monotone for every $p$, then applying \pref{part:spherical-mon} of \pref{lem:op-act}.
\end{proof}

Next, we argue that for $T = T(p)$, there is some small $\delta$ for which $(1-\delta)\ell_{\mcap_p} + \delta \ell_\rho\sdl \ell_{\beta_{T}}$; that is, the linear projection of the $p$-cap is almost stochastically dominated by the linear projection of Brownian motion run for the proper amount of time.
In order to do this, we first establish that almost all of the probability mass of $\beta_{T}$ is in a cap of radius close to $p$.
In \pref{sec:bm}, we'll prove the following lemma:
\begin{lemma}
\torestate{
\label{lem:BM-time-s}
Let $(\bV_t)_{t \ge 0}$ be a Brownian motion on $\bbS^{d-1}$ starting at $V_0$.
Then for any time $t\ge 0$,
\[
\Pr\left[ 
\left|\iprod{V_0,\bV_t}-\exp\left(-(d-1)t\right)\right| \ge x \right] \le 2 \exp\left(-\tfrac{d-1}{2}\frac{x^2}{1-e^{-2(d-1)t}}\right).
\]
}
\end{lemma}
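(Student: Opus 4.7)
The plan is to derive a one-dimensional stochastic differential equation for $Y_t := \iprod{V_0, \bV_t}$, identify an integrating factor that turns the process into a pure martingale with \emph{deterministically bounded} quadratic variation, and then apply the standard exponential supermartingale / Chernoff argument.

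First I would compute $dY_t$ using It\^o's formula on the given SDE for $\bV_t$. Since $\iprod{V_0, (I - \bV_t \bV_t^\top) d\bB_t}$ is a stochastic integral against a vector of norm $\sqrt{1 - Y_t^2}$, it equals $\sqrt{1 - Y_t^2}\, dW_t$ in distribution, for a standard 1-D Brownian motion $W_t$. This together with the drift term yields
\[
dY_t = \sqrt{2(1 - Y_t^2)}\, dW_t - (d-1) Y_t\, dt.
\]
In particular, $\E[Y_t] = e^{-(d-1)t}$ by taking expectations and solving the resulting ODE. Next, I would set $X_t := e^{(d-1)t} Y_t$ so that the drift is absorbed into the integrating factor, giving
\[
dX_t = e^{(d-1)t} \sqrt{2(1 - Y_t^2)}\, dW_t.
\]
Now $X_t$ is a genuine martingale (its integrand is bounded on $[0,t]$), with quadratic variation $d\langle X\rangle_t = 2 e^{2(d-1)s}(1 - Y_s^2)\, ds$ that admits the \emph{deterministic} upper bound
\[
\langle X \rangle_t \;\le\; \int_0^t 2 e^{2(d-1)s}\, ds \;=\; \frac{e^{2(d-1)t} - 1}{d-1} \;=:\; L.
\]

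The final step is the Chernoff step. The exponential local martingale $\exp(\lambda X_s - \tfrac{1}{2}\lambda^2 \langle X\rangle_s)$ is bounded (since $|X_s| \le e^{(d-1)t}$ for $s \le t$) and hence a true martingale starting from $e^{\lambda X_0} = e^{\lambda}$, so
\[
\E\bigl[\exp(\lambda X_t)\bigr] \;\le\; e^{\lambda}\cdot \exp\!\bigl(\tfrac{1}{2}\lambda^2 L\bigr),
\]
using $\langle X\rangle_t \le L$ deterministically. Optimizing over $\lambda$ in Markov's inequality applied to $\exp(\lambda(X_t-1))$, then substituting $X_t - 1 = e^{(d-1)t}(Y_t - e^{-(d-1)t})$ and $z = e^{(d-1)t} x$, gives
\[
\Pr\!\left[\,Y_t - e^{-(d-1)t} \ge x\,\right] \;\le\; \exp\!\left( - \frac{d-1}{2}\cdot \frac{x^2}{1 - e^{-2(d-1)t}}\right),
\]
and applying the same argument with $-\lambda$ gives the matching lower tail; a union bound supplies the factor of $2$.

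I do not anticipate a serious obstacle. The only point that requires care is verifying that the relevant exponential process is a genuine martingale (not merely a local one) so the supermartingale inequality $\E[\exp(\lambda X_t - \tfrac{1}{2}\lambda^2 \langle X\rangle_t)] \le e^\lambda$ is valid; this follows from the boundedness of $X_s$ for $s \in [0,t]$, which in turn follows from $|Y_s| \le 1$. Everything else is bookkeeping in the integrating-factor change of variables.
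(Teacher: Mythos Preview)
Your proposal is correct and essentially identical to the paper's proof: the paper also derives the one-dimensional SDE for $\iprod{V_0,\bV_t}$, multiplies by the integrating factor $e^{(d-1)t}$ to obtain a driftless process with quadratic-variation increment bounded by $2e^{2(d-1)s}\,ds$, and then applies the exponential-supermartingale/Chernoff argument (packaged there as a continuous Azuma--Hoeffding lemma). The only cosmetic difference is that the paper first subtracts the deterministic part and then multiplies by $e^{(d-1)t}$, whereas you multiply first and subtract $X_0=1$; these yield the same centered martingale.
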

From this lemma, we can show that almost all of the mass of the cap decomposition of $\ell_{\beta_T}$ is contained inside a $(\ge \tau)$-cap:
\begin{claim} \label{claim:small-tail-sphere}
    Let $\ptau > 0$,
    $T \coloneqq \frac{1}{d-1} \parens*{\log \frac{1}{\ptau}-2\eps}$, and $\eps \in \left[0,\frac{1}{2}\log \frac{1}{\ptau}\right]$.
    Then the total mass of $\ell_{\beta_T}$ outside of $\scap_{\ge (1+\eps)\ptau}(V_0)$ for $V_0$ the starting point of the Brownian motion is bounded:
    \[
        \int_{-1}^{(1+\eps)\tau} d\ell_{\beta_T}(x)  \leq \delta(\eps) := 2\exp\left( - \frac{(d-1) \eps^2 \ptau^2}{2(1 - \ptau^2)} \right).
    \]
\end{claim}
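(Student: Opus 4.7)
The plan is to interpret the left-hand side as a tail probability for a Brownian motion on the sphere, and then apply \pref{lem:BM-time-s} directly. By definition of $\ell_{\beta_T}$ (the projection of the density of $\bV_T$ onto the line through $V_0$), we have
\[
\int_{-1}^{(1+\eps)\ptau} d\ell_{\beta_T}(x) = \Pr\bracks*{\iprod{V_0,\bV_T} < (1+\eps)\ptau},
\]
where $(\bV_t)_{t\ge 0}$ is Brownian motion on $\bbS^{d-1}$ started at $V_0$. So the task reduces to controlling this one-dimensional tail.

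First I would compute the concentration center. With $T = \frac{1}{d-1}\parens*{\log\frac{1}{\ptau} - 2\eps}$, a direct calculation gives $\exp(-(d-1)T) = \ptau e^{2\eps}$. The gap between this center and the threshold $(1+\eps)\ptau$ is then
\[
\ptau e^{2\eps} - (1+\eps)\ptau = \ptau\parens*{e^{2\eps} - 1 - \eps} \ge \eps\ptau,
\]
using the elementary bound $e^{2\eps} \ge 1 + 2\eps$ for $\eps \ge 0$. Hence the event $\braces*{\iprod{V_0,\bV_T} < (1+\eps)\ptau}$ is contained in $\braces*{\abs*{\iprod{V_0,\bV_T} - \ptau e^{2\eps}} \ge \eps\ptau}$.

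Now I invoke \pref{lem:BM-time-s} with $x = \eps\ptau$ and $t = T$ to obtain
\[
\Pr\bracks*{\abs*{\iprod{V_0,\bV_T} - \ptau e^{2\eps}} \ge \eps\ptau} \le 2\exp\parens*{-\tfrac{d-1}{2}\cdot\tfrac{\eps^2\ptau^2}{1 - \ptau^2 e^{4\eps}}}.
\]
Finally, since $e^{4\eps}\ge 1$, we have $1 - \ptau^2 e^{4\eps} \le 1 - \ptau^2$, so the exponent in the bound above is at most that in $\delta(\eps)$, giving the claim. The constraint $\eps \le \frac{1}{2}\log\frac{1}{\ptau}$ ensures $T \ge 0$ (and $1 - \ptau^2 e^{4\eps} \ge 0$); at the boundary $\eps = \frac{1}{2}\log\frac{1}{\ptau}$ the bound is trivial since $T = 0$.

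I do not anticipate any real obstacle: the proof is essentially an application of \pref{lem:BM-time-s} once the correct concentration radius is identified. The only small subtlety is checking that $e^{-(d-1)T}$ exceeds $(1+\eps)\ptau$ by at least $\eps\ptau$, which comes from the second-order term in the Taylor expansion of $e^{2\eps}$, and verifying that the denominator in the Gaussian-type tail can be enlarged from $1-\ptau^2 e^{4\eps}$ to $1-\ptau^2$ without losing the claimed bound.
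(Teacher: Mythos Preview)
Your proposal is correct and follows essentially the same approach as the paper's proof: both compute $e^{-(d-1)T}=\ptau e^{2\eps}$, use $e^{2\eps}\ge 1+2\eps$ to show the event $\{\iprod{V_0,\bV_T}<(1+\eps)\ptau\}$ forces a deviation of at least $\eps\ptau$, and then apply \pref{lem:BM-time-s}. If anything you are slightly more explicit than the paper, which writes the final step as an equality with denominator $1-\ptau^2$ rather than spelling out the inequality $1-\ptau^2 e^{4\eps}\le 1-\ptau^2$ as you do.
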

\begin{proof}
    We let $(\bV_t)_{t\ge 0}$ be a Brownian motion on the sphere, $\bA_t = \iprod{\bV_t,V_0}$, and $\bA_t = \exp\left(-(d-1)t\right) + \bR_t$. 
    At time $T$, we have 
	\begin{align*}
		\bA_{T} = \exp\left( -(d - 1) \cdot T \right) + \bR_{T} 
		&= \ptau \cdot \exp\left( 2\eps \right) + \bR_{T} \\
		&\geq \ptau \cdot \left(1 + 2\eps \right) + \bR_{T}
		\geq \ptau \cdot \left(1 + 2\eps \right) + \bR_{T}.
	\end{align*}
	The event that $\bA_{T} \leq \ptau \cdot (1 + \eps)$ implies $\bR_{T} < -\eps \ptau $, so it suffices to upper bound the probability that $|\bR_{T}| > \eps \ptau$.
    Applying \pref{lem:BM-time-s},
    \[
        \Pr[|\bR_{T}| \geq \eps\ptau] 
        \leq 2\exp\left(- \tfrac{d-1}{2}\frac{\eps^2\ptau^2}{1 - e^{-2(d - 1) \cdot T}} \right)
        = 2\exp\left(- \frac{\eps^2 \ptau^2(d-1)}{2(1 - \ptau^2)} \right)
		\qedhere
    \]
\end{proof}

Now, we are ready to establish the stochastic domination of the combination.
\begin{claim}\label{claim:combo-dom}
    Let $p \in \left(0,\frac{1}{2}\right)$ and $\ptau = \tau(p) + \frac{4}{\sqrt{d}}$.
    For $T = \frac{1}{d-1}(\log \frac{1}{\ptau} - 2 \eps)$ with $\eps \in \bracks*{\frac{5}{(d-1)\ptau^2},\frac{1}{2}\log \frac{1}{\ptau}}$,
    \[
        \ell_{\beta_T}\sd (1 - 2\delta(\eps)) \ell_{\mcap_p} + 2\delta(\eps)\ell_\rho,
    \]
    for $\delta(\eps)$ as defined in the statement of \pref{claim:small-tail-sphere}.
\end{claim}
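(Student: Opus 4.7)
The plan is to reduce the desired stochastic dominance over $\bbS^{d-1}$ to a single scalar inequality on the cap decomposition of $\ell_{\beta_T}$. By \pref{claim:bm-monotone}, $\ell_{\beta_T}$ is spherically monotone, so \pref{claim:pancake-basis} furnishes a unique probability measure $r_{\beta_T}$ on $[-1,1]$ with $\ell_{\beta_T} = \int \ell_{\mcap_{\ge\theta}}\,dr_{\beta_T}(\theta)$. The target mixture $(1-2\delta(\eps))\ell_{\mcap_p}+2\delta(\eps)\ell_\rho$ has the atomic cap decomposition $r_{\mathrm{mix}} = (1-2\delta(\eps))\delta_{\tau(p)}+2\delta(\eps)\delta_{-1}$. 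It suffices to show $r_{\beta_T}\sd r_{\mathrm{mix}}$ as measures on $[-1,1]$, since $\theta\mapsto\int f\,d\mcap_{\ge\theta}$ is non-decreasing for every non-decreasing test function $f$ (a direct CDF comparison gives $\mcap_{\ge\theta_1}\sd\mcap_{\ge\theta_2}$ whenever $\theta_1\ge\theta_2$), so $\sd$-relations transfer from the cap decomposition level to the underlying distributions.

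Because $r_{\mathrm{mix}}$ is supported on just $\{-1,\tau(p)\}$ with mass $2\delta(\eps)$ at $-1$, the dominance $r_{\beta_T}\sd r_{\mathrm{mix}}$ collapses to the single inequality $r_{\beta_T}([-1,\tau(p)))\le 2\delta(\eps)$. To prove this, I will differentiate the defining relation of \pref{claim:pancake-basis} to obtain $dr_{\beta_T}(\theta) = (1-F_\rho(\theta))\,d\ell_{\beta_T}(\theta)$ on $(-1,1)$, together with an atom of mass $\ell_{\beta_T}(-1)$ at $-1$, and then integrate by parts over $[-1,\tau(p))$. The atom at $-1$ cancels against the boundary term at $-1$, producing the clean identity
\[
r_{\beta_T}([-1,\tau(p))) \;=\; p\cdot\ell_{\beta_T}(\tau(p)) + F_{\beta_T}(\tau(p)).
\]

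Both terms can be controlled using \pref{claim:small-tail-sphere}. Since $\tau(p)<\ptau<(1+\eps)\ptau$, we get $F_{\beta_T}(\tau(p))\le F_{\beta_T}((1+\eps)\ptau)\le\delta(\eps)$ directly. For the first term, spherical monotonicity of $\ell_{\beta_T}$ bounds $\ell_{\beta_T}(\tau(p))$ by the weighted average of $\ell_{\beta_T}$ on $[\tau(p),(1+\eps)\ptau]$, giving
\[
\ell_{\beta_T}(\tau(p)) \;\le\; \frac{F_{\beta_T}((1+\eps)\ptau) - F_{\beta_T}(\tau(p))}{F_\rho((1+\eps)\ptau) - F_\rho(\tau(p))}.
\]
The main obstacle is verifying the denominator here is at least $p/2$, i.e., that $\Pr_\rho[\bV\ge(1+\eps)\ptau]\le p/2$. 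This is precisely where the offset $\ptau=\tau(p)+4/\sqrt{d}$ and the hypothesis $\eps\ge 5/((d-1)\ptau^2)$ are used: applying \pref{lem:approx-tails-dpd} to both $\tau(p)$ and $(1+\eps)\ptau$ and taking the ratio produces a tail-ratio bound of order $\exp(-\Omega(\sqrt{d}))\cdot e^{-5/(1-\ptau^2)}$, comfortably below $1/2$. Plugging back, $p\cdot\ell_{\beta_T}(\tau(p))\le 2(\delta(\eps)-F_{\beta_T}(\tau(p)))$, and summing with $F_{\beta_T}(\tau(p))$ yields $r_{\beta_T}([-1,\tau(p)))\le 2\delta(\eps)-F_{\beta_T}(\tau(p))\le 2\delta(\eps)$, as required.
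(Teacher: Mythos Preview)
Your approach is correct and follows the same strategy as the paper: both reduce to showing that the cap-decomposition measure $r_{\beta_T}$ places at most $2\delta(\eps)$ mass on $[-1,\tau(p)]$, invoking \pref{claim:small-tail-sphere} together with a tail-ratio estimate at $(1+\eps)\ptau$. Your integration-by-parts identity $r_{\beta_T}([-1,s]) = (1-F_\rho(s))\,\ell_{\beta_T}(s) + F_{\beta_T}(s)$ is a clean analytic variant of the paper's argument, which instead bounds $\kappa = r_{\beta_T}([-1,\ptau])$ by expanding $\Pr_{\bv\sim\beta_T}[\bv\in\scap_{\ge(1+\eps)\ptau}]$ along the cap basis and using $\Pr_{\bx\sim\ell_{\mcap_{\ge\ptau}}}[\bx\ge(1+\eps)\ptau]\le\tfrac12$.

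One minor correction: your asserted tail ratio of order $\exp(-\Omega(\sqrt d))\cdot e^{-5/(1-\ptau^2)}$ is not valid uniformly over $p\in(0,\tfrac12)$. When $\tau(p)$ is small (i.e.\ $p$ near $\tfrac12$), the offset $\ptau-\tau(p)=4/\sqrt d$ only buys an $e^{-O(1)}$ factor, and the lower bound in \pref{lem:approx-tails-dpd} degrades at small $\tau(p)$. The fix is immediate and is exactly what the paper does: since $p=\ol\Phi_{\Beta{d}}(\tau(p))\ge\ol\Phi_{\Beta{d}}(\ptau)$ and $d\ptau^2\ge 16$, it suffices to bound $\ol\Phi_{\Beta{d}}((1+\eps)\ptau)/\ol\Phi_{\Beta{d}}(\ptau)\le \tfrac{3}{1+(d-1)\eps\ptau^2}\le\tfrac12$ using the hypothesis $\eps\ge 5/((d-1)\ptau^2)$. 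With this adjustment your denominator bound $F_\rho((1+\eps)\ptau)-F_\rho(\tau(p))\ge p/2$ holds and the rest of the argument goes through.
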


\begin{proof}
    Using \pref{claim:pancake-basis}, we write
		$\beta_T = \int_{-1}^{1} c_\theta \cdot \ell_{\mcap_{\ge \theta}} d\theta $, with $\int c_\theta d\theta = 1$.
    Let $\tau' \in [-1,1]$ be such that
	\begin{equation}
        \int_{-1}^{\tau'} c_\theta d\theta = 2\delta(\eps),\quad
	    \text{ and } \quad
        \int_{\tau'}^{1} c_\theta d\theta = 1 - 2\delta(\eps) .\label{eq:sums}
	\end{equation}
    The proof strategy is to show that the conclusion follows if $\tau'\ge\tau(p)$, and then establish that inequality.

    First observe that if $\alpha$ and $\{\gamma_x\}_{x \in X}$ are measures satisfying $\gamma_x \sd \alpha$ for all $x \in X$, then a convex combination $\int c_x \gamma_x dx \sd \alpha$ as well, from which the conclusion follows.
    Now, writing
	\[
        \beta_T = \int_{-1}^{\tau'} c_\theta \cdot \ell_{\mcap_{\ge \theta}} d\theta + \int_{\tau'}^1 c_\theta \cdot \ell_{\mcap_{\ge \theta}} d\theta,
    \]
    we see that the first term on the right-hand-side stochastically dominates 
    $2\delta(\eps)\cdot\ell_{\mcap_{\ge -1}} = 2\delta(\eps)\cdot \ell_\rho$ since for every $\theta\in[-1,\tau']$, $\theta\ge-1$ and therefore $\ell_{\mcap_{\ge \theta}}\sd \ell_{\mcap_{\ge -1}} = \ell_\rho$.
    By identical reasoning, the second term stochastically dominates $\ell_{\mcap_{\ge \tau(p)}}$ since for every $\theta \in [\tau',1]$, $\theta \ge \tau(p)$ and therefore $\ell_{\mcap_{\ge \theta}} \sd \ell_{\mcap_{\ge \tau}} = \ell_{\mcap_p}$. 

    We now show that the $\tau'$ satisfying \pref{eq:sums} is at least $\tau$, for which it is sufficient to show $\tau'\ge\ptau$.
    Let $\kappa = \int_{-1}^\ptau c_\theta \, d\theta$; $\tau' \ge \ptau$ is equivalent to showing that $\kappa \le 2\delta(\eps)$.
    Using \pref{claim:small-tail-sphere}, we know that $\Pr_{\bv \sim \beta_T}[\bv \in\scap_{\ge (1+\eps)\ptau}(V_0)]\ge 1-\delta(\eps)$.
    \begin{align*}
        1-\delta(\eps)
        &\le \Pr_{\bv \sim \beta_T}\left[\bv \in \scap_{\ge (1+\eps)\ptau}(V_0)\right]\\
        &= \int_{-1}^{(1+\eps)\ptau} c_\theta \cdot \Pr_{\bx \sim \ell_{\mcap_{\ge \theta}}}[\bx \ge (1+\eps)\ptau]\, d\theta + \int_{(1+\eps)\ptau}^1 c_\theta\, d\theta\\
        &\le \int_{-1}^{\ptau} c_\theta \cdot \Pr_{\bx \sim \ell_{\mcap_{\ge \theta}}}[\bx \ge (1+\eps)\ptau]\, d\theta + \int_{\ptau}^1 c_\theta\, d\theta\\
        &\le \left(\max_{\theta \in [-1,\ptau]}\Pr_{\bx \sim \ell_{\mcap_{\ge \theta}}}[\bx \ge (1+\eps)\ptau]\right) \cdot \kappa + \int_{\ptau}^1 c_\theta \, d\theta = \Pr_{x\sim\ell_{\mcap_{\ge \ptau}}}\bracks*{\bx\ge(1+\eps)\ptau} \cdot \kappa + \int_{\ptau}^1 c_{\theta}\, d\theta.
    \end{align*}
    Using \pref{lem:approx-tails-dpd} and $\ptau\ge 4/\sqrt{d}$,
	\begin{align*}
        \Pr_{\bx \sim \ell_{\mcap_{\ge \ptau}}}\left[\bx \ge (1+\eps)\ptau\right]
        &= \frac{\Unif(\scap_{\ge (1+\eps)\ptau})}{\Unif(\scap_{\ge \ptau})}\\
        &\leq \frac{3 \ptau \left( 1 - ((1+\eps)\ptau)^2\right)^{(d - 1)/2}}{2 \ptau(1+\eps) \left(1 - \ptau^2 \right)^{(d - 1)/2}} \\
        &\leq \frac{3}{2} \cdot \left( \frac{1 - (1+\eps)^2\ptau^2}{1 - \ptau^2} \right)^{(d - 1)/2} \\
        &= \frac{3}{2} \cdot \left( 1 - \frac{2 \eps \ptau^2 + \eps^2 \ptau^2}{1 - \ptau^2}  \right)^{(d - 1)/2} 
        \leq \frac{3}{2} \cdot \left( 1 - 2\eps \ptau^2  \right)^{(d - 1)/2} 
        \leq \frac{3}{1 + (d - 1) \eps \ptau^2}.
	\end{align*}
	The final quantity is smaller than $\frac{1}{2}$ given our lower bound on $\eps$, and $\int_{\tau}^1 c_{\theta}\, d\theta = 1-\kappa$.
    Plugging into the above, we have that 
    \[
        1-\delta(\eps) \le \frac{1}{2}\kappa + 1 - \kappa \implies \kappa \le 2\delta(\eps),
    \]
    which completes the proof.
\end{proof}

Finally, we will need the following claim to transfer the statement about the stochastic domination of a linear combination of $\ell_{P_p\alpha}$ and $\ell_\rho$ to just $\ell_{P_p\alpha}$:
\begin{lemma}   \label{lem:TV-relation}
    Suppose $\mu$ and $\nu$ are spherically monotone distributions, then for any $\eta\in[0,1)$,
    \[
        \dtv{\ell_\mu}{\ell_\rho} \le \frac{1}{1-\eta}\dtv{(1-\eta) \ell_\mu + \eta \ell_\nu}{\ell_\rho}.
    \]
\end{lemma}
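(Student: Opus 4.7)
The plan is to exploit the variational characterization of total variation distance, together with the monotonicity of $\ell_\mu$ and $\ell_\nu$ on $[-1,1]$ (with respect to $\ell_\rho$). Recall from \pref{claim:pancake-basis} that since $\mu$ and $\nu$ are spherically monotone, $\ell_\mu$ and $\ell_\nu$ are monotone non-decreasing densities on $[-1,1]$ with respect to $m := \ell_\rho$, and both integrate to $1$. Hence there is a threshold $t^*_\mu \in [-1,1]$ such that $\ell_\mu(t) \le 1$ for $t < t^*_\mu$ and $\ell_\mu(t) \ge 1$ for $t \ge t^*_\mu$, and consequently
\[
    \dtv{\ell_\mu}{\ell_\rho} = \int_{t^*_\mu}^{1} (\ell_\mu(t)-1)\, dm(t).
\]

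Next, I would use the variational lower bound $\dtv{\phi}{\ell_\rho} \ge \int_A (\phi(t)-1)\, dm(t)$, valid for any measurable $A \subseteq [-1,1]$, applied to $\phi = (1-\eta)\ell_\mu + \eta \ell_\nu$ and $A = [t^*_\mu,1]$. Splitting by linearity,
\[
    \dtv{(1-\eta)\ell_\mu + \eta \ell_\nu}{\ell_\rho} \ge (1-\eta)\int_{t^*_\mu}^1 (\ell_\mu(t)-1)\, dm(t) + \eta \int_{t^*_\mu}^1 (\ell_\nu(t)-1)\, dm(t) = (1-\eta)\dtv{\ell_\mu}{\ell_\rho} + \eta \int_{t^*_\mu}^1 (\ell_\nu(t)-1)\, dm(t).
\]
The main step that requires justification is then showing that the second integral is nonnegative, i.e. $\int_{t^*_\mu}^1 (\ell_\nu(t)-1)\, dm(t) \ge 0$. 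This is the only place where the monotonicity of $\ell_\nu$ (rather than just $\ell_\mu$) gets used, and I expect it to be the main---though quite mild---obstacle.

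To verify this inequality, I would argue as follows. Since $\ell_\nu$ integrates to $1$ with respect to $m$, the claim $\int_s^1 (\ell_\nu(t)-1)\, dm(t) \ge 0$ for a given $s$ is equivalent to $\int_{-1}^s (\ell_\nu(t)-1)\, dm(t) \le 0$. If $\ell_\nu(s) \le 1$, then by the monotonicity of $\ell_\nu$ we have $\ell_\nu(t) \le 1$ for all $t \le s$, and the inequality is immediate. If $\ell_\nu(s) > 1$, then again by monotonicity $\ell_\nu(t) \ge \ell_\nu(s) > 1$ for all $t \ge s$, so $\int_s^1 (\ell_\nu(t)-1)\, dm(t) \ge 0$ directly, and hence $\int_{-1}^s (\ell_\nu(t)-1)\, dm(t) \le 0$ by integrating to $1$. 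Combining this nonnegativity with the displayed bound yields $\dtv{(1-\eta)\ell_\mu + \eta \ell_\nu}{\ell_\rho} \ge (1-\eta)\dtv{\ell_\mu}{\ell_\rho}$, which rearranges to the stated inequality.
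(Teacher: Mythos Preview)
Your proof is correct and follows essentially the same approach as the paper: pick the threshold $s$ where $\ell_\mu$ crosses $1$, use the variational lower bound for total variation with the test set $[s,1]$, and handle the $\ell_\nu$ term via a case split on whether $\ell_\nu(s)\gtrless 1$. The paper's version is organized slightly differently---in the case $\ell_\nu(s)\le 1$ it switches to the complementary test set $[-1,s]$ rather than arguing that $\int_s^1(\ell_\nu-1)\,dm\ge 0$ directly---but the content is the same.
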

\begin{proof}
    Let $s\in[-1,1]$ be such that:
    \[
        \dtv{\ell_\mu}{\ell_\rho} = \int_{s}^1 (\ell_\mu(x)-1) d\rhoflat = \int_{-1}^s (1 - \ell_\mu(x))d\rhoflat,
    \]
where $\rhoflat$ is the density of the $1$-dimensional projection of $\mcap_1(y)$.
    The choice of $s$ satisfying the above is the one satisfying $\ell_\mu(s)= 1$. 
 If $\ell_\nu(s)\ge 1$, by spherical monotonicity $\ell_\nu(x)\ge 1$ on $[s,1]$ and:
    \begin{align*}
        \dtv{(1-\eta)\ell_\mu+\eta\ell_\nu}{\ell_\rho} 
&\ge \int_{s}^1 \left((1-\eta)(\ell_\mu(x)-1) + \eta(\ell_\nu(x)-1)\right) d\rhoflat \\
&\ge (1-\eta) \int_s^1 (\ell_\mu(x)-1) d\rhoflat = (1-\eta)\cdot\dtv{\ell_\mu}{\ell_\rho}.
    \end{align*}
    On the other hand, if $\ell_\nu(s)\le 1$, by an identical argument we know:
    \[
        \dtv{(1-\eta)\ell_\mu+\eta \ell_\nu}{\ell_\rho} \ge (1-\eta)\int_{-1}^s (1-\ell_\mu(x)) d\rhoflat = (1-\eta)\cdot\dtv{\ell_\mu}{\ell_\rho}.
    \]
    The desired statement follows from rearranging the above inequality.
\end{proof}

We are now ready to prove \pref{thm:decay}, following the reasoning above, in combination with induction on $k$, the number of applications of $P_p$.
We state and prove a more refined version of \pref{thm:decay} below.
\begin{theorem}
    If a probability distribution $\alpha$ over $\bbS^{d-1}$ is symmetric and spherically monotone, then for any integer $k \ge 0$
    and for $\ptau = \tau(p,d) + \frac{4}{\sqrt{d}}$,
    \[
        \dtv{P_p^k \alpha}{\rho} \le \ptau^k \parens*{ \frac{\exp\parens*{\frac{4}{(d-1)^{1/4}\sqrt{\ptau}}}}{\sqrt{1-2\exp(-\ptau\sqrt{d-1})}} }.
    \]
\end{theorem}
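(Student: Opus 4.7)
The plan is to implement the strategy sketched informally between \pref{thm:decay} and the current statement: compare one step of the cap walk $P_p$ to a time-$T$ step of Brownian motion on $\bbS^{d-1}$ using the stochastic domination of \pref{claim:combo-dom}, iterate this comparison via the monotonicity of convolution in \pref{lem:op-act}, and invoke the exponential decay of KL divergence under Brownian motion from \pref{lem:log-Sobo-to-KL} to control the dominating process.

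Concretely, I would choose $\eps \approx \tfrac{\sqrt{2}}{(d-1)^{1/4}\sqrt{\ptau}}$ and $T = \tfrac{1}{d-1}(\log\tfrac{1}{\ptau}-2\eps)$, verify that $\eps$ lies in the admissible range of \pref{claim:combo-dom}, and observe that $\delta(\eps) \approx \exp(-\ptau\sqrt{d-1})$, which is exactly the expression appearing under the square root in the denominator of the stated $F$. By \pref{claim:combo-dom}, $\ell_{\mu} \sdl \ell_{\beta_T}$ for $\mu \coloneqq (1-2\delta)\mcap_p + 2\delta\rho$; note $\mu$ is itself spherically monotone, since its $\ell$ is a convex combination of monotone non-decreasing densities. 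An easy induction using \pref{part:spherical-mon}, \pref{part:stoc-dom-ops}, and \pref{part:stoc-dom-dists} of \pref{lem:op-act} then yields $\ell_{P_\mu^k\alpha}\sdl \ell_{U_{kT}\alpha}$, and by linearity of convolution together with $P_\rho = \rho$,
\[
    P_\mu^k \alpha \;=\; (1-2\delta)^k P_p^k \alpha + \bigl(1-(1-2\delta)^k\bigr) \rho,
\]
so in particular $\dtv{P_\mu^k\alpha}{\rho} = (1-2\delta)^k \, \dtv{P_p^k\alpha}{\rho}$.

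Combining the above domination with \pref{lem:stoc-dom-tv}, \pref{claim:marginal}, Pinsker's inequality, and \pref{lem:log-Sobo-to-KL} gives
\[
    (1-2\delta)^k \, \dtv{P_p^k\alpha}{\rho} \;=\; \dtv{P_\mu^k\alpha}{\rho} \;\le\; \dtv{U_{kT}\alpha}{\rho} \;\le\; \ptau^k e^{2k\eps}\sqrt{\tfrac{1}{2}\KL{\alpha}}.
\]

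The principal obstacle is reconciling the straightforward $k$-dependent factors $(1-2\delta)^{-k}$ and $e^{2k\eps}$ above with the $k$-independent factor $F$ in the statement. The square root in the denominator of $F$ suggests coupling the iteration with a single Pinsker/Cauchy--Schwarz step that produces $(1-2\delta)^{-1/2}$ in place of $(1-2\delta)^{-k}$, and the constant numerator $\exp\!\bigl(\tfrac{4}{(d-1)^{1/4}\sqrt{\ptau}}\bigr)$ suggests that $\eps$ should be chosen depending on $k$ so that $2k\eps$ remains bounded, with the resulting $k$-growth of $\delta$ absorbed through the same square-root step. I expect the hardest part of the proof is arranging this bookkeeping carefully---and handling the $\sqrt{\KL{\alpha}}$ factor left over by Pinsker, which must be controlled by the structure of $\mu$ together with the cap decomposition of \pref{claim:pancake-basis}---so that only the $\ptau^k$ factor ends up carrying the $k$-dependence.
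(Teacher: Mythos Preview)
Your approach is exactly the paper's: fix $\eps$ and $T$ as you do, use \pref{claim:combo-dom} to get $\ell_{\mu}\sdl\ell_{\beta_T}$ for $\mu=(1-2\delta)\mcap_p+2\delta\rho$, induct via \pref{lem:op-act} to obtain $\ell_{P_\mu^k\alpha}\sdl\ell_{U_{kT}\alpha}$ (with both sides spherically monotone), and then combine \pref{lem:stoc-dom-tv}, \pref{claim:marginal}, and \pref{lem:log-Sobo-to-KL} via Pinsker. Your identity $P_\mu^k\alpha=(1-2\delta)^kP_p^k\alpha+\bigl(1-(1-2\delta)^k\bigr)\rho$ is correct and gives $\dtv{P_\mu^k\alpha}{\rho}=(1-2\delta)^k\dtv{P_p^k\alpha}{\rho}$ directly; the paper reaches the same conclusion through \pref{lem:TV-relation}, so your route is a mild simplification of the same argument.

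The ``principal obstacle'' you describe is not real. The displayed statement is missing a $k$-th power on the parenthesized factor and a trailing $\sqrt{\tfrac{1}{2}\KL{\alpha}}$; the paper's own proof ends with
\[
    \dtv{P_p^k\alpha}{\rho}\le \ptau^k\left(\frac{\exp\!\bigl(\tfrac{4}{(d-1)^{1/4}\sqrt{\ptau}}\bigr)}{\sqrt{1-2\exp(-\ptau\sqrt{d-1})}}\right)^{\!k}\sqrt{\tfrac{1}{2}\KL{\alpha}},
\]
declares this ``as desired,'' and then observes that the parenthesized term is $1+o_{d\tau^2}(1)$, recovering \pref{thm:decay}. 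So your derivation already matches the paper, and you should not attempt to eliminate the $k$-dependence or the $\sqrt{\KL{\alpha}}$ factor: the version without them is in fact false (take $\alpha=\mcap_q$ with $q\ll p$, so that $\KL{\alpha}=\log\tfrac{1}{q}$ is arbitrarily large while the claimed bound stays fixed).
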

Note that when $\tau^2 d \to \infty$, the parenthesized term is $1+o(1)$ and $\nu = \tau \cdot (1+o(1))$.
\begin{proof}[Proof]    \hypertarget{proof:decay}{}
    Suppose $\tau(p) \ge 1-1/(d-1)^{1/4}$, then the statement is vacuously true.
    Thus, we assume from now on $\tau(p) < 1 - 1/(d-1)^{1/4}$.

    Let $\ptau = \tau(p)+4/\sqrt{d}$, let $t = \frac{1}{d-1}\left(\log\frac{1}{\ptau} - 2\eps\right)$, and $\delta = 2\exp\left(-\frac{(d-1)\eps^2 \ptau^2}{2(1-\ptau^2)}\right)$ for $\eps = \frac{\sqrt{2-2\ptau^2}}{(d-1)^{1/4}\sqrt{\ptau}}$; note that for $d$ sufficiently large, $\eps \in \left[\frac{5}{(d-1)\ptau^2}, \frac{1}{2}\log \frac{1}{\ptau}\right]$.
    For convenience's sake, define $P_{p,\delta} = (1-2\delta)P_p + 2\delta P_1$.
    We will prove that 
    \begin{align}
        \ell_{P_{p,\delta}^k \alpha} \sdl \ell_{U_t^k \alpha}, \label{eq:dom}
        \qquad \text{ and }\qquad U_t^k \alpha,\,\, P_{p,\delta}^k \alpha\quad \text{are spherically monotone}.
    \end{align}
    Given this, the proof of the theorem will follow: by the linearity of the projection onto the line defined by $y$, and by the commutativity of convolution,
    \[
        \ell_{P_{p,\delta}^k \alpha} = \sum_{j=0}^k (1-2\delta)^{k-j} (2\delta)^j \binom{k}{j} \ell_{P_p^{k-j}P_1^j\alpha },
    \]
    So from \pref{claim:marginal}, \pref{lem:TV-relation}, \pref{eq:dom}, and \pref{lem:stoc-dom-tv},
    \begin{align}
        \dtv{P_p^k \alpha}{\rho}=
        \dtv{\ell_{P_p^k\alpha}}{\ell_\rho} \le \frac{1}{(1-2\delta)^k}\dtv{\ell_{P_{p,\delta}^k\alpha}}{\ell_\rho} \le \frac{1}{(1-2\delta)^k}\dtv{\ell_{U_t^k\alpha}}{\ell_\rho}.\label{eq:combo}
    \end{align}
    Then we can apply \pref{claim:marginal} to get that
    \begin{align}
        \dtv{\ell_{U_t^k \alpha}}{\ell_\rho} = \dtv{U_t^k \alpha}{\rho},\label{eq:line}
    \end{align}
    and finally using that $U_t^k = U_{k \cdot t}$ in conjunction with \pref{lem:log-Sobo-to-KL}, we have that
    \begin{align}
        \dtv{U_t^k \alpha}{\rho} = \dtv{U_{k \cdot t}\alpha}{\rho} \le \sqrt{\frac{1}{2}\exp(-2(d-1)tk) \cdot \dkl(\alpha \|\rho)},\label{eq:kl}
    \end{align}
    So combining \pref{eq:combo}, \pref{eq:line}, and \pref{eq:kl}, we have that 
    \[
        \dtv{P_p^k\alpha}{\rho} \le \sqrt{\frac{1}{2(1-2\delta)^k}\exp(-2(d-1)tk) \cdot \dkl(\alpha \|\rho)}.
    \]
    In our case, $\delta = \exp(-\ptau\sqrt{d-1})$, $t = \tfrac{1}{d-1}\left(\log \frac{1}{\ptau} - \frac{\sqrt{2-2\ptau^2}}{(d-1)^{1/4}\sqrt{\ptau}} \right)$, so combining these estimates,
    \[
        \dtv{P_p^k \alpha}{\rho} \le \ptau^k \cdot \left(\frac{\exp\parens*{ \frac{4}{(d-1)^{1/4}\sqrt{\ptau}} }}{\sqrt{1-2\exp(-\ptau\sqrt{d-1})}}\right)^{k} \cdot \sqrt{\tfrac{1}{2}\dkl(\alpha\|\rho)},
    \]
    as desired.

    Now we prove \pref{eq:dom}.
    The proof is by induction on $k$; when $k=0$, there is nothing to prove.
    Suppose now that the statement holds true for $k$; we shall prove it for $k+1$.
    By \pref{claim:bm-monotone}, the density of a time-$t$ spherical Brownian motion $\beta_t$ is spherically monotone about its starting point, and clearly, any convex combination of caps is spherically monotone.
    Hence we can apply \pref{lem:op-act}, \pref{part:spherical-mon} in conjunction with the induction hypothesis to conclude that both $P_{p,\delta}^{k+1} \alpha = P_{p,\delta}(P_{p,\delta}^k \alpha)$ and $U_{t}^{k+1}\alpha = U_t(U_t^k \alpha)$ are spherically monotone, giving the second part of the induction hypothesis.

    By our induction hypothesis $U_t^k \alpha$ and $P_{p,\delta}^k \alpha$ are spherically monotone with $\ell_{U_t^k \alpha} \sd \ell_{P_{p,\delta}^k \alpha}$, and so we can apply \pref{lem:op-act}, \pref{part:stoc-dom-dists} in conjunction with \pref{claim:bm-monotone} to conclude that
    \[
        \ell_{ U_t^{k+1} \alpha} = \ell_{P_{\beta_t} (U_t^k \alpha)} \sd \ell_{P_{\beta_t}(P_{p,\delta}^k \alpha)},
    \]
    and then apply \pref{lem:op-act}, \pref{part:stoc-dom-ops} in conjunction with \pref{claim:combo-dom} to conclude that
    \[
        \ell_{P_{\beta_t} (P_{p,\delta}^k \alpha)} \sd \ell_{P_{p,\delta}(P_{p,\delta}^k \alpha)} = \ell_{P_{p,\delta}^k \alpha},
    \]
    completing the proof.
\end{proof}

Now, we fill in the proofs of the lemmas from above.
\restateclaim{claim:pancake-basis}
\begin{proof}[Proof of \pref{claim:pancake-basis}]  \hypertarget{proof:pancake-basis}{}
We first prove the ``only if'' direction.
Since $\alpha$ is spherically symmetric about $y$, $\alpha(v) = \ell_{\alpha}(\iprod{v,y})$.
Let $d\ell_{\alpha}$ be the distributional derivative of $\ell_\alpha$, and set $dr(\theta) = \rho(\scap_{\ge\theta}(y))\, d\ell_\alpha(\theta)$.
    \begin{align*}
        \int (\mcap_{\ge \theta}(y))(v)\, dr(\theta) &= \int \frac{\Ind[\iprod{v,y}\ge\theta]}{\rho(\scap_{\ge\theta}(y))} \cdot \rho(\scap_{\ge\theta}(y)) \, d\ell_\alpha(\theta) 
        = \int \Ind[\iprod{v,y}\ge\theta]\, d\ell_\alpha(\theta) 
        = \alpha(v).
    \end{align*}
    To see that the measure $dr$ indeed gives a probability distribution, first observe that $dr(\theta)\ge 0$ for every $\theta$ due to the monotonicity of $\ell_\alpha$, and next observe that
    \begin{align*}
        1 = \int_{v \in \bbS^{d-1}}  \alpha(v) \, d\rho(v)
 	&= \int_{v \in \bbS^{d-1}} \int_{-1}^1 (\mcap_{\ge\theta}(y))(v) \, dr(\theta)\, d\rho(v)\\
        &= \int_{-1}^1 \int_{v \in \bbS^{d-1}} (\mcap_{\ge \theta}(y))(v)\, d\rho(v)\, dr(\theta) 
        = \int_{-1}^1 dr(\theta).
    \end{align*}
    In summary, since $r$ is a positive measure which integrates to $1$, it is a probability distribution.
The claim regarding $\ell_\alpha$ follows because the line projection onto $y$ is a linear operation.

Now we prove the converse.
Suppose $\alpha = \int_{-1}^1 \mcap_{\ge \theta}(y)\,dr(\theta)$. 
By linearity of projection onto the line defined by $y$, $\ell_{\alpha} = \int_{-1}^1 \ell_{\mcap_{\ge \theta}}\, dr(\theta)$.
Since $\ell_{\mcap_{\ge \theta}}$ is monotone for every $\theta$, and a non-negative combination of monotone functions is monotone, $\ell_\alpha$ is also monotone, concluding the proof.
\end{proof}

\restatelemma{lem:stoc-dom-tv}
\begin{proof}[Proof of \pref{lem:stoc-dom-tv}]  \hypertarget{proof:stoc-dom-tv}{}
    First, observe that $\ell_\nu \sd \ell_\rho$ and $\ell_\mu \sd \ell_\rho$ by the assumption that $\mu,\nu$ are spherically monotone. 
Thus, $\ell_\mu\sd\ell_\nu\sd\ell_\rho$.
Further, if measures $a,b$ on $[-1,1]$ satisfy $a \sd b$, then their CDFs $G_a$ and $G_b$ satisfy $G_a(s) \le G_b(s)$ for every $s$. 
Hence,
    \begin{align*}
        G_{\ell_\mu}(s) \le G_{\ell_\nu}(s) \le G_{\ell_\rho}(s) & \quad \forall s\in[-1,1].
    \end{align*}
    By definition of the total variation distance, for any non-decreasing density $\gamma:[-1,1]\to\R$,
    \[
        \dtv{\gamma}{\ell_\rho} = \max_{s\in[-1,1]} G_{\ell_\rho}(s)- G_{\gamma}(s).
    \]
    Thus,
    \[
        \dtv{\ell_\nu}{\ell_\rho} = G_{\ell_\rho}(s^*)-G_{\ell_\nu}(s^*) \le G_{\ell_\rho}(s^*) - G_{\ell_\mu}(s^*) \le \dtv{\ell_\mu}{\ell_\rho},
    \]
    which completes the proof.
\end{proof}

We'll now prove \pref{lem:op-act}.
\restatelemma{lem:op-act}
\begin{proof}[Proof of \pref{lem:op-act}]   \hypertarget{proof:op-act}{}
We first prove \pref{part:spherical-mon}.
We can write $\alpha$ and $\mu$ in terms of their {\em cap decompositions} as shown in \pref{claim:pancake-basis}, $\alpha = \int_{-1}^1 \mcap_{\ge\theta}(y) \, dr(\theta)$ and $\mu = \int_{-1}^1  \mcap_{\ge \psi}(z)\, ds(\psi)$ for some $z \in \bbS^{d-1}$.
$P_\mu$ is a linear operator, so $P_\mu \alpha = \int P_\mu \mcap_{\ge \theta}(y) \, dr(\theta)$.
Further, by the commutativity of convolution, $P_\mu \mcap_{\ge \theta}(y) = P_{\ge \theta}\mu_y$, where $\mu_y$ denotes the version of $\mu$ centered at $y$.
Hence,
\[
P_\mu \alpha = \int P_\mu \mcap_{\ge\theta}(y) dr(\theta) = \int P_{\ge \theta} \mu_y \, dr(\theta) = \int \int P_{\ge \theta} \mcap_{\ge \psi}(y) \, ds(\psi)\, dr(\theta).
\]
Each $P_{\ge \theta} \mcap_{\ge \psi}(y)$ is clearly spherically symmetric about $y$.
Since the projection onto the line defined by $y$ is a linear operation, $\ell_{P_\mu \alpha} = \int \int \ell_{P_{\ge \theta} \mcap_{\ge \psi}} ds(\psi) dr(\theta)$, and because a non-negative combination of monotone functions is monotone, it suffices to prove that for any $\theta,\psi \in [-1,1]$, $\ell_{P_{\ge \theta} \mcap_{\ge \psi}}$ is monotone.
By definition,
\begin{align*}
\ell_{P_{\ge \theta}\mcap_{\ge \psi}(y)}(t) 
&= \E_{\bv \sim \rho}\left[ \left(P_{\ge \theta} \mcap_{\ge \psi}(y)\right)(\bv) \mid \iprod{\bv,y} = t\right]\\
&= \E_{\bv \sim \rho}\left[ \E_{\bw \sim \mcap_{\ge \theta}(\bv)} \left[\left(\mcap_{\ge \psi}(y)\right)(\bw) \right]\mid \iprod{\bv,y} = t\right]\\
&= \E_{\bv \sim \rho}\left[ \E_{\bw \sim \mcap_{\ge \theta}(\bv)} \left[\tfrac{\Ind[\iprod{\bw,y} \ge \psi]}{\rho(\scap_{\ge \psi})} \right]\mid \iprod{\bv,y} = t\right]
= \frac{\Pr_{\bv,\bw \sim \rho}\left[\iprod{\bw,y} \ge \psi \mid \iprod{\bw,\bv} \ge \theta, \iprod{\bv, y} = t \right]}{\Pr_{\bw \sim \rho}\left[\iprod{\bw,y} \ge \psi\right]}.
\end{align*}
This ratio is monotone increasing in $t$, completing the proof of $(1)$.

Now we show \pref{part:stoc-dom-dists}. 
\pref{claim:pancake-basis} shows that by the spherical monotonicity of $\alpha$, we can express $\alpha$ in its cap decomposition,
\[
\alpha = \int_{0}^1 \mcap_{q} \, dr(q),
\]
and now by the linearity of convolution, $P_\alpha = \int_{0}^1 P_{\mcap_{q}} \, dr(q)$, and $P_\alpha \mu = \int P_q \mu\, dr(q)$, $P_\alpha \nu = \int P_q \nu dr(q)$.
So, to show that $\ell_{P_\alpha\mu}\sd \ell_{P_\alpha\nu}$, it suffices to argue ``slice-by-slice'' that for every $q \in [0,1]$, $\ell_{P_q \mu} \sd \ell_{P_q \nu}$.

This follows from the following coupling argument:
we sample $(\bx,\by)$ from $(\ell_{P_q \mu},\ell_{P_q \nu})$ in a coupled manner as follows: first, sample $(\ba_\mu,\ba_\nu) \sim (\ell_\mu, \ell_\nu)$ in a coupled manner so that $\ba_\mu \ge \ba_\nu$; such a coupling is guaranteed because $\ell_\mu \sd \ell_\nu$. 
Next, choose $(\bv_\mu,\bv_\nu)$ at random in $\bbS^{d-1}$ conditioned on $\iprod{\bv_\mu,y} = \ba_\mu$ and $\iprod{\bv_\nu,y} = \ba_\nu$.
Now, let $\btheta_\mu$ be the random variable $\iprod{y,\bu_{\mu}}$ for $\bu_\mu \sim \mcap_q(\bv_\mu)$, and $\btheta_\nu = \iprod{y,\bu_{\nu}}$ for $\bu_\nu \sim \mcap_q(\bv_\nu)$.
Note that the marginal over $\btheta_\mu$ is $\ell_{P_q \mu}$ and the marginal over $\btheta_\nu$ is $\ell_{P_q \nu}$.	
The probability $\Pr[\theta_\mu > t]$ is proportional to the measure of the intersection of $\scap_{\ge t}(y)$ and $\scap_{q}(\bu_{\mu})$, and similarly the probability $\Pr[\theta_\nu > t]$ is proportional to the measure of the intersection of $\scap_{\ge t}(y)$ and $\scap_q(\bu_\nu)$.
By our choice of coupling, the angle between $\bu_\mu$ and $y$ is smaller than the angle between $\bu_\nu$ and $y$, so for every $t \in [-1,1]$,
\[
\Pr[\theta_\mu > t] \ge \Pr[\theta_\nu > t],
\]
and hence we may couple $\btheta_\mu$ and $\btheta_\nu$ so that $\btheta_\mu \ge \btheta_\nu$ always.
Taking $\bx = \btheta_\mu$ and $\by = \btheta_\nu$ in this coupling gives our conclusion.

Finally, observe that by the commutativity of convolution, $P_\mu \alpha = P_\alpha \mu$ and $P_\nu \alpha = P_\alpha \nu$, and so \pref{part:stoc-dom-ops} follows from \pref{part:stoc-dom-dists}.
\end{proof}

\subsection{Concentration of spherical Brownian Motion within a cap}
\label{sec:bm}
In this section, we study the concentration of Brownian Motion on $\bbS^{d-1}$ in the spherical cap around its starting point. 

\restatelemma{lem:BM-time-s}

\begin{proof}[Proof of \pref{lem:BM-time-s}]
Letting $\bA_t = \iprod{V_0, \bV_t}$ be the correlation of the motion at step $t$ with the starting point, $\parens*{\bB_t}_{t\ge 0}$ be standard Brownian motion on $\R^d$, $\parens*{\bB'_t}_{t\ge 0}$ be standard Brownian motion on $\R$, and $\theta = d-1$,
\begin{align*}
d\bA_t = \iprod{V_0, d\bV_t} 
&= -\theta \cdot \bA_t\, dt + \sqrt{2} \angles*{V_0, \parens*{\Id - \bV_t \bV_t^{\top}}\,d\bB_t } \\
&= -\theta\cdot \bA_t\, dt + \sqrt{2}\iprod{\parens*{\Id - \bV_t \bV_t^{\top}} V_0, d\bB_t} \\
&= -\theta\cdot \bA_t\, dt + \sqrt{2}\sqrt{1-\bA_t^2}\, d\bB'_t
\end{align*}

The solution to the deterministic differential equation $d x_t = -\theta x_t$ with initial condition $x_0 = 1$ is $x_t = \exp(-\theta t)$.
To this end, it's convenient to split $\bA_t$ up into a deterministic and a random part:
\[
\bA_t = \exp\left(-\theta t\right) + \bR_t,
\]
with the initial condition $R_0=0$.
Then via calculation,
\begin{align}
d \bR_t 
&= - \theta \bR_t dt + \sqrt{2}\sqrt{1-\bA_t^2}\, d\bB'_t.
\end{align}
We now relate $\bR_t$ to a stochastic process without drift, as is done, for example, in the analysis of the Ornstein-Uhlenbeck process.
Consider $\bR_t\exp(\theta t)$.
Note that
\begin{align*}
    d(\bR_t\exp(\theta t)) &= \exp(\theta t)\, d\bR_t + \bR_t\, \theta \exp(\theta t)\,dt \\
    &= -\bR_t\, \theta \exp(\theta t)\,dt + \sqrt{2}\exp(\theta t) \sqrt{1-\bA_t^2}\, d\bB'_t + \bR_t\, \theta \exp(\theta t)\,dt \\
    &= \sqrt{2}\exp(\theta t) \sqrt{1-\bA_t^2}\, d\bB'_t,
\end{align*}
a process without drift.

The following version of the Azuma--Hoeffding inequality will allow us to argue that this driftless process concentrates.
\begin{lemma}\torestate{\label{lem:cont-AH}
Let $(\bX_t)_{t\ge 0}\subset \R$ be a stochastic process adapted to the filtration $\calF_t$ with
$
\E[e^{r \,d \bX_t} \mid \calF_t] < \exp\left(r^2\sigma_t^2dt\right),
$
for all $t,r$.
Then for all $s,x > 0$,
\[
	\Pr[|\bX_s-\bX_0| \ge x] \leq 2\exp\left(\frac{-x^2}{4\int_0^s \sigma_t^2 dt}\right).
\]}
\end{lemma}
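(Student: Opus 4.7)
The plan is to adapt the standard Chernoff / exponential martingale proof of Azuma--Hoeffding to the continuous setting. Fix $r > 0$. The key step is the MGF bound
\[
\E\bracks*{\exp\parens*{r(\bX_s - \bX_0)}} \le \exp\parens*{r^2 \int_0^s \sigma_u^2\, du}.
\]
Once this is in hand, set $V \coloneqq \int_0^s \sigma_u^2\, du$ and apply Markov's inequality: for any $r > 0$,
\[
\Pr[\bX_s - \bX_0 \ge x] \le e^{-rx}\,\E[e^{r(\bX_s - \bX_0)}] \le \exp(-rx + r^2 V).
\]
Optimizing over $r$, the choice $r = x/(2V)$ yields $\exp(-x^2/(4V))$. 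Applying the same argument to $-\bX_t$ (which satisfies the same infinitesimal MGF bound since the right-hand side depends only on $r^2$) controls $\Pr[\bX_s - \bX_0 \le -x]$, and a union bound gives the factor of $2$.

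To establish the MGF bound, I would interpret the hypothesis rigorously as the statement that the process
\[
M_t^r \coloneqq \exp\parens*{r\bX_t - r^2 \int_0^t \sigma_u^2\, du}
\]
is a (local) supermartingale with respect to $\calF_t$. Equivalently, one can discretize $[0,s]$ into a partition $0 = t_0 < t_1 < \cdots < t_n = s$ and, from the hypothesis applied to each sub-interval (which integrates to $\E[\exp(r(\bX_{t_{k+1}} - \bX_{t_k})) \mid \calF_{t_k}] \le \exp(r^2 \int_{t_k}^{t_{k+1}} \sigma_u^2\,du)$ in the limit of fine partitions), iterate by peeling off conditional expectations from the inside out:
\[
\E\bracks*{\exp(r(\bX_s - \bX_0))} = \E\bracks*{\prod_{k=0}^{n-1} e^{r(\bX_{t_{k+1}} - \bX_{t_k})}} \le \prod_{k=0}^{n-1} \exp\parens*{r^2 \int_{t_k}^{t_{k+1}} \sigma_u^2\, du} = \exp(r^2 V).
\]
Via the supermartingale viewpoint, this is just $\E[M_s^r] \le \E[M_0^r] = 1$.

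The main obstacle is making the discretization (equivalently, the supermartingale property) rigorous: the hypothesis is written in infinitesimal $dt$ notation, so one needs to turn it into a genuine statement about finite increments. Once phrased as a supermartingale claim, the only remaining subtlety is a standard localization argument for possible unboundedness of $\bX_t$ --- apply optional stopping at $\tau_N \coloneqq \inf\{t : |\bX_t| \ge N\}$, take $N \to \infty$, and invoke Fatou. Everything after the MGF bound is purely routine Chernoff optimization.
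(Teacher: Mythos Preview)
Your proposal is correct and follows essentially the same route as the paper: bound the moment generating function $\E[\exp(r(\bX_s-\bX_0))]$ by $\exp(r^2\int_0^s\sigma_t^2\,dt)$, apply Markov's inequality, optimize $r$, and repeat for $-\bX$. The only cosmetic difference is that the paper obtains the MGF bound by directly computing $d\E[\exp(\theta\bX_t)]$ from the infinitesimal hypothesis and integrating $d\log\E[\exp(\theta\bX_t)]$, whereas you phrase the same step via discretization/supermartingale and add a localization argument the paper omits.
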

Versions of this lemma are known (c.f. \cite{Dembo96} and references therein), we include a proof in \pref{app:cont-ah} for completeness.

\noindent
We apply \pref{lem:cont-AH} to prove that
 \[\Pr\bracks*{|\bR_s|\ge x} \le 2\exp\left(-C'\cdot\frac{x^2 d\theta}{1-\exp(-\theta s)}\right).
\]
Indeed, $\bR_t \exp(\theta t)$ is a stochastic process without drift and satisfies that 
\begin{align*}
\E[\exp(r\, d(\bR_t \exp(\theta t)))] 
&= \E\left[\exp\left(\sqrt{2}r\exp(\theta t)\sqrt{1-\bA_t^2} \, d\bB'_t\right)\right]  \\
&\leq \exp\left(r^2\exp(2\theta t) \cdot \parens*{1 - \bA_t^2} \, dt\right)\\
&\le\exp\left(r^2\exp(2\theta t) \, dt\right),
\end{align*}
Since $\bA_t$ is real-valued.
So we can apply \pref{lem:cont-AH} to the process and derive that
\begin{align*}
\Pr[|\bR_s| \ge x] 
&= \Pr[|\bR_s \exp(\theta s)| \ge x\exp(\theta s)] \\
&\le  2 \exp\left(-\frac{x^2\exp(2\theta s)}{4\int_{0}^s\exp(2\theta t)dt}\right) \\
&= 2 \exp\left(-\frac{ \theta x^2}{2\cdot \parens*{1 - \exp(-2\theta s)} }\right),
\end{align*}
and plugging in $\theta = d-1$ concludes the proof.
\end{proof}

\section{The second eigenvalue of links}    \label{sec:link-eigs}
In this section we analyze the links of the random geometric complex.
Each link is a random geometric graph in a cap centered around some $w\in\bbS^{d-1}$ on $\boldm$ vertices where $\boldm\sim\Binom(n,p)$.
We are interested in obtaining a high probability bound on the second eigenvalue of $\wh{A}_{\bG}\coloneqq D_{\bG}^{-1/2}A_{\bG}D_{\bG}^{-1/2}$, the normalized adjacency matrix of link graph $\bG$, where $A_{\bG}$ and $D_{\bG}$ denote its adjacency matrix and diagonal degree matrix.
Since the number of vertices $\boldm$ concentrates well in our setting, throughout this section we treat the number of vertices $m$ as fixed and handle the variation in $\boldm$ in \pref{sec:wrapup}.
We also specialize the parameters to the regime relevant in proving \pref{thm:hdx} in \pref{sec:wrapup} --- in particular, the relationship between $n$, $p$ and $d$ is such that $\lim_{n\to\infty}\tau(p,d)$ is a constant in $(0,1)$, $np$ is a polynomially large function of $n$, and $d = \Omega(\log n)$.

\begin{theorem} \torestate{ \label{thm:main-thm-links}
    Let $0 < \tau < 1$ be a constant.
    Let $\bv_1,\dots,\bv_m\sim\scap_{\ge\tau}(w)$ and $\bG\coloneqq\GG_{\tau}(\bv_1,\dots,\bv_{m})$.
    Then for $q\coloneqq \ol{\Phi}_{\Beta{d}}\parens*{\frac{\tau}{1+\tau}}$, suppose
    $qm \gg \log^8 m \cdot \log^{3/2} \frac{1}{q} \cdot \parens*{\frac{1+\tau}{\tau}}^3$
    and $d\ge C\cdot\log m$ for any constant $C > 0$, then for any constant $\gamma > 0$,
    \[
        \displaystyle\Pr\bracks*{|\lambda|_2\parens*{\wh{A}_{\bG}} > \frac{\tau}{1+\tau} + o_{d,m}(1)} \le O\parens*{m^{-\gamma}}.
    \]
    }
\end{theorem}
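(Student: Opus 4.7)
The plan is to follow the high-level strategy outlined in the overview. First I would condition on $\bkappa = (\bkappa_1, \ldots, \bkappa_m)$ with $\bkappa_i \coloneqq \iprod{\bv_i, w}$. Under the reparametrization $\bv_i = \bkappa_i w + \sqrt{1 - \bkappa_i^2} \bu_i$, the vectors $\bu_1, \ldots, \bu_m$ become independent uniform samples on the vertex-transitive sphere $\bbS^{d-2}$, and the edge condition $\iprod{\bv_i, \bv_j} \ge \tau$ becomes $\iprod{\bu_i, \bu_j} \ge t_{ij}$ where $t_{ij} \coloneqq (\tau - \bkappa_i \bkappa_j)/\sqrt{(1-\bkappa_i^2)(1-\bkappa_j^2)}$. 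A direct calculation shows that over $\bkappa_i, \bkappa_j \in [\tau, 1]$, $t_{ij}$ is \emph{maximized} at $\bkappa_i = \bkappa_j = \tau$, where it equals $\tau/(1+\tau)$; correspondingly $q_{ij} = \ol{\Phi}_{\Beta{d-1}}(t_{ij})$ is minimized at this point and equals $q$. Since the worst case for the random-walk mixing rate (and thus for the eventual eigenvalue bound) is the largest $t_{ij}$, we should expect the target bound $\tau/(1+\tau)$.

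Because degrees in the link do not concentrate marginally but only after conditioning on $\bkappa$, I will work exclusively with events conditioned on $\bkappa$. By \pref{fact:rank-1-sub}, $|\lambda|_2(\wh{A}_{\bG}) \le \norm*{\wh{A}_{\bG} - R}$ for any rank-$1$ PSD $R$. Choosing $R = D_{\bG}^{-1/2} R' D_{\bG}^{-1/2}$ and applying the triangle inequality yields
\[
    \norm*{\wh{A}_{\bG} - R} \le \norm*{D_{\bG}^{-1/2}}^2 \cdot \parens*{ \norm*{A_{\bG} - \E[A_{\bG} \mid \bkappa]} + \norm*{\E[A_{\bG} \mid \bkappa] - R'} }.
\]
Hoeffding-type concentration of the conditional degrees (of order $qm$) controls $\norm*{D_{\bG}^{-1/2}}^2 \approx (qm)^{-1}$; the first bracketed term is a fluctuation-around-mean term handled by the trace method, while the second is a structural term asking how close the conditional mean is to being rank-$1$.

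For the fluctuation term, I would replay the trace-method machinery of \pref{sec:trace-method} with the critical modification that each edge $\{i,j\}$ in a walk $W \in \cW_{\ell}$ now carries its own edge probability $q_{ij}$ and its own per-step mixing rate governed by $t_{ij}$. Since the $\bu_i$ live on the vertex-transitive $\bbS^{d-2}$, \pref{cor:decay-useful} applied with threshold $t_{ij}$ yields per-step total-variation decay at rate $(1+o(1)) t_{ij}$. After truncating to the high-probability event that every $\bkappa_i \in [\tau, \tau + o(1)]$---which holds by concentration of $\Beta{d} \mid_{\ge \tau}$---the worst-case per-step mixing rate is $(1+o(1)) \cdot \tau/(1+\tau)$. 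The combinatorial accounting of walks from \pref{obs:2-core-decomp}--\pref{claim:walk-count} carries over with only notational changes, so that the dominant contribution to $\E[\tr(\ol{A}_{\bG}^{\ell}) \mid \bkappa]$ is $q^{|E(W)|} (\tau/(1+\tau))^{\mathrm{sing}(W)}$, and \pref{claim:trace-method} delivers the target bound.

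For the structural term, I would argue as in \pref{sec:shells}: a $(1-o(1))$ fraction of the $\bkappa_i$ lie within $o(1)$ of $\tau$, so ignoring the ``heavy'' rows and columns (those with $\bkappa_i$ significantly above $\tau$) discards only an $o(1)$ fraction of the mass in $\E[A_{\bG}\mid\bkappa]$. On the typical set the rows of $\E[A_{\bG}\mid\bkappa]$ are approximately constant multiples of a common vector, which I would formalize by viewing $\E[A_{\bG}\mid\bkappa]$ as a Markov transition kernel whose rapid mixing (again from the spherical walk on $\bbS^{d-2}$) forces proximity to a rank-$1$ projector. The main obstacle I anticipate is the trace-method step under varying $\bkappa$-dependent thresholds: unlike in \pref{thm:random-restriction}, the contribution of a walk no longer factors cleanly into $p^{|E(W)|}\lambda^{\mathrm{sing}(W)}$, so I must carefully show that after truncation the pointwise quantities $q_{ij}$ and $t_{ij}$ can be uniformly dominated by their worst-case values (at $\bkappa_i = \bkappa_j = \tau$) without accumulating unacceptable multiplicative losses, particularly when walks pass through several vertices with $\bkappa_i$ close to $1$.
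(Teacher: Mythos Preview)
Your overall structure---condition on $\bkappa$, reduce to $\bbS^{d-2}$, split into a fluctuation term plus a structural rank-$1$ term, and bound the per-step mixing rate by the worst-case threshold $\tau/(1+\tau)$---matches the paper's. But there is a genuine gap in how you control the fluctuation term.

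The claim that ``every $\bkappa_i\in[\tau,\tau+o(1)]$'' holds with high probability is false in the regime $d=\Theta(\log m)$. From \pref{lem:approx-tails-dpd}, $\Pr[\bkappa_i>\tau(1+\epsilon)\mid\bkappa_i\ge\tau]$ is of order $\exp(-\Theta(d\epsilon))$, so with $d=C\log m$ a union bound over $m$ shells only succeeds for $\epsilon\ge\Omega(1/C)$, a \emph{constant}; the paper flags this explicitly in \pref{fact:typical-alpha}. Without that truncation, your factorization $\norm{D_{\bG}^{-1/2}}^2\cdot\norm{A_{\bG}-\E[A_{\bG}\mid\bkappa]}$ is too loose: the first factor is $\approx (qm)^{-1}$ (the inverse of the \emph{minimum} conditional degree), whereas the trace-method bound on the unnormalized centered matrix scales with $\max_i\eD[i,i]$, and for an outlier vertex this can exceed $qm$ by a factor polynomial in $m$. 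The paper avoids this by normalizing \emph{inside} the trace method: it works with $\nA=\eD^{-1/2}A_{\bG}\eD^{-1/2}$, where $\eD$ is the conditional expected degree matrix, so each closed walk $W$ carries the weight $\dW(\kappa)=\prod_t\eD^{-1}[i_t,i_t]$. The key step, \pref{claim:bound-equivalent-walks}, shows that summing $\dW(\kappa)\prod_{e\in W}p_e$ over all labelings of a fixed walk shape telescopes along a spanning tree to a quantity controlled by $(qm)^{-\exc-(\ell-\sing)/2}$, \emph{uniformly} in $\kappa\in[\tau,1]^m$---the variable edge probabilities $q_{ij}$ are absorbed without any truncation. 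The remaining degree correction becomes $\norm{D_{\bG}^{-1}\eD}$ (\pref{lem:diag-entries}), which \emph{is} $1+o(1)$ because it compares each vertex's realized degree to its \emph{own} conditional mean rather than to a global minimum.
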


To prove \pref{thm:main-thm-links}, by \pref{fact:rank-1-sub} it suffices to bound $\norm*{\wh{A}_{\bG} - R}$ for any rank-$1$ PSD matrix $R$. 
For a given $\bG$, the minimizing $R$ for $\norm*{\wh{A}_{\bG}-R}$ is $R = R_{\bG} = \frac{D_{\bG}^{1/2}JD_{\bG}^{1/2}}{\tr\parens*{D_{\bG}}}$ where $J$ is the all-ones matrix.

One challenge in directly performing the trace method on $\wh{A}_{\bG} - R_{\bG}$ is that the degree of any vertex $i$ is a random variable that depends on the locations of all the vectors, and hence introduces extra correlations.
In \pref{sec:trace-method}, this issue was resolved because the degrees concentrated very well, and hence $D_{\bG}^{-1/2}$ and $D_{\bG}^{1/2}$ were close to scalar multiples of identity.
However, in the links the degrees of vertices in $\bG$ no longer concentrate around a single value, and even the behavior of the {\em expected} degree of vertex $i$ depends on which ``shell'' $\bv_i$ is contained in around $w$, $\iprod{\bv_i,w}$.
To better control the degrees, we will study the spectral norm of $\wh{A}_{\bG}-R_{\bG}|\bkappa$ conditioned on the shells $\bkappa\coloneqq\braces*{\bkappa_i\coloneqq \angles*{w, \bv_i}}_{i=1}^{m}$.

Let $\eD\in \R^{m\times m}$ be the conditional expected diagonal degree matrix with $\eD[i,i] = \E[\deg_{\bG}(i)\mid \bkappa]$. 
Then we define the new normalized matrix $\nA=\eD^{-1/2}A_{\bG}\eD^{-1/2}$ and the new conditional rank-$1$ PSD matrix $R_{\bkappa} = \frac{\eD^{1/2}J\eD^{1/2}}{\tr\parens*{\eD}}$. Then by optimality of $R_{\bG}$:
\begin{align*}
    \norm*{\wh{A}_{\bG}-R_{\bG}} &\le \norm*{\wh{A}_{\bG}- \parens*{D_{\bG}^{-1/2}\eD^{1/2}} R_{\bkappa} \parens*{\eD^{1/2}D_{\bG}^{-1/2}}}\\
    &=\norm*{ \parens*{D_{\bG}^{-1/2}\eD^{1/2}} \parens*{\nA - R_{\bkappa}} \parens*{\eD^{1/2}D_{\bG}^{-1/2}} } \\
    &\le \norm*{\nA - R_{\bkappa}} \cdot \norm*{D_{\bG}^{-1/2}\eD^{1/2}}^2.
\end{align*}
Since $\norm*{D_{\bG}^{-1/2}\eD^{1/2}}^2 = \norm*{D_{\bG}^{-1}\eD}$, this is equivalent to bounding
\begin{align*}
    \norm*{\nA - R_{\bkappa}} \cdot \norm*{D_{\bG}^{-1}\eD} &\le \norm*{\nA - R_{\bkappa}} \cdot \max_{i\in[m]} \frac{\eD[i,i]}{D_{\bG}[i,i]} 
\intertext{
Now, in the trace method it is convenient to work with $A_{\bG} - \E[A_{\bG}] \mid \bkappa$, which is not a rank-1 matrix. 
So, applying the triangle inequality,
}
    &\le \parens*{ \norm*{\nA - \E \bracks*{\nA\mid\bkappa} } + \norm*{\E \bracks*{\nA\mid\bkappa} - R_{\bkappa} }} \cdot \max_{i\in[m]} \frac{\eD[i,i]}{D_{\bG}[i,i]}  \numberthis \label{eq:bound-norm-adj},
\end{align*}
It then suffices to bound $\norm*{\E \bracks*{\nA\mid\bkappa} - R_{\bkappa}}$, $\max_{i\in[m]} \frac{\eD[i,i]}{D_{\bG}[i,i]}$, and $\norm*{\nA - \E \bracks*{\nA\mid\bkappa}}$ to complete the proof of \pref{thm:main-thm-links}.

In \pref{sec:orbit-norm} we'll show that $\E[\nA \mid \bkappa]$ is close to $R_{\bG}$ in spectral norm:
\begin{lemma}   \label{lem:orbit-bound}
    If $d\ge C\cdot\log m$ for some constant $C>0$ and the constant
    $\tau \in (0,1)$ satisfies $qm \gg \log^8 m$,
     then
    \[
    \displaystyle{\norm*{\E \bracks*{\nA\mid\bkappa} - R_{\bkappa}}} \le O\parens*{\sqrt{\frac{\log^2d}{d}}}
    \]
    with probability at least $1-o\parens*{m^{-\gamma}}$ for any constant $\gamma > 0$.
\end{lemma}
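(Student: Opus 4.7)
The plan is to identify $\|\E[\nA \mid \bkappa] - R_{\bkappa}\|$ with the second eigenvalue (in absolute value) of a Markov operator, and then bound that eigenvalue by exhibiting that the kernel $q_{ij}$ is nearly a rank-$1$ function of $(\bkappa_i, \bkappa_j)$. For the reduction, note that since $\eD$ is determined by $\bkappa$, we have $\E[\nA \mid \bkappa] = \eD^{-1/2} M \eD^{-1/2}$ where $M_{ij} = q_{ij} \cdot \Ind[i \ne j]$, and a direct check shows that $\sqrt{\pi}$, with $\pi_i \coloneqq \eD_{ii}/\tr(\eD)$, is its top eigenvector with eigenvalue $1$. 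Since $R_{\bkappa} = \sqrt{\pi}\sqrt{\pi}^{\top}$ is exactly the rank-$1$ projector onto $\sqrt{\pi}$, we conclude $\|\E[\nA \mid \bkappa] - R_{\bkappa}\| = |\lambda|_2(\E[\nA \mid \bkappa])$, which in turn equals $|\lambda|_2$ of the similar row-stochastic matrix $\eD^{-1} M$. We are thus reduced to proving a spectral-gap bound for this Markov chain.

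The key observation is that $q_{ij}$ is nearly multiplicatively separable in $\bkappa_i, \bkappa_j$. Writing $\alpha_i \coloneqq \bkappa_i - \tau$ and Taylor expanding the threshold $\theta_{ij} = (\tau - \bkappa_i \bkappa_j)/\sqrt{(1-\bkappa_i^2)(1-\bkappa_j^2)}$ from \pref{eq:qij} around $\alpha_i = \alpha_j = 0$ yields $\theta_{ij} = \tfrac{\tau}{1+\tau} + f(\alpha_i) + f(\alpha_j) + O(\alpha_i \alpha_j)$ for an explicit linear $f$. Composing with \pref{lem:approx-tails-dpd}, this lifts to $\log q_{ij} = \log q^{\star} + g(\alpha_i) + g(\alpha_j) + \eta_{ij}$, where $q^{\star} \coloneqq \overline{\Phi}_{\Beta{d-1}}(\tau/(1+\tau))$ and $|\eta_{ij}| = O(d \cdot |\alpha_i \alpha_j|)$. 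Setting $a_i \coloneqq \sqrt{q^{\star}} \, e^{g(\alpha_i)}$, we obtain $q_{ij} = a_i a_j (1 + O(\eta_{ij}))$, and a direct computation verifies that an \emph{exact} rank-$1$ kernel $a_i a_j$ would give $\E[\nA \mid \bkappa] = R_{\bkappa}$ identically, so the entire deviation is controlled by the interaction term $\eta_{ij}$.

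To convert this pointwise bound into a spectral-norm bound, we use that $\bkappa_i \sim \Beta{d}\mid_{\ge\tau}$ and apply \pref{lem:approx-tails-dpd} to show that with probability at least $1 - o(m^{-\gamma})$, $\max_i |\alpha_i| \le O(\log m / d)$ and the empirical variance of $\alpha_i$ is $O(1/d^2)$. Since the leading part of $\eta_{ij}$ is $c \cdot \alpha_i \alpha_j$ for an explicit constant $c$, after centering the $\alpha_i$ appropriately the error matrix $E_{ij} \coloneqq q_{ij} - a_i a_j$ is close to a scaled outer product $c' \cdot \widetilde\alpha \widetilde\alpha^{\top}$; pushing this through the $\eD^{-1/2}(\cdot)\eD^{-1/2}$ normalization, and using $\|\widetilde\alpha\|_2^2 = O(m/d^2)$, the resulting contribution to $\|\E[\nA\mid\bkappa] - R_{\bkappa}\|$ is of order $\log d/\sqrt{d}$, matching the claimed rate.

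The main obstacle is making the last step rigorous in the presence of atypical vertices with $\alpha_i \gg \log m/d$ and of non-product-form higher-order Taylor remainders. The former we intend to handle by separating out the $o(1)$ fraction $T^{c}$ of atypical vertices (whose smallness is guaranteed by \pref{lem:approx-tails-dpd}) and controlling the spectral contribution of the rows and columns indexed by $T^{c}$ via a row-sum bound (\pref{claim:row-sum-bound}); the latter we intend to handle by showing that each successive Taylor correction contributes at most a factor of $O(\log m / \sqrt{d}) = o(1)$ in the regime $d \ge C\log m$, so that the leading $\alpha_i \alpha_j$ term dominates and the rank-$1$ approximation argument above goes through.
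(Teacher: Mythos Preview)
Your reduction to $|\lambda|_2$ of the Markov chain $\nQ = \eD^{-1}Q$ is correct and matches the paper's, as is the key structural observation that $\log q_{ij}$ is additively separable up to an interaction $\eta_{ij} = O(d\,\alpha_i\alpha_j)$; this is exactly the content of the paper's \pref{lem:typical-ratio}. The gap is in your final paragraph, where the outliers are handled.

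First, a parameter slip: $\log m/\sqrt{d}$ is \emph{not} $o(1)$ when $d = C\log m$; it is $\Theta(\sqrt{\log m})$ and diverges. Relatedly, you mis-identify the problematic vertices: with high probability \emph{all} vertices satisfy $\alpha_i = O(\log m/d)$ (this is the paper's event $\calE_\gamma$), so the ``atypical vertices with $\alpha_i \gg \log m/d$'' you propose to excise do not exist. The actual obstacle is vertices with $\alpha_i$ in the range $[\Theta(\log d/d),\,\Theta(\log m/d)]$, of which there are $m/\poly(d)$. For a pair of such vertices, $\eta_{ij} = \Theta(d\alpha_i\alpha_j)$ can be as large as $\Theta(\log^2 m/d) = \Theta(\log m)$ in the regime $d = \Theta(\log m)$, so $q_{ij}/(a_ia_j) = e^{\eta_{ij}}$ is polynomially large in $m$ and your linearization of the error matrix collapses on those entries. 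Nor does a one-step row-sum bound on $\nQ - \vec{1}\pi^\top$ help: for a vertex $i$ with $\alpha_i$ near the maximum, $\eta_{i\ell} \approx c\, d\alpha_i\alpha_\ell$ ranges over an interval of width $\Theta(1)$ as $\ell$ varies over typical shells, so $\nQ_{i\ell}/\nQ_{j\ell}$ (for typical $j$) varies by a constant factor in $\ell$, and hence $\|\nQ_{i,*} - \nQ_{j,*}\|_1 = \Omega(1)$. The Gershgorin-type bound you cite is therefore vacuous on such rows.

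The paper's fix is to pass to $\nQ^2$: it bounds $|\lambda|_{\max}(\nQ^2 - \vec{1}\pi^\top) \le \max_{i,j}\bigl\|(\nQ^2)_{i,*} - (\nQ^2)_{j,*}\bigr\|_1$ and controls the latter by a two-step coupling. From \emph{any} vertex, one step of $\nQ$ lands at a typical shell except with probability $O(1/d)$ (\pref{lem:outlier-trans-prob}); conditioned on both chains being at typical shells after one step, your rank-$1$ argument (formalized as \pref{lem:typical-trans-prob} and \pref{cor:typical-1-norm}) couples the second step to within $O(\log^2 d/d)$. Squaring is precisely what neutralizes the outlier rows, and there is no analogue of this step in your one-step plan.
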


And the remainder of this section will be devoted to bounding the other two quantities, as follows:
\begin{lemma}   \label{lem:diag-entries}
    For any $0 < \alpha < 1$, 
    \[
        \max_{i\in[m]} \frac{\eDdet[i,i]}{D_{\bG}[i,i]} \le \frac{1}{1-\alpha},
    \]
    with probability at least $1-m\cdot \exp\parens*{-\frac{\alpha^2 q (m-1)}{4}}$.
\end{lemma}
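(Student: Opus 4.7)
The plan is to apply a multiplicative Chernoff lower-tail bound to each vertex's degree, followed by a union bound over the $m$ vertices. Observe that $\tfrac{\eDdet[i,i]}{D_{\bG}[i,i]} \le \tfrac{1}{1-\alpha}$ is equivalent to $D_{\bG}[i,i] \ge (1-\alpha)\eDdet[i,i]$, so it suffices to show this one-sided concentration holds at every vertex simultaneously with probability at least $1 - m\exp(-\alpha^2 q(m-1)/4)$.

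First I would set up the conditioning that makes the edges incident to a fixed vertex independent. Writing $\bv_j = \bkappa_j w + \sqrt{1-\bkappa_j^2}\,\bu_j$ with each $\bu_j$ uniform on the unit sphere orthogonal to $w$, the $\bu_j$'s are mutually independent given $\bkappa$. Conditional on $\bkappa$ \emph{and} additionally on $\bv_i$, the indicators $\Ind[(i,j) \in E(\bG)]$ for $j \ne i$ are independent Bernoullis, and by the rotational symmetry of $\bu_j$ on $\bbS^{d-2}$ the success probability $q_{ij}$ from \pref{eq:qij} depends only on $\bkappa_i,\bkappa_j$ and not on the direction of $\bu_i$. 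Hence $D_{\bG}[i,i] \mid \bkappa, \bv_i$ is a sum of independent Bernoullis with mean $\eDdet[i,i] = \sum_{j\ne i}q_{ij}$, and the standard multiplicative Chernoff lower-tail bound yields
\[
\Pr\bracks*{D_{\bG}[i,i] \le (1-\alpha)\eDdet[i,i] \mid \bkappa,\bv_i} \le \exp\parens*{-\alpha^2 \eDdet[i,i]/2}.
\]
Since this right-hand side does not depend on $\bv_i$, the same estimate survives averaging out $\bv_i$, i.e., conditional only on $\bkappa$.

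Next I would convert $\eDdet[i,i]$ to $q(m-1)$ in the exponent via the monotonicity claim that $q_{ij} \ge q$ whenever $\bkappa_i,\bkappa_j \in [\tau,1)$, which is always the case since every sampled $\bv_j$ lies in $\scap_{\ge \tau}(w)$. Letting $f(a,b) \coloneqq \tfrac{\tau-ab}{\sqrt{(1-a^2)(1-b^2)}}$ denote the threshold inside \pref{eq:qij}, a short computation gives $\partial_a f(a,b) \propto a\tau - b$, which is non-positive for $a,b \in [\tau,1)$; by symmetry the same holds in $b$. Therefore $f(\bkappa_i,\bkappa_j) \le f(\tau,\tau) = \tau/(1+\tau)$, giving $q_{ij} \ge q$ and in particular $\eDdet[i,i] \ge (m-1)q$. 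Plugging this in yields $\exp(-\alpha^2 \eDdet[i,i]/2) \le \exp(-\alpha^2 q(m-1)/2) \le \exp(-\alpha^2 q(m-1)/4)$.

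A union bound over the $m$ vertices then gives the stated conclusion. No step presents a real obstacle; the only point deserving care is that edges incident to \emph{different} vertices are \emph{not} independent given $\bkappa$, so the Chernoff bound must be applied after the additional conditioning on $\bv_i$, but since the resulting probability bound is independent of $\bv_i$ this extra conditioning comes essentially for free.
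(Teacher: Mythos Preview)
Your proof is correct and follows essentially the same approach as the paper's: reformulate the event, apply a lower-tail concentration bound (you use Chernoff, the paper uses Bernstein), lower bound $\eDdet[i,i] \ge (m-1)q$ via the monotonicity of $T$ (the paper's \pref{clm:tau-tau-worst-case}), and union bound. Your extra conditioning on $\bv_i$ is harmless but in fact unnecessary: by the rotational symmetry you invoke, the conditional law of $(\Ind[(i,j)\in\bG])_{j\ne i}$ given $\bu_i$ does not depend on $\bu_i$, so these indicators are already independent given $\kappa$ alone.
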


\begin{lemma}   \label{lem:link-trace-method}
    For any $\kappa, q$ and $m$ and $qm\gg\log^8 m \cdot \log^{3/2} \frac{1}{q} \cdot \parens*{\frac{1+\tau}{\tau}}^3$
    \[
        \norm*{\nA - \E \bracks*{\nA\mid\kappa}} \le (1+o_m(1))\cdot\frac{\tau}{1+\tau}.
    \]
\end{lemma}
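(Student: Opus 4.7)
The plan is to bound $\|M\|$ for $M := \nA - \E\bracks*{\nA\mid\kappa}$ via the trace method (\pref{claim:trace-method}) with an even exponent $\ell = \log^2 m$, following the template of the proof of \pref{thm:random-restriction}. Conditional on $\kappa$, if we write $\bv_i = \kappa_i w + \sqrt{1-\kappa_i^2}\,\bu_i$ with $\bu_i$ i.i.d.\ uniform on $\bbS^{d-2}$, then edge $(i,j)$ is present iff $\iprod{\bu_i,\bu_j} \ge \rho_{ij}$ where $\rho_{ij} = \frac{\tau - \kappa_i\kappa_j}{\sqrt{(1-\kappa_i^2)(1-\kappa_j^2)}}$, and we set $q_{ij} = \ol{\Phi}_{\Beta{d-1}}(\rho_{ij})$. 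I would then expand $\E_{\bu \mid \kappa}\bracks*{\tr(M^\ell)}$ as a sum over closed length-$\ell$ walks $W \in \cW_{\ell}$, each contributing
\[
\frac{1}{\prod_i \eDdet[i,i]^{\deg_W(i)/2}}\cdot\E_{\bu\mid\kappa}\bracks*{\prod_{e=(i,j)\in W}(A_{ij}-q_{ij})^{m(e)}}.
\]

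Next, I would apply the 2-core/junction-graph decomposition from \pref{sec:trace-method} (\pref{obs:2-core-decomp}, \pref{def:junc-ver}) essentially verbatim: 1-core edges contribute forest-edge factors handled exactly as before, and the 2-core decomposes along junction vertices into paths $\gamma(f)$. The key novelty is that each path's contribution is controlled by the composition of spherical cap walks $P_{q_{e_k}}\cdots P_{q_{e_1}}\delta_{\bu_{s(f)}}$ on $\bbS^{d-2}$ with \emph{varying} cap sizes. An extension of \pref{cor:decay-useful} obtained by iterating the stochastic-domination step in the proof of \pref{thm:decay} (using the semigroup property $U_{t_1}\cdots U_{t_k} = U_{t_1+\cdots+t_k}$) shows the composed TV distance to uniform is at most $\prod_t \parens*{(1+o(1))\rho_{e_t}} \cdot \sqrt{\tfrac{1}{2}\log(1/q_{\min})}$. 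A short calculus argument shows that for $\kappa_i,\kappa_j \in [\tau,1]$ one has $\rho_{ij} \le \tfrac{\tau}{1+\tau}$, with equality attained only at $\kappa_i = \kappa_j = \tau$; hence every length-$k$ path contributes a mixing factor of at most $\parens*{\tfrac{\tau}{1+\tau}}^{k-1}$, matching the target rate.

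The main obstacle is reconciling the heterogeneous edge probabilities $q_{ij}$ with the vertex normalizations $\eDdet[i,i] = \sum_{j\ne i}q_{ij}$. For the dominant cycle term $W = C_\ell$, the sum reduces (after the mixing bound above) to
\[
\parens*{\tfrac{\tau}{1+\tau}}^{\ell-2}\sqrt{\log(1/q)} \cdot \sum_{i_0,\dots,i_{\ell-1}}\prod_{t=0}^{\ell-1}\frac{q_{i_t i_{t+1}}}{\sqrt{\eDdet[i_t,i_t]\cdot\eDdet[i_{t+1},i_{t+1}]}}.
\]
I would bound this inner sum by telescoping $\sum_j q_{ij} = \eDdet[i,i]$ against the normalization and using monotonicity of $\eDdet[i,i]$ in $\kappa_i$ (a direct computation shows $q_{ij}$ is non-decreasing in $\kappa_i$ for fixed $\kappa_j$), so $\min_i \eDdet[i,i] \ge (1-o(1))\cdot qm$, attained at $\kappa_i=\tau$. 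For walks with excess $\exc(W)=c$, the bookkeeping from \pref{claim:walk-count} and \pref{obs:edge-bound} yields an additional factor of $(C/\sqrt{qm})^c$ where $C$ absorbs the $\sqrt{\log(1/q)}$ and $\tfrac{1+\tau}{\tau}$ overheads; the hypothesis $qm\gg\log^8 m\cdot\log^{3/2}(1/q)\cdot(\tfrac{1+\tau}{\tau})^3$ is precisely what renders this factor $1+o(1)$ when summed over $c \le \ell$. Finally, \pref{claim:trace-method} with $\eps = \log\log m/\log m$ converts the trace bound into $\|M\| \le (1+o_m(1))\cdot\tfrac{\tau}{1+\tau}$ with the claimed high probability.
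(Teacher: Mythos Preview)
Your proposal is correct and follows essentially the same route as the paper: trace method at $\ell=\log^2 m$, the 2-core/junction decomposition of \pref{sec:trace-method}, reduction to cap walks on $\bbS^{d-2}$ via $\bv_i=\kappa_i w+\sqrt{1-\kappa_i^2}\,\bu_i$, the worst-case threshold $\rho_{ij}\le\tfrac{\tau}{1+\tau}$ (this is \pref{clm:tau-tau-worst-case}), and the telescoping $\sum_j q_{ij}=\eDdet[i,i]$ to absorb the heterogeneous normalization (this is \pref{claim:bound-equivalent-walks}). One minor deviation: rather than proving a product TV bound $\prod_t((1+o(1))\rho_{e_t})$ for the varying-cap walk, the paper takes the simpler route of replacing every $P_{q_{e_t}}$ by the slowest $P_q$ via the monotonicity in \pref{lem:op-act} and \pref{lem:stoc-dom-tv}, then applies \pref{thm:decay} directly; since you immediately bound each $\rho_{e_t}\le\tfrac{\tau}{1+\tau}$ anyway, both reach the same place.
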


In service of proving \pref{lem:orbit-bound}, \pref{lem:diag-entries} and \pref{lem:link-trace-method}, we need the following fact that arises in studying random geometric graphs with shifted edge-connectivity thresholds.

\begin{definition}
We define the bivariate function $T(x,y) \coloneqq \frac{\tau - xy}{\sqrt{(1-x^2)(1-y^2)}}$ as the {\em shifted threshold} function, defined so that 
\[
\Pr_{\bx,\by \sim \bbS^{d-2}}\left[\iprod{\bx,\by} \ge T(x,y)\right] = \Pr_{\bu,\bv \sim \bbS^{d-1}}\left[\iprod{\bu,\bv} \ge \tau \mid \iprod{\bu,w} = x, \iprod{\bv,w} = y \right].
\]
\end{definition}
\begin{claim}
\label{clm:tau-tau-worst-case}
    The shifted threshold function $T(x,y) \coloneqq \frac{\tau - xy}{\sqrt{(1-x^2)(1-y^2)}}$ on the domain $x,y\in[\tau,1]$ is maximized when $x = y = \tau$, and achieves value $\frac{\tau}{1+\tau}$.
    Additionally $\partial_x T(x,y)$ and $\partial_y T(x,y)$ are both negative.
\end{claim}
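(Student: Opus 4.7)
The plan is a direct calculus computation. First, I would verify the value at the claimed maximizer:
\[
T(\tau,\tau) = \frac{\tau - \tau^2}{\sqrt{(1-\tau^2)^2}} = \frac{\tau(1-\tau)}{1-\tau^2} = \frac{\tau}{1+\tau},
\]
using $\tau \in (0,1)$ so $1-\tau^2 > 0$. This handles the ``value'' half of the claim.

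For the monotonicity half, I would compute $\partial_x T$ directly via the quotient rule. Writing $f(x,y) = \tau - xy$ and $g(x,y) = \sqrt{(1-x^2)(1-y^2)}$, so that $f_x = -y$ and $g_x = -x(1-y^2)/g$, I would obtain
\[
\partial_x T(x,y) = \frac{f_x\, g - f\, g_x}{g^2} = \frac{-y\, g^2 + x(1-y^2)(\tau - xy)}{g^3}.
\]
Expanding $g^2 = (1-x^2)(1-y^2)$ and simplifying the numerator gives
\[
-y(1-x^2)(1-y^2) + x(1-y^2)(\tau - xy) = (1-y^2)\bigl[x\tau - y\bigr],
\]
so
\[
\partial_x T(x,y) = \frac{(1-y^2)(x\tau - y)}{\bigl((1-x^2)(1-y^2)\bigr)^{3/2}}.
\]

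On the domain $(x,y) \in [\tau,1]^2$, the denominator is positive (on the relative interior where $x,y < 1$), and the numerator factor $x\tau - y$ satisfies $x\tau \le \tau \le y$, hence $x\tau - y \le 0$ with strict inequality whenever $(x,y)\neq (1,\tau)$; the factor $1-y^2$ is nonnegative and strictly positive for $y < 1$. Thus $\partial_x T(x,y) \le 0$ throughout the domain, and is strictly negative on the interior $(\tau,1)\times(\tau,1)$. By symmetry of $T$ in $x$ and $y$, the same analysis yields $\partial_y T(x,y) \le 0$ on $[\tau,1]^2$ with strict inequality on the interior.

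Monotonic non-increase in each argument then implies $T(x,y) \le T(\tau, y) \le T(\tau,\tau) = \tau/(1+\tau)$ for every $(x,y)\in[\tau,1]^2$, completing the proof. There is no real obstacle here; the only subtlety is being careful about the boundary, where $\partial_x T$ (resp.\ $\partial_y T$) vanishes when $y=1$ (resp.\ $x=1$), but this does not affect the monotonicity argument since these are a measure-zero set and $T$ restricted to a horizontal or vertical line through $(1,\cdot)$ or $(\cdot,1)$ is continuous.
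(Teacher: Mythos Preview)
Your proof is correct and takes essentially the same approach as the paper: compute the partial derivatives, show they are negative on the domain, and conclude the maximum occurs at $(\tau,\tau)$. Your fully factored form $\partial_x T = (x\tau - y)/\bigl((1-x^2)^{3/2}\sqrt{1-y^2}\bigr)$ is in fact a cleaner simplification than the paper's, which leaves the derivative as $\frac{\tau}{\sqrt{1-y^2}}g(x) - \frac{y}{\sqrt{1-y^2}}h(x)$ and argues via $g<h$; these are the same expression.
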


\begin{proof}
The derivatives $\partial_y T(x,y) = \frac{\tau}{\sqrt{1-x^2}} \cdot g(y) - \frac{x}{\sqrt{1-x^2}} \cdot h(y)$ and $\partial_x T(x,y) = \frac{\tau}{\sqrt{1-y^2}} \cdot g(x) - \frac{y}{\sqrt{1-y^2}} \cdot h(x)$, where $g(z) \coloneqq \frac{z}{(1-z^2)^{3/2}} $ and $h(z) \coloneqq \frac{1}{\sqrt{1-z^2}} + \frac{z^2}{(1-z^2)^{3/2}}$.
Since $g(z) < h(z)$ for $z\in(0,1]$, then for $x,y\geq \tau$ we deduce that $\partial_y T, \partial_x T < 0$.
Therefore, $T$ achieves the maximum value $\frac{\tau - \tau^2}{1-\tau^2} = \frac{\tau}{1+\tau}$ when $x = y = \tau$.
\end{proof}

\noindent Now we prove \pref{lem:diag-entries}.
\begin{proof}[Proof of \pref{lem:diag-entries}]
    For any $\alpha\in(0,1)$, consider the event that $\max_{i\in[m]}\frac{\eDdet[i,i]}{D_{\bG}[i,i]} > \frac{1}{1-\alpha}$. We can bound the probability that this event happens by union bound and Bernstein's inequality:
    \begin{align*}
        \Pr\left[\exists i\in[m],~D_{\bG}[i,i] \le (1-\alpha)\eDdet[i,i]\right]
        &\le \sum_{i=1}^m \Pr[D_{\bG}[i,i] \le (1-\alpha)\eDdet[i,i]] \\
        &\le m \cdot \max_i \exp\left( -\frac{1}{2}\cdot\frac{ \alpha^2 \eDdet[i,i]^2 }{(\alpha+1)\eDdet[i,i] }\right) \\
        &\le m\cdot \max_i \exp\parens*{ -\frac{\alpha^2 \eDdet[i,i]}{4} }
    \end{align*}
    Observe that $\eDdet[i,i] = \sum_{j\ne i} \ol{\Phi}_{\Beta{d-1}}(T(\kappa_i,\kappa_j))$.
    By \pref{clm:tau-tau-worst-case}, $T(\kappa_i,\kappa_j)\le \frac{\tau}{1+\tau}$, so $\ol{\Phi}_{\Beta{d - 1}}\parens*{T(\kappa_i,\kappa_j)} \ge \ol{\Phi}_{\Beta{d - 1}}\parens*{\frac{\tau}{1+\tau}} = q$.  Consequently, $\eDdet[i,i]\ge q(m-1)$ from which the desired statement follows.
\end{proof}

\subsection{Spectral norm bound for centered links}

\noindent In the rest of the section, we prove \pref{lem:link-trace-method} by bounding the expected trace $\E\left[\tr\left( \left(\nA - \E[\nA\mid \kappa]\right)^{\ell} \right) \right]$, for $\kappa \in [\tau,1]^m$ a fixed configuration of shells.
The proof will be almost identical to the one in \pref{sec:trace-method}, but here we have to deal with the fact that the graph is not vertex-transitive.
\begin{proof}[Proof of \pref{lem:link-trace-method}]
First observe:
\begin{align*}
    \E\left[\tr\left( \left(\nA - \E[\nA\mid \kappa]\right)^{\ell} \right) \right] &= \E\bracks*{ \tr \parens*{ \parens*{ \eDdet^{-1/2} A_{\bG} \eDdet^{-1/2} - \eDdet^{-1/2} \E\bracks*{A_{\bG}|\kappa} \eDdet^{-1/2} }^{\ell} } } \\
    &= \E \bracks*{ \tr\parens*{ \parens*{ \eDdet^{-1} A_{\bG} - \eDdet^{-1} \E\bracks*{A_{\bG}|\kappa} }^{\ell} } }.
\end{align*}
We rewrite the expression in terms of $\eDdet^{-1} A_{\bG}$ which approximates the transition matrix of the random walk on $\bG$.\footnote{If $\eDdet$ were not the \emph{expected} degree matrix but rather the exact degree matrix of $\bG$, we would have a true transition matrix here.}
Next, we expand the expression in terms of walks in $\calK_m$.

Following the convention of \pref{sec:trace-method}, we use $\calW_{\ell}$ to denote the collection of length-$\ell$ walks in $\calK_m$. 
For every $W\in \calW_{\ell}$, use $\gw = (\vw,\ew)$ to denote the multigraph obtained by the vertices and edges used in $W$.
Use $\mX(e)$ to denote the number of times that an edge $e$ appears in the walk $W$. 
\begin{definition}
We also introduce the following notation. 
Let $\dW(\kappa) := \prod_{(i_t,i_{t+1})\in W} \eDdet^{-1}[{i_t, i_t}]$ denote the normalization constant along the path $W$ conditioned on the shells $\kappa$.
Also define $p_{e} =  \E[\Ind[e\in\bG]\mid \kappa]$ to be the probability that an edge $e$ exists conditioned on $\kappa$.
\end{definition}

Then:
\begin{align}
\E\left[\tr\left( \left(\nA - \E[\nA \mid \kappa]\right)^{\ell} \right)\mid \kappa \right]
&=  \sum_{W \in \calW_\ell} \dW(\kappa) \cdot \E\left[\prod_{e \in \ew} \left(\Ind[e\in\bG] - p_{e}\right)^{\mX(e)}\mid \kappa \right]
\label{eq:pause}
\end{align}

Next we apply the decomposition in \pref{sec:trace-method} to $\gw$ and obtain the 2-core graph $\gtw$ and the forest graph $\gow$. Since conditioned on the vectors $\bv_i\in \vtw$ the events $e\in\bG$ are independent for all $e \in \eow$, the expectation in \pref{eq:pause} can be decomposed into two parts:
\begin{align*}
    &\E\bracks*{\prod_{e\in E(W)} \parens*{\Ind[e\in\bG] - p_e}^{m(e)} \mid \kappa } \\
    =& \prod_{e\in E_1(W)} \E \bracks*{\parens*{\Ind[e\in \bG] - p_e }^{m(e)} \mid \kappa} \E_{\substack{\bv_i \\ i\in V_2(W)}} \bracks*{ \prod_{e\in E_2(W)} \parens*{\Ind[e\in \bG] - p_e}^{m(e)} \mid \kappa }  \numberthis \label{eq:sep-2-core-link}
\end{align*} 

We bound the contribution from the edges in $\etw$ by further spliting $\gtw$ into paths consisting of degree-$2$ vertices and the junction graph $\gjw = (\vjw,\ejw)$ as defined in \pref{def:junc-ver}. 
As in \pref{sec:trace-method} the key observation here is that conditioned on vertices in $\vjw$ the contributions from the paths of degree-$2$ vertices are all independent from each other:
\begin{align*}
    \abs*{\E_{\substack{\bv_i\\i\in V_2(W)}} \left[\prod_{e\in E_2(W)} \parens*{\Ind[e\in\bG] - p_e}^{m(e)}\mid \kappa\right] }
    \le \E_{\substack{\bv_i\\i\in J(W)}} \prod_{f\in E_{J(W)}} \abs*{\E_{\substack{\bv_i\\i\in \gamma(f)\setminus J(W)}} \left[\prod_{e\in\gamma(f)} \parens*{\Ind[e\in\bG] - p_e}^{m(e)} \mid \kappa \right] }
    \numberthis \label{eq:indep-paths-link}
\end{align*}
Now, let $\wo_{\kappa,\kappa'}$ be the transition operator for the random step that walks from vector $v$ in $\Shell_{=\kappa}(w)$ to a uniformly random vector $\bv'$ in $\Shell_{=\kappa'}(w) \cap \scap_{\ge \tau}(v)$ .
Like in \pref{sec:trace-method}, we use $\gamma(f) = (f_0,f_1,\dots,f_{\ell(f)})$ to identify the walk in $\gtw$ corresponding to the edge $f\in E_{J(W)}$.
We denote the edge $(f_i, f_{i+1})$ with $\gamma_i(f)$.
We simplify the contribution from each path $\gamma(f)$ where $f\in\ejw$ as follows:
\begin{align*}
    &\abs*{\E_{\bv_i:i\in \gamma(f)\setminus J(W)} \left[\prod_{e\in\gamma(f)} \parens*{\Ind[e\in\bG] - p_e}^{m(e)} \mid \kappa \right] } \\
    =&\abs*{\E_{\bv_i:i\in \gamma(f)\setminus J(W)} \left[\prod_{e\in\gamma(f)} \parens*{\Ind[e\in \bG] \cdot \parens*{(1-p_e)^{\mX(e)} - (-p_e)^{\mX(e)}} + (-p_e)^{\mX(e)}} \mid \kappa \right] }\\
    =& \abs*{\sum_{T\subseteq \gamma(f)} \E_{\bv_i:i\in\gamma(f)\setminus J(W)} \left[\prod_{e\in T} \Ind[e\in \bG] \cdot \parens*{(1-p_e)^{\mX(e)} - (-p_e)^{\mX(e)}} \prod_{e\in \gamma(f) \setminus T} (-p_e)^{\mX(e)}\mid \kappa \right]} \\
    \le& \abs*{ \prod_{e\in\gamma(f)} \parens*{p_e(1-p_e)^{\mX(e)} + (1-p_e)(-p_e)^{\mX(e)} } } \\
    &+ \abs*{\prod_{e\in\gamma(f)} \parens*{(1-p_e)^{\mX(e)} - (-p_e)^{\mX(e)}} \cdot \prod_{i=0}^{\ell(f)-2}p_{\gamma_i(f)} \cdot \parens*{ \angles*{\prod_{i=0}^{\ell(f)-2}\wo_{\kappa_{f_i},\kappa_{f_{i+1}}} \delta_{\bv_{f_0}}, \scap_{p_{\gamma_{\ell(f)-1}(f)}}(\bv_{f_{\ell(f)}})}  - p_{\gamma_{\ell(f)-1}(f)}}}  \numberthis \label{eq:each-path-link}
\end{align*}

Let $\sw \subseteq \ejw$ be the set of edges $f$ such $\mX(e) = 1$ for some $e\in \gamma(f)$, and $\dw = \ejw \setminus \sw$.
For any $f\in\sw$ the first term in \pref{eq:each-path-link} vanishes, while for any $f \in \dw$
\[\abs*{ \prod_{e\in\gamma(f)} \parens*{p_e(1-p_e)^{\mX(e)} + (1-p_e)(-p_e)^{\mX(e)} } } \le \prod_{e\in\gamma(f)} \parens*{p_e(1-p_e)^{2} + (1-p_e)p_e^{2} } \le \prod_{e\in\gamma(f)} p_e .\]
\noindent Therefore we can derive the following bound on the contribution from the $2$-core graph.
\begin{align*}
    \pref{eq:indep-paths-link} \le
    & \E_{\substack{\bv_i\\i\in J(W)}} \left[ \prod_{f\in \dw} \prod_{i=0}^{\ell(f)-2}p_{\gamma_i(f)} \cdot \parens*{ \abs*{ \angles*{\prod_{i=0}^{\ell(f)-2}\wo_{\kappa_{f_i},\kappa_{f_{i+1}}} \delta_{\bv_{f_0}}, \scap_{p_{\gamma_{\ell(f)-1}(f)}}(\bv_{f_{\ell(f)}})}  - p_{\gamma_{\ell(f)-1}(f)} } + p_{\gamma_{\ell(f)-1}(f)}} \cdot \right. \\\
    & \left.\prod_{f\in \sw} \prod_{i=0}^{\ell(f)-2}p_{\gamma_i(f)} \cdot \abs*{ \angles*{\prod_{i=0}^{\ell(f)-2}\wo_{\kappa_{f_i},\kappa_{f_{i+1}}} \delta_{\bv_{f_0}}, \scap_{p_{\gamma_{\ell(f)-1}(f)}}(\bv_{f_{\ell(f)}})}  - p_{\gamma_{\ell(f)-1}(f) } } \mid \kappa \right]
\end{align*}

To bound the absolute value terms, we take an arbitrary spanning tree $\tjw$ of $\gjw$, and bound the absolute value differently depending on whether $f \in \tjw$ or not.

To bound this expectation, let $\tjw$ be a spanning tree of $\gjw$.
For every edge not in $\tjw$, we apply a worst-case bound.
To state this bound, we define $C \coloneqq \sqrt{\frac{1}{2}\log\frac{1}{q}}\cdot\parens*{\frac{1+\tau}{\tau}}$ and $\lambda\coloneqq\frac{\tau}{1+\tau}$.
\begin{claim}
\label{clm:non-tree-worst-case}
For every shell configuration $\kappa\in [\tau,1]^m$ and non-tree edge $f\in \ejw\setminus\tjw$ , we have that 
\[
    \abs*{ \angles*{\prod_{i=0}^{\ell(f)-2}\wo_{\kappa_{f_i},\kappa_{f_{i+1}}} \delta_{\bv_{f_0}}, \scap_{p_{\gamma_{\ell(f)-1}(f)}}(\bv_{f_{\ell(f)}})}  - p_{\gamma_{\ell(f)-1}(f) }} \le C \cdot \lambda^{\ell(f)-1} \]
\end{claim}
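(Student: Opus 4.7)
My plan is to reduce the walk on $\bbS^{d-1}$ conditioned on shells to an ordinary spherical cap walk on $\bbS^{d-2}$ and then invoke a multi-step extension of \pref{thm:decay}. For each vector $\bv$ on $\Shell_{=\kappa}(w)$, I would write $\bv = \kappa w + \sqrt{1-\kappa^2}\,\bu$ with $\bu \in w^{\perp} \cap \bbS^{d-1}$ (identified with $\bbS^{d-2}$). A direct calculation gives $\iprod{\bv_i,\bv_j} \ge \tau$ iff $\iprod{\bu_i,\bu_j} \ge T(\kappa_i,\kappa_j)$, so under this parameterization $\wo_{\kappa_i,\kappa_j}$ pushes forward to the spherical cap walk $P_{q(\kappa_i,\kappa_j)}$ on $\bbS^{d-2}$ with $q(\kappa_i,\kappa_j) = \ol{\Phi}_{\Beta{d-1}}(T(\kappa_i,\kappa_j))$.

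Let $\alpha$ be the distribution on $\bbS^{d-2}$ given by the pushforward of $\prod_{i=0}^{\ell(f)-2}\wo_{\kappa_{f_i},\kappa_{f_{i+1}}}\delta_{\bv_{f_0}}$. The inner product appearing in the claim equals $\Pr_{\bu\sim\alpha}\bracks*{\bu \in \scap_{\ge T(\kappa_{f_{\ell(f)-1}},\kappa_{f_{\ell(f)}})}(\bu_{f_{\ell(f)}})}$, whereas $p_{\gamma_{\ell(f)-1}(f)} = \ol{\Phi}_{\Beta{d-1}}(T(\kappa_{f_{\ell(f)-1}},\kappa_{f_{\ell(f)}}))$ is precisely this same probability under the uniform measure $\rho'$ on $\bbS^{d-2}$. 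Hence the quantity in question is at most $\dtv{\alpha}{\rho'}$.

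By \pref{clm:tau-tau-worst-case}, every threshold satisfies $T(\kappa_{f_i},\kappa_{f_{i+1}}) \le \lambda = \tfrac{\tau}{1+\tau}$, equivalently $q(\kappa_{f_i},\kappa_{f_{i+1}}) \ge q$. I would then establish a multi-step extension of \pref{thm:decay}: by iterating the Brownian-motion domination argument (\pref{claim:combo-dom}, \pref{lem:op-act}) step by step and using the semigroup property $U_{t_1}U_{t_2} = U_{t_1+t_2}$ together with the log-Sobolev bound \pref{lem:log-Sobo-to-KL}, one obtains, for any spherically monotone $\alpha$,
\[
\dtv{P_{q_{k-1}}\cdots P_{q_0}\alpha}{\rho'} \le \prod_{i=0}^{k-1}\parens*{(1+o(1))\cdot \tau(q_i)} \cdot \sqrt{\tfrac{1}{2}\KL{\alpha}}.
\]
Applying this with the first step absorbed into the initial distribution as in \pref{cor:decay-useful} --- namely $P_{q(\kappa_{f_0},\kappa_{f_1})}\delta_{\bu_{f_0}} = \mcap_{q(\kappa_{f_0},\kappa_{f_1})}(\bu_{f_0})$, which is spherically monotone with $\KL{\mcap_{q(\kappa_{f_0},\kappa_{f_1})}(\bu_{f_0})} = \log\tfrac{1}{q(\kappa_{f_0},\kappa_{f_1})} \le \log\tfrac{1}{q}$ --- and bounding $\tau(q_i) \le \lambda$ on the remaining $\ell(f)-2$ operators gives
\[
\dtv{\alpha}{\rho'} \le (1+o(1))\cdot\lambda^{\ell(f)-2}\sqrt{\tfrac{1}{2}\log\tfrac{1}{q}} = (1+o(1))\cdot C\,\lambda^{\ell(f)-1}.
\]

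The principal technical obstacle is establishing the multi-step composition bound, since \pref{thm:decay} as stated applies only to iterates of a single cap walk. The extension retraces the same proof: by induction on the number of steps, one shows that after applying $P_{q_0},\ldots,P_{q_{k-1}}$ the distribution remains spherically monotone (by \pref{lem:op-act}\,\ref{part:spherical-mon}) and is stochastically dominated by the Brownian-motion chain of total time $\sum_i t_i \approx \sum_i \tfrac{1}{d-2}\log\tfrac{1}{\tau(q_i)}$ (combining \pref{lem:op-act}\,\ref{part:stoc-dom-dists}--\ref{part:stoc-dom-ops} with \pref{claim:combo-dom} at each step to dominate $P_{q_i}$ by $U_{t_i}$). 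Passing to one-dimensional projections via \pref{claim:marginal} and applying the log-Sobolev bound then yields the displayed inequality; the remaining $(1+o(1))^{\ell(f)}$ slack from these approximations is controlled using $d = \Omega(\log m)$ together with the choice of $\ell = \polylog m$ made in the trace method.
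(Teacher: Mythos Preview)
Your reduction to a cap walk on $\bbS^{d-2}$ and the passage to a total-variation bound are exactly what the paper does. The difference is in how the varying cap sizes $q_i = q(\kappa_{f_i},\kappa_{f_{i+1}})$ are handled.

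The paper does not re-run the Brownian-motion comparison step by step. Instead, it observes that since each $q_i \ge q$ we have $\ell_{\mcap_{q_i}} \sdl \ell_{\mcap_q}$, and then uses \pref{lem:op-act}(\ref{part:stoc-dom-dists}),(\ref{part:stoc-dom-ops}) inductively to conclude
\[
\ell_{\,\prod_{i=0}^{\ell(f)-2} P_{q_i}\,\delta_{\bz_0}} \;\sdl\; \ell_{\,P_q^{\ell(f)-1}\,\delta_{\bz_0}},
\]
whence \pref{claim:marginal} and \pref{lem:stoc-dom-tv} give $\dtv{\prod_i P_{q_i}\delta_{\bz_0}}{\rho'} \le \dtv{P_q^{\ell(f)-1}\delta_{\bz_0}}{\rho'}$. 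At this point \pref{thm:decay} applies verbatim with the single threshold $\frac{\tau}{1+\tau}$, yielding the claim. This discrete-to-discrete domination avoids re-deriving any of the Brownian-motion machinery.

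Your route---dominating each $P_{q_i}$ by $U_{t_i}$ via \pref{claim:combo-dom} and concatenating---would also work, but carries an extra burden you do not fully discharge: the $(1+o_{d\tau(q_i)^2}(1))$ correction in \pref{thm:decay} requires $d\,\tau(q_i)^2 \to \infty$, yet the claim is stated for \emph{every} $\kappa \in [\tau,1]^m$, so some $\kappa_{f_i}$ may be close to $1$, making $T(\kappa_{f_i},\kappa_{f_{i+1}})$ small or negative and $\tau(q_i)$ not bounded below. You would need a separate argument for those steps (e.g.\ they can only help mixing), whereas the paper's reduction to a uniform $P_q$ sidesteps this entirely.
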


\begin{proof}
To prove the claim, we first need to understand the random variable
\[
    \angles*{\prod_{i=0}^{\ell(f)-2}\wo_{\kappa_{f_i},\kappa_{f_{i+1}}} \delta_{\bv_{f_0}}, \scap_{p_{\gamma_{\ell(f)-1}(f) }}(\bv_{f_{\ell(f)}})}  .
\]
Recall that at time step $i$ the operator $\wo_{\kappa_{f_i},\kappa_{f_{i+1}}}$ denotes the random step that takes a vector $\bv_{f_i} = \kappa_{f_{i}}\cdot w + \sqrt{1-\kappa_{f_{i}}^2} \cdot \bz_{i}$ and outputs $\bv_{f_{i+1}} \coloneqq \kappa_{f_{i+1}}\cdot w + \sqrt{1-\kappa_{f_{i+1}}^2} \cdot \bz_{i+1}$ where $\bz_{i+1}$ is a uniformly random unit vector orthogonal to $w$ such that
\[
    \angles*{ \bv_{f_i}, \bv_{f_{i+1}}} \ge \tau.
\]
This is equivalent to 
\[
    \kappa_{f_i}\kappa_{f_{i+1}} + \sqrt{(1-\kappa_{f_i}^2)(1-\kappa_{f_{i+1}}^2)} \cdot \iprod{\bz_i,\bz_{i+1}} \ge \tau,
\]
which can then be rearranged as
\[
    \iprod{\bz_i,\bz_{i+1}} \ge T\parens*{\kappa_{f_i}\kappa_{f_{i+1}}} \coloneqq \frac{\tau-\kappa_{f_i}\kappa_{f_{i+1}}}{\sqrt{(1-\kappa_{f_i}^2)(1-\kappa_{f_{i+1}}^2)}}.
\]
In particular, we are choosing $\bz_{i+1}$ in the $p_{\gamma_i(f)}$-cap of $\bz_i$ within the $d-2$ dimensional unit sphere orthogonal to $w$.
So the operator $\prod_{i=0}^{\ell(f)-2}\wo_{\kappa_{f_i},\kappa_{f_{i+1}}}$ can be decomposed into its action in the span of $w$ and that in the space orthogonal to $w$. 
The action in the span of $w$ conditioned on $\kappa$ is deterministic.
Orthogonal to $w$, it is the operator $\prod_{i=0}^{\ell(f)-2} P_{p_{\gamma_i(f)}}$ on $\bbS^{d-2}$.
Thus the quantity we are interested in understanding is the same as
\[
    \angles*{ \prod_{i=0}^{\ell(f)-2} P_{p_{\gamma_i(f)}} \delta_{\bz_0}, \scap_{p_{\gamma_{\ell(f)-1}}} (\bz_{\ell(f)}) }.
\]
Now, observe that:
\[
    \abs*{\angles*{ \prod_{i=0}^{\ell(f)-2} P_{p_{\gamma_i(f)}} \delta_{\bz_0}, \scap_{p_{\gamma_{\ell(f)-1}}} (\bz_{\ell(f)}) } - p_{\gamma_{\ell(f)-1}}} \le \dtv{\prod_{i=0}^{\ell(f)-2} P_{p_{\gamma_i(f)}} \delta_{\bz_0} }{\Unif}.
\]
Recall that $p_{\gamma_i(f)} = \Phi_{\Beta{d-1}}\parens*{\tau_{\kappa_i,\kappa_{i+1}}}$, which by \pref{clm:tau-tau-worst-case} is minimized when $\tau_{\kappa_i,\kappa_{i+1}} = \frac{\tau}{1+\tau}$, which means $p_{\gamma_i(f)} \ge q$.
Thus, by \pref{claim:marginal}, \pref{lem:op-act}, and \pref{lem:stoc-dom-tv}, which make concrete the intuition that applying $P_q$ should only mix slower than applying $P_{q'}$ for $q'\ge q$, we have:
\[
    \abs*{\angles*{ \prod_{i=0}^{\ell(f)-2} P_{p_{\gamma_i(f)}} \delta_{\bz_0}, \scap_{p_{\gamma_{\ell(f)-1}}} (\bz_{\ell(f)}) } - p_{\gamma_{\ell(f)-1}}} \le \dtv{P_{q}^{\ell(f)-1} \delta_{\bz_0} }{\Unif} = \dtv{\frac{1}{q}P_q^{\ell(f)-2} \scap_q{\bz_0} }{\Unif}.
\]
\noindent Then by \pref{thm:decay}, the above is
\begin{align*}
\le \sqrt{\frac{1}{2}\log\frac{1}{q}} \cdot \parens*{\frac{\tau}{1+\tau}}^{\ell(f)-2} = \sqrt{\frac{1}{2}\log\frac{1}{q}}\cdot\frac{1+\tau}{\tau}\cdot\parens*{\frac{\tau}{1+\tau}}^{\ell(f)-1} = C \cdot \lambda^{|\gamma(f)|-1},
\end{align*}
which completes the proof.
\end{proof}

Next we bound the contribution of a tree edge $f\in\tjw$ using the following claim whose proof is identical to that of \pref{claim:tree-bound}.

\begin{claim}
\label{clm:tree-bound-link}
For every shell vector $\kappa$ and tree edge $f\in \tjw$ , we have that 
\begin{align*}
    \E_{\substack{\bv_i\\i\in J(W)}} \prod_{f\in T_J(W)} \prod_{i=0}^{\ell(f)-2}p_{\gamma_i(f)}  \cdot \parens*{ \abs*{\angles*{\prod_{i=0}^{\ell(f)|-1}\wo_{\kappa_{f_i},\kappa_{f_{i+1}}} \delta_{\bv_{f_0}}, \scap_{p_{\gamma_{\ell(f)-1}(f)}}(\bv_{f_{\ell(f)}})}  - p_{\gamma_{\ell(f)-1}(f)} } + p_{\gamma_{\ell(f)-1}(f)} \cdot\Ind[f\in \dw] } \\
    \le \prod_{f\in T_J(W)} \prod_{i=0}^{\ell(f)-1}p_{\gamma_i(f)} \cdot \parens*{ 2C\lambda^{\ell(f)} + \Ind[f\in \dw]}.
\end{align*}
\end{claim}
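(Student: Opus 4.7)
The plan is to mirror the inductive leaf-peeling argument used in the proof of \pref{claim:tree-bound}, adapting each step to account for (i) the fact that vectors now live in a spherical cap rather than on the whole sphere, and (ii) the fact that the walk operator $\wo_{\kappa,\kappa'}$ depends on the shell labels of both endpoints. First I would fix an ordering $i_0,\dots,i_t$ of the junction vertices $J(W)$ such that $i_j$ is a leaf of the subtree $T_J^{(j)}(W)$ obtained from $T_J(W)$ by deleting $i_{j+1},\dots,i_t$, and let $f_j$ denote the unique edge of $T_J^{(j)}(W)$ incident to $i_j$. Define
\[
    \alpha_j \coloneqq \E_{\bv_i\,:\,i\in V(T_J^{(j)}(W))}\prod_{f\in T_J^{(j)}(W)}\prod_{i=0}^{\ell(f)-2}p_{\gamma_i(f)}\cdot\Big(|\angles{\textstyle\prod_i \wo_{\kappa_{f_i},\kappa_{f_{i+1}}}\delta_{\bv_{f_0}},\,\scap_{p_{\gamma_{\ell(f)-1}(f)}}(\bv_{f_{\ell(f)}})} - p_{\gamma_{\ell(f)-1}(f)}|+p_{\gamma_{\ell(f)-1}(f)}\Ind[f\in\dw]\Big);
\]
the claim's right-hand side is exactly a bound on $\alpha_t$, and $\alpha_0=1$ trivially.

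The inductive step is to show $\alpha_j\le \alpha_{j-1}\cdot\prod_{i=0}^{\ell(f_j)-1}p_{\gamma_i(f_j)}\cdot(2C\lambda^{\ell(f_j)}+\Ind[f_j\in\dw])$. Because $i_j$ is a leaf of $T_J^{(j)}(W)$, the edge $f_j$ does not share any endpoint with the other edges of $T_J^{(j-1)}(W)$, so we can pull the expectation over $\bv_{i_j}$ inside. Without loss of generality $i_j=t(f_j)$, and writing $\bv_{i_j}=\kappa_{i_j}w+\sqrt{1-\kappa_{i_j}^2}\,\bz_{i_j}$ with $\bz_{i_j}$ uniform on the $(d-2)$-sphere orthogonal to $w$, conditional on $\kappa$ the only randomness in the inner expectation is $\bz_{i_j}$. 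Using the decomposition of $\wo_{\kappa,\kappa'}$ explained in the proof of \pref{clm:non-tree-worst-case}, the operator acts as $\prod_{i=0}^{\ell(f_j)-2}P_{p_{\gamma_i(f_j)}}$ on $\bbS^{d-2}$, so the inner absolute value is exactly the discrepancy between the cap-landing probability of this walk started at $\bz_{s(f_j)}$ and the uniform measure of the relevant cap.

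Now I would evaluate $\E_{\bv_{i_j}}|\angles{\cdot,\cdot}-p_{\gamma_{\ell(f_j)-1}(f_j)}|$. Just as in the proof of \pref{claim:tree-bound}, one recognizes this expectation as $p_{\gamma_{\ell(f_j)-1}(f_j)}$ times $2\,\dtv{\prod_{i=0}^{\ell(f_j)-1}P_{p_{\gamma_i(f_j)}}\delta_{\bz_{s(f_j)}}}{\rho}$; invoking the bound $p_{\gamma_i(f_j)}\ge q$ from \pref{clm:tau-tau-worst-case}, the monotonicity lemmas (\pref{lem:stoc-dom-tv}, \pref{lem:op-act}), and \pref{thm:decay} exactly as in the proof of \pref{clm:non-tree-worst-case} gives an upper bound of $p_{\gamma_{\ell(f_j)-1}(f_j)}\cdot 2C\lambda^{\ell(f_j)}$. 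Combining this with the trivial contribution $p_{\gamma_{\ell(f_j)-1}(f_j)}\Ind[f_j\in\dw]$ yields the recursion $\alpha_j\le\alpha_{j-1}\cdot\prod_{i=0}^{\ell(f_j)-1}p_{\gamma_i(f_j)}\cdot(2C\lambda^{\ell(f_j)}+\Ind[f_j\in\dw])$, and unrolling from $\alpha_0=1$ produces the claimed bound.

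The main obstacle (relative to \pref{claim:tree-bound}) is the bookkeeping around the $\kappa$-dependent step sizes: the walk at each step uses a different cap measure $p_{\gamma_i(f_j)}$, and one has to convert a statement about this non-uniform walk into one about the slowest walk $P_q$ before \pref{thm:decay} can be applied. This was already done in the proof of \pref{clm:non-tree-worst-case}, so I would simply cite that reduction; once it is in place, the induction itself is routine and essentially identical to \pref{claim:tree-bound}.
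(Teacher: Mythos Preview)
Your proposal is correct and matches the paper's approach exactly: the paper simply states that the proof of \pref{clm:tree-bound-link} is ``identical to that of \pref{claim:tree-bound}'', and you have carried out precisely that leaf-peeling induction, correctly inserting the reduction from the shell-dependent walk $\prod_i P_{p_{\gamma_i(f_j)}}$ to $P_q^{\ell(f_j)}$ via \pref{clm:tau-tau-worst-case}, \pref{lem:op-act}, \pref{lem:stoc-dom-tv}, and \pref{thm:decay} (the same reduction already spelled out in the proof of \pref{clm:non-tree-worst-case}). One small wording slip: the edge $f_j$ \emph{does} share its non-leaf endpoint with $T_J^{(j-1)}(W)$; the correct reason you can pull $\E_{\bv_{i_j}}$ inside is that the random variable $\bv_{i_j}$ appears only in the $f_j$ factor, since $i_j$ is a leaf.
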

\noindent Combining the two bounds for different edges in $\gjw$ to obtain the simplified bound for \pref{eq:indep-paths-link}: 
\begin{align*}
\pref{eq:indep-paths-link} \le
    & \prod_{f\in \tjw} \prod_{i=0}^{\ell(f)-1}p_{\gamma_i(f)}  \parens*{ 2C\lambda^{\ell(f)} + \Ind[f\in \dw]} \cdot \\
    & \cdot \prod_{f\in \ejw\setminus\tjw} \prod_{i=0}^{\ell(f)-2}p_{\gamma_i(f)} \cdot \parens*{ C\lambda^{\ell(f)-1} + p_{\gamma_{\ell(f)-1}(f)-1}\Ind[f\in \dw]} 
\end{align*}

We now recall some notation from \pref{sec:trace-method}.
We use $e(W)$ to denote $\abs*{\ew}$, $\sing(W)$ for the number of singleton edges in $\gtw$, and $\exc(G)$ for $\abs*{E(G)} - (\abs*{V(G)} - 1)$, the number of edges $G$ has more than a tree.
The relations between these variable are already shown in \pref{obs:minor-exc-ineq} and \pref{claim:walk-count}.
So here we directly apply these results to get that 
\begin{align*}
\pref{eq:indep-paths-link} \le &\prod_{e\in\etw} p_e \cdot \prod_{f\in\ejw\setminus \tjw} p_{ \gamma_{\ell(f)-1}(f) }^{-1} \cdot \lambda^{\sing(W)-\exc(W)}\cdot(3C)^{\abs*{\ejw}} \\
\le &\prod_{e\in\etw} p_e \cdot \prod_{f\in\ejw\setminus \tjw} p_{ \gamma_{\ell(f)-1}(f) }^{-1} \cdot \lambda^{\sing(W)-\exc(W)}\cdot(3C)^{3\exc(W)} \quad\text{by \pref{obs:minor-exc-ineq}} 
\end{align*}
\noindent Therefore
\begin{align*}
\pref{eq:sep-2-core-link} \le & \prod_{e\in E_1(W)} \E \bracks*{\parens*{\Ind[e\in \bG] - p_e }^{m(e)} \mid \kappa} \cdot \prod_{e\in\etw} p_e \cdot \prod_{f\in\ejw\setminus \tjw} p_{ \gamma_{\ell(f)-1}(f) }^{-1} \cdot \lambda^{\sing(W)-\exc(W)}\cdot(3C)^{3\exc(W)} \\
\le& \prod_{e\in E_1(W)}p_e  \cdot \prod_{e\in\etw} p_e \cdot \prod_{f\in\ejw\setminus \tjw} p_{ \gamma_{\ell(f)-1}(f) }^{-1} \cdot \lambda^{\sing(W)-\exc(W)}\cdot(3C)^{3\exc(W)} \\
\le & \prod_{e\in\ew}p_e \cdot \prod_{f\in\ejw\setminus \tjw} p_{ \gamma_{\ell(f)-1}(f) }^{-1}\cdot\lambda^{\sing(W)} \parens*{\frac{27C^3}{\lambda}}^{\exc(W)} \numberthis \label{eq:term-bound-link}
\end{align*} 

Before finally bounding the trace power, we make the following observations.
\begin{observation} \label{obs:exp-deg-bd}
As a consequence of \pref{clm:tau-tau-worst-case} for all $i \in [m]$, the expected degree of vertex $i$ satisfies
\[
    \eDdet[{i,i}] = \E[\deg_{\bG}(i)\mid \kappa] \ge (m-1)\cdot q
\]
\end{observation}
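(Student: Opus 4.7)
The plan is to expand the conditional expected degree as a sum of edge probabilities and then apply the pointwise bound from \pref{clm:tau-tau-worst-case} term-by-term. By linearity of expectation,
\[
\eDdet[i,i] = \E[\deg_{\bG}(i)\mid \bkappa] = \sum_{j\ne i}\Pr[(i,j)\in \bG\mid \bkappa],
\]
and because $\bv_i, \bv_j$ are drawn independently uniformly from their respective shells, the conditional edge probability depends only on $\kappa_i$ and $\kappa_j$.

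To identify this probability I would use the decomposition $\bv_i = \kappa_i w + \sqrt{1-\kappa_i^2}\,\bz_i$ (with $\bz_i$ uniform on the unit sphere inside $w^\perp$) and likewise for $\bv_j$. Substituting into $\iprod{\bv_i,\bv_j}\ge\tau$ and rearranging, the edge event becomes $\iprod{\bz_i,\bz_j}\ge T(\kappa_i,\kappa_j)$, where $T$ is exactly the shifted threshold function from \pref{clm:tau-tau-worst-case}. Since $\bz_i,\bz_j$ are independent uniform unit vectors in a $(d-1)$-dimensional space, this probability equals $\ol{\Phi}_{\Beta{d-1}}\bigl(T(\kappa_i,\kappa_j)\bigr)$.

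Finally, \pref{clm:tau-tau-worst-case} gives $T(\kappa_i,\kappa_j)\le \frac{\tau}{1+\tau}$ on the domain $[\tau,1]^2$, and $\ol{\Phi}_{\Beta{d-1}}$ is monotone non-increasing, so each term is at least $\ol{\Phi}_{\Beta{d-1}}\!\bigl(\tfrac{\tau}{1+\tau}\bigr)=q$. Summing over the $m-1$ choices of $j\ne i$ gives $\eDdet[i,i]\ge (m-1)q$. There is no real obstacle here: the observation is an immediate corollary of \pref{clm:tau-tau-worst-case} together with the translation between inner products on $\bbS^{d-1}$ conditioned on shells and inner products on $\bbS^{d-2}$.
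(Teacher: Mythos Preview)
Your argument is correct and matches the paper's reasoning exactly: the paper states this observation without a separate proof, but the identical chain of inequalities (writing $\eDdet[i,i]=\sum_{j\ne i}\ol{\Phi}_{\Beta{d-1}}(T(\kappa_i,\kappa_j))$, invoking \pref{clm:tau-tau-worst-case} to get $T(\kappa_i,\kappa_j)\le\frac{\tau}{1+\tau}$, and using monotonicity of the tail) is spelled out in the proof of \pref{lem:diag-entries}.
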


We define $\WS$ to be the set of distinct unlabelled walks of length $\ell$. Then as a corollary of \pref{claim:walk-count}, we have
\begin{corollary}\label{claim:unlabelled-walk-count}
The number of unlabelled walks $\uw \in \WS$ such that $e(\uw) = a, \suw = b,$ and $\exc(\uw) = c$ is at most:
\[\ell^{3(\ell-b)}\cdot\ell^{2c}.\]
\end{corollary}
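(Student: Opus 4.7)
}
The plan is to derive this essentially for free from the proof of \pref{claim:walk-count}. That earlier claim bounds the number of \emph{labelled} length-$\ell$ walks in $\calK_m$ with the prescribed combinatorial parameters by enumerating, for each walk, four pieces of data: (i) the sequence of $a-c+1$ distinct vertex labels in $[m]$ in order of first appearance (contributing $m^{a-c+1}$), and three types of timestamp data (contributing $\ell^{\ell-b}$, $\ell^{2c}$, and $\ell^{\ell-b}$ respectively). An unlabelled walk is exactly a labelled walk where we suppress the choice in (i)---equivalently, we always label vertices by their order of first appearance, so that the underlying label set is forced to be $\{1,\dots,a-c+1\}$. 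Hence we drop the factor $m^{a-c+1}$ entirely while keeping the three timestamp factors intact.

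The remaining data (timestamps of non-singleton steps, timestamps of steps to previously visited vertices together with first-visit times, and timestamps of repeated edges together with first-cover times) continue to uniquely reconstruct the unlabelled walk by the same argument as in the proof of \pref{claim:walk-count}. Multiplying the three timestamp bounds yields at most $\ell^{\ell-b}\cdot \ell^{2c}\cdot \ell^{\ell-b} = \ell^{2(\ell-b)}\cdot \ell^{2c}$ unlabelled walks, which is bounded above by $\ell^{3(\ell-b)}\cdot \ell^{2c}$ since $\ell \ge 1$, giving the stated bound.

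There is essentially no obstacle here; the only subtlety is making sure that the reconstruction argument in the proof of \pref{claim:walk-count} really does only use timestamp information to determine the walk up to relabelling. This is clear from that proof: the labels in (i) are needed only to identify which element of $[m]$ each vertex corresponds to, whereas the step-by-step topology of the walk (which previously visited vertex, which previously traversed edge) is fully recovered from the timestamp pieces alone.
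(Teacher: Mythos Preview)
Your proposal is correct and matches the paper's own argument essentially verbatim: the paper simply says the result follows from \pref{claim:walk-count} by removing the $m^{a-c+1}$ factor counting labelings. You even noticed that this actually yields the sharper bound $\ell^{2(\ell-b)}\cdot\ell^{2c}$ (which is what the paper uses downstream), of which the stated $\ell^{3(\ell-b)}\cdot\ell^{2c}$ is a weakening.
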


The result follows by observing that since $\uw$ is unlabelled, we can remove the $m^{a-c+1}$ term that counts the number of distinct labelings in \pref{claim:walk-count}.

For an unlabeled walk $\uw$ and labeled walk $W$, we say $W\sim\uw$ if $W$ is a labeling of $\uw$ in $[m]$.
\begin{claim}   \label{claim:bound-equivalent-walks}
For any unlabelled walk $\uw$ we have that 
\[
    \sum_{W\sim\uw} \dW(\kappa)\prod_{e\in\ew}p_e \cdot \prod_{f\in\ejw\setminus \tjw} p_{\gamma_{\ell(f)-1}(f)}^{-1} \le \parens*{m\cdot q}^{-\exc(U)-\frac{\ell-s(\uw)}{2}}
\]
\end{claim}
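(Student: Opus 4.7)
The plan is to reinterpret the summand as a walk probability in a natural Markov chain, exploit systematic cancellations between $\prod_e p_e$, $\prod_f p_f^{-1}$, and the factors inside $\dW(\kappa)$, and then carry out the combinatorial enumeration vertex-by-vertex along the template $\uw$.

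\paragraph{Step 1: Markov chain setup.} Define the transition kernel $\wh{P}$ on $[m]$ by $\wh{P}[i,j] \coloneqq p_{\{i,j\}}/\eDdet[i,i]$. Because $\eDdet[i,i] = \sum_{j\ne i} p_{\{i,j\}}$, the matrix $\wh{P}$ is genuinely row-stochastic, and is reversible with stationary measure $\pi[i] \propto \eDdet[i,i]$. A direct computation gives the key identity
\[
    \prod_{t=0}^{\ell-1} \wh{P}[i_t, i_{t+1}] \;=\; \dW(\kappa) \cdot \prod_{e\in \ew} p_e^{\mX(e)},
\]
which lets us rewrite the summand on the left-hand side of \pref{claim:bound-equivalent-walks} as $\bigl(\prod_t \wh{P}[i_t,i_{t+1}]\bigr) \cdot \prod_{e\in\ew} p_e^{1-\mX(e)} \cdot \prod_{f\notin\tjw} p_{\gamma_{\ell(f)-1}(f)}^{-1}$.

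\paragraph{Step 2: Pointwise cancellation.} Classify each of the $\ell$ steps of $W$ into three types: (a) \emph{first-visits}, which traverse a new edge to a previously unseen vertex; (b) \emph{retraversals}, which re-walk an edge that has already been visited; and (c) \emph{cycle-closures}, which traverse a new edge to an already-labelled vertex. The number of first-visits equals $|\vw|-1 = e(W)-\exc(W)$, there are $\ell - e(W)$ retraversals, and exactly $\exc(W)$ cycle-closures (one for each non-tree edge of $\gjw$). Observe now that for each retraversal of a multi-edge $e$, one factor of $p_e$ inside $\prod_t \wh{P}$ cancels against a factor of $p_e^{-1}$ from $\prod_e p_e^{1-\mX(e)}$; and for each cycle-closure, the single $p_{\gamma_{\ell(f)-1}(f)}$ appearing in $\wh{P}$ cancels against the factor in $\prod_{f} p_{\gamma_{\ell(f)-1}(f)}^{-1}$. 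After all cancellations,
\[
    \dW(\kappa) \prod_{e\in\ew} p_e \prod_{f\notin \tjw} p_{\gamma_{\ell(f)-1}(f)}^{-1}
    \;=\; \prod_{t \in \text{(a)}} \wh{P}[i_t, i_{t+1}] \;\cdot\; \prod_{t \in \text{(b)} \cup \text{(c)}} \frac{1}{\eDdet[i_t, i_t]}.
\]

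\paragraph{Step 3: Summation and conclusion.} Enumerate labellings $W\sim\uw$ by walking along the template $\uw$: choose the starting label, then at each step either freely pick the label of a newly visited vertex or use the already-determined label. For each first-visit step, the sum over the destination label of $\wh{P}[i,j]$ equals $1$. For each (b) or (c) step, the destination is forced, so the factor $1/\eDdet[i_t,i_t]$ contributes at most $1/((m-1)q)$ by \pref{obs:exp-deg-bd}. Combining yields
\[
    \sum_{W\sim\uw} \dW(\kappa) \prod_{e} p_e \prod_{f\notin \tjw} p_{\gamma_{\ell(f)-1}(f)}^{-1}
    \;\le\; \bigl((m-1)q\bigr)^{-(\ell - e(\uw) + \exc(\uw))}
\]
after folding the initial free choice of starting vertex into the target bound. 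Finally, invoking \pref{obs:edge-bound} in the form $\ell - e(\uw) \ge (\ell - s(\uw))/2$ gives $\ell - e(\uw) + \exc(\uw) \ge (\ell - s(\uw))/2 + \exc(\uw)$, which upon using $(m-1)q = (1-o(1))\cdot mq$ produces the claimed bound.

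\paragraph{Main obstacle.} The critical and delicate step is the clean cancellation in Step 2: each $p_e$ appearing inside $\wh{P}$ for a retraversal or cycle-closure must be matched precisely against a $p_e^{-1}$ coming from either $\prod_e p_e^{1-\mX(e)}$ or $\prod_{f\notin \tjw} p_{\gamma_{\ell(f)-1}(f)}^{-1}$. Getting this matching right is what turns the (otherwise useless) bound $\wh{P}[i,j] \le 1/((m-1)q)$ into the full $(mq)^{-1}$ gain per non-first-visit step, and is what drives the final exponent $\exc(\uw) + (\ell - s(\uw))/2$. The argument relies heavily on the uniform lower bound $p_e \ge q$ from \pref{clm:tau-tau-worst-case}, which in turn forces $\eDdet[i,i] \ge (m-1)q$ in \pref{obs:exp-deg-bd}.
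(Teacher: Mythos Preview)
Your proof is correct and follows essentially the same approach as the paper's. Both arguments construct the first-visit spanning tree of $G(W)$, identify the non-tree edges with the cycle-closure steps (equivalently with the edges $\gamma_{\ell(f)-1}(f)$ for $f\notin T_J(W)$ once $T_J(W)$ is taken to be the canonical tree induced by first visits), cancel those $p_e$'s against the $p_{\gamma_{\ell(f)-1}(f)}^{-1}$ factors, pair the remaining tree-edge $p_e$'s with $\eDdet^{-1}$ factors to form transition probabilities of the row-stochastic matrix $\wh P$, bound the leftover $\eDdet^{-1}$ factors uniformly by $((m-1)q)^{-1}$ via \pref{obs:exp-deg-bd}, and then sum out the new-vertex labels leaf-by-leaf along the tree using $\sum_j \wh P[i,j]\le 1$. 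Your Markov-chain framing makes the cancellation slightly more transparent than the paper's induction, but the content is identical.

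One shared loose end: both your ``folding the initial free choice of starting vertex into the target bound'' and the paper's induction (whose base case $a=1$ actually yields $m$, not $1$) leave an unaccounted factor of $m$ from the choice of $i_0$. This cannot literally be absorbed into the difference between $(m-1)q$ and $mq$; the stated inequality is off by at most a factor of $m$. This is harmless downstream since one takes an $\ell$-th root with $\ell=\log^2 m$, so $m^{1/\ell}=1+o(1)$, but you should be aware the claim as written is slightly overstated in both proofs.
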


\begin{proof}
For each $W\sim \uw$ we use $i_1,\dots,i_{a} \in [m]$ to denote the label of each vertex in $W$ in the order of visit. 
Then we construct the canonical spanning tree $\tw$ by adding each directed edge in the order of $W$ as long as the edge goes to an unvisited vertex. Use $\Par(i_j)$ to denote the parent of vertex $i_j$. Then the $j$-th edge of $\tw$ is $(\Par(i_{j+1}),i_{j+1})$, and use $\tw^{(j)}$ to denote the tree consisting of the first $j$ edges of $\tw$. Then $i_{j+1}$ is always a leaf in $\tw^{(j)}$.

$\tw$ gives rise to a canonical spanning tree $\tjw$ in the contracted graph $\gjw$: an edge $f$ is in $\tjw$ if and only if every edge in the path $\gamma(f)$ is in $\tw$.
From this fact we can deduce that
\[
    \tw = \ew\setminus\left\{ \gamma_{\ell(f)-1}(f) : f\in\ejw\setminus\tjw\right\}.
\]
Therefore, using \pref{obs:exp-deg-bd} we can take a loose upper bound on the contribution of edges outside of $T(W)$ and write
\begin{align*}
&\sum_{W\sim\uw} \dW(\kappa)\prod_{e\in\ew}p_e \cdot \prod_{f\in\ejw\setminus \tjw} p_{\gamma_{\ell(f)-1}(f)}^{-1}\\
    &\le \sum_{W\sim\uw} \parens*{(m-1)\cdot q}^{-\ell+(\abs{\vuw}-1)}\prod_{(i,j)\in\tw}\frac{p_{i,j}}{\eDdet[i,i]}\\
    &\le \parens*{(m-1)\cdot q}^{-\ell+(\abs{\vuw}-1)}\sum_{i_1,\dots,i_{a\in[m]}}\prod_{j=2}^{a}\frac{p_{\Par(i_j),i_j}}{\eDdet[{\Par(i_j),\Par(i_j)}]} \numberthis \label{eq:uw}
\end{align*}
where $a = |V(U)|$.
Next we show by induction on $a$ that 
\[
    \sum_{i_1,\dots,i_{a}}
    \prod_{j=2}^{a}
    \frac{p_{(\Par(i_j),i_j)}}{\eDdet[\Par(i_j),\Par(i_j)]} = 1
\]
The base case $a = 1$ is true by definition.
Suppose this is true for $a-1$. Then:
\begin{align*}
\sum_{i_1,\dots,i_{a}}\prod_{j=2}^{a}\frac{p_{(\Par(i_j),i_j)}}{\eDdet[{\Par(i_j),\Par(i_j)}]}
&= \sum_{i_1,\dots,i_{a-1}} \prod_{j=2}^{a-1}\frac{p_{(\Par(i_j),i_j)}}{\eDdet[{\Par(i_j),\Par(i_j)}]} \cdot \sum_{i_a=1}^{m} \frac{p_{(\Par(i_a),i_a)}}{\eDdet{[\Par(i_a),\Par(i_a)]}} 
&\intertext{By definition $\sum_{i_a} \frac{p_{(\Par(i_a),i_a)}}{\eDdet{[\Par(i_a),\Par(i_a)]}} = 1$, so we have: } 
&= \sum_{i_1,\dots,i_{a-1}} \prod_{j=2}^{a-1}\frac{p_{(\Par(i_j),i_j)}}{\eDdet{[\Par(i_a),\Par(i_a)]}} \cdot 1 = 1
\end{align*}
\noindent
Finally, observe that $\ell - (|V(U)| - 1) \ge \exc(U) + \frac{\ell-\suw}{2}$ is at least the number of steps that use a previously walked-on edge.
The way to see this is to observe that the quantity $\ell - (|V(U)|-1)$ counts the number of steps to a previously visited vertex.
Such a step can either (1) use an excess edge for the first time, of which there are $\exc(U)$ steps, or (2) use a previously walked-on edge, which must be at least half the steps that do not use a singleton edge, i.e. at least $\frac{\ell-\suw}{2}$ steps.
Thus we conclude that $\pref{eq:uw} \le \parens*{(m-1)\cdot q}^{-\ell+(\abs{\vuw}-1)} \le \parens*{(m-1)\cdot q}^{-\exc(U)-\frac{\ell-\suw}{2}}$.
\end{proof}
\noindent Now we are finally already to bound the expected trace power.
Plugging \pref{eq:term-bound-link} into \pref{eq:pause} gives:
\begin{align*}
\pref{eq:pause} &= \sum_{\uw\in\WS}\sum_{W\sim\uw} \dW(\kappa)\prod_{e\in\ew}p_e \cdot \prod_{f\in\ejw\setminus \tjw} p_{\gamma_{\ell(f)-1}(f)}^{-1}\cdot\lambda^{\sing(W)} \parens*{\frac{27C^3}{\lambda}}^{\exc(W)} \\
& \le \sum_{\uw\in\WS} \parens*{(m-1)\cdot q}^{-\exc(U)-\frac{\ell-s(\uw)}{2}} \cdot \lambda^{s(\uw)} \parens*{\frac{27C^3}{\lambda}}^{\exc(\uw)} &\text{by \pref{claim:bound-equivalent-walks}} \\
& = \sum_{a=1}^{\ell} \sum_{b=1}^{\ell} \sum_{c=1}^{\ell} \sum_{\substack{\uw\in\WS \\ e(\uw)=a,\, \suw=b,\, \exc(\uw)=c}} \parens*{(m-1)\cdot q}^{-\frac{\ell-b}{2}} \cdot \lambda^{b} \parens*{\frac{27C^3}{\lambda q (m-1)}}^{c} \\
& = \sum_{a=1}^{\ell} \sum_{b=1}^{\ell} \sum_{c=1}^{\ell} \parens*{(m-1)\cdot q}^{-\frac{\ell-b}{2}} \cdot \lambda^{b} \parens*{\frac{27C^3}{\lambda q (m-1)}}^{c} \cdot \ell^{2(\ell-b)}  \cdot \ell^{2c} &\text{by \pref{claim:unlabelled-walk-count}} \\
&  = \ell \sum_{b=1}^{\ell} \sum_{c=1}^{\ell} \parens*{\frac{\ell^2}{\sqrt{(m-1)\cdot q}}}^{\ell-b} \cdot \lambda^b \parens*{\frac{27C^3\ell^2}{\lambda q (m-1) }}^{c} \\
& = \ell^3 \max\parens*{1,\parens*{\frac{27C^3\ell^2}{\lambda q (m-1) }}^{\ell}}\cdot \max\parens*{\lambda^{\ell},\parens*{\frac{\ell^2}{\sqrt{(m-1)\cdot q}}}^{\ell} }  
\end{align*}
By choosing $\ell = \log^2 m$, we can conclude that with probability at least $1-m^{-\gamma}$,
\[
    \norm*{\nA - \E [\nA\mid\kappa]} \le (1+o(1)) \cdot \parens*{1+\frac{27C^3\log^4 m}{\lambda q m}} \cdot \max\braces*{ \lambda, \frac{\log^4 m}{\sqrt{qm}}}
\]
Since $qm \gg \log^8 m \cdot \log^{3/2}\frac{1}{q} \cdot \parens*{\frac{1+\tau}{\tau}}^3$, we have:
    \[
        \norm*{\nA - \E [\nA\mid\kappa]} \le (1+o(1)) \cdot \lambda.    \qedhere
    \]
\end{proof}

\section{The second eigenvalue of the shell walk}\label{sec:shells}

The goal of this section is to prove \pref{lem:orbit-bound}, and in particular bound $\norm*{\E[\nA \mid \bkappa] - R_{\bkappa} }$ where $\bkappa \sim (\Beta{d}_{\geq \tau})^{\otimes m}$ is a configuration of $m$ shells, and we have conditioned on $\kappa_i = \iprod{w,\bv_i}$ for all $i \in [m]$.

To make the matrix more amenable to analysis via the coupling-based techniques we use here, we first observe that the spectral norm we are interested in bounding is equal to the largest eigenvalue of $\E[\eD^{-1}A_{\bG}\mid\bkappa] - \vec{1}\bpi^{\top}$ where $\bpi \coloneqq \frac{\eD}{\tr(\eD)}\vec{1}$ is the stationary distribution of the Markov chain described by the transition matrix $\E[\eD^{-1}A_{\bG}\mid\bkappa]$.
Indeed:
\[
    \norm*{\E[\nA \mid \bkappa] - R_{\bkappa} } = |\lambda|_{\max}\parens*{\E[\nA \mid \bkappa] - R_{\bkappa}} = |\lambda|_{\max}\parens*{\E[\eD^{-1}A_{\bG}\mid\bkappa] - 1\bpi^{\top}}
\]
where the first equality uses symmetry of the matrix and the second equality uses the fact that the spectra of $M$ and $\eD^{-1/2}M\eD^{1/2}$ are identical.
For convenience, let $Q = \E[A_{\bG} \mid \bkappa]$ and let $\nQ = \eD^{-1}Q$.
The following main result of this section implies \pref{lem:orbit-bound}.
\begin{lemma}    \label{lem:main-orbit-bound}
    There exists a constant $C > 0$ such that for any $d \ge C\log m$, any threshold $\tau \in (0,1)$ such that $q\coloneqq \ol{\Phi}_{\Beta{d}}\parens*{\frac{\tau}{1+\tau}} \gg \log^8 m/m$, and any constant $\gamma > 0$, with probability at least $1-o(m^{-\gamma})$ over the shells $\bkappa\sim (\Beta{d}_{\geq \tau})^{\otimes m}$,
    \[
        \abs*{\lambda}_{\max}\parens*{\nQ-\vec{1}\bpi^{\top}} \le O\parens*{ \sqrt{ \frac{\log^2 d}{d} } }.
    \]
\end{lemma}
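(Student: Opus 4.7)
My plan is to bound $|\lambda|_{\max}(\nQ - \vec{1}\bpi^\top)$ by recognizing $\nQ$ as a reversible Markov chain with stationary distribution $\bpi$ and showing it mixes in essentially one step. Since $Q$ is symmetric, $\nQ$ is reversible, so $|\lambda|_{\max}(\nQ - \vec{1}\bpi^\top)$ equals the second-largest eigenvalue in absolute value of $\nQ$. I would bound this via a coupling-based (Dobrushin-style) contraction inequality
\[
|\lambda|_{\max}(\nQ - \vec{1}\bpi^\top) \le \max_{i,j}\,\dtv{\nQ[i,:]}{\nQ[j,:]},
\]
reducing the task to showing that the rows of $\nQ$ are pairwise close in total variation. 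Equivalently, passing to the symmetric matrix $S = \eD^{-1/2}Q\eD^{-1/2}$ similar to $\nQ$, I would apply \pref{fact:rank-1-sub} to bound $\|S - \nu\nu^\top\|$ for $\nu_i = \sqrt{\eD[i,i]/\tr(\eD)}$.

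The row-closeness of $\nQ$ comes from approximate multiplicative separability of $Q$. Taylor-expanding the shifted threshold $T(x,y) = (\tau - xy)/\sqrt{(1-x^2)(1-y^2)}$ around $(\tau,\tau)$ gives
\[
T(\kappa_i,\kappa_j) = \tfrac{\tau}{1+\tau} + a\cdot\bigl((\kappa_i-\tau) + (\kappa_j-\tau)\bigr) + O\bigl((\kappa_i-\tau)(\kappa_j-\tau) + (\kappa_i-\tau)^2 + (\kappa_j-\tau)^2\bigr),
\]
for a constant $a = a(\tau)$. Composing with $\ol{\Phi}_{\Beta{d-1}}$, whose logarithmic derivative near $\tfrac{\tau}{1+\tau}$ has magnitude $\Theta(d)$ by \pref{lem:approx-tails-dpd}, yields $Q[i,j] = q\cdot\psi(\kappa_i)\psi(\kappa_j)\cdot(1+\eps_{ij})$ for an explicit single-variable function $\psi$, with non-separable error $\eps_{ij}$ governed by the mixed second-order residual. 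A key observation is that the multiplicative prefactor $\psi(\kappa_i)$ cancels in $\nQ[i,k] = Q[i,k]/\eD[i,i]$: to leading order, $\nQ[i,k] \approx (\psi(\kappa_k)/\sum_\ell \psi(\kappa_\ell)) \cdot (1 + \eps_{ik} - \ol{\eps}_i)$ with $\ol{\eps}_i = \E_{k\sim\bpi}[\eps_{ik}]$, so the row-TV distance is controlled by the \emph{centered} error $\eps_{ik} - \ol{\eps}_i$, which to leading order is $O(d\cdot(\kappa_i-\tau)\cdot((\kappa_k-\tau) - \E_{\bpi}[\kappa-\tau]))$.

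The shells $\bkappa\sim(\Beta{d}|_{\ge\tau})^{\otimes m}$ concentrate around $\tau$: the density of $\kappa - \tau$ is approximately exponential with rate $\Theta(d)$ by \pref{lem:approx-tails-dpd}, so typical $\kappa_i - \tau = O(1/d)$ and, with probability $1 - o(m^{-\gamma})$, all $\kappa_i - \tau = O(\log m/d)$. The main obstacle is handling outlier rows where $\kappa_i - \tau$ is large: the cancellation above still leaves a centered error of order $|\kappa_i - \tau|$, which is not automatically $o(1)$ for extreme outliers. I would resolve this following the strategy outlined in the overview: truncate the at most $o(m)$ outlier rows/columns whose shell exceeds $\tau + C\log d/d$, on the good event bound the TV distance between typical rows by $O(\log^2 d/d)$ via the centered-error estimate above, and then show the truncated block contributes negligibly to the spectral norm by combining the row-sum bound \pref{claim:row-sum-bound} with control on the total $\bpi$-mass of outliers afforded by the moments of $\psi(\kappa)$ together with the tail estimate on $\bkappa$. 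Putting these pieces together yields $|\lambda|_{\max}(\nQ - \vec{1}\bpi^\top) = O(\sqrt{\log^2 d/d})$ with probability at least $1 - o(m^{-\gamma})$.
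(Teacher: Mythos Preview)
Your proposal assembles essentially the same ingredients as the paper --- the typical/outlier shell decomposition, the approximate multiplicative separability of $Q$, and a coupling/TV argument --- but the way you combine them has a genuine gap at the outlier rows.

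The one-step Dobrushin bound
\[
|\lambda|_{\max}(\nQ - \vec{1}\bpi^\top) \le \max_{i,j}\,\dtv{\nQ[i,:]}{\nQ[j,:]}
\]
requires the maximum over \emph{all} pairs, and for an outlier $i$ the row $\nQ[i,:]$ can differ from a typical row by $\Theta(1)$ in TV (your own centered-error heuristic gives an error of order $|\kappa_i - \tau|$, which under the high-probability event $\calE_\gamma$ is only bounded by a constant, not by $O(\log^2 d/d)$). Your proposed remedy --- truncate outliers and bound the truncated block via the row-sum bound --- does not work as stated: the row-sum bound (\pref{claim:row-sum-bound}) applied to the outlier rows of $\nQ - \vec{1}\bpi^\top$ returns $\max_{i\text{ outlier}}\|\nQ[i,:]-\bpi\|_1$, which is again $\Theta(1)$. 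Having few outlier rows, or small $\bpi$-mass on outliers, does not by itself make the spectral contribution small.

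The paper's proof circumvents this with one additional idea: it squares first, using $|\lambda|_{\max}(\nQ - \vec{1}\bpi^\top) = \sqrt{|\lambda|_{\max}(\nQ^2 - \vec{1}\bpi^\top)}$, and then applies the row-sum bound to $\nQ^2 - \vec{1}\bpi^\top$, reducing to $\max_{i,j}\|(\nQ^2)_{i,*} - (\nQ^2)_{j,*}\|_1$. This two-step coupling \emph{does} handle outliers: even from an outlier $i$, one step of $\nQ$ lands on a typical vertex with probability $1 - O(1/d)$ (this is exactly \pref{lem:outlier-trans-prob}, which you also identified), and from any pair of typical vertices the next step couples with probability $1 - O(\log^2 d/d)$ (your separability estimate, the paper's \pref{lem:typical-trans-prob}). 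Hence the two-step row differences are $O(\log^2 d/d)$ uniformly, and the square root produces the stated $O(\sqrt{\log^2 d/d})$ --- which also explains why the final bound carries a square root that your one-step argument would not naturally produce.
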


In service of proving \pref{lem:main-orbit-bound}, we show:
\begin{lemma}    \label{lem:row-sum}
There exists a constant $C > 0$ such that for any $d \ge C\log m$, any threshold
    $\tau \in (0,1)$ such that $qm \gg \log^8 m$, and 
    any constant $\gamma > 0$, with probability at least $1-o(m^{-\gamma})$ over the shells $\bkappa\sim (\Beta{d}_{\geq \tau})^{\otimes m}$,
    \[
        \max_{i,j\in[n]} \norm*{ \parens*{\nQ^2}_{i,*} - \parens*{\nQ^2}_{j,*} }_1 \le O\parens*{\frac{\log^2 d}{d}},
    \]
    where $\parens*{\nQ^2}_{i,*}$ denotes the $i$-th row of the matrix $\nQ^2$.
\end{lemma}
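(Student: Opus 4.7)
The plan is to show that the Markov chain with transition matrix $\nQ$ mixes essentially in two steps, by exhibiting an approximate rank-$1$ factorization of $Q$. I would begin by defining a ``typical'' set $\mathcal{T} := \{i : \kappa_i - \tau \le \eta\}$ for a threshold $\eta = \Theta(\log m/d)$, chosen via \pref{lem:approx-tails-dpd} so that $\Pr[\kappa > \tau + \eta \mid \kappa \ge \tau] \le m^{-c}$ for a large constant $c$. A Chernoff bound over the iid shells then gives $|\mathcal{T}^c| \le m^{1-c}$ with probability $1-o(m^{-\gamma})$. Since $Q_{ij} \le 1$ and $\eDdet[i,i] \ge (m-1)q$ by \pref{obs:exp-deg-bd}, the one-step mass a random walk places on atypical vertices is bounded uniformly in $i$: $\sum_{j \notin \mathcal{T}} \nQ_{ij} \le |\mathcal{T}^c|/((m-1)q) \le m^{-c}/q = o(\log^2 d/d)$ under the hypothesis $q \gg \log^8 m/m$.

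The heart of the argument is the rank-$1$ factorization of $Q$. Writing $s_i := \kappa_i - \tau$, I would Taylor-expand $\log(1-T(\kappa_i,\kappa_j)^2)$ around $(s_i, s_j) = (0,0)$. A direct calculation gives $\log(1-T_0^2) + A(s_i + s_j) + A'(s_i^2 + s_j^2) + B\cdot s_i s_j + O(|s_i|^3 + |s_j|^3)$, where $T_0 = \tau/(1+\tau)$, $A, A', B$ are explicit constants depending only on $\tau$, and in particular $B = 2\tau/((1-\tau)^2(1+2\tau)^2)$. Combined with the tail approximation of \pref{lem:approx-tails-dpd}, this yields
\[
    Q_{ij} = y_i y_j \cdot (1 + \varepsilon_{ij}), \qquad y_i := \sqrt{q}\cdot \exp\parens*{\tfrac{d-2}{2}(A s_i + A' s_i^2)},
\]
with multiplicative error $|\varepsilon_{ij}| = O(d|s_is_j| + d(|s_i|^3 + |s_j|^3) + \log d/d)$ for $i, j \in \mathcal{T}$. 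Setting $\mu_j := y_j/\|y\|_1$, a short computation then gives the one-step bound $\|\nQ_{k,*} - \mu\|_1 \le 2\sum_l \mu_l |\varepsilon_{kl}| = O(d|s_k|\cdot \E_\mu[|s|] + \log d/d)$. Crucially, $\mu$ is an exponential tilt of the (approximately exponential with rate $\sim d$) shell density by the factor $\exp(\tfrac{d-2}{2}As)$, and one verifies that $A/(2A_0) = \tau/(1+2\tau) < 1$ where $A_0 = \tau/(1-\tau^2)$ is the original rate; hence $\mu$ retains $\Theta(1/d)$-scale concentration, $\E_\mu[|s|] = O(1/d)$, and $\|\nQ_{k,*} - \mu\|_1 = O(|s_k| + \log d/d)$ for typical $k$.

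To pass to two steps, I would combine
\[
    \|(\nQ^2)_{i,*} - \mu\|_1 \le \sum_k \nQ_{ik}\cdot \|\nQ_{k,*} - \mu\|_1 \le \sum_{k \in \mathcal{T}} \nQ_{ik}\cdot O(|s_k| + \log d/d) + 2\sum_{k \notin \mathcal{T}}\nQ_{ik},
\]
where the last sum is $o(\log^2 d/d)$ from the atypical-mass bound and the first sum is $O(\E_{\nQ_{i,*}}[|s|] + \log d/d) = O(\log d/d)$, using that $\nQ_{i,*}$ is another exponential tilt of the shell density which also retains $\Theta(1/d)$-scale concentration uniformly in the starting vertex $i$. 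This gives $\|(\nQ^2)_{i,*} - \mu\|_1 = O(\log^2 d/d)$ uniformly in $i$, and the lemma follows from the triangle inequality $\|(\nQ^2)_{i,*} - (\nQ^2)_{j,*}\|_1 \le \|(\nQ^2)_{i,*} - \mu\|_1 + \|\mu - (\nQ^2)_{j,*}\|_1$.

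The main technical obstacle will be the careful bookkeeping of the Taylor expansion and the two successive exponential tilts: verifying that the dominant term in $\varepsilon_{ij}$ is the cross term $d|s_is_j|$ rather than the cubic $d(|s_i|^3 + |s_j|^3)$ or higher-order contributions, and confirming that $\Theta(1/d)$-concentration of $\kappa - \tau$ is preserved under both $\mu$ and $\nQ_{i,*}$ uniformly, including for atypical starting vertices where $|s_i|$ could be a constant.
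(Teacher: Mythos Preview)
Your overall strategy—approximate rank-$1$ factorization of $Q$ via Taylor expansion, a typical/outlier split, and a two-step coupling—matches the paper's (\pref{lem:typical-trans-prob}, \pref{lem:outlier-trans-prob}, and the coupling proof of \pref{lem:row-sum}). The gap is in the pointwise error bound. You choose $\eta=\Theta(\log m/d)$ so that the \emph{count} $|\mathcal T^c|$ is polynomially small; but at the boundary $d=C\log m$ allowed by the lemma this makes $\eta$ a fixed constant, and then the claim $|\varepsilon_{ij}|=O(d|s_is_j|+d(|s_i|^3+|s_j|^3)+\log d/d)$ is false on $\mathcal T$: the true error is $e^{\frac{d-2}{2}(Bs_is_j+O(|s|^3))}-1$, and for $s_i,s_j$ near $\eta$ the exponent is $\Theta(d)$. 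Since the cubic piece $\tfrac{d-2}{2}\cdot O(|s_k|^3)$ is constant in $l$, averaging over $l$ under $\mu$ does not remove it, so your one-step bound $\|\nQ_{k,*}-\mu\|_1=O(|s_k|+\log d/d)$ is only justified for $|s_k|=O(d^{-1/3})$, not for all of $\mathcal T$, and cannot be fed into the two-step sum as written.

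The paper resolves this by working at two scales. For the rank-$1$ comparison it takes the much tighter window $\kappa_i\le\tau(1+\alpha)$ with $\alpha=\Theta(\log d/d)$, so that $d\alpha^2=O(\log^2 d/d)$ and the second-order expansion is valid \emph{pointwise} on typical shells (\pref{lem:typical-ratio}). But now there can be $\Omega(m/\mathrm{poly}(d))$ outliers and the crude counting bound fails, so the outlier mass must be bounded in a \emph{weighted} sense: \pref{lem:outlier-mean} computes directly that $\E_{\bkappa_k}[Q_{i,k}\,\Ind[\bkappa_k>\tau(1+\alpha)]]\big/\E_{\bkappa_k}[Q_{i,k}]\le O(1/d)$ uniformly for $\kappa_i$ bounded away from $1$, and Bernstein's inequality transfers this to the empirical sums (\pref{lem:outlier-trans-prob}). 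This weighted bound is precisely what underlies your assertion that ``$\nQ_{i,*}$ retains $\Theta(1/d)$-scale concentration uniformly in $i$,'' but it has to be proved directly; your Taylor factorization is only valid inside the $O(\log d/d)$ window and cannot bootstrap the tail.
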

We show how to prove \pref{lem:main-orbit-bound} using \pref{lem:row-sum} and then dedicate the rest of the section to proving \pref{lem:row-sum}.
\begin{proof}[Proof of \pref{lem:main-orbit-bound}]
    First, observe that $\abs*{\lambda}_{\max}\parens*{\nQ-\vec{1}\bpi^{\top}} = \ssqrt{\abs*{\lambda}_{\max}\parens*{\nQ^2 - \vec{1}{\bpi^{\top}}} }$.
    Via the row sum bound for the largest magnitude eigenvalue of a matrix (\pref{claim:row-sum-bound}), \pref{lem:row-sum} and the fact that $\bpi$ is the stationary distribution of $\nQ$:
    \begin{align*}
        \abs*{\lambda}_{\max}\parens*{\nQ^2 - \vec{1}\bpi^{\top}}
        &\le \max_{i\in[n]} \norm*{\parens*{\nQ^2}_{i,*}-\bpi^{\top}}_1 \\
        &= \max_{i\in[n]} \norm*{\parens*{\nQ^2}_{i,*} - \sum_{j\in[n]} \bpi_j \parens*{\nQ^2}_{j,*}}_1 \\
        &= \max_{i\in[n]} \norm*{\sum_{j\in[n]} \bpi_j \parens*{\parens*{\nQ^2}_{i,*} - \parens*{\nQ^2}_{j,*}} }_1 \\
        &\le \max_{i,j\in[n]} \norm*{ \parens*{\nQ^2}_{i,*} - \parens*{\nQ^2}_{j,*} }_1.
        \qedhere
    \end{align*}
\end{proof}

\label{sec:orbit-norm}

\subsection{Coupling for the shell walk}

In this section we give the proof of \pref{lem:row-sum} assuming a few key lemmas. The proofs for the key lemmas are deferred to the next section. 

\subsubsection{A high-probability condition for $\bkappa$}

To simplify the upcoming computations for \pref{lem:row-sum}, we will condition on the following high-probability event $\mathcal{E}_\gamma$ over the sample space of the shells $\bkappa$: 
\begin{definition} \label{def:orbit-event}
    Let $\mathcal{E}_\gamma$ be the event that for all $m$ shells $\bkappa_i \in \bkappa$ in the link, $\bkappa_i \leq \xbound$, where
    \[
    \xbound = \tau\parens*{m^{-2\gamma-1}\cdot \ol{\Phi}_{\Beta{d}}(\tau) ,d}.
    \]
    Note that the outermost $\tau(\cdot)$ refers to the threshold function, rather than the value of the threshold such that $\ol{\Phi}_{\Beta{d}}(\tau) = p$.
\end{definition}

\begin{claim}
    The event $\calE_\gamma$ occurs with probability at least $1-m^{-2\gamma}$.
\end{claim}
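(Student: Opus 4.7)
The plan is to prove this by a direct calculation using the definition of $\xbound$ followed by a union bound over the $m$ shells.

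First, I would unpack what the conditioning does to the distribution of each $\bkappa_i$. Since $\bkappa \sim (\Beta{d}\vert_{\geq\tau})^{\otimes m}$, each coordinate satisfies $\Pr[\bkappa_i \geq t] = \ol{\Phi}_{\Beta{d}}(t)/\ol{\Phi}_{\Beta{d}}(\tau)$ for every $t \in [\tau,1]$. The definition of $\xbound$ as $\tau(m^{-2\gamma-1}\cdot \ol{\Phi}_{\Beta{d}}(\tau),d)$ is exactly engineered so that $\ol{\Phi}_{\Beta{d}}(\xbound) = m^{-2\gamma-1}\cdot \ol{\Phi}_{\Beta{d}}(\tau)$, using that $\tau(p,d)$ is the inverse of $p \mapsto \ol{\Phi}_{\Beta{d}}(p)$. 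Plugging this into the conditional tail bound gives
\[
    \Pr[\bkappa_i > \xbound] = \frac{\ol{\Phi}_{\Beta{d}}(\xbound)}{\ol{\Phi}_{\Beta{d}}(\tau)} = m^{-2\gamma-1}
\]
for each individual $i$.

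Then I would apply a union bound over the $m$ coordinates:
\[
    \Pr[\calE_\gamma^c] = \Pr\bracks*{\exists\, i\in[m]:\, \bkappa_i > \xbound} \le \sum_{i=1}^m \Pr[\bkappa_i > \xbound] \le m \cdot m^{-2\gamma-1} = m^{-2\gamma},
\]
which is exactly the claimed bound.

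There isn't really an obstacle here — the content of the claim is essentially definitional, and the exponent $-2\gamma-1$ in the definition of $\xbound$ is chosen precisely to absorb the factor of $m$ from the union bound while still leaving an $m^{-2\gamma}$ slack that matches the probability guarantee promised in \pref{lem:main-orbit-bound}. The only subtlety worth flagging is that the lemma requires $\xbound \in [\tau,1]$ for the inversion of $\ol{\Phi}_{\Beta{d}}$ to be well-defined, which follows from $m^{-2\gamma-1}\cdot\ol{\Phi}_{\Beta{d}}(\tau) \le \ol{\Phi}_{\Beta{d}}(\tau)$ and the monotonicity of $\tau(\cdot,d)$.
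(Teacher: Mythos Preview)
Your proposal is correct and follows exactly the same approach as the paper: compute the single-shell tail probability from the definition of $\xbound$ and then union bound over the $m$ shells. You have simply spelled out the ``by definition'' step more explicitly than the paper does.
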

\begin{proof}
    By definition, for any shell $\bkappa_i$:
    $
    \Pr_{\bkappa_i \sim \Beta{d}\vert_{\ge\tau}}[\bkappa_i \ge \xbound] \le m^{-2\gamma-1}.
    $
    Our conclusion follows from taking a union bound over all $m$ shells.
\end{proof}
The conditioning on $\mathcal{E}_\gamma$ can be folded into high-probability guarantee over $\bkappa$ in \pref{lem:row-sum}.
Thus, for the remainder of the section, we can assume that $\bkappa$ obeys event $\mathcal{E}_\gamma$. 
This will be especially relevant in the analysis of the outlier shells (\pref{sec:outlier-cols}).

\begin{claim}
	If $d \geq C \log m$ for some constant $C > 0$, then $\xbound \le 1 - \eps_\gamma$, where $\eps_\gamma > 0$ is a constant depending only on $\gamma$.
\end{claim}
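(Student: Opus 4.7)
The plan is to unpack the definition $\xbound = \tau\!\left(m^{-2\gamma-1}\cdot \ol{\Phi}_{\Beta{d}}(\tau),\,d\right)$, which says that $\xbound$ is the unique point satisfying
\[
\ol{\Phi}_{\Beta{d}}(\xbound) \;=\; m^{-2\gamma-1}\cdot \ol{\Phi}_{\Beta{d}}(\tau),
\]
and then invert this relation using the sharp tail estimate \pref{lem:approx-tails-dpd}. Since $\tau \in (0,1)$ is a fixed constant and $d \ge C\log m \to \infty$, we have $d\tau^2 \to \infty$, so the multiplicative slack in \pref{lem:approx-tails-dpd} is $1+o(1)$; moreover, because $\xbound \ge \tau$, the same is true at $\xbound$. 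Hence the tail ratio is well-approximated by its leading term:
\[
\frac{\ol{\Phi}_{\Beta{d}}(\xbound)}{\ol{\Phi}_{\Beta{d}}(\tau)} \;=\; \bigl(1+o(1)\bigr)\cdot\frac{\tau}{\xbound}\left(\frac{1-\xbound^2}{1-\tau^2}\right)^{(d-1)/2}.
\]

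Next, I will set this ratio equal to $m^{-2\gamma-1}$, take logarithms, and solve for $1-\xbound^2$. Since $\tau \le \xbound \le 1$, the prefactor $\tau/\xbound$ lies in the constant range $[\tau,1]$ and contributes only an additive $O(1)$ after taking logs. Rearranging yields
\[
\frac{d-1}{2}\log\frac{1-\tau^2}{1-\xbound^2} \;\le\; (2\gamma+1)\log m + O_\tau(1),
\]
i.e.
\[
1-\xbound^2 \;\ge\; (1-\tau^2)\cdot m^{-2(2\gamma+1)/(d-1)}\cdot e^{-O_\tau(1)}.
\]

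The final step is to use the hypothesis $d \ge C \log m$ to turn the $m$-dependent factor into a constant. Indeed $m^{-2(2\gamma+1)/(d-1)} \ge m^{-4(2\gamma+1)/(C\log m)} = e^{-4(2\gamma+1)/C}$ for $m$ large, which is a positive constant. Combined with the constant factor $1-\tau^2 > 0$, this gives $1-\xbound^2 \ge c_\gamma > 0$, hence $\xbound \le \sqrt{1-c_\gamma} \le 1-\eps_\gamma$ for a constant $\eps_\gamma > 0$.

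The main obstacle is really just a bookkeeping one: making sure that the error terms in the two applications of \pref{lem:approx-tails-dpd} (at $\tau$ and at $\xbound$) are genuinely $1+o(1)$ on both sides. This relies on $d\tau^2 \to \infty$ and on the monotonicity $\xbound \ge \tau$, both of which hold in our regime. I also note that although the claim states $\eps_\gamma$ depends only on $\gamma$, the argument shows $\eps_\gamma$ may also depend on $\tau$ and $C$; since both are treated as constants throughout \pref{sec:link-eigs}, this is harmless.
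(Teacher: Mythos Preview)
Your proof is correct and follows essentially the same approach as the paper: both arguments use the tail estimate \pref{lem:approx-tails-dpd} at $\tau$ and at $\xbound$, combine it with the defining relation $\ol{\Phi}_{\Beta{d}}(\xbound) = m^{-2\gamma-1}\ol{\Phi}_{\Beta{d}}(\tau)$, and use $d \ge C\log m$ to absorb the $m$-dependence into a constant. The paper phrases this in two steps (first show $m^{-2\gamma-1}\ol{\Phi}_{\Beta{d}}(\tau) \ge \exp(-C_\gamma d)$, then exhibit $\eps_\gamma$ with $\ol{\Phi}_{\Beta{d}}(1-\eps_\gamma) \le \exp(-C_\gamma d)$ and invoke monotonicity), whereas you solve for $1-\xbound^2$ directly from the ratio, but these are the same computation organized slightly differently.
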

\begin{proof}
    Since $\tau$ is a constant bounded away from $1$, and $d = \Omega(\log m)$, by the lower bound in \pref{lem:approx-tails-dpd}, the quantity $m^{-2\gamma-1}\cdot\ol{\Phi}_{\Beta{d}}(\tau)$ is at least $\exp(-C_{\gamma}d)$ for some constant $C_{\gamma}$ depending on $\gamma$.
    By the upper bound in \pref{lem:approx-tails-dpd}, there is a constant $\eps_{\gamma} > 0$ such that $\ol{\Phi}_{\Beta{d}}(1-\eps_{\gamma}) \le \exp(-C_{\gamma}d)$.
    Since $\ol{\Phi}_{\Beta{d}}$ is a decreasing function, $\eta \le 1 - \eps_{\gamma}$.
\end{proof}

\subsubsection{``Typical'' and ``outlier'' shells}
In the proof of \pref{lem:row-sum}, we analyze the contributions of ``typical'' and ``outlier'' shells separately.

\begin{definition}
We say that a shell $\kappa_i$ is ``typical'' if $\kappa_i \in [\tau, \tau(1 + \alpha)]$, for $\alpha = \frac{36 \log d}{\tau^2(d - 3) (1 - \xbound)}$.
\end{definition}

\begin{remark}\label{fact:typical-alpha}
	$\alpha$ is chosen so that $\nQ$, when restricted to typical rows and columns, will resemble a rank-1 matrix. 
	For our eventual choices of $d$ and $m$, the event that every shell is typical \emph{does not} occur with high probability; we will inevitably need to deal with outlier shells.
\end{remark}

\subsubsection{Total variation bound from similarity of typical rows and scarcity of outlier columns}

To obtain the desired row-sum bound in  \pref{lem:row-sum}, we will prove the following two lemmas about the matrix $\nQ$. 
The first shows that outlier columns do not contribute much to the total row sum of any row:
\begin{lemma} \torestate{\label{lem:outlier-trans-prob} 
    For any $d \ge C\log m$ for some constant $C > 0$ and any 
    $ \tau \in [0,1]$ such that $qm \gg \log^8 m$, 
    if $\kappa_i \leq \xbound$,
    \[
    \sum_{k = 1}^m \nQ_{i, k} \cdot \Ind{[k \text{ outlier}]} \leq O\left(\frac{1}{d}\right)
    \]
    with probability $1 - o(m^{- \log m})$.}
\end{lemma}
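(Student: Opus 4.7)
The plan is to express the target sum as a ratio and bound the numerator and denominator separately. Write
\[
\sum_{k=1}^m \nQ_{ik}\cdot\Ind[k \text{ outlier}] \;=\; \frac{N_i}{\eDdet[i,i]}, \qquad N_i \coloneqq \sum_{k\neq i} Q_{ik}\cdot\Ind[\kappa_k > \tau(1+\alpha)].
\]
By \pref{obs:exp-deg-bd} we have $\eDdet[i,i] \ge (m-1)q$ deterministically, so it suffices to show $N_i \le O((m-1)q/d)$ with probability $1-o(m^{-\log m})$. Both $N_i$ and $\eDdet[i,i]$ are sums of $m-1$ mutually independent $[0,1]$-valued random variables indexed by $k\ne i$, with the randomness coming from the shells $\bkappa_k$ and the directions $\bz_k$ orthogonal to $w$.

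The heart of the argument is a bound $\E[N_i\mid\kappa_i]/\E[\eDdet[i,i]\mid\kappa_i] \le O(1/d)$ that holds for every $\kappa_i \le \xbound$. Using the geometric interpretation, this ratio equals the conditional probability $\Pr[\iprod{w,\bv_k} > \tau(1+\alpha) \mid \bv_k \in \scap_{\ge\tau}(\bv_i)\cap\scap_{\ge\tau}(w)]$. Decomposing $\bv_k = yw + \sqrt{1-y^2}\,\bz_k$ with $\bz_k\in\bbS^{d-2}$, the $y$-density under this conditional is proportional to $g(y) \coloneqq \ol{\Phi}_{\Beta{d-1}}(T(\kappa_i,y))\cdot\BetaPDF_d(y)$ on $y\in[\tau,1]$. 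Computing $\partial_y \log g(y)$ using the explicit expressions for $\BetaPDF_d$ and $T(\kappa_i,\cdot)$, the decay rate $\tau d/(1-\tau^2)$ coming from $\BetaPDF_d \propto (1-y^2)^{(d-3)/2}$ outpaces the growth rate of $\ol{\Phi}_{\Beta{d-1}}(T(\kappa_i,y))$ uniformly in $\kappa_i \in [\tau,\xbound]$; a direct calculation then yields $\partial_y\log g \le -C(\tau,\xbound)\cdot d$ for a positive constant $C(\tau,\xbound)$ on $[\tau,\tau(1+\alpha)]$. The choice $\alpha = 36\log d/(\tau^2(d-3)(1-\xbound))$ ensures $g(\tau(1+\alpha))/g(\tau) \le d^{-c}$ for a sufficiently large constant $c$, and integration yields the desired $O(1/d)$ bound on the ratio of integrals.

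To conclude, I will apply a Chernoff bound to $\eDdet[i,i]$, whose expectation is at least $(m-1)q \gg \log^8 m$, to get $\eDdet[i,i] \ge (m-1)q/2$ with probability $1 - \exp(-\Omega(\log^8 m)) = 1 - o(m^{-\log m})$. Applying Bernstein's inequality to $N_i$ (whose variance is at most its mean, since each summand lies in $[0,1]$) yields $N_i \le 3\max(\E[N_i\mid\kappa_i],\,\log^2 m)$ with the same failure probability. Dividing, the expectation-ratio bound controls the regime where $\E[N_i\mid\kappa_i]$ dominates, while the residual $\log^2 m$ term is handled by $\log^2 m/((m-1)q) \le O(1/d)$, which follows from $qm \gg \log^8 m$ together with $d = O(\log m)$. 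The main technical obstacle is verifying the decay-rate calculation for $g(y)$ uniformly in $\kappa_i$: when $\kappa_i$ is itself an outlier, $\ol{\Phi}_{\Beta{d-1}}(T(\kappa_i,y))$ is not exponentially small and grows nontrivially in $y$, so one must confirm that the $\BetaPDF_d$ decay still dominates with margin sufficient to give an exponent $c \ge 1$.
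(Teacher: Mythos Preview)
Your high-level strategy matches the paper's exactly: rewrite the sum as a ratio $N_i/\eDdet[i,i]$, prove an expectation-ratio bound $\E[N_i\mid\kappa_i]/\E[\eDdet[i,i]\mid\kappa_i]\le O(1/d)$ for all $\kappa_i\le\eta$ (this is the paper's \pref{lem:outlier-mean}), and then apply Bernstein to numerator and denominator. The concentration portion of your argument is essentially the same as the paper's (you use the deterministic bound $\eDdet[i,i]\ge(m-1)q$ instead of concentrating the denominator, which is a harmless simplification).

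The gap is in your proof of the expectation-ratio bound. You claim $\partial_y\log g(y)\le -C(\tau,\eta)\,d$ only on $[\tau,\tau(1+\alpha)]$, conclude that $g(\tau(1+\alpha))/g(\tau)\le d^{-c}$, and then assert that ``integration yields'' the ratio bound. But a pointwise decay estimate on a short interval says nothing about $\int_{\tau(1+\alpha)}^1 g(y)\,dy$; you need control of $g$ on the full tail $[\tau(1+\alpha),1]$. To push the derivative argument through globally you would have to show $\partial_y\log g\le -C\,d$ on all of $[\tau,1]$, and this requires care exactly where your final sentence worries: when $T(\kappa_i,y)$ drops through the regime $T\sim 1/\sqrt{d}$, the hazard-rate approximation $\psi_{d-1}(T)/\ol\Phi_{\Beta{d-1}}(T)\approx T(d-2)/(1-T^2)$ from \pref{lem:approx-tails-dpd} breaks down, so a single derivative inequality doesn't cover all $y$ uniformly.

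The paper handles this by a direct pointwise-ratio comparison rather than a derivative bound: it restricts the denominator to $[\tau,\tau(1+\alpha/2)]$, then for any $y\in[\tau(1+\alpha),1]$ and $z\in[\tau,\tau(1+\alpha/2)]$ bounds $g(y)/g(z)$ by splitting into three cases according to whether $y,z$ lie above or below the threshold $y_*$ defined by $T(\kappa_i,y_*)=4/\sqrt d$. This case split is exactly what absorbs the regime change you flag. Your derivative approach can likely be completed by the same case analysis (and in fact the algebra in the regime $T>4/\sqrt d$ yields the clean identity $yB-A=(y-\tau\kappa_i)(1-y^2)$ for $A=(\tau-\kappa_i y)(\kappa_i-\tau y)$ and $B=(1-\kappa_i^2)(1-y^2)-(\tau-\kappa_i y)^2$, which does give the needed margin), but the sketch as written stops short of this.

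One minor point: your last step invokes $d=O(\log m)$ to get $\log^2 m/(qm)\le O(1/d)$. The lemma as stated only assumes $d\ge C\log m$; the upper bound on $d$ is implicit in the paper's setting ($d=\Theta(\log n)$) but is not part of the hypothesis.
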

\noindent The second shows that typical rows are similar at indices corresponding to typical columns:

    \begin{lemma} \torestate{\label{lem:typical-trans-prob}
        For any dimension $d$ and any threshold $\tau \in (0,1)$,
        if $\kappa_i$, $\kappa_j$ correspond to typical shells, then for all $\ell$ such that $\kappa_\ell$ is typical,
        \[
        \nQ_{i, \ell} \in \left( 1 \pm O\left( \frac{\log^2 d}{d} \right) \right) \nQ_{j, \ell}
        \]
        }
    \end{lemma}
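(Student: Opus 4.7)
The plan is to establish that the conditional edge-probability kernel $q(x,y) \coloneqq \ol{\Phi}_{\Beta{d-1}}(T(x,y))$ (where $T(x,y) = (\tau - xy)/\sqrt{(1-x^2)(1-y^2)}$) is approximately \emph{separable} on the typical region $[\tau, \tau(1+\alpha)]^2$: namely, $q(x,y) = f(x)\,g(y)\,(1 \pm O(\log^2 d / d))$ for some functions $f, g$. Granting separability, for typical $\kappa_\ell$ one has $q(\kappa_i,\kappa_\ell)/q(\kappa_j,\kappa_\ell) = (f(\kappa_i)/f(\kappa_j))(1 \pm O(\log^2 d/d))$, nearly $\ell$-independent. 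Splitting the normalization $\eD[i,i] = \sum_{k\text{ typical}} q(\kappa_i,\kappa_k) + \sum_{k\text{ outlier}} q(\kappa_i,\kappa_k)$, the outlier sum contributes only an $O(1/d)$ fraction by \pref{lem:outlier-trans-prob}, while the typical sum factors via separability as $f(\kappa_i)\sum_{k\text{ typical}} g(\kappa_k)\cdot(1 \pm O(\log^2 d/d))$. This yields $\eD[i,i]/\eD[j,j] = (f(\kappa_i)/f(\kappa_j))(1 \pm O(\log^2 d/d))$, so the $f$-factors cancel in $\nQ_{i,\ell}/\nQ_{j,\ell} = (q_{i,\ell}/q_{j,\ell})\cdot(\eD[j,j]/\eD[i,i])$, leaving $1 \pm O(\log^2 d / d)$.

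To prove separability, I would Taylor-expand $h(x,y) \coloneqq \log q(x,y)$ around $(\tau,\tau)$. By \pref{lem:approx-tails-dpd} the dominant part of $h$ is $\tfrac{d-2}{2}\log(1-T(x,y)^2)$; the subleading term $-\log T(x,y) + \log(Z_{d-1}/(d-2))$ and the multiplicative error $O(\log(1+dT^2)/(dT^2))$ vary by only $O(\log d / d)$ on the typical region and so contribute only lower-order non-separable error. Using the algebraic identity
\[
(1-x^2)(1-y^2) - (\tau-xy)^2 = (1-\tau^2)(1-x^2) - (y - \tau x)^2,
\]
one may split $\log(1-T^2) = \log[(1-\tau^2)(1-x^2) - (y-\tau x)^2] - \log(1-x^2) - \log(1-y^2)$, of which only the first summand couples $x$ and $y$. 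A direct computation gives
\[
\left.\partial_x \partial_y \log\bigl[(1-\tau^2)(1-x^2) - (y - \tau x)^2\bigr]\right|_{(\tau,\tau)} = \frac{2\tau}{(1-\tau)^2(1+2\tau)^2},
\]
a $\tau$-dependent constant of order $1$. Continuity extends this to a uniform $O(1)$ bound on $[\tau,\tau(1+\alpha)]^2$ since the log's argument stays of order $(1-\tau)^2(1+2\tau)$ there. Hence $|h_{xy}| = O(d)$ on the typical region, and Taylor's theorem gives
\[
\bigl|h(x,y) - h(x,\tau) - h(\tau,y) + h(\tau,\tau)\bigr| \le (x - \tau)(y - \tau) \cdot \sup|h_{xy}| = O\bigl((\tau \alpha)^2 \cdot d\bigr) = O(\log^2 d / d),
\]
using $\tau\alpha = O(\log d / d)$. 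Setting $\log f(x) \coloneqq h(x,\tau) - \tfrac{1}{2}h(\tau,\tau)$ and $\log g(y) \coloneqq h(\tau,y) - \tfrac{1}{2}h(\tau,\tau)$ and exponentiating establishes the separability claim.

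The main obstacle I anticipate is the uniform $O(d)$ bound on $|h_{xy}|$ over the perturbed domain $[\tau,\tau(1+\alpha)]^2$ -- a routine but somewhat involved calculus exercise, requiring careful tracking of $\tau$-dependence and of the denominator $(1-\tau^2)(1-x^2) - (y-\tau x)^2$ staying bounded away from zero under the perturbation. A secondary check is ensuring that higher-order mixed terms in the Taylor expansion (e.g.\ $h_{xxy}(x-\tau)^2(y-\tau)$, of size $O(d)\cdot O((\tau\alpha)^3) = O(\log^3 d / d^2)$) are negligible, which follows from analogous boundedness of higher mixed partials of $\log[(1-\tau^2)(1-x^2) - (y-\tau x)^2]$ on the typical region.
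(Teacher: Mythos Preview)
Your proposal is correct, and the high-level reduction is exactly the paper's: approximate separability of $q(x,y)$ on the typical box (the paper states this as \pref{lem:typical-ratio}, i.e.\ $\frac{q(\kappa_i,\kappa_\ell)}{q(\kappa_j,\kappa_\ell)}\cdot\frac{q(\kappa_j,\tau)}{q(\kappa_i,\tau)}\in 1\pm O(\log^2 d/d)$), combined with \pref{lem:outlier-trans-prob} to handle the normalizations $\eD[i,i],\eD[j,j]$.

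Where you diverge is in the proof of separability. The paper applies the tail estimate \pref{lem:approx-tails-dpd} to write the four-term ratio as $(1\pm O(\log d/d))\cdot\frac{T(\kappa_j,\kappa_\ell)T(\kappa_i,\tau)}{T(\kappa_i,\kappa_\ell)T(\kappa_j,\tau)}\cdot(A/B)^{(d-1)/2}$ with $A,B$ products of $1-T(\cdot,\cdot)^2$, and then verifies by explicit polynomial expansion (in the appendix) that both the $T$-ratio and $A/B$ are $1\pm O(\alpha^2)$. You instead bound the mixed partial $\partial_x\partial_y\log q=O(d)$ using the identity $(1-x^2)(1-y^2)-(\tau-xy)^2=(1-\tau^2)(1-x^2)-(y-\tau x)^2$, and then invoke the exact formula $h(x,y)-h(x,\tau)-h(\tau,y)+h(\tau,\tau)=\int_\tau^x\!\int_\tau^y h_{xy}$. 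This is somewhat cleaner and more structural, replacing the paper's two appendix claims by a single mixed-partial bound; the paper's route is more bare-hands but requires no smoothness tracking of the $O(\log d/d)$ error in the tail estimate (which you correctly note can be absorbed at the end by the triangle inequality, since $O(\log d/d)\subset O(\log^2 d/d)$). One small remark: your ``secondary check'' on higher-order Taylor terms like $h_{xxy}$ is unnecessary---the integral identity above is exact, so a uniform bound on $h_{xy}$ over the typical box already suffices with no remainder to control.
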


These lemmas are both proven by direct calculation, and we leave their proofs to \pref{sec:outlier-cols} and \pref{sec:typical-entries-typical-rows} respectively.

To illustrate these statements, we provide a schematic of the matrix $Q$ below, organized into its typical and outlier rows and columns.
\pref{lem:typical-trans-prob} states that the sub-rows in area (I) of the matrix are all nearly equal to each other. 
\pref{lem:outlier-trans-prob} says that the sum of its entries in area (II) or area (IV) is a $O\left(\frac{1}{d}\right)$ fraction of the total row sum.

\begin{center}
    \begin{tikzpicture}
    \matrix [matrix of math nodes,left delimiter=(,right delimiter=),row sep=0.15cm,column sep=0.15cm] (m) {
        \, \, &   \, \,   &   \, \,  &  \, \, \, \, \,  \\
        \, \, &  \text{(I)} &   \, \,  & \text{(II)} \\
        \, \,  &  \, \,  & \, \,  &  \, \, \, \, \, \\
        \, \, & \text{(III)}   & \,  \,  & \text{(IV)}  \\ };
    
    \draw[dashed] ($0.5*(m-1-3.north east)+0.4*(m-1-4.north west)$) -- ($0.5*(m-4-3.south east)+0.6*(m-4-4.south west)$);
    
    \draw[dashed] ($0.6*(m-3-1.south west)+0.4*(m-4-1.north west)$) -- ($0.5*(m-3-4.south east)+0.7*(m-4-4.north east)$);
    
    \node[above=1pt of m-1-1] (top-1) {};
    \node[above=1pt of m-1-3] (top-3) {};
    \node[above=1pt of m-1-4] (top-4) {};
    
    \node[right=12pt of m-1-4] (right-1) {};
    \node[right=12pt of m-3-4] (right-3) {};
    \node[right=6pt of m-4-4] (right-4) {};
    
    \node[rectangle,above delimiter=\{] (del-top-1) at ($0.5*(top-1.south) +0.5*(top-3.south)$) {\tikz{\path (top-1.south west) rectangle (top-3.north east);}};
    \node[above=10pt] at (del-top-1.north) {typical};
    \node[rectangle,above delimiter=\{] (del-top-2) at ($0.5*(top-4.south) +0.5*(top-4.south)$) {\tikz{\path (top-4.south west) rectangle (top-4.north east);}};
    \node[above=10pt] at (del-top-2.north) {outlier};
    
    \node[rectangle,right delimiter=\}] (del-right-1) at ($0.5*(right-1.west) +0.5*(right-3.west)$) {\tikz{\path (right-1.north east) rectangle (right-3.south west);}};
    \node[right=22pt] at (del-right-1.west) {typical};
    \node[rectangle,right delimiter=\}] (del-right-2) at ($0.5*(right-4.west) +0.5*(right-4.west)$) {\tikz{\path (right-4.south east) rectangle (right-4.south west);}};
    \node[right=22pt] at (del-right-2.west) {outlier};
    \end{tikzpicture}
\end{center}

One straightforward corollary of \pref{lem:typical-trans-prob} and \pref{lem:outlier-trans-prob} is that the $\ell_1$ norms of the differences between any two \emph{typical} rows of $\nQ$ is at most $O \left(\frac{\log^2 d}{d}\right)$.
More formally: 
\begin{corollary} \label{cor:typical-1-norm}
    For any $d \ge C\log m$ for some constant $C > 0$ and any threshold
    $0< \tau \le 1$ such that $qm \gg \log^8 m$, let $i, j$ be rows of $\nQ$ corresponding to typical shells $\kappa_i$, $\kappa_j$. Then:
    \[
    \left \|\left(\nQ\right)_{i, *} - \left(\nQ\right)_{j, *} \right\|_1 \leq O \left(\frac{\log^2 d}{d}\right)
    \] with probability $1 - o(m^{- \log m})$.
\end{corollary}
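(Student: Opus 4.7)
The plan is to prove this corollary as a direct consequence of the two preceding lemmas by splitting the $\ell_1$ norm according to whether a column index corresponds to a typical shell or an outlier shell. Concretely, for typical rows $i,j$, I would write
\[
\|\underline{Q}_{i,*} - \underline{Q}_{j,*}\|_1 = \sum_{\ell : \kappa_\ell \text{ typical}} |\underline{Q}_{i,\ell} - \underline{Q}_{j,\ell}| + \sum_{\ell : \kappa_\ell \text{ outlier}} |\underline{Q}_{i,\ell} - \underline{Q}_{j,\ell}|,
\]
and bound the two sums separately.

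For the typical-column part, I would invoke \pref{lem:typical-trans-prob}, which gives $|\underline{Q}_{i,\ell} - \underline{Q}_{j,\ell}| \le O(\log^2 d/d) \cdot \underline{Q}_{j,\ell}$ for every typical $\ell$. Summing over all typical $\ell$ and upper-bounding the resulting sum by $\sum_{\ell=1}^m \underline{Q}_{j,\ell} = 1$ (since $\underline{Q}$ is a row-stochastic matrix, being the transition matrix $\eD^{-1}Q$ of the Markov chain under consideration) yields a bound of $O(\log^2 d/d)$ for this part.

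For the outlier-column part, I would apply the triangle inequality and \pref{lem:outlier-trans-prob} twice to conclude
\[
\sum_{\ell : \kappa_\ell \text{ outlier}} |\underline{Q}_{i,\ell} - \underline{Q}_{j,\ell}| \le \sum_{\ell : \kappa_\ell \text{ outlier}} \underline{Q}_{i,\ell} + \sum_{\ell : \kappa_\ell \text{ outlier}} \underline{Q}_{j,\ell} \le O(1/d),
\]
where the invocation of \pref{lem:outlier-trans-prob} requires $\kappa_i, \kappa_j \le \eta$, which holds under the high-probability event $\mathcal{E}_\gamma$ that we have already conditioned on (typical shells satisfy $\kappa_i \le \tau(1+\alpha) \le \eta$ for our parameter regime). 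Combining the two bounds gives the desired $O(\log^2 d/d)$ overall. The success probability $1 - o(m^{-\log m})$ comes from the two applications of \pref{lem:outlier-trans-prob} via a union bound, matching the failure guarantee stated there.

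There is no real obstacle here beyond confirming that the conditions of the two lemmas are met for typical rows $i,j$; both lemmas apply verbatim once we note that typicality of $\kappa_i$ implies $\kappa_i \le \eta$ in the parameter regime $d \ge C \log m$ with $qm \gg \log^8 m$. The proof is thus essentially a two-line argument once the two supporting lemmas are in hand.
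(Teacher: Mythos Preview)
Your proposal is correct and follows essentially the same approach as the paper: split the $\ell_1$ norm into typical and outlier columns, bound the outlier part via the triangle inequality and two applications of \pref{lem:outlier-trans-prob}, and bound the typical part via \pref{lem:typical-trans-prob} together with the row-stochasticity of $\nQ$. The paper's proof is line-for-line the same argument.
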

\begin{proof}
    We split $\left \|\left(\nQ\right)_{i, *} - \left(\nQ\right)_{j, *} \right\|_1$ based on its contributions from typical columns and outlier columns. 
    \[
        \left \|\left(\nQ\right)_{i, *} - \left(\nQ\right)_{j, *} \right\|_1 = \sum_{\ell \text{ typical}} \left|\nQ_{i, \ell} - \nQ_{j, \ell} \right| + \sum_{\ell \text{ outlier}} \left|\nQ_{i, \ell} - \nQ_{j, \ell}\right| 
    \]
    \pref{lem:outlier-trans-prob} and the triangle inequality tell us that with probability $1 - o(m^{-\log m})$:
    \[
         \sum_{\ell \text{ outlier}} \left|\nQ_{i, \ell} - \nQ_{j, \ell}\right| \leq \sum_{\ell \text{ outlier}} \left|\nQ_{i, \ell} \right| + \sum_{\ell \text{ outlier}} \left| \nQ_{j, \ell}\right| \leq O\left(\frac{1}{d}\right)
    \]
    \pref{lem:typical-trans-prob} tells us that for some constant $C > 0$: 
    \begin{align*}
        \sum_{\ell \text{ typical}} \left|\nQ_{i, \ell} - \nQ_{j, \ell} \right| &\leq \sum_{\ell \text{ typical}} \left[1 + \left(\frac{C\log^2 d}{d} - 1\right) \right] \nQ_{j, \ell} \\
        &= \frac{C\log^2 d}{d} \sum_{\ell \text{ typical}} \nQ_{j, \ell} \leq \frac{C\log^2 d}{d} 
    \end{align*}
    Combining the bounds on $\sum_{\ell \text{ typical}} \left|\nQ_{i, \ell} - \nQ_{j, \ell} \right|$ and $\sum_{\ell \text{ outlier}} \left|\nQ_{i, \ell} - \nQ_{j, \ell} \right|$ gives the desired result. 
\end{proof}

\noindent We can furthermore use \pref{lem:outlier-trans-prob}, \pref{cor:typical-1-norm}, and a coupling argument, to prove \pref{lem:row-sum}.
\begin{proof}[Proof of \pref{lem:row-sum}]
    We may assume event $\mathcal{E}_\gamma$ (that all shells $\kappa_i \leq \xbound$), and the outcomes of \pref{lem:outlier-trans-prob} and \pref{cor:typical-1-norm}. 
    The union of these three events occur with probability $1 - o(m^{-\gamma} + m^{-\log m})$.
    
    Let $(\bX_a^{(t)})_{t\ge 0}$ be the trajectory of Markov chain $\nQ$ starting at vertex $a$.
    For any pair of vertices $i,j\in[n]$, we couple $\bX_i^{(2)}$ and $\bX_j^{(2)}$ such that they are equal with probability $1 - O\left(\frac{\log^2 d}{d} \right)$, and so by \pref{fact:coupling-tv}:
    \[
        \left\|\text{pmf}\left(\bX_i^{(2)} \right) - \text{pmf} \left(\bX_j^{(2)} \right) \right\|_{\text{TV}} = \frac{1}{2}\left\| \left(\nQ^2 \right)_{i, *} - \left(\nQ^2 \right)_{j, *} \right\|_1 \leq O\left(\frac{\log^2 d}{d} \right)
    \]
    from which the desired result follows.
    
    We now exhibit such a coupling between $\bX_i^{(2)}$ and $\bX_j^{(2)}$.
    Observe that $\bX_i^{(1)}$ and $\bX_j^{(1)}$ are distributed according to $\parens*{\nQ}_{i,*}$ and $\parens*{\nQ}_{j,*}$.
    When $\kappa_i$ and $\kappa_j$ are both typical shells, we can couple $\bX_i^{(1)}$ and $\bX_j^{(1)}$ such that they are equal with probability $1 - O\left(\frac{\log^2 d}{d} \right)$ using \pref{cor:typical-1-norm} and \pref{fact:coupling-tv}. As a result $\bX_i^{(2)}$ and $\bX_j^{(2)}$ can be coupled so that they agree with probability $1 - O\left(\frac{\log^2 d}{d} \right)$.
    When either $\kappa_i$ or $\kappa_j$ is an outlier shell, though $\bX_i^{(1)}$ and $\bX_j^{(1)}$ may have TV distance greater than $O\left(\frac{\log^2 d}{d} \right)$, by \pref{lem:outlier-trans-prob} for both random variables $1 - O\parens*{\frac{1}{d}}$-fraction of the probability mass is over the typical shells. Due to that, we can couple $\bX_i^{(2)}$ and $\bX_j^{(2)}$ with probability $\parens*{1 - O\parens*{\frac{1}{d}}} \cdot \parens*{1 - O\left(\frac{\log^2 d}{d}\right)} $ by \pref{lem:outlier-trans-prob}. 
    Thereby we complete the proof.
\end{proof}

\subsection{Relating typical rows and bounding outlier columns}

Our next step is to prove \pref{lem:outlier-trans-prob} and \pref{lem:typical-trans-prob}. Throughout this section, instead of working with $\nQ$, we will work with  $Q = \E[\bA_{\bG} \mid \kappa]$; it will be simpler to operate on the entries of $Q$ and later relate them to $\nQ$.
We first characterize the entries of $Q$ using the $\Beta{d - 1}$ distribution. 

\subsubsection{The conditional expected adjacency matrix}

For each pair of vertices $i,j \in [m]$, we have 
\[
Q_{i,j} 
= \porb{\kappa_i}{\kappa_j} 
\defeq \Pr_{\bv_i ,\bv_j \sim \rho_w}\left[\iprod{\bv_i, \bv_j} \geq \tau \mid \iprod{\bv_i, w} = \kappa_i, \Iprod{\bv_j, w} = \kappa_j\right].
\]
Though $\porb{x}{y}$ is symmetric in its inputs $x, y$, we choose this notation because we will often work with the function $\porb{x}{\cdot}$, where the input is any value in $[\tau, 1]$.

\begin{claim}
    The quantity $\porb{x}{y}$ is exactly a tail probability of the $\Beta{d - 1}$ distribution:
    \[
        \porb{x}{y} = \ol{\Phi}_{\Beta{d-1}} \parens*{T(x,y)}
    \]
\end{claim}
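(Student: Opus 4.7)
The plan is to prove the claim by a direct decomposition argument: conditioning on $\iprod{\bv_i,w}=x$ fixes the component of $\bv_i$ along $w$, leaving the orthogonal component uniform on a $(d-2)$-sphere, and similarly for $\bv_j$. Working out what the event $\iprod{\bv_i,\bv_j}\ge\tau$ says about the orthogonal components will yield exactly a $\Beta{d-1}$ tail evaluated at $T(x,y)$.

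More concretely, I would decompose
\[
\bv_i = x\cdot w + \sqrt{1-x^2}\,\bu_i, \qquad \bv_j = y\cdot w + \sqrt{1-y^2}\,\bu_j,
\]
where $\bu_i,\bu_j$ are the (normalized) projections of $\bv_i,\bv_j$ onto the hyperplane $w^{\perp}$. Under the conditioning $\iprod{\bv_i,w}=x$ and $\iprod{\bv_j,w}=y$ with $\bv_i,\bv_j$ independent and uniform on $\bbS^{d-1}$, standard rotational invariance of the uniform measure on $\bbS^{d-1}$ gives that $\bu_i$ and $\bu_j$ are independent and uniform on the unit sphere of $w^{\perp}$, which is a copy of $\bbS^{d-2}$.

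Substituting the decomposition,
\[
\iprod{\bv_i,\bv_j} \;=\; xy + \sqrt{(1-x^2)(1-y^2)}\,\iprod{\bu_i,\bu_j},
\]
so the event $\iprod{\bv_i,\bv_j}\ge\tau$ is equivalent (since $x,y\in[\tau,1)\subset(-1,1)$ and thus $\sqrt{(1-x^2)(1-y^2)}>0$) to
\[
\iprod{\bu_i,\bu_j} \;\ge\; \frac{\tau-xy}{\sqrt{(1-x^2)(1-y^2)}} \;=\; T(x,y).
\]
By the definition of $\Beta{d-1}$ in \pref{sec:prelim-sphere} (the distribution of $\iprod{\bu,v}$ for $\bu$ uniform on $\bbS^{d-2}$ and any fixed $v\in\bbS^{d-2}$), and since $\bu_i,\bu_j$ are independent uniform points on $\bbS^{d-2}$, we conclude
\[
\porb{x}{y} \;=\; \Pr\bigl[\iprod{\bu_i,\bu_j}\ge T(x,y)\bigr] \;=\; \ol{\Phi}_{\Beta{d-1}}\bigl(T(x,y)\bigr),
\]
which is the desired equality.

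There is essentially no obstacle here; the only thing to be careful about is to justify cleanly that conditioning on $\iprod{\bv_i,w}=x$ really does produce the uniform distribution on the $(d-2)$-sphere inside $w^{\perp}$ for the orthogonal part (this is just rotational invariance about the axis $w$), and that the independence of $\bv_i$ and $\bv_j$ is preserved after conditioning on marginal events. Both are immediate from the product structure of $\rho_w\otimes\rho_w$ and the invariance of $\rho$ under the stabilizer of $w$.
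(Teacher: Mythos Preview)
Your proof is correct and follows essentially the same approach as the paper: decompose $\bv_i$ and $\bv_j$ into their components along $w$ and orthogonal to $w$, observe that the orthogonal parts are independent uniform vectors on $\bbS^{d-2}$, and rewrite the event $\iprod{\bv_i,\bv_j}\ge\tau$ as $\iprod{\bu_i,\bu_j}\ge T(x,y)$. If anything, you are slightly more careful than the paper in explicitly noting the positivity of $\sqrt{(1-x^2)(1-y^2)}$ and the preservation of independence under the marginal conditioning.
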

\begin{proof}
    Conditional on $\iprod{\bv_i,w} = x$ and $\iprod{\bv_j,w} = y$, $\bv_i$ and $\bv_j$ are distributed as $\bv_i = x \cdot w + \sqrt{1-x^2}\cdot \bu_i$ and $\bv_j = y \cdot w + \sqrt{1-y^2}\cdot \bu_j$, for $\bu_i,\bu_j$ uniformly random unit vectors orthogonal to $w$.
    Now, observe that the condition $\angles*{\bv_i,\bv_j} \ge \tau$ is equivalent to $\angles*{\bu_i,\bu_j} \ge \frac{\tau-xy}{\ssqrt{(1-x^2)(1-y^2)}}$, and thus the desired statement follows since $\bu_i$ and $\bu_j$ are sampled from a space isometric to $\bbS^{d-2}$.
\end{proof}

\subsubsection{The contribution of outlier columns} \label{sec:outlier-cols}

The goal of this section is to prove \pref{lem:outlier-trans-prob}.
\restatelemma{lem:outlier-trans-prob}

\noindent The lemma statement is equivalent to the following about $Q$: for all $i \in [m]$, 
\[
\frac{\sum_{k = 1}^m Q_{i, k} \cdot \Ind{[k \text{ outlier}]}}{\sum_{k = 1}^m Q_{i, k}} \leq O\left(\frac{1}{d}\right)
\]
with probability $1 - o(m^{- \log m})$.
Recalling that we use $Z$ to denote the normalizing constant from \pref{sec:prelim-sphere}, we can define:
\begin{align*}
    N(x) &\coloneqq \int_{\tau(1 + \alpha)}^1 \porb{x}{y} \cdot Z^{-1} (1 - y^2)^{(d - 3)/2} dy = Z^{-1} \int_{\tau(1 + \alpha)}^1 (1 - y^2)^{(d - 3)/2} \cdot \ol{\Phi}_{\Beta{d - 1}}\left(T(x, y) \right) dy \\
    D(x) &\coloneqq Z^{-1} \int_{\tau}^1 \porb{x}{y} \cdot (1 - y^2)^{(d - 3)/2} dy = Z^{-1} \int_{\tau}^{1} (1 - y^2)^{(d - 3)/2} \cdot \ol{\Phi}_{\Beta{d - 1}}\left(T(x, y) \right) dy 
\end{align*}
By our definitions of $N(x)$ and $D(x)$, and recalling that we condition on $\mathcal{E}_\gamma$ (\pref{def:orbit-event}) throughout this section,
\[
N(\kappa_i) = \E_{\bkappa_\ell}[Q_{i, \ell} \cdot \Ind{[\ell \text{ outlier}]}], \; \; D(\kappa_i) = \E_{\bkappa_\ell}[Q_{i, \ell}] 
\]
The $Z^{-1} (1 - y^2)^{(d - 3)/2}$ expression in each integrand comes from the probability density over shells. 

First, when $\kappa_i \leq \eta$, we establish that the ratio of the \emph{expected} sum of outlier $Q_{i, k}$ and typical $Q_{i, k}$ is of the desired magnitude of $O\left(\frac{1}{d}\right)$. 
\begin{lemma} \torestate{\label{lem:outlier-mean} For any $d \ge C\log m$ for some constant $C > 0$, any constant $ \tau \in [0,1]$, and any $x \leq \eta$, 
	\[
    \frac{N(x)}{D(x)} \leq O \left(\frac{1}{d}\right).
    \]}
\end{lemma}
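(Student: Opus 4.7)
The plan is to directly estimate the integrals $N(x)$ and $D(x)$ using the sharp tail bounds of \pref{lem:approx-tails-dpd}, exploiting the algebraic identity
\[
(1-y^2)\bigl(1-T(x,y)^2\bigr) \;=\; (1-x^2)(1-\tau^2) - (y-\tau x)^2 \;=:\; S(y),
\]
which unifies the two exponential-decay factors $(1-y^2)^{(d-3)/2}$ and $(1-T(x,y)^2)^{(d-2)/2}$ (the latter appearing via \pref{lem:approx-tails-dpd}) in the integrand defining $N(x)$.

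For the lower bound on $D(x)$, we restrict the integral to a short window $y \in [\tau,\,\tau + c/d]$ for a suitable constant $c > 0$ depending only on $\tau$. On this window, $(1-y^2)^{(d-3)/2}$ stays within a factor of $2$ of $(1-\tau^2)^{(d-3)/2}$, and $\porb{x}{y} \ge \porb{x}{\tau}$ by the monotonicity of $T(x,\cdot)$ on $[\tau,1]$ (\pref{clm:tau-tau-worst-case}). This yields $D(x) \gtrsim \tfrac{1}{d}(1-\tau^2)^{(d-3)/2}\porb{x}{\tau}$. For the upper bound on $N(x)$, we apply $\porb{x}{y} \le \tfrac{Z_{d-1}}{T(x,y)(d-2)}(1-T(x,y)^2)^{(d-2)/2}$ from \pref{lem:approx-tails-dpd} in the sub-range $y < \tau/x$ (where $T(x,y)>0$), and $\porb{x}{y} \le 1$ otherwise. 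The $S$-identity then bounds the integrand by $\tfrac{Z_{d-1}\,S(y)^{(d-2)/2}}{T(x,y)(d-2)(1-x^2)^{(d-2)/2}\sqrt{1-y^2}}$.

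Since $S(y)$ is a downward parabola in $y$ with maximum at $y = \tau x \le \tau$, it is decreasing on $[\tau,1]$. A direct computation gives $S(\tau) = (1-x)(1+x-2\tau^2)$ and $S(\tau) - S(\tau(1+\alpha)) = \tau^2\alpha\bigl(2(1-x)+\alpha\bigr) \ge 2\tau^2\alpha(1-x)$, hence
\[
\frac{S(\tau(1+\alpha))}{S(\tau)} \;\le\; 1 - \frac{2\tau^2\alpha}{1+x-2\tau^2}.
\]
Raising to the $(d-2)/2$ power and substituting $\alpha = 36\log d/(\tau^2(d-3)(1-\eta))$ yields $\bigl(S(y)/S(\tau)\bigr)^{(d-2)/2} \le d^{-C_0}$ for all $y \ge \tau(1+\alpha)$, where $C_0 = 36/\bigl((1-\eta)(1+x-2\tau^2)\bigr)$. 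Event $\mathcal{E}_\gamma$ gives $\eta \le 1 - \eps_\gamma$ for a positive constant $\eps_\gamma$ depending only on $\gamma$, and since $\tau$ is a constant in $(0,1)$ with $1+x-2\tau^2 \le 2$, we have $C_0 \ge 18 \gg 2$. Combining the pointwise bound with a Laplace-type estimate for the integrals (the log-derivative of the integrand in $y$ is of order $-d$, so each integral is a characteristic width $\Theta(1/d)$ times its peak value), we obtain $N(x)/D(x) \le O(d^{-C_0}) = O(1/d)$.

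The main obstacle is the $1/T(x,y)$ pre-factor in the sharp bound on $\porb{x}{y}$, which blows up as $y \to \tau/x$ (where $T(x,y)\to 0$). We handle this by splitting the integration range further at some $y_0$ chosen so that $T(x,y_0) = T(x,\tau)/2$: on $y \in [\tau(1+\alpha), y_0]$ the pre-factor is controlled by $2/T(x,\tau)$ and absorbed into the polynomial-in-$d$ corrections; on $y \in [y_0, 1]$, we use $\porb{x}{y} \le 1$ and bound $\int_{y_0}^1 (1-y^2)^{(d-3)/2}\,dy$ directly via \pref{lem:approx-tails-dpd}. Since $y_0 > \tau$ by a constant amount (depending on $\tau$ and $\eps_\gamma$), the ratio $(1-y_0^2)^{(d-1)/2}/(1-\tau^2)^{(d-1)/2}$ contributes an additional $d^{-\Omega(1)}$ factor, making this tail contribution negligible compared to $D(x)$.
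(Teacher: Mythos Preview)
Your overall strategy — using the identity $(1-y^2)(1-T(x,y)^2)=S(y)/(1-x^2)$ together with the sharp tail estimates of \pref{lem:approx-tails-dpd} — is exactly the engine the paper uses as well (the paper's function $G$ is your $S$ up to an additive $-x^2$). The paper, however, organizes the argument as a \emph{pointwise} comparison: for every $y\in[\tau(1+\alpha),1]$ and every $z\in[\tau,\tau(1+\alpha/2)]$ it bounds the ratio of integrands directly, splitting into cases at the point $y_*$ where $T(x,y_*)=4/\sqrt d$. Your separate upper bound on $N(x)$ and lower bound on $D(x)$ is a legitimate alternative, but it makes the $1/T$ blow-up harder to manage, and that is where your argument has a genuine gap.

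The gap is in the tail region $[y_0,1]$. Your lower bound reads $D(x)\gtrsim \tfrac{1}{d}(1-\tau^2)^{(d-3)/2}\,\porb{x}{\tau}$, and $\porb{x}{\tau}$ is itself exponentially small in $d$: by \pref{lem:approx-tails-dpd}, $\porb{x}{\tau}\asymp d^{-1/2}\bigl(1-T(x,\tau)^2\bigr)^{(d-2)/2}$. So when you bound the tail by $\int_{y_0}^1(1-y^2)^{(d-3)/2}\,dy\asymp \tfrac{1}{d}(1-y_0^2)^{(d-1)/2}$ and then compare with $D(x)$, it is \emph{not} enough to know that $(1-y_0^2)/(1-\tau^2)$ is bounded away from $1$; you need the stronger inequality
\[
1-y_0^2 \;<\; (1-\tau^2)\bigl(1-T(x,\tau)^2\bigr)
\]
by a constant factor. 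Your justification (``$y_0>\tau$ by a constant amount'') addresses only the first factor on the right and ignores the second. Concretely, for $\tau=0.5$ and $x=0.9$ one computes $T(x,\tau)\approx 0.13$, $y_0\approx 0.53$, $1-y_0^2\approx 0.72$, while $(1-\tau^2)(1-T(x,\tau)^2)\approx 0.74$: the gap between the two sides is only about $0.02$, and more to the point it is not delivered by the reasoning you wrote down. (The inequality does turn out to hold for all $x\le\eta$, but proving it requires solving for $y_0$ and is not the one-line ``$y_0>\tau$'' argument you gave.)

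The paper sidesteps this entirely by placing the split at $y_*$ with $T(x,y_*)=4/\sqrt d$. At that point $\overline{\Phi}_{\Beta{d-1}}(T(x,y_*))=\Theta(1)$, so the crude bound $\porb{x}{y}\le 1$ costs only a constant on $[y_*,1]$, and now comparing the \emph{single} factor $(1-y^2)/(1-z^2)$ against $1-\Theta(\alpha\tau^2)$ is genuinely sufficient. If you want to salvage your separate-bounds approach, the cleanest fix is to adopt the same split point $y_*$ (absorbing the resulting $1/T(x,y)\le\sqrt d/4$ into the polynomial prefactors on $[\tau(1+\alpha),y_*]$), rather than $y_0$.
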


\noindent The proof is by computation and is provided in \pref{app:n-d-ratio}. 
We are now ready to prove \pref{lem:outlier-trans-prob}.

\begin{proof}[Proof of \pref{lem:outlier-trans-prob}]
    For convenience, we use $N$ and $D$ as shorthand for $N(\kappa_i)$ and $D(\kappa_i)$.
    We compute a high probability lower bound for the numerator $\sum_{k = 1}^m Q_{i, k} \cdot \Ind{[k \text{ outlier}]}$ and a high probability upper bound for the denominator $\sum_{k = 1}^m Q_{i, k}$. 
    
    \medskip
    \noindent \textbf{Concentration of the numerator:} We first show that $\sum_{k = 1}^m Q_{i, k} \cdot \Ind{[k \text{ outlier}]}$ concentrates around $Nm$.
    First, each $Q_{i, k} \leq 1$. 
    Then, $\Var(Q_{i, k}) \leq \E[Q_{i, k}^2 \cdot \Ind{[k \text{ outlier}]}] \leq \E[Q_{i, k} \cdot \Ind{[k \text{ outlier}]}] = N$. 
    Applying Bernstein's Inequality, we obtain:
        \[
        \Pr\left[\sum_{k = 1}^m Q_{i, k} \cdot \Ind{[k \text{ outlier}]} \geq Nm + \left(\sqrt{Nm} + 1\right) \log^2 m\right] \leq \exp \left( -\frac{\left(\sqrt{Nm} + 1\right)^2 \log^4 m}{\frac{1}{2} \cdot Nm + \frac{1}{3} \cdot \left(\sqrt{Nm} + 1\right) \log^2 m} \right) \leq m^{-\log m}
        \] 
     \noindent \textbf{Concentration of the denominator:} We next show that $\sum_{k = 1}^m Q_{i, k}$ concentrates around $Dm$.
     Using a similar bound on variance as above, and applying Bernstein's inequality again:
        \[
        \Pr\left[\sum_{k = 1}^m Q_{i, k} \leq Dm - \left(\sqrt{Dm} + 1 \right) \log m \right] \leq \exp \left( -\frac{\left( \sqrt{Dm} + 1 \right)^2 \log^2 m}{\frac{1}{2} \cdot Dm + \frac{1}{3} \cdot \left(\sqrt{Dm} + 1\right) \log m} \right) \leq m^{-\log m}
        \] 
    Thus, with probability greater that $1 - 2 m^{-\log m}$, the ratio $\frac{\sum_{k = 1}^n Q_{i, k} \cdot \Ind{[k \text{ outlier}]}}{\sum_{k = 1}^m Q_{i, k}}$ is at most
    \[
    \frac{Nm + \left(\sqrt{Nm} + 1\right) \log^2 m}{Dm - \left(\sqrt{Dm} + 1 \right) \log m}
    \]
We can upper bound this by $\leq O \left(\frac{1}{d} \right)$, as \pref{lem:outlier-mean} tells us $\frac{N}{D} \leq O\left(\frac{1}{d}\right)$, and since $Dm \geq qm \gg  \log^8 m$ the first terms in the ratio dominate.
\end{proof}

\subsubsection{Relating typical rows} \label{sec:typical-entries-typical-rows}

Our goal for this section is to prove:
\restatelemma{lem:typical-trans-prob}

We will translate \pref{lem:typical-trans-prob} into a statement about $Q$ first. 
Let $Q_{i,*}$ and $Q_{j, *}$ be rows of $Q$ corresponding to            typical shells $\kappa_i,\kappa_j$. 
We will prove that $Q_{i, *}$ and $Q_{j, *}$, when restricted to typical columns, are nearly constant scalings of each other. 
Formally, we will prove:
\begin{lemma} \label{lem:typical-ratio} For any dimension $d$ and any threshold $\tau \in (0,1)$, let $\kappa_i, \kappa_j, \kappa_\ell$ be typical shells. Then,
    \[
    \frac{\porb{\kappa_i}{\kappa_\ell}}{\porb{\kappa_j}{\kappa_\ell}} \cdot \left(\frac{\porb{\kappa_i}{\tau}}{\porb{\kappa_j}{\tau}} \right)^{-1} \in 1 \pm O \left(\frac{\log^2 d}{d}\right)
    \]
\end{lemma}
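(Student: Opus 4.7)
The plan is to apply the sharp tail estimate from \pref{lem:approx-tails-dpd} (with $d$ replaced by $d-1$) to log-linearize $\porb{x}{y} = \ol{\Phi}_{\Beta{d-1}}(T(x,y))$, and then to bound the resulting mixed second difference via Taylor expansion. First, for typical shells $x,y \in [\tau, \tau(1+\alpha)]$, the shifted threshold $T(x,y)$ lies in a compact subset of $(0,1)$: it is at most $\tau/(1+\tau)$ by \pref{clm:tau-tau-worst-case} and bounded below by a positive constant depending on $\tau$ (once $\alpha$ is small enough that $x y < \tau$), so $d \cdot T(x,y)^2 = \Theta(d)$. Thus \pref{lem:approx-tails-dpd} gives
\[
\log \porb{x}{y} = -\log T(x,y) + \tfrac{d-2}{2}\log\!\bigl(1 - T(x,y)^2\bigr) + c_d \pm O(\log d / d),
\]
where $c_d$ is an $(x,y)$-independent constant.

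Taking logs of the quantity we wish to bound, call it $L$, the $c_d$ terms and the one-variable terms cancel, leaving
\[
L = -\Delta G + \tfrac{d-2}{2}\,\Delta F \pm O(\log d / d),
\]
where $\Delta H \coloneqq H(\kappa_i,\kappa_\ell) - H(\kappa_j,\kappa_\ell) - H(\kappa_i,\tau) + H(\kappa_j,\tau)$, $G(x,y) \coloneqq \log T(x,y)$, and $F(x,y) \coloneqq \log(1 - T(x,y)^2)$. Using the identity $1 - T(x,y)^2 = (1 - x^2 - y^2 - \tau^2 + 2\tau xy)/((1-x^2)(1-y^2))$ together with the formula for $T(x,y)$, both $F$ and $G$ decompose as a non-separable piece plus functions of $x$ alone and $y$ alone; the separable pieces contribute zero to $\Delta F$ and $\Delta G$. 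The mixed partials of the remaining non-separable pieces are
\[
\partial_x\partial_y \log(\tau-xy) = \frac{-\tau}{(\tau-xy)^2}, \qquad \partial_x\partial_y\log\bigl(1 - x^2 - y^2 - \tau^2 + 2\tau xy\bigr),
\]
both rational functions uniformly bounded by a constant $M_\tau$ on the typical region---at $x = y = \tau$, the denominator of the second quantity equals $(1-\tau)^2(1+2\tau) > 0$, and continuity over the compact typical region (with $\alpha \to 0$) extends the boundedness. By the mean value theorem applied twice,
\[
|\Delta G|,\ |\Delta F| \;\leq\; M_\tau\, |\kappa_i - \kappa_j|\cdot|\kappa_\ell - \tau| \;\leq\; M_\tau\, \alpha^2 \tau^2.
\]

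Since $\alpha = \Theta(\log d / d)$ for constant $\tau$ (recalling that $\xbound$ is bounded away from $1$), we obtain $|\Delta G| = O((\log d / d)^2)$ and $\tfrac{d-2}{2}|\Delta F| = O(d\alpha^2) = O(\log^2 d / d)$. Combining with the $O(\log d / d)$ additive approximation error gives $|L| = O(\log^2 d / d)$, so $e^L = 1 \pm O(\log^2 d / d)$, yielding the claim. The main obstacle is verifying the uniform bound on the mixed partials; this reduces to checking that the denominator $H(x,y) \coloneqq 1 - x^2 - y^2 - \tau^2 + 2\tau xy$ stays bounded away from zero on $[\tau,\tau(1+\alpha)]^2$, which follows from the explicit value $H(\tau,\tau) = (1-\tau)^2(1+2\tau)$ and continuity. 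Everything else is a routine differentiation exercise, conceptually identical to showing that the ``bilinear'' variation of an approximately log-linear quantity is small.
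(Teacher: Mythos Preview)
Your proof is correct and follows the same overall arc as the paper's: apply the sharp tail estimate \pref{lem:approx-tails-dpd} to reduce the ratio to expressions of the form $\frac{T(\kappa_j,\kappa_\ell)T(\kappa_i,\tau)}{T(\kappa_i,\kappa_\ell)T(\kappa_j,\tau)}$ and $(A/B)^{(d-1)/2}$, then show each is $1 \pm O(\alpha^2)$, so that the dominant error is $d\alpha^2 = O(\log^2 d/d)$.

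Where you differ is in how you carry out the $O(\alpha^2)$ step. The paper (in \pref{claim:H-ratio} and \pref{claim:a-b-ratio}) expands the relevant polynomials explicitly and checks by hand that the constant and linear terms in $\delta_i,\delta_j,\delta_\ell$ vanish. You instead take logarithms up front, recognize the target as a mixed second difference $\Delta H = H(\kappa_i,\kappa_\ell)-H(\kappa_j,\kappa_\ell)-H(\kappa_i,\tau)+H(\kappa_j,\tau)$, strip off the separable pieces of $\log T$ and $\log(1-T^2)$ (which contribute zero to $\Delta$), and then bound $|\Delta|$ by $\sup|\partial_x\partial_y|\cdot|\kappa_i-\kappa_j|\cdot|\kappa_\ell-\tau| \le M_\tau\,\alpha^2\tau^2$ via two applications of the mean value theorem. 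This is cleaner and explains \emph{why} the bound is $O(\alpha^2)$: the target is a discrete bilinear form in increments of size $\alpha\tau$, and the underlying function is smooth on the typical region because both denominators $\tau-xy$ and $1-x^2-y^2-\tau^2+2\tau xy$ are bounded away from zero there. The paper's direct computation verifies the same cancellations but obscures this structure. Functionally the two arguments are equivalent; yours generalizes more readily and avoids the appendix algebra.
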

\noindent In other words, this establishes that for any typical shells $\kappa_i, \kappa_j, \kappa_\ell$, 
\[
\frac{\porb{\kappa_i}{\kappa_\ell}}{\porb{\kappa_j}{\kappa_\ell}} \approx \frac{\porb{\kappa_i}{\tau}}{\porb{\kappa_j}{\tau}},
\] 
where the quantity on the right is a constant $c_{ij}$ depending only on $\kappa_i$ and $\kappa_j$ (not $\kappa_\ell$).

\begin{proof}[Proof of \pref{lem:typical-trans-prob} using \pref{lem:typical-ratio}.]
By the definition of $\nQ$, 
\[
    \nQ_{i, \ell} = \frac{Q_{i, \ell}}{\sum_{k = 1}^m Q_{i, k}}, \; \nQ_{j, \ell} = \frac{Q_{j, \ell}}{\sum_{k = 1}^m Q_{j, k}} 
\]
It suffices to prove that $\frac{\nQ_{i \ell}}{\nQ_{j, \ell}}$ is close to 1. 
Expanding $\frac{\nQ_{i \ell}}{\nQ_{j, \ell}}$, we can upper bound:
\begin{align*}
    \frac{\nQ_{i, \ell}}{\nQ_{j, \ell}} &= \frac{Q_{i, \ell}}{\sum_{k = 1}^m Q_{i, k}} \cdot \frac{\sum_{k = 1}^m Q_{j, k}}{Q_{j, \ell}} \\
    &= \frac{Q_{i, \ell}}{Q_{j, \ell}} \cdot \frac{\sum_{k = 1}^m Q_{j, k}}{\sum_{k = 1}^m Q_{i, k}} \\
    &\leq \left(1 + \frac{C \log^2 d}{d} \right) \left(\frac{\porb{\kappa_i}{\tau}}{\porb{\kappa_j}{\tau}} \right) \cdot \frac{\sum_{k \text{ typical}} Q_{j, k} + \sum_{k \text{ outlier}} Q_{j, k}}{\sum_{k \text{ typical}} Q_{i, k} + \sum_{k \text{ outlier}} Q_{i, k}} \\
    &\leq \left(1 + \frac{C \log^2 d}{d} \right) \left(\frac{\porb{\kappa_i}{\tau}}{\porb{\kappa_j}{\tau}} \right) \cdot \frac{\left(1 + \frac{C'}{d}\right)\sum_{k \text{ typical}} Q_{j, k}}{\sum_{k \text{ typical}} Q_{i, k}} \\
    &\leq \left(1 + \frac{C \log^2 d}{d} \right) \left(\frac{\porb{\kappa_i}{\tau}}{\porb{\kappa_j}{\tau}} \right) \cdot \frac{\left(1 + \frac{C'}{d}\right)\left(1 + \frac{C \log^2 d}{d} \right) \sum_{k \text{ typical}} \left( \frac{\porb{\kappa_j}{\tau}}{\porb{\kappa_i}{\tau}} \right) \cdot Q_{i, k}}{\sum_{k \text{ typical}} Q_{i, k}} \\
    &\leq 1 + \frac{C'' \log^2 d}{d} 
\end{align*}
The first inequality uses \pref{lem:typical-ratio} to bound $\frac{Q_{i, \ell}}{Q_{j, \ell}}$. 
The second inequality uses the fact that the outlier entries of $Q_{i, *}$ and $Q_{j, *}$ only occupy an $O\left(\frac{1}{d}\right)$ fraction of the $\ell_1$ norm of each row (\pref{lem:outlier-trans-prob}). 
The third inequality again comes from an application of \pref{lem:typical-ratio} to relate $Q_{i, k}$ to $Q_{j, k}$ when $k$ is typical. 
The lower bound follows analogously. 
\end{proof}

\begin{proof}[Proof of \pref{lem:typical-ratio}]
    By definition:
    \begin{align*}
        \frac{\porb{\kappa_i}{\kappa_\ell}}{\porb{\kappa_j}{\kappa_\ell}} \cdot \left(\frac{\porb{\kappa_i}{\tau}}{\porb{\kappa_j}{\tau}} \right)^{-1}
        &= \frac{\ol{\Phi}_{\Beta{d - 1}} \left(T(\kappa_i, \kappa_\ell) \right)}{\ol{\Phi}_{\Beta{d - 1}} \left(T(\kappa_j, \kappa_\ell) \right)} \cdot \frac{\ol{\Phi}_{\Beta{d - 1}} \left(T(\kappa_j, \tau) \right)}{\ol{\Phi}_{\Beta{d - 1}} \left(T(\kappa_i, \tau) \right)} \numberthis \label{eq:ratio-ratios}
    \end{align*}
    Since $\kappa_i, \kappa_j, \kappa_{\ell}\in[\tau,\tau(1+\alpha)]$, by \pref{clm:tau-tau-worst-case}, all terms of the form $T(x,y)$ in the above are lower bounded by $T(\tau(1+\alpha),\tau(1+\alpha))$, which is lower bounded by a constant for large enough $d$.
    Thus, by \pref{lem:approx-tails-dpd}:
    \[
        \pref{eq:ratio-ratios} = \left(1 \pm O\left(\frac{\log d}{d}\right) \right) \cdot \frac{T(\kappa_j, \kappa_\ell) \cdot T(\kappa_i, \tau)}{T(\kappa_i, \kappa_\ell) \cdot T(\kappa_j, \tau)} \cdot \left( \frac{A}{B} \right)^{(d - 1)/2}
    \]
    where
    $
        A \coloneqq \left(1 - T(\kappa_i, \kappa_\ell)^2\right) \left(1 - T(\kappa_j, \tau)^2\right)
    $
    and 
    $
        B \coloneqq \left(1 - T(\kappa_j, \kappa_\ell)^2\right) \left(1 - T(\kappa_i, \tau)^2\right).
    $
    We show:
    \begin{align}
        \left| \frac{T(\kappa_j, \kappa_\ell) \cdot T(\kappa_i, \tau)}{T(\kappa_i, \kappa_\ell) \cdot T(\kappa_j, \tau)} - 1 \right| &\leq O(\alpha^2) \label{eq:minor-ratio} \\
        \left| \frac{A}{B} - 1 \right| &\leq O(\alpha^2) \label{eq:major-ratio}
    \end{align}
    where \pref{eq:minor-ratio} is proved in \pref{claim:H-ratio} and \pref{eq:major-ratio} is proved in \pref{claim:a-b-ratio}.
    Consequently,
    \[
        \frac{\porb{a}{x}}{\porb{b}{x}} \cdot \frac{\porb{b}{\tau}}{\porb{a}{\tau}} = \left(1 \pm O\left(\frac{\log d}{d}\right) \right) \cdot \left(1 \pm O(\alpha^2) \right) \cdot \left(1 \pm O(d\alpha^2) \right)
    \]
    The term of order $d\alpha^2$ dominates, and because $\alpha = O(\frac{\log d}{d})$ we conclude the desired result.
\end{proof}

\section{$2$-dimensional expansion of the random geometric complex}
\label{sec:wrapup}
In this section we prove \pref{thm:hdx}.
\begin{theorem} \label{thm:hdx-full}
    For every $0 < \eps < 1$, $0 < \eta < 2\eps$ and $d = \eta \log_{4/3} n$, if $\bH\sim\grg^{(2)}_d\parens*{n, n^{-1+\eps}}$, then every link of $\bH$ is a $\parens*{\frac{1}{2}-\delta}$-expander, and its $1$-skeleton is a $\parens*{1-\frac{4\delta}{1+2\delta}}$-expander with high probability where $\delta = \frac{1}{2}\cdot\frac{1-\sqrt{1-\exp\left(-2\log\frac{4}{3}\cdot(1-\eps)/\eta\right)}}{1+\sqrt{1-\exp\left(-2\log\frac{4}{3}\cdot(1-\eps)/\eta\right)}} - o_n(1)$.
\end{theorem}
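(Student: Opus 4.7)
The plan is to reduce to the two main technical results of the paper, \pref{thm:main-thm-links} (second eigenvalue of a link) and \pref{thm:sphere} (second eigenvalue of $\grg_d(n,p)$), and combine them via the trickling-down theorem \pref{thm:opp-trickling}. Concretely, it suffices to establish with high probability that (i) every vertex link of $\bH$ has second absolute eigenvalue at most $\tau/(1+\tau)+o_n(1)$ with $\tau = \tau(p,d)$, and (ii) the $1$-skeleton of $\bH$ is connected. Once both hold, rewriting $\tau/(1+\tau) = \tfrac{1}{2}-\delta$ makes each link a $(\tfrac{1}{2}-\delta+o_n(1))$-expander, and trickling-down yields second eigenvalue at most $\tfrac{\tau/(1+\tau)+o(1)}{1-\tau/(1+\tau)-o(1)} = \tau+o(1) = 1 - \tfrac{4\delta}{1+2\delta}+o_n(1)$ for the $1$-skeleton.

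For (i), fix a vertex $v$ of $\bH$ and condition on its associated vector $\bu_v = w$. The link of $v$ is then distributed as $\GG_\tau(\bv_1,\ldots,\bv_m)$ with $m \sim \Binom(n-1,p)$ and each $\bv_i$ drawn i.i.d.\ uniformly from $\scap_p(w)$. A Chernoff bound gives $m = (1\pm o(1))n^{\eps}$ with probability $1-\exp(-\Omega(n^{\eps}))$. The hypotheses of \pref{thm:main-thm-links} are easily verified: $d\geq C\log m$ follows from $d = \eta\log_{4/3}n$ and $m \leq n$, while the density hypothesis $qm\gg\log^8 m\cdot\log^{3/2}(1/q)\cdot((1+\tau)/\tau)^3$ reduces, via \pref{lem:approx-tails-dpd}, to a comparison of polynomial orders in $n$ that the constraint $\eta<2\eps$ guarantees. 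A union bound over the $n$ vertices then establishes (i).

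It remains to translate $\tau/(1+\tau)$ into the explicit $\tfrac{1}{2}-\delta$ of the statement. \pref{lem:approx-tails-dpd} gives $p = (1\pm o(1))\cdot\tfrac{Z_d}{\tau(d-1)}(1-\tau^2)^{(d-1)/2}$; substituting $p = n^{-1+\eps}$ and $d-1 = (1-o(1))\eta\log n/\log(4/3)$ and taking logarithms yields
\[
    -\log(1-\tau^2) = \frac{2(1-\eps)\log n}{d-1}(1 + o(1)) = 2\log\tfrac{4}{3}\cdot\tfrac{1-\eps}{\eta}\cdot(1+o(1)),
\]
so $\tau = \sqrt{1-\exp(-2\log(4/3)(1-\eps)/\eta)} - o_n(1)$. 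Then $\delta = \tfrac{1}{2} - \tau/(1+\tau) = \tfrac{1}{2}\cdot(1-\tau)/(1+\tau)$, matching the statement's formula.

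For (ii), since (i) makes every link a $\lambda$-expander with $\lambda<1$, every link is connected; in particular the induced subgraph of $\bG\sim\grg_d(n,p)$ on each neighborhood $N_{\bG}(v)$ has no isolated vertex. Consequently every edge $\{v,i\}$ of $\bG$ lies in at least one triangle, so the $1$-skeleton of $\bH$ equals $\bG$ as an unweighted graph. \pref{thm:sphere} applied to $\bG$ gives $|\lambda|_2(\widehat A_{\bG}) = \tau + o(1) < 1$ with high probability (noting $\log^4 n/\sqrt{pn} = o(1)$ since $pn = n^{\eps}$), so $\bG$ is connected. The main obstacle is checking the hypotheses of \pref{thm:main-thm-links} carefully and tracking how the various $o_n(1)$ errors---degree concentration inside and outside the link, fluctuations in $m$, and the bound from \pref{thm:main-thm-links} itself---combine into a single $-o_n(1)$ correction to $\delta$ in the theorem statement.
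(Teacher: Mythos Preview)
Your proposal is correct and follows essentially the same approach as the paper: apply \pref{thm:main-thm-links} to each link and union bound, use the resulting link connectivity to conclude every edge of $\bG$ sits in a triangle so that the unweighted $1$-skeleton coincides with $\bG$, invoke \pref{thm:sphere} on $\bG$ for connectivity, and then apply trickling-down. The only minor differences are cosmetic: the paper bounds $\tau$ from above and below separately (using \pref{lem:approx-tails-dpd} in each direction) rather than extracting an asymptotic value as you do, and it verifies the density hypothesis of \pref{thm:main-thm-links} explicitly via the crude bound $q \ge \ol{\Phi}_{\Beta{d-1}}(1/2) \ge \Omega(1/\sqrt{d})\,n^{-\eta/2}$, which is where the base $4/3$ and the constraint $\eta<2\eps$ enter.
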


One of the ingredients in the proof of \pref{thm:hdx-full} is the spectral expansion of random geometric graphs, which is a corollary of \pref{thm:random-restriction} and \pref{cor:decay-useful}:
\hypertarget{thm2:sphere}
{\restatetheorem{thm:sphere}}

\noindent The second ingredient is a bound on the second eigenvalue of the links, proved in \pref{sec:link-eigs}:
\restatetheorem{thm:main-thm-links}

\begin{proof}[Proof of \pref{thm:hdx-full}]
    To show that the links expand, we apply \pref{thm:main-thm-links} in combination with a union bound over all links.
    The second eigenvalue bound for the $1$-skeleton is then proved using \pref{thm:opp-trickling}, the trickling-down theorem.  
    Let $p = n^{-1+\eps}$, $d = \eta \log_{4/3} n$ and $\tau = \tau(p,d)$.

    Let $\bG\coloneqq\GG_{\tau}(\bv_1,\dots,\bv_n)$ to denote the geometric graph of the collection of vectors used to generate $\bH$.
    The number of vertices that fall in the neighborhood of a vertex $v$ within $\bG$ is $\boldm_v\sim\Binom(n, p)$, and hence $\boldm_v \ge m\coloneqq pn-2\sqrt{pn\log n}$ except with probability $o(1/n)$.
    For the rest of the proof, we condition on the event that $\boldm_v \ge m$ for all $v$, which happens with probability $1-o(1)$ by the union bound.

    The link $\bH_v$ of a vertex $v$ is obtained by taking $\bG_v$, the subgraph of $\bG$ induced by the neighborhood of $v$, and then  removing the isolate vertices. 
    Note that the isolated vertices need to be removed since when sampling a random complex, we remove all edges that are not in any triangles.
    Our goal is to control the second eigenvalue of all the links in $\bH$, and we do so by showing bounds on the second eigenvalue of $\bG_v$ for all $v$.
    The second eigenvalue bounds show that with high probability, for all $v$, the graph $\bG_v$ is connected, and hence has no isolated vertices.
    Consequently, $\bH_v$ is in fact equal to $\bG_v$ and the second eigenvalue bounds port over.

    As a first step, we show that $\bG_v$ satisfies the hypothesis of \pref{thm:main-thm-links}.
    In particular, we show for $q\coloneqq \Phi_{\Beta{d-1}}\parens*{\frac{\tau}{1+\tau}}$
    \[
        q \cdot m_v \gg \log^4 m_v \cdot \log^2 \frac{1}{q} \cdot \parens*{\frac{1+\tau}{\tau}}^4.  \numberthis \label{eq:link-condition-met}
    \]
    Using \pref{lem:approx-tails-dpd} and the fact that the tail function of a probability distribution is monotone decreasing, we can lower bound $q$:
    \[
        q \ge \ol{\Phi}_{\Beta{d-1}}\parens*{\frac{1}{2}} \ge \frac{2Z_{d-1}}{d-2}\cdot\parens*{\frac{3}{4}}^{(d-2)/2}\cdot\parens*{1-\frac{16\log d}{d-1}} \ge \Omega\parens*{\frac{1}{\sqrt{d}}}\cdot n^{-\eta/2},
    \]
    where the first inequality holds since $\tau \in (0,1]$, and the last inequality holds by definitions of $d$ and $Z_d$. 
    We now lower bound $\tau$ by a constant.
    By \pref{lem:approx-tails-dpd}:
    \[
        \ol{\Phi}_{\Beta{d}}\parens*{\sqrt{1-\exp\parens*{-\frac{(1-\eps)\log\frac{4}{3}}{\eta}}}} \ge \Omega\parens*{\frac{1}{\sqrt{d}}}\cdot\sqrt{p}\cdot\parens*{1 - O\parens*{\frac{\log d}{d}}} \ge p = \ol{\Phi}_{\Beta{d}}(\tau),
    \]
    where the first inequality holds by definition of $Z_d$.
    Since $\ol{\Phi}_{\Beta{d}}$ is a decreasing function,
    \[
        \tau\ge \sqrt{1-\exp\parens*{-\frac{(1-\eps)\log\frac{4}{3}}{\eta}}}.  \label{eq:tau-lower-bound}
    \]
    Consequently:
    \[
        \log^4 m_v \cdot \log^2\frac{1}{q} \cdot \parens*{\frac{1+\tau}{\tau}}^4 \le \log^6 n.    
    \]
    On the other hand, $qm_v\ge \Omega\parens*{\frac{1}{\sqrt{d}}}\cdot n^{\eps-\eta/2} \gg \log^6 n$, which establishes \pref{eq:link-condition-met}.
    By \pref{eq:link-condition-met} and \pref{thm:main-thm-links} with $\gamma = 2/\eps$, with probability at least $1-O(1/n^{2})$:
    \[
        \abs*{\lambda}_2\parens*{\wh{A}_{\bG_{v}}} \le \frac{\tau}{1+\tau} + o_{n}(1).
    \]
    By the union bound over all vertices, with probability $1-O(1/n)$:
    \[
        \abs*{\lambda}_2\parens*{\wh{A}_{\bG_{v}}} \le \frac{\tau}{1+\tau} + o_{n}(1) \qquad \forall v\in[n].
    \]
    Henceforth, we condition on the above.
    Since $\frac{\tau}{1+\tau}<1$, for all $v\in[n]$, each $\bG_v$ is connected and has no isolated vertices and hence $\bH_v=\bG_v$.
    Consequently
    \[
        \abs*{\lambda}_2\parens*{\wh{A}_{\bH_{v}}} \le \frac{\tau}{1+\tau} + o_{n}(1) \qquad \forall v\in[n].
    \]
    Assuming the $1$-skeleton $\bH^{(1)}$ is connected, by the trickling-down theorem (\pref{thm:opp-trickling}) it satisfies:
    \[
        \abs*{\lambda}_2\parens*{\wh{A}_{\bH^{(1)}}} \le \frac{\frac{\tau}{1+\tau} + o_n(1)}{1-\frac{\tau}{1+\tau} - o_n(1)} = \tau + o_n(1).
    \]
    It remains to bound $\tau$, $\tau/(1+\tau)$ and show $\bH^{(1)}$ is connected.
    By \pref{lem:approx-tails-dpd}, the following inequality must be satisfied:
    \[
        p \le \frac{Z_d}{\tau(d-1)}\cdot\parens*{1-\tau^2}^{(d-1)/2}.
    \]
    Since the right hand side of the above is a decreasing function of $\tau$ and plugging in $\sqrt{1-\exp\parens*{-\frac{2(1-\eps)\log\frac{4}{3}}{\eta}}}$ yields a value smaller than $p$, we know:
    \[
        \tau\le\sqrt{1-\exp\parens*{-\frac{2(1-\eps)\log\frac{4}{3}}{\eta}}} = 1-\frac{4\delta}{1+2\delta}.  \numberthis \label{eq:tau-upper-bound}
    \]
    The function $\tau/(1+\tau)$ is an increasing function and hence:
    \[
        \frac{\tau}{1+\tau} \le \frac{\sqrt{1-\exp\parens*{-\frac{2(1-\eps)\log\frac{4}{3}}{\eta}}}}{1+\sqrt{1-\exp\parens*{-\frac{2(1-\eps)\log\frac{4}{3}}{\eta}}}} + o_n(1) = \frac{1}{2} - \delta.
    \]
    Finally, to show $\bH^{(1)}$ is connected, it suffices to illustrate $\wt{\bH}^{(1)}$, a reweighted version of $\bH^{(1)}$, whose normalized adjacency matrix has a spectral gap.
    We use $\bG$ as our reweighting of $\bH^{(1)}$, which is valid since all edges in $\bG$ occur in $\bH^{(1)}$ with probability $1 - o_n(1)$.
    Indeed, for every vertex $v$ and neighbor $w$ the vertex $w$ has some neighbor $w'$ within $\bG_v$, which means $\{v,w,w'\}$ is a triangle in $\bH$ causing $\{v,w\}$ to appear in $\bH^{(1)}$.
    By our choice of parameters, the lower and upper bounds on $\tau$ shown in \pref{eq:tau-lower-bound} and \pref{eq:tau-upper-bound} respectively, and \hyperlink{thm2:sphere}{Theorem \ref*{thm:sphere}}, we know $|\lambda|_2\parens*{A_{\bG}} < \tau+o_n(1) < 1$, which implies $\bH^{(1)}$ is connected, which completes our proof.
\end{proof}

\section{Tightness of the tricking-down theorem}\label{sec:tight}
In this section we will show that the trickle-down theorem is tight:
\restateprop{prop:trickle-tight}

We prove the proposition by showing that a random geometric graph's adjacency matrix (when weighted in a regular way) has second eigenvalue at least $\tau$, and then prove that the random geometric complex indeed satisfies that regularity condition.
\begin{lemma}   \label{lem:grg-specgap-lower}
    Let $\bG\sim\grg_d(n,p)$ generated by vectors $\bv_1,\dots,\bv_n$, and let $W$ be the transition matrix of any time-reversible Markov chain on $\bG$ with stationary distribution $\pi$.
    Then with high probability $\lambda_2(W)\ge \tau - o_n(1) - O(\dtv{\pi}{U_n}^2)$ where $U_n$ is the uniform distribution on $[n]$.
\end{lemma}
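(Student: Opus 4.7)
\textbf{Proof plan for \pref{lem:grg-specgap-lower}.}

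The plan is to use the variational characterization of $\lambda_2(W)$ and exhibit a test function built from a linear functional on the sphere. Let $V \in \R^{n \times d}$ be the matrix whose $i$th row is $\bv_i$, and for each direction $z \in \bbS^{d-1}$ define $f_z = Vz \in \R^n$, i.e.\ $f_z(i) = \iprod{\bv_i,z}$. I will work in the $\pi$-inner product $\iprod{f,g}_\pi = \sum_i \pi_i f_i g_i$, in which $W$ is self-adjoint and the top eigenvector is $\vec 1$. The quantity I want to lower bound is then $\sup_{g \perp_\pi \vec 1}\, \iprod{g,Wg}_\pi / \iprod{g,g}_\pi$.

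The first step is to average the quadratic form over an orthonormal basis $e_1,\ldots,e_d$ of $\R^d$. Writing $D_\pi$ for the diagonal matrix with $\pi$ on the diagonal, I would compute
\begin{align*}
\sum_{k=1}^d \iprod{f_{e_k}, W f_{e_k}}_\pi &= \tr(V^\top D_\pi W V) = \sum_{i,j} \pi_i W_{ij} \iprod{\bv_i,\bv_j} \ge \tau,
\end{align*}
using that $W_{ij} \neq 0$ forces $\iprod{\bv_i,\bv_j} \ge \tau$ and that $\sum_j W_{ij} = 1$. Similarly $\sum_k \iprod{f_{e_k},f_{e_k}}_\pi = \tr(V^\top D_\pi V) = \sum_i \pi_i = 1$.

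The second step is to subtract the component along $\vec 1$. Let $\mu_k = \iprod{f_{e_k}, \vec 1}_\pi = \iprod{\sum_i \pi_i \bv_i,\, e_k}$ and $g_k = f_{e_k} - \mu_k \vec 1$. Since $W\vec 1 = \vec 1$ and $W$ is $\pi$-reversible, a one-line computation gives $\iprod{g_k, W g_k}_\pi = \iprod{f_{e_k}, W f_{e_k}}_\pi - \mu_k^2$ and $\iprod{g_k,g_k}_\pi = \iprod{f_{e_k},f_{e_k}}_\pi - \mu_k^2$. Setting $\eps = \sum_k \mu_k^2 = \bigl\|\sum_i \pi_i \bv_i\bigr\|^2$, the previous step yields $\sum_k \iprod{g_k,W g_k}_\pi \ge \tau - \eps$ and $\sum_k \iprod{g_k,g_k}_\pi = 1-\eps$. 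Each $g_k$ is $\pi$-orthogonal to $\vec 1$, so by the weighted-average principle some index $k$ satisfies
\[
\frac{\iprod{g_k, W g_k}_\pi}{\iprod{g_k, g_k}_\pi} \ge \frac{\tau-\eps}{1-\eps} \ge \tau - O(\eps),
\]
which by the variational principle lower bounds $\lambda_2(W)$.

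The third step is to control $\eps$. I would split $\sum_i \pi_i \bv_i = \sum_i U_n(i)\bv_i + \sum_i (\pi_i - U_n(i))\bv_i$ and apply the triangle inequality. The second term has norm at most $\sum_i |\pi_i - U_n(i)| = 2\dtv{\pi}{U_n}$ since $\|\bv_i\|=1$. The first term is the mean of $n$ i.i.d.\ uniform points on $\bbS^{d-1}$, so with high probability its norm is $o_n(1)$ (one can e.g.\ bound its square in expectation by $1/n$ and apply Markov). Hence $\eps = O(\dtv{\pi}{U_n}^2) + o_n(1)$ with high probability, completing the bound.

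I do not foresee a significant obstacle: the main observations are the trace identity in the first step and the fact that reversibility allows the centering in the second step. The only mild care needed is ensuring $\sum b_k > 0$, which holds because the $\bv_i$ are in general position almost surely, so no $f_{e_k}$ is a constant vector.
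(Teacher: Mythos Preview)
Your proposal is correct and is essentially the same argument as the paper's. The paper plugs the vector-valued test function $f(x)=\bv_x$ directly into the Dirichlet-form characterization $1-\lambda_2(W)=\min_f \frac{\E_{x\sim_W y}\|f(x)-f(y)\|^2}{\E_{x,y\sim\pi}\|f(x)-f(y)\|^2}$, obtaining $1-\lambda_2 \le \frac{2(1-\tau)}{2(1-\eps)}$ with $\eps=\|\sum_i\pi_i\bv_i\|^2$; your coordinate-wise version with the $f_{e_k}$'s and the summing/mediant step is exactly the scalar unpacking of that same computation, and your control of $\eps$ via the split $\sum_i\pi_i\bv_i=\frac1n\sum_i\bv_i+\sum_i(\pi_i-\frac1n)\bv_i$ matches the paper's line for line.
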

\begin{proof}
    When $\dtv{\pi}{U_n}\ge 0.1$, the statement is vacuously true.
    Thus, we assume $\dtv{\pi}{U_n} < 0.1$ for the rest of this proof.
    We see that:
    \begin{align*}
        1 - \lambda_2(W)
        = \min_{\substack{f:V(\bG)\to\R^d\\ f \text{ non-constant}}} \frac{\E_{x\sim_W y}\norm*{f(x)-f(y)}^2}{\E_{x,y\sim\pi}\norm*{f(x)-f(y)}^2}
        \le \frac{\E_{x\sim_W y}\norm*{\bv_x-\bv_y}^2}{\E_{x,y\sim\pi}\norm*{\bv_x-\bv_y}^2}
        \le \frac{2(1-\tau(p,d))}{\E_{x,y\sim\pi}\norm*{\bv_x-\bv_y}^2}
        \numberthis \label{eq:spec-gap-bound}
    \end{align*}
    where the last inequality uses that for adjacent $x,y$, $\angles*{\bv_x,\bv_y}\ge\tau(p,d)$.
    To lower bound the denominator, observe:
    \begin{align*}
        \E_{x,y\sim\pi} \norm*{\bv_x-\bv_y}^2
        &= \sum_{x,y\in [n]} \pi(x)\pi(y)\parens*{2 - 2\angles*{\bv_x, \bv_y}}
        = 2\parens*{1 - \sum_{x,y\in[n]} \angles*{ \pi(x) \bv_x, \pi(y) \bv_y } } \\
        &= 2 \parens*{1 - \norm*{\sum_{x\in[n]} \pi(x)\bv_x}^2 }
        = 2\parens*{1 - \norm*{ \sum_{x\in[n]} \frac{1}{n} \bv_x + \sum_{x\in[n]} \left(\pi(x)-\frac{1}{n}\right)\bv_x }^2 } \\
        &\ge 2\parens*{1 - \norm*{ \frac{1}{n} \sum_{x\in[n]} \bv_x }^2
        - 4 \norm*{ \frac{1}{n} \sum_{x\in[n]} \bv_x }\cdot\dtv{\pi}{U_n}
        - 4 \dtv{\pi}{U_n}^2 }.
    \end{align*}
    By standard concentration arguments, $\norm*{\frac{1}{n}\sum_{x\in[n]} \bv_x}$ is $o_n(1)$ with high probability.
    Plugging in the lower bound into \pref{eq:spec-gap-bound} tells us:
    \begin{align*}
        1 - \lambda_2(W) \le 1 - \tau(p,d) + o_n(1) + O\parens*{ \dtv{\pi}{U_n}^2 },
    \end{align*}
    which can be rearranged into the desired inequality.
\end{proof}

Armed with this lemma we can prove \pref{prop:trickle-tight}.
\begin{proof}[Proof of \pref{prop:trickle-tight}]
    Let $\tau = \frac{\lambda}{1-\lambda}$, which is in $(0,1)$ for $\lambda\in\left(0,\frac{1}{2}\right)$.
    Using the bounds from \pref{lem:approx-tails-dpd} we can choose $n$ and $d = \Theta(\log n)$ such that for $p = \ol{\Phi}_{\Beta{d}}(\tau)$, we have $\frac{np^2}{2} \gg \poly\log n$.

    Let $\bH\sim\grg^{(2)}_d(n,p)$.
    Since each link contains $\Binom(n-1,p)$ vertices, and $(n-1)p\gg\poly\log n$, every link has $(n-1)p(1\pm o_n(1))\ge m\coloneqq np/2$ vertices with probability $1-O(n^{-1})$.
    Also, $\ol{\Phi}_{\Beta{d}}\parens*{ \frac{\tau}{1+\tau} } \ge \ol{\Phi}_{\Beta{d}}(\tau)$, so $m\cdot \ol{\Phi}_{\Beta{d}}\parens*{ \frac{\tau}{1+\tau} } \ge \frac{np^2}{2} \gg \poly\log m$, and so the conditions of \pref{thm:main-thm-links} are met so that by a union bound we can conclude that all links have second eigenvalue at most $\frac{\tau}{1+\tau}+o(1) = \lambda + o(1)$.

    Simultaneously, for any pair of vertices the number of triangles they participate in are within a multiplicative factor of $1\pm\frac{\log n}{\sqrt{p^2 n}} = 1 \pm o_n(1)$ of each other, as we argue in the next paragraph.
    Since the stationary distribution $\pi$ of the random walk on $\bG$ weighted according to $\bH^{(1)}$, the $1$-skeleton of $\bH$, puts mass on vertex $v$ proportional to the number of triangles $v$ participates in, it must be the case that $\pi(v) = (1\pm o_n(1))\cdot\frac{1}{n}$.
    Consequently, $\dtv{\pi}{U_n} = o_n(1)$, and by \pref{lem:grg-specgap-lower}, $\lambda_2(\bH^{(1)})\ge\tau - o_n(1) = \frac{\lambda}{1-\lambda} - o_n(1)$.

    We now show concentration for the number of triangles that contain a vertex.
    Indeed, the number of triangles that a vertex $v$ participates in is equal to the number of edges in its link.
    Using, $\boldm_v$ to denote the number of vertices in the link of $v$, $\deg(u)$ to denote the degree of a vertex $u$ within the link of $v$, and $\bkappa$ to denote the collection of shells that vertices in the link of $v$ lie in, we have:
    \[
        |E(\mathrm{Link}(v))| = \frac{1}{2} \sum_{i=1}^{\boldm_v} \deg_v(u).
    \]
    Henceforth we condition on $\boldm_v$ achieving some value in $(1\pm o_n(1))p(n-1)$.
    The average degree of a vertex $u$ within the link of $v$ is at least $\frac{np^2}{2}$, and hence by Bernstein's inequality each $\deg_v(u) = (1\pm o_n(1))\E \bracks*{\deg_v(u)|\bkappa_u}$ except with probability $O(n^{-4})$ since $\deg_v(u)|\bkappa_u$ is a sum of independent indicator random variables.
    The random variables $\E\bracks*{\deg_v(u)|\bkappa_u}$ are independent and distributed as $p(\bkappa_u)\boldm_v$ where $p(\bkappa_u)$ is the probability that a uniformly random vector in $\scap_p(v)$ falls in $\scap_p(u)$ where $\angles*{u,v} = \bkappa_u$.
    We can show with Bernstein's inequality that:
    \[
        \sum_{u=1}^{\boldm_v} \E\bracks*{\deg_v(u)|\bkappa_u} = (1\pm o(1)) \E[p(\bkappa_u)] \boldm_v^2  
    \]
    except with probability $O(n^{-4})$.
    By the union bound, with probability $O(n^{-1})$ for all $v\in[n]$,
    \[
        |E(\mathrm{Link}(v))| = \frac{1\pm o_n(1)}{2} \E[p(\bkappa_u)] \boldm_v^2,
    \]
    which completes the proof.
\end{proof}

\ifdim\displayauthors=1pt
\section*{Acknowledgments}
We thank Vishesh Jain for a game-changing pep talk on the power of the trace method.
Thanks also to Ryan O'Donnell, Prasad Raghavendra, and Nikhil Srivastava for inspiring conversations about random graphs and high-dimensional expanders.
We would like to thank Omar Alrabiah for feedback on an earlier version of the paper.
S.M.\ and T.S.\ were visiting the ``Computational Complexity of Statistical Inference'' program at Simons Institute for the Theory of Computing when some of the work was conducted, and would like to thank the institute for their hospitality and support.
T.S. thanks Samory Kpotufe for suggesting to a version of \pref{question:block-model}.
\else
\fi

\bibliographystyle{alpha}
\bibliography{main}

\appendix
\section{Azuma-Hoeffding for continuous processes}
\label{app:cont-ah}
\restatelemma{lem:cont-AH}
\begin{proof}[Proof of \pref{lem:cont-AH}]
The proof is the same as that of the standard Azuma-Hoeffding inequality.
Without loss of generality assume $\bX_0 = 0$.
From Markov's inequality,
\begin{align*}
\Pr[|\bX_s| \ge x]\le e^{-\theta x}\left(\E[\exp(\theta \bX_s)] + \E[\exp(-\theta\bX_s)]\right)
\end{align*}
We have that $\bX_{t + dt} = \bX_{t} + d\bX_t$, and so for any $\theta$,
\begin{align*}
d\E[\exp(\theta \bX_t)] 
&=\E[\exp(\theta\bX_{t+dt}) - \exp(\theta\bX_t)] \\
&= \E[\exp(\theta \bX_t)] \cdot \E[\exp(\theta d\bX_t) - 1 \mid \calF_t] \\
&\le \E[\exp(\theta \bX_t)] \cdot \left(\exp\left(\tfrac{\theta^2 \sigma_t^2dt}{K}\right) - 1\right)
= \E[\exp(\theta \bX_t)] \cdot \tfrac{\theta^2 \sigma_t^2 dt}{K},
\end{align*}
since the higher-order terms in the Taylor expansion of $\exp$ go to zero.
Hence, we conclude that $d \log \E[\exp(\theta \bX_t)] \le \frac{\theta^2\sigma_t^2 dt}{K}$, and thus $\log \E[\exp(\theta\bX_s)] = \int_0^s d\log\E \exp(\theta\bX_t) \le \int_0^s \frac{\theta^2\sigma_t^2 dt}{K}$ (where we used that $\bX_0 = 0$).
In conclusion,
\[
\E[\exp(\theta \bX_s)] \le \exp\left(\frac{\theta^2\int_{0}^s\sigma_t^2 dt}{K}\right),
\]
and choosing $\theta = \frac{Kx}{2\int_0^s \sigma_t^2 dt}$, and then repeating the argument with $-\bX_s$, completes the proof.
\end{proof}

\section{Bounding the expected contribution of outlier shells}
\label{app:n-d-ratio}

\restatelemma{lem:outlier-mean}
\begin{proof}
	For the function $T(x,y)\coloneqq\frac{\tau-xy}{\sqrt{1-x^2}\sqrt{1-y^2}}$, we are interested in bounding
	\[
		\frac{\displaystyle\int_{\tau(1+\alpha)}^1 (1-y^2)^{(d-3)/2}\cdot\ol{\Phi}_{\Beta{d-1}}(T(x,y))\, dy }{\displaystyle\int_{\tau}^1 (1-y^2)^{(d-3)/2}\cdot\ol{\Phi}_{\Beta{d-1}}(T(x,y))\, dy} \le \frac{\displaystyle\int_{\tau(1+\alpha)}^1 (1-y^2)^{(d-3)/2}\cdot\ol{\Phi}_{\Beta{d-1}}(T(x,y))\, dy }{\displaystyle\int_{\tau}^{\tau(1+\alpha/2)} (1-y^2)^{(d-3)/2}\cdot\ol{\Phi}_{\Beta{d-1}}(T(x,y))\, dy}
	    \numberthis	\label{eq:integral-ratio}
	\]
	For any $y\in[\tau(1+\alpha),1]$ and $z\in[\tau,\tau(1+\alpha/2)]$, we show that the expression
	\[
		\frac{(1-y^2)^{(d-3)/2}\cdot\ol{\Phi}_{\Beta{d-1}}(T(x,y))}{(1-z^2)^{(d-3)/2}\cdot\ol{\Phi}_{\Beta{d-1}}(T(x,z))}
		\numberthis	\label{eq:ratio-individual}
	\]
	is bounded by $\alpha\tau/d$.
	Consequently, \pref{eq:integral-ratio} is bounded by $\frac{2}{d}$.

	We now prove \pref{eq:ratio-individual}.
	Recall that by \pref{clm:tau-tau-worst-case}, $T(x,y)$ is decreasing in $x$ and $y$ on $[\tau,1]\times[\tau,1]$.
	For fixed $x$, let $y_*$ be chosen such that $T(x,y_*) = \frac{4}{\sqrt{d}}$.
	We split into cases depending on where $y$ and $z$ fall with respect to $y_*$.

	\noindent {\bf Case 1: $y_* \le z$}.  In this case, the numerator of \pref{eq:ratio-individual} can be bounded by $(1-y^2)^{(d-3)/2}$ and the denominator can be lower bounded by $\beta\cdot(1-z^2)^{(d-3)/2}$ for some constant $\beta > 0$.
	Combined with the fact that the derivative of $1-y^2$ is $-2y$ which is at most $-2\tau$ and our choice of $\alpha$, we have:
	\[
		\pref{eq:ratio-individual} \le \frac{1}{\beta}\cdot\parens*{\frac{1-y^2}{1-z^2}}^{(d-3)/2}
		\le \frac{(1-\alpha\tau^2)^{(d-3)/2}}{\beta} \le \frac{\alpha\tau}{d}.
	\]

	\noindent {\bf Case 2: $y_* \ge y$}.
	In this case, via \pref{lem:approx-tails-dpd}, the numerator of \pref{eq:ratio-individual} can be bounded by
	\begin{align*}
		\parens*{1-y^2}^{(d-3)/2}\cdot\frac{Z_{d-1}}{T(x,y)(d-2)}\parens*{1-T(x,y)^2}^{(d-2)/2}
		= \parens*{\frac{1-y^2-\tau^2+2\tau xy}{1-x^2}}^{(d-3)/2} \cdot \frac{Z_{d-1}\sqrt{1-T(x,y)^2}}{T(x,y)(d-2)}
	\end{align*}
	and similarly the denominator can be lower bounded by
	\[
		\beta\cdot\parens*{\frac{1-z^2-\tau^2+2\tau xz}{1-x^2}}^{(d-3)/2} \cdot \frac{Z_{d-1}\sqrt{1-T(x,z)^2}}{T(x,z)(d-2)}
	\]
	for some constant $\beta > 0$.
	Consequently,
	\[
		\pref{eq:ratio-individual} \le \frac{1}{\beta}\cdot\parens*{\frac{1-y^2-\tau^2+2\tau xy}{1-z^2-\tau^2+2\tau xz}}^{(d-3)/2} \cdot \frac{T(x,z)\sqrt{1-T(x,y)^2}}{T(x,y)\sqrt{1-T(x,z)^2}} 
		\le O\parens*{\sqrt{d}} \cdot \parens*{\frac{1-y^2-\tau^2+2\tau xy}{1-z^2-\tau^2+2\tau xz}}^{(d-3)/2}
	\]
	The derivative of the expression $G(z)\coloneqq 1 - z^2 - \tau^2 + 2\tau xz$ is $-2z + 2\tau x$, which is bounded by $-2\tau(1-x) \le -2\tau(1-\xbound)$ when $ \tau \le z $.
	By the derivative bound and our choice of $\alpha$ the above expression is at most:
	\[
		O\parens*{\sqrt{d}}\cdot\parens*{\frac{3-2\tau(1-\xbound)(y-z)}{3}}^{(d-3)/2} \le O\parens*{\sqrt{d}}\cdot\parens*{1-\frac{(1-\xbound)\alpha\tau^2}{3}}^{(d-3)/2} \le \frac{\alpha\tau}{d}.
	\]

	\noindent {\bf Case 3: $y \le y_* \le z$}.
	In this case:
	\begin{align*}
		\pref{eq:ratio-individual} &= \frac{(1-y^2)^{(d-3)/2}\cdot\ol{\Phi}_{\Beta{d-1}}(T(x,y))}{(1-y_*^2)^{(d-3)/2}\cdot\ol{\Phi}_{\Beta{d-1}}(T(x,y_*))}\cdot \frac{(1-y_*^2)^{(d-3)/2}\cdot\ol{\Phi}_{\Beta{d-1}}(T(x,y_*))}{(1-z^2)^{(d-3)/2}\cdot\ol{\Phi}_{\Beta{d-1}}(T(x,z))}.
	\end{align*}
	By an identical calculation to the case where $y_*\ge y$, the first part of the above product is bounded by
	\[
		O\parens*{\sqrt{d}}\cdot\parens*{1-\frac{2\tau(1-\eta)(y_*-y)}{3}}^{(d-3)/2}
	\]
	and the second part is bounded by
	\[
		O(1)\cdot\parens*{1 - 2(z-y_*)\tau}^{(d-3)/2}.	
	\]
	Either $y_*-y > \alpha\tau/4$ or $z-y_* > \alpha\tau/4$, and so
	\[
		\pref{eq:ratio-individual}
		\le O\parens*{\sqrt{d}}\cdot \max\braces*{ 1-\frac{(1-\eta)\alpha\tau^2}{6}, 1-\frac{\alpha\tau^2}{4} }^{(d-3)/2}
		\le \parens*{1-\frac{(1-\eta)\alpha\tau^2}{6}}^{(d-3)/2} \le \frac{\alpha\tau}{d},
	\]
	which completes the proof.
\end{proof}
\section{Computations for evaluating ratios of typical entries} \label{sec:code}

\begin{claim} \label{claim:H-ratio}
	Let $\kappa_x = \tau(1 + \delta_x)$, where $\alpha$ is the typicality threshold, and $x \in \{i, j, \ell\}$. Then,
	\[
		\left| \frac{T(\kappa_j, \kappa_\ell) \cdot T(\kappa_i, \tau)}{T(\kappa_i, \kappa_\ell) \cdot T(\kappa_j, \tau)} - 1 \right| \leq O(\alpha^2)
	\]
\end{claim}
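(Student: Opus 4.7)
The plan is to take logarithms and show that $\log T(\kappa_a, \kappa_b)$ decomposes as a sum of single-variable functions of $\delta_a$ and $\delta_b$ up to $O(\alpha^2)$ error. Once this is established, the ``mixed second difference''
\[
\log T(\kappa_j,\kappa_\ell) + \log T(\kappa_i, \tau) - \log T(\kappa_i, \kappa_\ell) - \log T(\kappa_j, \tau)
\]
automatically cancels all first-order contributions in $\delta_i,\delta_j,\delta_\ell$, leaving only $O(\alpha^2)$. Exponentiating then yields the claim.

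First, I would write
\[
\log T(x,y) = \log(\tau - xy) - \tfrac{1}{2}\log(1-x^2) - \tfrac{1}{2}\log(1-y^2).
\]
The last two terms are already separable (as functions of the two arguments), so only $\log(\tau - xy)$ requires attention. Substituting $x = \tau(1+\delta_x)$, $y = \tau(1+\delta_y)$, one has
\[
\tau - xy = \tau(1-\tau)\left(1 - \tfrac{\tau}{1-\tau}(\delta_x + \delta_y + \delta_x\delta_y)\right),
\]
and since $\delta_x,\delta_y = O(\alpha)$ and $\tau$ is bounded away from $1$, a Taylor expansion of $\log(1-u)$ gives
\[
\log(\tau - xy) = \log(\tau(1-\tau)) - \tfrac{\tau}{1-\tau}(\delta_x + \delta_y) + O(\alpha^2),
\]
which is separable in $\delta_x,\delta_y$ up to $O(\alpha^2)$ error. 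An analogous Taylor expansion applies to $\log(1-x^2)$ and $\log(1-y^2)$, each of which is already a function of a single variable.

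Collecting everything, I obtain the decomposition $\log T(\kappa_a, \kappa_b) = c + g(\delta_a) + g(\delta_b) + r(\delta_a, \delta_b)$ where $c = \log T(\tau,\tau)$, the function $g$ is a single-variable smooth function (whose precise form is immaterial), and the remainder satisfies $|r(\delta_a,\delta_b)| = O(\alpha^2)$ uniformly for $\delta_a,\delta_b \in [0,\alpha]$. Plugging this into the mixed second difference, the constants $c$ and all $g(\delta_\cdot)$ terms cancel exactly, and only the remainder terms survive:
\[
\log \frac{T(\kappa_j,\kappa_\ell)\,T(\kappa_i,\tau)}{T(\kappa_i,\kappa_\ell)\,T(\kappa_j,\tau)} = r(\delta_j,\delta_\ell) + r(\delta_i, 0) - r(\delta_i,\delta_\ell) - r(\delta_j,0) = O(\alpha^2).
\]
Exponentiating via $|e^u - 1| \le 2|u|$ for $|u| \le 1$ then gives the desired bound. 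The only subtlety worth double-checking is that the implicit constants depend only on $\tau$ (which is bounded away from both $0$ and $1$ for the parameter regime in \pref{thm:hdx}), so they do not blow up; this follows because the denominators $1-\tau$, $1-\tau^2(1+\alpha)^2$ appearing in the Taylor expansions remain bounded below by positive constants for $\alpha$ sufficiently small. I don't expect any substantive obstacle beyond this bookkeeping.
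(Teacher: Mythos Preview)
Your proof is correct, but the route differs from the paper's. The paper observes (as you implicitly do by noting that $\tfrac12\log(1-x^2)+\tfrac12\log(1-y^2)$ is separable) that the square-root denominators in the product ratio cancel \emph{exactly}, leaving
\[
\frac{T(\kappa_j,\kappa_\ell)\,T(\kappa_i,\tau)}{T(\kappa_i,\kappa_\ell)\,T(\kappa_j,\tau)}
=\frac{(\tau-\kappa_j\kappa_\ell)(1-\kappa_i)}{(\tau-\kappa_i\kappa_\ell)(1-\kappa_j)}.
\]
It then subtracts $1$ and factors the numerator of the difference as $(\tau-\kappa_\ell)(\kappa_j-\kappa_i)$, which is visibly $O(\alpha^2)$ since each factor is $O(\tau\alpha)$; the denominator is bounded below by a constant because $\tau<1$ is fixed and $\alpha$ is small. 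This is shorter and purely algebraic, with no Taylor expansion needed. Your logarithmic second-difference argument is more systematic and would extend to settings where the cancellation is not exact, but here it trades a clean factorization for bookkeeping of remainder terms.
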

\begin{proof}
	We first simplify $\frac{T(\kappa_j, \kappa_\ell) \cdot T(\kappa_i, \tau)}{T(\kappa_i, \kappa_\ell) \cdot T(\kappa_j, \tau)}$ by expanding the expression for $T(\cdot, \cdot)$ and cancelling terms:
	\begin{align*}
		\frac{T(\kappa_j, \kappa_\ell) \cdot T(\kappa_i, \tau)}{T(\kappa_i, \kappa_\ell) \cdot T(\kappa_j, \tau)} &= \frac{(\tau - \kappa_j \kappa_\ell)(1 - \kappa_i)}{(\tau - \kappa_i \kappa_\ell)(1 - \kappa_j)} \\
		\frac{T(\kappa_j, \kappa_\ell) \cdot T(\kappa_i, \tau)}{T(\kappa_i, \kappa_\ell) \cdot T(\kappa_j, \tau)} - 1 &= \frac{(\tau - \kappa_j \kappa_\ell)(1 - \kappa_i) - (\tau - \kappa_i \kappa_\ell)(1 - \kappa_j)}{(\tau - \kappa_i \kappa_\ell)(1 - \kappa_j)} 
	\end{align*}
	The claim will follow from an upper bound on the numerator and a lower bound on the denominator.
	
	\noindent \textbf{Upper bound on numerator:}
	The following calculation establishes that the magnitude of the  numerator $|(\tau - \kappa_j \kappa_\ell)(1 - \kappa_i) - (\tau - \kappa_i \kappa_\ell)(1 - \kappa_j)|$ is $O(\alpha^2)$.
	\begin{align*}
		|(\tau - \kappa_j \kappa_\ell)(1 - \kappa_i) - (\tau - \kappa_i \kappa_\ell)(1 - \kappa_j)| &= |(\tau - \tau \kappa_i - \kappa_j \kappa_\ell + \kappa_i \kappa_j \kappa_\ell) - (\tau - \tau\kappa_j - \kappa_i \kappa_\ell + \kappa_i \kappa_j \kappa_\ell)| \\
		&= |\tau(\kappa_j - \kappa_i) - \kappa_\ell(\kappa_j - \kappa_i)| = |(\tau - \kappa_\ell)(\kappa_j - \kappa_i)| \\
		&= \tau^2|1 - (1 + \delta_\ell)|\cdot|1 + \delta_j - (1 + \delta_i)| = \tau^2(\delta_\ell)|\delta_j - \delta_i| \\
		&\le \tau^2 \alpha^2
	\end{align*}
	where the last inequality uses $\delta_\ell \leq \alpha$ and $|\delta_j - \delta_i| \leq \alpha$. 
		
	\noindent \textbf{Lower bound on denominator:}
	The denominator $(\tau - \kappa_i \kappa_\ell)(1 - \kappa_j)$ is lower bounded by a constant. 
	\begin{align*}
		(\tau - \kappa_i \kappa_\ell)(1 - \kappa_j) &\geq (\tau - \tau^2(1 + \alpha)^2)(1 - \tau(1 + \alpha)) 
	\end{align*}
	As $\alpha \leq O\left(\frac{\log d}{d}\right)$, both $(\tau - \tau^2(1 + \alpha)^2)$ and $(1 - \tau(1 + \alpha))$ are lower bounded by a constant.
\end{proof}

\begin{claim} \label{claim:a-b-ratio}
Let $\kappa_x = \tau(1 + \delta_x)$, where $\alpha$ is the typicality threshold, and $x \in \{i, j, \ell\}$, and define $A \coloneqq \left(1 - T(\kappa_i, \kappa_\ell)^2\right) \left(1 - T(\kappa_j, \tau)^2\right)$ and $B \coloneqq \left(1 - T(\kappa_j, \kappa_\ell)^2\right) \left(1 - T(\kappa_i, \tau)^2\right)$. Then,
\[
\left| \frac{A}{B} - 1 \right| \leq O(\alpha^2)
\]
\end{claim}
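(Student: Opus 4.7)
The plan is to simplify $A/B$ and then verify by direct expansion that the leading constant and linear terms in the deviations $\delta_i, \delta_j, \delta_\ell$ agree between the numerator and denominator.

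First I would observe that
$$1 - T(x,y)^2 = \frac{N(x,y)}{(1-x^2)(1-y^2)}, \qquad \text{where } N(x,y) := 1 - x^2 - y^2 + 2\tau xy - \tau^2,$$
so the factor $(1-\kappa_i^2)(1-\kappa_j^2)(1-\kappa_\ell^2)(1-\tau^2)$ appears identically in both $A$ and $B$ and cancels in the ratio. This reduces the claim to showing
$$\left|\frac{N(\kappa_i,\kappa_\ell)\,N(\kappa_j,\tau)}{N(\kappa_j,\kappa_\ell)\,N(\kappa_i,\tau)} - 1\right| \le O(\alpha^2).$$

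Next I would substitute $\kappa_x = \tau(1+\delta_x)$ (with $\delta_x \in [0,\alpha]$ and $\delta_\tau = 0$) and expand to obtain
$$N(\kappa_x, \kappa_y) = C_0 + C_1(\delta_x + \delta_y) + O(\alpha^2),$$
where $C_0 = (1-\tau)^2(1+2\tau)$ and $C_1 = -2\tau^2(1-\tau)$ are constants depending only on $\tau$. Multiplying out,
$$N(\kappa_i,\kappa_\ell)\,N(\kappa_j,\tau) = C_0^2 + C_0 C_1(\delta_i + \delta_j + \delta_\ell) + O(\alpha^2),$$
and the product $N(\kappa_j,\kappa_\ell)\,N(\kappa_i,\tau)$ appearing in the denominator has the same expansion: the linear coefficient is governed by the unordered sum $\delta_i + \delta_j + \delta_\ell$, which is symmetric under swapping $i$ and $j$. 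Subtracting, the constant and linear terms cancel and the difference is $O(\alpha^2)$. Dividing by $N(\kappa_j,\kappa_\ell)\,N(\kappa_i,\tau) = C_0^2 + O(\alpha) = \Theta(1)$, where I use that $\tau \in (0,1)$ is a constant so $C_0$ is bounded below by a positive constant, yields the claim.

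The argument is essentially a routine calculation, and I expect no serious obstacle. The one step that warrants care is checking the cancellation of the $\delta$-linear terms; this relies on the symmetric way in which $\delta_i, \delta_j, \delta_\ell$ each appear exactly once in the linear coefficient of either cross-product. The only place where $\tau$ being bounded away from $1$ matters is in concluding that the denominator is $\Theta(1)$ (via $C_0>0$), and this is guaranteed in the hypothesis of the statement.
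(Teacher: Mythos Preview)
Your proof is correct and follows essentially the same strategy as the paper: reduce to showing that the difference between numerator and denominator has no constant or linear terms in the $\delta_x$'s, so that it is $O(\alpha^2)$, and then divide by a denominator that is $\Theta(1)$ since $\tau$ is bounded away from $1$.

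Your organization is a bit cleaner than the paper's. The paper bounds $|A-B|$ directly: it expands $A-B$, factors out the common denominator $(1-\tau^2)(1-\kappa_i^2)(1-\kappa_j^2)(1-\kappa_\ell^2)$, and is left with two explicit polynomials $f_1,f_2$ in $\delta_i,\delta_j,\delta_\ell$ whose degree-$0$ and degree-$1$ parts it then verifies vanish (by checking $f_k(0,0,0)=0$ and that each univariate restriction $f_k(x,0,0)$, etc., is identically zero). You instead first observe that $1-T(x,y)^2 = N(x,y)/[(1-x^2)(1-y^2)]$ with $N(x,y)=1-x^2-y^2+2\tau xy-\tau^2$, so the $(1-\kappa_x^2)$ factors cancel in the ratio $A/B$ before any expansion; then the single observation that $N(\kappa_x,\kappa_y)=C_0+C_1(\delta_x+\delta_y)+O(\alpha^2)$ with $C_0=(1-\tau)^2(1+2\tau)>0$ makes the cancellation of linear terms transparent, since both cross-products have linear part $C_0C_1(\delta_i+\delta_j+\delta_\ell)$. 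The underlying reason is the same symmetry under $i\leftrightarrow j$, but your reduction to $N$ exposes it more directly and avoids tracking two separate polynomials.
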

\begin{proof}
	It suffices to lower bound $B$ by a constant, and then prove that $|A - B| \leq O(\alpha^2)$. 
	Lower bounding $B$ by a constant is straightforward; $T(x, y)$ is maximized when $x = y = \tau$, achieving a value of $\frac{\tau}{1 + \tau}$. 
	Thus, $B \geq (1 - \frac{\tau^2}{(1 + \tau)^2})^2$, which is a constant.

	To get a handle on $|A - B|$, we first expand the expressions:
	\begin{align*}
	A - B &= [T(\kappa_i, \tau)^2 - T(\kappa_j, \tau)^2 - T(\kappa_i, \kappa_\ell)^2 + T(\kappa_j, \kappa_\ell)^2] + [T(\kappa_i, \kappa_\ell)^2 T(\kappa_j, \tau)^2 - T(\kappa_j, \kappa_\ell)^2 T(\kappa_i, \tau)^2] \\
	&= \frac{1}{(1 - \tau^2)(1 - \kappa_i^2)(1 - \kappa_j^2)(1 - \kappa_\ell^2)} \cdot [\tau^2 \cdot f_1(\delta_i, \delta_j, \delta_\ell) + \tau^4 \cdot f_2(\delta_i, \delta_j, \delta_\ell)]
	\end{align*}
	where
	\begin{align*}
		f_1(x, y, z) &\coloneqq (1 - \tau(1 + x))(1 - \tau^2(1 + y)^2)(1 - \tau^2(1 + z)^2) - (1 - \tau(1 + y))(1 - \tau^2(1 + x)^2)(1 - \tau^2(1 + z)^2) \\ 
		&\; \; \; \; \; \; \; \; \; \; \; \; - (1 - \tau(1 + x)(1 + z))(1 - \tau^2(1 + y)^2)(1 - \tau^2) + (1 - \tau(1 + y)(1 + z))(1 - \tau^2(1 + x)^2)(1 - \tau^2) \\
		f_2(x, y, z) &= (1 - \tau(1 + x)(1 + z))(1 - \tau(1 + y)) - (1 - \tau(1 + y)(1 + z))(1 - \tau(1 + x)) 
	\end{align*}
	The term $(1 - \tau^2)(1 - \kappa_i^2)(1 - \kappa_j^2)(1 - \kappa_\ell^2)$ is lower bounded by a constant for large enough $d$ and so it suffices to bound $|f_1(\delta_i,\delta_j,\delta_{\ell})|$ and $|f_2(\delta_i,\delta_j,\delta_{\ell})|$ by $O(\alpha^2)$.

	To do so, we use the fact that $f_1$ and $f_2$ is a polynomial of constant degree (in particular, of degree at most $5$) whose degree-$0$ and degree-$1$ terms are $0$, and whose remaining coefficients are bounded by a constant.
	The result then follows since $\delta_i,\delta_j,\delta_k\le\alpha$.

	It is easy to see that their coefficients are bounded by a constant.
	The fact that the degree-$0$ and degree-$1$ terms of $f_k$ vanish for $k=1,2$ follows from the fact that $f_k(0, 0, 0) = 0$ and that the univariate polynomials $f_k(x, 0, 0)$, $f_k(0, y, 0)$, and $f_k(0, 0, z)$ are identically $0$.
\end{proof}

\end{document}